\documentclass[reqno]{amsart}

\usepackage[margin= 3cm]{geometry}
\usepackage{enumerate}
\usepackage{tikz}
\usepackage{tikz-cd}
\usepackage{amsthm}
\usepackage{amsmath}
\usepackage{amssymb}
\usepackage{mathtools}
\usepackage{hyperref}
\usepackage[T1]{fontenc}
\usepackage[utf8]{inputenc}

\theoremstyle{plain}
\newtheorem{theo}{Theorem}[section]
\newtheorem{cor}[theo]{Corollary}
\newtheorem{lem}[theo]{Lemma}
\newtheorem{prop}[theo]{Proposition}

\theoremstyle{definition}
\newtheorem{cons}[theo]{Construction}

\newtheorem{defi}[theo]{Definition}
\newtheorem{ex}[theo]{Example}
\newtheorem{notation}[theo]{Notation}

\newtheorem{rem}[theo]{Remark}
\newtheorem{introthm}{Theorem}

\newcommand{\Aa}{\mathcal{A}}
\newcommand{\Bb}{\mathcal{B}}
\newcommand{\Cc}{\mathcal{C}}
\newcommand{\Dd}{\mathcal{D}}
\newcommand{\Ee}{\mathcal{E}}
\newcommand{\Ff}{\mathcal{F}}

\newcommand{\Ii}{\mathcal{I}}

\newcommand{\Ll}{\mathcal{L}}

\newcommand{\Nn}{\mathcal{N}}
\newcommand{\Oo}{\mathcal{O}}
\newcommand{\Pp}{\mathcal{P}}

\newcommand{\Ss}{\mathcal{S}}
\newcommand{\Tt}{\mathcal{T}}
\newcommand{\Uu}{\mathcal{U}}
\newcommand{\Vv}{\mathcal{V}}
\newcommand{\Ww}{\mathcal{W}}
\newcommand{\Xx}{\mathcal{X}}
\newcommand{\Yy}{\mathcal{Y}}
\newcommand{\Zz}{\mathcal{Z}}

\let\lim\relax
\DeclareMathOperator{\lim}{lim}
\DeclareMathOperator{\colim}{colim}
\DeclareMathOperator{\const}{const}
\DeclareMathOperator{\fgt}{fgt}
\DeclareMathOperator{\ev}{ev}
\DeclareMathOperator{\Lan}{Lan}
\DeclareMathOperator{\Ran}{Ran}
\DeclareMathOperator{\OpLAdj}{\Lambda_{G/G}}
\DeclareMathOperator{\incl}{incl}
\DeclareMathOperator{\AlgE}{Alg_{E_{\infty}}}
\DeclareMathOperator{\CAlg}{CAlg}
\DeclareMathOperator{\NAlg}{NAlg}
\DeclareMathOperator{\Hom}{Hom}
\DeclareMathOperator{\HOM}{HOM}
\DeclareMathOperator{\Fun}{Fun}
\DeclareMathOperator{\Span}{Span}
\DeclareMathOperator{\Triv}{Triv}
\DeclareMathOperator{\Env}{Env}
\DeclareMathOperator{\Ar}{Ar}
\DeclareMathOperator{\Free}{Free}
\DeclareMathOperator{\FCopC}{\mathbb{F}}
\DeclareMathOperator{\Cofree}{\Fun(\underline{\Tt}^{\op},}
\DeclareMathOperator{\CofreeG}{\Fun(\underline{\Oo}_{\mathit{G}}^{\op},}
\DeclareMathOperator{\CorCof}{\Fun(\underline{\Oo}_{\mathit{G}}^{\simeq},}
\DeclareMathOperator{\CMon}{CMon}
\DeclareMathOperator{\Mack}{Mack}
\DeclareMathOperator{\PMack}{\underline{\Mack}}
\DeclareMathOperator{\Stab}{Stab}
\DeclareMathOperator{\pr}{pr}
\DeclareMathOperator{\id}{id}
\DeclareMathOperator{\Nm}{Nm}
\DeclareMathOperator{\Uncc}{Un^{\text{cc}}}
\DeclareMathOperator{\Unct}{Un^{\text{ct}}}

\DeclareMathOperator{\FunProd}{\Fun^{\! \times}\!}
\DeclareMathOperator{\FunTen}{\Fun^{\! \otimes}\!}
\DeclareMathOperator{\FunTenG}{\Fun_{G}^{\otimes}}
\DeclareMathOperator{\FunLax}{\Fun^{\! \textup{lax}}\!}
\DeclareMathOperator{\FunLaxG}{\Fun_{G}^{\textup{lax}}}
\DeclareMathOperator{\CorCofMul}{\FunTen(\underline{\FCopC}_{\mathit{G}}^{\simeq,\amalg},}
\DeclareMathOperator{\Ho}{Ho}

\newcommand{\op}{\textup{op}}
\newcommand{\RatQ}{\mathbb{Q}}
\newcommand{\IntZ}{\mathbb{Z}}
\newcommand{\OG}{\Oo_G}
\newcommand{\OGo}{\Oo_G^{\op}}
\newcommand{\FinG}{\FCopC_G}
\newcommand{\Spc}{\textup{Spc}}
\newcommand{\Ab}{\textup{Ab}}
\newcommand{\Cat}{\textup{Cat}}

\newcommand{\PSMCat}{\widetilde{\Cat}\vphantom{\Cat}^{\otimes}}
\newcommand{\SMCat}{\Cat^{\otimes}}
\newcommand{\PSMCatG}{\widetilde{\Cat}\vphantom{\Cat}^{\otimes}_{G}}
\newcommand{\SMCatG}{\Cat^{\otimes}_{G}}
\newcommand{\POpG}{\widetilde{\textup{Op}}_{G}}
\newcommand{\OpG}{\textup{Op}_{G}}
\newcommand{\POp}{\widetilde{\textup{Op}}}
\newcommand{\Op}{\textup{Op}}
\newcommand{\RelCat}{\textup{RelCat}}
\newcommand{\AdTrip}{\textup{AdTrip}}
\newcommand{\Pcat}{\underline{\textup{cat}}}
\newcommand{\PSpc}{\underline{\Spc}}
\newcommand{\PrL}{\mathrm{Pr^{L}}}
\newcommand{\PrLT}{\mathrm{Pr^{L}_{\Tt}}}
\newcommand{\PrLG}{\mathrm{Pr}^{\textup{L}}_{G}}
\newcommand{\BC}{\textup{BC}}
\newcommand{\Sp}{\textup{Sp}}
\newcommand{\PSp}{\underline{\Sp}}
\newcommand{\Sphere}{\mathbb{S}}
\newcommand{\inv}{\textit{-}1}

\title{A MODEL FOR NORMED ALGEBRAS IN RATIONAL $G$-SPECTRA}
\author{Giorgi Tigilauri}

\begin{document}
\begin{abstract}
    For a finite group $G$, we construct a simplified model for the $G$-symmetric monoidal $G$-$\infty$-category $\PSp_{G,\RatQ}^{\otimes}$ of rational $G$-spectra. Using this model, we classify $\Ii$-normed algebras in rational $G$-spectra for a given indexing system $\Ii$. We show that such an algebra is equivalently described as a collection $\{\Xx(G/H)\}_{(H\leq G)}$ of commutative algebras in nonequivariant rational spectra, indexed by conjugacy classes of subgroups of $G$, together with compatible morphisms of commutative algebras $\Xx(G/K)\xrightarrow{}\Xx(G/H)$ whenever $K\leq H$ and the induced map $G/K\xrightarrow{}G/H$ is in $\Ii$. This generalizes a result by Wimmer \cite{Wimmer}.
    \end{abstract}

\setcounter{tocdepth}{2}
\date{}

\maketitle
\tableofcontents

\section{Introduction}

Cohomology theories are among the central tools of algebraic topology. They provide means to approach problems that are intractable when considered directly. The category of spectra offers a natural framework in which cohomology theories can be studied via homotopy-theoretic methods. Despite their utility, spectra possess an intricate structure that is often difficult to analyze. This complexity simplifies significantly upon passing to the rational setting. Serre’s computation of the rational homotopy groups of spheres \cite{Serre}, imply that the rational stable homotopy category is equivalent to the category of graded vector spaces over the rational numbers
\begin{equation*}
    \Ho(\Sp_{\RatQ})\simeq \RatQ\textup{-GrMod}.
\end{equation*}
Thus, in the nonequivariant setting, rational homotopy theory becomes entirely algebraic.

Equivariant cohomology theories arise naturally when studying spaces equipped with a group action. Foundational work of Lewis, May, and Steinberger \cite{lewisMaySteinberger}, together with Mandell and May \cite{MandellMay}, establishes that equivariant cohomology theories are represented by $G$-spectra, giving rise to the equivariant stable homotopy category as the appropriate setting for their study. Once again, the category of $G$-spectra simplifies significantly upon rationalization. It was conjectured by Greenlees \cite{GreenleesConjecture}, that for each compact Lie group $G$ there exists an abelian category $\Aa(G)$, such that the homotopy category of rational $G$-spectra is equivalent to the derived category $\Dd(\Aa(G))$. In the case of a finite group $G$, it follows from the work of Greenlees and May \cite[Appendix A]{GreenleesMay} that the conjecture holds for the abelian category 
\begin{equation*}
    \Aa(G)=\prod_{(H\leq G)}\RatQ[\textup{W}_{G}H]\textup{-Mod},
\end{equation*}
which is a product over the conjugacy classes of subgroups of $G$ of rational vector spaces with an action of the Weyl group $\textup{W}_{G}H=\textup{N}_{G}(H)/H$. The conjecture was resolved for various groups and classes of groups. For example, the cases $G=\textup{SO}(2),\textup{O}(2),\textup{\textup{SO}(3)}$ were treated in \cite{GreenleesConj1}, \cite{GreenleesConj2}, and \cite{GreenleesConj3}, respectively. 

Multiplicative structures on the cohomology theories give a useful refinement of the theory, encoding additional information beyond the underlying additive invariants. In the context of spectra, such structures are modeled by highly structured ring objects such as $\textup{E}_{\infty}$-ring spectra, which encode coherently associative and commutative multiplications. The framework of $\infty$-categories allows one to systematically encode the higher coherences required for $\textup{E}_{\infty}$-algebras. In this setting, one is naturally led again to the study of rational $\textup{E}_{\infty}$-ring spectra. Following Shipley \cite{ShipleyCDGA}, one obtains an equivalence
\begin{equation*}
    \AlgE(\Sp_{\RatQ})\simeq \textup{CDGA}_{\RatQ},
\end{equation*}
between the $\infty$-category $\AlgE(\Sp_{\RatQ})$ of rational $\textup{E}_{\infty}$-ring spectra and the underlying $\infty$-category of rational commutative differential graded algebras. 

In the equivariant setting, the situation changes substantially, as one obtains nonequivalent notions of commutative algebras. For a finite group $G$, one can consider the $\infty$-category $\Sp_{G}$ of $G$-spectra and take $\textup{E}_{\infty}$-algebras inside to obtain the $\infty$-category $\AlgE(\Sp_{G})$. On the other hand, one can consider the category $\CMon(\Sp_{G}^{O})$ of strictly commutative monoids in the category $\Sp_{G}^{O}$ of orthogonal $G$-spectra and equip it with the lifted positive stable model category structure, see \cite{MandellMay}. Surprisingly, the $\infty$-category $\AlgE(\Sp_{G})$ is not equivalent to the underlying $\infty$-category of $\CMon(\Sp_{G}^{O})$. The reason for this is that strictly commutative monoids in orthogonal $G$-spectra encode richer structure. Namely, for a strictly commutative monoid object $X$ in $\Sp_{G}^{O}$, in addition to the regular multiplication map $X\otimes X\xrightarrow{}X$ one obtains norm maps 
\begin{equation*}
    n_{K}^{H}\colon\textup{N}_{K}^{H}X\xrightarrow{}X,
\end{equation*}
for subgroups $K\leq H\leq G$. Here
\begin{equation*}
    \textup{N}_{K}^{H}\colon \Sp_{K}\xrightarrow{}\Sp_{H}
\end{equation*}
denotes the HHR-norm, see Hill, Hopkins, and Ravenel \cite{HHR}. Moreover, in the work of Blumberg and Hill \cite{BlumbergHill}, the authors introduce the notion of $\textup{N}_{\infty}$-operads for a finite group $G$. Algebras over such an operad in the category of orthogonal $G$-spectra encode a certain level of commutativity, with norm maps $n_{K}^{H}$ governed by the operad. The collection of allowed norm maps is specified by an associated indexing system.

The $\infty$-category of rational $G$-ring spectra with the highest level of commutativity was studied by Wimmer in \cite{Wimmer}, where the author first shows that the geometric fixed points assemble into a symmetric monoidal equivalence 
\begin{equation*}
    \Sp_{G,\RatQ}\simeq\Fun(\OG^{\simeq},\Sp_{\RatQ})
\end{equation*}
of $\infty$-categories. This equivalence immediately implies the statement about the rational $G$-ring spectra with the lowest level of commutativity. Wimmer then shows that the geometric fixed points also assemble into an equivalence
\begin{equation}\tag{i}\label{EqIntroduction1}
    \textup{N}(\CMon(\Sp_{G}^{O}))[\text{Stable weak equivalences}^{\inv}]_{\RatQ}\simeq \Fun(\OG,\AlgE(\Sp_{\RatQ}))
\end{equation}
of $\infty$-categories, where on the left we have the rationalization of the underlying $\infty$-category of strictly commutative monoids in orthogonal $G$-spectra. These two results cover the lowest and the highest levels of commutativity. For intermediate levels of commutativity, the situation is more complicated, largely due to the difficulty of handling $\textup{N}_{\infty}$-operads and algebras over them. The notion of normed algebras makes things more tractable.

\subsection{Parametrized homotopy theory}

Just as $\infty$-categories provide a natural framework for studying homotopy theory, parametrized $\infty$-categories offer a natural setting to study equivariant homotopy theory. The development of the theory of parametrized $\infty$-categories was started by Barwick, Dotto, Glasman, Nardin, and Shah in \cite{Expose}, with further development by the works of Martini and Wolf \cite{martini2021colimits}, and Cnossen, Lenz, and Linskens \cite{ParStab}. The notion important for the current paper is that of $G$-$\infty$-categories for a finite group $G$, which is a special case of the general parametrized $\infty$-categories. 

A $G$-$\infty$-category is defined to be a contravariant functor
\begin{equation*}
    \underline{\Cc}\colon\OGo\xrightarrow{}\Cat_{\infty},
\end{equation*}
from the orbit category of $G$ to the $\infty$-category of $\infty$-categories. Many notions of category theory generalize to the theory of $G$-$\infty$-categories. One can define notions of $G$-limits, $G$-adjunctions, $G$-presentability, $G$-stability and so on. The main example in this paper will be that of the $G$-$\infty$-category of genuine $G$-spectra, which is defined to be a functor
\begin{equation*}
    \PSp_{G}\colon\OGo\xrightarrow{}\Cat_{\infty}, \quad G/H\mapsto \Sp_{H},
\end{equation*}
sending a coset $G/H$ to the $\infty$-category of genuine $H$-spectra, with the functoriality given by restriction functors. 

In this setting we can talk about the $G$-symmetric monoidal $G$-$\infty$-categories, which are defined to be finite product preserving functors
\begin{equation*}
    \underline{\Cc}^{\otimes}\colon\Span(\FinG)\xrightarrow{} \Cat_{\infty}.
\end{equation*}
The work of Bachmann and Hoyois \cite{NormsMotivic}, upgrades the $G$-$\infty$-category of genuine $G$-spectra to a $G$-symmetric monoidal $G$-$\infty$-category
\begin{equation*}
    \PSp_{G}^{\otimes}\colon\Span(\FinG)\xrightarrow{}\Cat_{\infty}.
\end{equation*}
For a chain of subgroups $K\leq H\leq G$, the functoriality of $\PSp_{G}^{\otimes}$ on the forward morphisms $\nabla\colon G/H\amalg G/H\xrightarrow{}G/H$ and $G/K\xrightarrow{}G/H$ lets us recover the smash product $-\otimes-\colon \Sp_{H}\times\Sp_{H}\xrightarrow{}\Sp_{H}$ and the HHR-norm $\textup{N}_{K}^{H}\colon \Sp_{K}\xrightarrow{}\Sp_{H}$, respectively.

Since, the $G$-symmetric monoidal $G$-category $\PSp_{G}^{\otimes}$ naturally encodes HHR-norms, one can consider algebras encoding various levels of equivariant commutativity. To do this one can work with the $\infty$-category $\OpG$ of $G$-$\infty$-operads, which can be defined as a certain non-full subcategory of the slice $\infty$-category $(\Cat_{\infty})_{/\Span(\FinG)}$, see the work of Lenz, Linskens, and P\"{u}tzst\"{u}ck \cite{NormsEq}. The lax functors from the terminal $G$-$\infty$-operad $\id\colon\Span(\FinG)\xrightarrow{}\Span(\FinG)$ to the underlying $G$-$\infty$-operad of $\PSp_{G}^{\otimes}$ define the $\infty$-category 
\begin{equation*}
    \NAlg_{\FinG}(\PSp_{G}^{\otimes})
\end{equation*}
of $\FinG$-normed algebras in the $G$-symmetric monoidal $G$-$\infty$-category of $G$-spectra. It follows from \cite{NormsEq}, that the $\infty$-category $\NAlg_{\FinG}(\PSp_{G}^{\otimes})$ of normed algebras models the $G$-ring spectra with the highest level of commutativity. On the other end, the lax functors from the $G$-$\infty$-operad $\Span_{\text{all},\FCopC[\OG^{\simeq}]}(\FinG)\xrightarrow{}\Span(\FinG)$ to the underlying $G$-$\infty$-operad of $\PSp_{G}^{\otimes}$ define the $\infty$-category 
\begin{equation*}
    \NAlg_{\FCopC[\OG^{\simeq}]}(\PSp_{G}^{\otimes}),
\end{equation*}
of $\FCopC[\OG^{\simeq}]$-normed algebras in the $G$-symmetric monoidal $G$-$\infty$-category of $G$-spectra. In the above, $\Span_{\text{all},\FCopC[\OG^{\simeq}]}(\FinG)$ denotes the wide subcategory of the span category $\Span(\FinG)$ where the forward morphisms are finite coproducts of codiagonals. The $\infty$-category $\NAlg_{\FCopC[\OG^{\simeq}]}(\PSp_{G}^{\otimes})$ models $G$-ring spectra with the lowest level of commutativity. It is then natural to consider $G$-$\infty$-suboperads of the terminal $G$-$\infty$-operad $\id\colon\Span(\FinG)\xrightarrow{}\Span(\FinG)$, which contain the $G$-$\infty$-operad $\Span_{\text{all},\FCopC[\OG^{\simeq}]}(\FinG)\xrightarrow{}\Span(\FinG)$. It turns out that such $G$-$\infty$-operads are in bijection with indexing systems in the sense of \cite{BlumbergHill}. Namely, for an indexing system $\Ii$ we can define a wide subcategory $\Span_{\text{all},\Ii}(\FinG)$ of $\Span(\FinG)$, with forward morphisms restricted according to $\Ii$. In a similar way as before, we can then define the $\infty$-category
\begin{equation*}
    \NAlg_{\Ii}(\PSp_{G}^{\otimes})
\end{equation*}
of $\Ii$-normed algebras. It is expected that the $\infty$-categories $\NAlg_{\Ii}(\PSp_{G}^{\otimes})$ are equivalent to the underlying $\infty$-categories of algebras over the associated $\textup{N}_{\infty}$-operads in orthogonal $G$-spectra, but currently this is not known.

\subsection{Main results}
In this paper, we complete the classification of rational $G$-ring spectra for finite groups 
$G$ by treating the intermediate levels of equivariant commutativity. Inspired by the equivalence (\ref{EqIntroduction1}), we define for an indexing system $\Ii$ a subcategory $\Oo_{G,\Ii}$ of the orbit category, where the morphisms correspond to the norm maps present. The main result of this paper states:
\begin{introthm}[Theorem \ref{MainTh}]\label{IntroTheoA}
    Let $G$ be a finite group, and let $\Ii$ be an indexing system. There exists an equivalence
    \begin{equation*}
        \NAlg_{\Ii}(\PSp_{G,\RatQ}^{\otimes})\simeq \Fun(\Oo_{G,\Ii},\AlgE(\Sp_{\RatQ}))
    \end{equation*}
    of $\infty$-categories.
\end{introthm}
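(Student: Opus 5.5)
The plan is to replace $\PSp_{G,\RatQ}^{\otimes}$ by a simpler, equivalent $G$-symmetric monoidal $G$-$\infty$-category and to compute normed algebras in that model. On the $G$-$\infty$-category level, geometric fixed points give an equivalence $\Sp_{H,\RatQ}\simeq\Fun((\Oo_H)^{\simeq},\Sp_{\RatQ})$ for every $H\leq G$, compatible with restriction. This is Wimmer's equivalence, applied levelwise and in parametrized form. The candidate model is the $G$-symmetric monoidal $G$-$\infty$-category $\CorCofMul\Sp_{\RatQ}^{\otimes})$. Its value on $G/H$ is $\Fun((\Oo_{G})^{\simeq}_{/(G/H)},\Sp_{\RatQ})\simeq\Fun(\Oo_H^{\simeq},\Sp_{\RatQ})$. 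Its norm functoriality along a forward map $G/K\to G/H$ is the multiplicative (tensor) left Kan extension along the induced map of groupoids of orbits, which is a coproduct of codiagonal-type maps. The reason this is the right model is that geometric fixed points turn the HHR norm into this tensor-induction: $\Phi^{L}\textup{N}_K^H X\simeq\bigotimes_{[g]\in L\backslash H/K}\Phi^{L^g\cap K}X$.

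The first step is to build a $G$-symmetric monoidal functor $\Phi\colon\PSp_{G,\RatQ}^{\otimes}\to\CorCofMul\Sp_{\RatQ}^{\otimes})$ assembled from geometric fixed points. I would obtain it from a universal property: $\PSp_{G}^{\otimes}$ is the $G$-symmetric monoidal stabilization of $G$-spaces, as in the Bachmann--Hoyois and Cnossen--Lenz--Linskens framework. The target is $G$-stable and $G$-presentable, and the representation spheres become invertible in it. Rationalization is a smashing localization, so $\Phi$ factors through the rational category. The functor is then an equivalence because it is levelwise an equivalence, by Wimmer or Greenlees--May. This gives an equivalence of $G$-symmetric monoidal $G$-$\infty$-categories, and hence
\[
\NAlg_{\Ii}(\PSp_{G,\RatQ}^{\otimes})\simeq\NAlg_{\Ii}(\CorCofMul\Sp_{\RatQ}^{\otimes})).
\]

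The second step is a cofree/corepresentability statement. For a $G$-symmetric monoidal category of the form "parametrized functors out of a $G$-groupoid into a symmetric monoidal category $\Cc^{\otimes}$" with multiplicative Kan extension as norms, lax maps from $\Span_{\text{all},\Ii}(\FinG)$ should correspond to ordinary lax symmetric monoidal functors out of an operadic envelope. Concretely, I would show that $\CorCofMul\Cc^{\otimes})$ is right adjoint (cofree) to the forgetful functor to symmetric monoidal categories. Then $\NAlg_{\Ii}$ of it becomes $\AlgE$ of $\Cc$ valued in functors out of the "unstraightening" $\Span_{\text{all},\Ii}(\FinG)\times_{\Span(\FinG)}\underline{\FinG}^{\simeq,\amalg}$. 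The key computation is that this unstraightened operad is the cocartesian operad of $\Oo_{G,\Ii}$, after localizing or identifying its underlying category. Morphisms in it are $\Ii$-admissible maps of orbits, and its multi-morphisms are coproduct-decompositions. With that identification, Lurie's result $\AlgE(\Fun(\Oo_{G,\Ii},\Cc))\simeq\Fun(\Oo_{G,\Ii},\AlgE(\Cc))$ for cocartesian operads finishes the proof with $\Cc=\Sp_{\RatQ}$.

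I expect the main obstacle to be the first step. The point is to produce $\Phi$ as a genuinely $G$-symmetric monoidal functor, with all norm coherences, rather than just compatibility on objects. The fallback, if the universal property approach is awkward, is to construct the model directly as the $G$-symmetric monoidal localization of $\PSp_{G}^{\otimes}$ at rational equivalences. One would then split it along the idempotents of the rational Burnside ring, checking that the isotropy separation idempotents are compatible with norms. The identification of the unstraightened operad with $\Oo_{G,\Ii}$ in the second step is combinatorial and should follow from the closure properties of indexing systems.
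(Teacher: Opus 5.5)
Your overall architecture is the paper's: the model $\CorCofMul(\Sp_{\RatQ},\otimes))$, its cofreeness (Proposition~\ref{GSymmMonAdj}: it is right adjoint to $l_{G/G}^{*}$, i.e.\ evaluation at $G/G$), and the reduction of $\Ii$-normed algebras in it to algebras over the cocartesian operad of $\Oo_{G,\Ii}$. However, the two steps that carry the weight are not supplied.

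\textbf{First step.} The obstacle you name there is not one. The cofreeness you plan to prove in your second step already produces $\Phi$ with all norm coherences, because a $G$-symmetric monoidal functor $\PSp_{G}^{\otimes}\to\CorCofMul(\Sp,\otimes))$ is the same as a symmetric monoidal functor $\Sp_G\to\Sp$. The paper simply takes the adjunct $\phi^{\otimes}$ of $\Phi^{G}$, and no normed universal property of $\PSp_G^{\otimes}$ is needed.

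What is missing is the real content: showing that the underlying $G$-functor of this adjunct is a rational equivalence. ``Levelwise an equivalence by Wimmer'' presupposes that the level-$G/H$ functor $\Sp_H\to\Fun(\Oo_H^{\simeq},\Sp)$ \emph{is} Wimmer's functor as a functor into the diagram category, including the Weyl-group functoriality recorded by $\Oo_H^{\simeq}$. What one reads off easily are only its components, $\ev_{\alpha}\phi^{\otimes}_{G/H}\simeq\Phi^{G}\textup{N}_K^G\alpha^{*}\simeq\Phi^{K}$, and these do not identify the functor.

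The paper closes this in two moves. It proves that $\fgt(\phi^{\otimes})$ preserves $G$-colimits, which is a genuine Beck--Chevalley computation (Lemma~\ref{GSMCompFunPreservesColimits}). It then uses the universal property of $\PSp_G$ among $G$-stable $G$-presentable $G$-categories (Theorem~\ref{ParametrizedUnivPropPresSp}) to identify it with the independently constructed comparison $\phi$ of Section~\ref{Section3}, whose rationalization is objectwise Glasman's equivalence.

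Your alternative, via a normed universal property relative to $\PSpc_{G,*}^{\otimes}$, could also pin the functor down. But it requires three things the proposal does not contain:
\begin{enumerate}
\item constructing the $G$-symmetric monoidal functor on pointed $G$-spaces (again most easily by cofreeness, from $\Sigma^{\infty}(-)^{G}$);
\item checking that the norms of the target preserve sifted colimits;
\item checking that Wimmer's levelwise functor, with its Weyl actions, is the colimit-preserving symmetric monoidal extension of the same space-level diagram.
\end{enumerate}
The idempotent-splitting fallback does not address this either.

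A smaller point: the norm of $\CorCofMul(\Cc,\otimes))$ along $f\colon X\to Y$ is precomposition with $X\times_{Y}-$, i.e.\ $(f_{\otimes}F)(Z)\simeq\bigotimes_{W}F(W)$ over the orbits $W$ of $X\times_{Y}Z$. It is not a Kan extension along $f\circ-$, which computes $f_{!}$. Your double coset formula does match the correct description.

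\textbf{Second step.} Cofreeness is an adjunction for strong $G$-symmetric monoidal functors. $\NAlg_{\Ii}$, by contrast, consists of lax maps out of the $G$-operad $\Span_{\text{all},\Ii}(\FinG)$, which is not a $G$-symmetric monoidal category. So ``then $\NAlg_{\Ii}$ of it becomes algebras over the unstraightening'' does not follow.

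The paper transports the adjunction to preoperads:
\begin{enumerate}
\item it sets $\OpLAdj=(\sigma\circ-)\circ\nu^{*}$;
\item it proves $\Env\circ\OpLAdj\simeq l_{G/G}^{*}\circ\Env$ via an equivalence of adequate triples (Proposition~\ref{EnvLambdalEnv});
\item it deduces a 2-right adjoint $\widetilde{R}$ with $\widetilde{R}\circ\Uncc\simeq\Uncc\circ\CorCofMul-)$ (Propositions~\ref{OpLAdjIsLAdj} and~\ref{NAlgVsAlg});
\item it obtains $\NAlg_{\Ii}(\CorCofMul(\Cc,\otimes)))\simeq\FunLax(\Span_{\FCopC[\OG^{\simeq}],\Ii}(\FinG),\Uncc(\Cc,\otimes))$.
\end{enumerate}
The operad $\OpLAdj(\Span_{\text{all},\Ii}(\FinG))=\Span_{\FCopC[\OG^{\simeq}],\Ii}(\FinG)$, lying over $\Span(\FCopC)$ via orbit sets, is on the nose the cocartesian-type operad of $\Oo_{G,\Ii}$ (the left adjoint of $\CAlg(-)$ evaluated at $\Oo_{G,\Ii}$). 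So your target operad is correct, and no localization is required.

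The total space you describe, with objects $(X,Y\to X)$ over $\Span_{\text{all},\Ii}(\FinG)$, is a different and larger object. Both the claim that its algebras compute $\NAlg_{\Ii}$ and its identification with the cocartesian operad would need proofs you have not indicated. The mechanism you are missing is the envelope adjunction $\Env\dashv\Uncc$, combined with cofreeness.
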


The equivalence is given by associating to each $\Ii$-normed algebra the diagram consisting of the geometric fixed points of the underlying $G$-spectrum together with norm maps between them. In particular, for $\Ii$ the maximal indexing system, this result recovers the equivalence (\ref{EqIntroduction1}) from \cite{Wimmer}.

To prove the above theorem we first show that the $G$-symmetric monoidal $G$-$\infty$-category $\PSp_{G,\RatQ}^{\otimes}$ admits a simplified description. We constuct a $G$-symmetric monoidal $G$-$\infty$-category
\begin{equation*}
    \CorCofMul(\Sp_{\RatQ},\otimes))\colon\Span(\FinG)\xrightarrow{}\Cat_{\infty},
\end{equation*}
which sends a coset $G/H$ to the $\infty$-category $\Fun(\Oo_{H}^{\simeq},\Sp_{\RatQ})$ of functors from the underlying groupoid of $\Oo_{H}$ to the $\infty$-category of rational spectra. The functoriality of $\CorCofMul(\Sp_{\RatQ},\otimes))$ on forward morphisms $G/H\amalg G/H\xrightarrow{}G/H$ recovers the pointwise symmetric monoidal structure induced by the smash product of rational spectra. We then proceed to construct a $G$-symmetric monoidal $G$-functor
\begin{equation*}
    \phi^{\otimes}_{\RatQ}\colon \PSp_{G,\RatQ}^{\otimes}\xrightarrow{}\CorCofMul(\Sp_{\RatQ},\otimes)),
\end{equation*}
which sends a rational $H$-spectrum $X$ to the functor 
\begin{equation*}
    \Oo_{H}^{\simeq}\xrightarrow{}\Sp_{\RatQ}, \quad H/K\mapsto \Phi^{K}(X),
\end{equation*}
given by the geometric fixed points. At the end of Section \ref{Section4} we show the following result.
\begin{introthm}[Theorem \ref{SMEquivalence}]\label{IntroTheoB}
    The $G$-symmetric monoidal $G$-functor
    \begin{equation*}
        \phi^{\otimes}_{\RatQ}\colon \PSp_{G,\RatQ}^{\otimes}\xrightarrow{}\CorCofMul(\Sp_{\RatQ},\otimes))
    \end{equation*}
    is an equivalence of $G$-symmetric monoidal $G$-$\infty$-categories.
\end{introthm}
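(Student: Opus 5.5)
To prove that $\phi^{\otimes}_{\RatQ}$ is an equivalence, I would reduce to a pointwise statement. A $G$-symmetric monoidal $G$-functor is a natural transformation between finite-product-preserving functors $\Span(\FinG)\to\Cat_{\infty}$. A natural transformation is an equivalence precisely when each of its components is an equivalence. Since both functors preserve finite products and every finite $G$-set is a coproduct of orbits, it suffices to check the components at the orbits $G/H$. On objects, the transformation is given by the functor
\begin{equation*}
    \Sp_{H,\RatQ}\xrightarrow{}\Fun(\Oo_{H}^{\simeq},\Sp_{\RatQ}),\quad X\mapsto \bigl(H/K\mapsto \Phi^{K}X\bigr).
\end{equation*}
Thus the theorem reduces to showing that this functor is an equivalence of $\infty$-categories for every subgroup $H\leq G$. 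Compatibility with the norms, restrictions and smash products is already built into the naturality of $\phi^{\otimes}_{\RatQ}$, so it does not need to be checked separately.

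To show the component at $G/H$ is an equivalence, I would use the rational splitting of $H$-spectra, which is the classical result of Greenlees--May and the underlying-category part of Wimmer's result. First, $\Sp_{H,\RatQ}$ splits as a product over conjugacy classes $(K)$ of its localizations at the idempotents $e_K$ of the rational Burnside ring $A(H)\otimes\RatQ$. Each factor is identified, via geometric fixed points, with $\Fun(B\textup{W}_{H}K,\Sp_{\RatQ})$. Since $\Oo_{H}^{\simeq}\simeq\coprod_{(K)}B\textup{W}_{H}K$, this gives an equivalence $\Sp_{H,\RatQ}\simeq\Fun(\Oo_H^{\simeq},\Sp_{\RatQ})$.

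Concretely, I would first show that the geometric fixed point functors are jointly conservative. Then I would show that the functor is fully faithful by checking it on the compact generators $\Sigma^\infty_+H/L\otimes\mathbb{S}_\RatQ$. The key input is the computation $\Phi^{K}(\Sigma^\infty_+H/L)=\Sigma^\infty_+(H/L)^K$, together with the fact that rational mapping spectra between these generators decompose via the idempotents as sums of $\textup{W}_HK$-equivariant maps of rational spectra. Essential surjectivity then follows because both sides are stable and compactly generated, and the image of the generators generates the target: the free $\textup{W}_HK$-module on a point, placed at $(K)$, is hit by $e_K\Sigma^\infty_+ H/K$.

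The main obstacle is making sure that the object-level identification agrees with the functor $\phi_{\RatQ}$ as it is actually constructed in Section \ref{Section4}. This construction probably goes through a universal property or a cofree/right Kan extension of geometric fixed points rather than the naive formula. I would first try to show that the construction is compatible with the family-wise description above. To do this, I would compare it with the standard model of $\Phi^K$ as $(\widetilde{E\mathcal{P}}\otimes -)^K$, and check that the resulting $\textup{W}_HK$-actions agree. Once the two descriptions of the component at $G/H$ are matched, the equivalence follows.
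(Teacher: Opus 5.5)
Your first step matches the paper: because $\fgt\colon\SMCatG\to\Cat_{G}$ is conservative, it suffices to show that the components at the orbits are equivalences. The gap is in the second step.

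\textbf{The gap.} Everything depends on knowing that the component of $\phi^{\otimes}_{\RatQ}$ at $G/H$ is the classical geometric fixed point diagram \emph{including the $\textup{W}_{H}K$-actions}. In the paper this component is defined as the adjunct of $\Phi^{G}$ under $l_{G/G}^{*}\dashv\CorCofMul-)$. You flag this identification as the main obstacle but give no argument for it.

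The pointwise statement $\ev_{\alpha}\circ\phi^{\otimes}_{G/H}\simeq\Phi^{K}$ is easy; it follows from $\Phi^{G}\textup{N}_{K}^{G}\simeq\Phi^{K}$. It is not enough, however. Giving each $\Phi^{K}X$ the trivial action produces a functor with the same pointwise values, and that functor is not an equivalence. Your fully-faithfulness check on the generators $\Sigma^{\infty}_{+}H/L\otimes\Sphere_{\RatQ}$ has the same problem: it needs to know what the functor does on morphisms, so it rests on the same missing identification.

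In the constructed functor, the value at $p\colon Y\to G/H$ is $\Phi^{G}((t_{Y})_{\otimes}p^{*}X)$. The Weyl action therefore comes from the functoriality of $\PSp_{G}^{\otimes}$ on automorphisms of the span $G/H\xleftarrow{p}Y\to G/G$, i.e.\ from the coherent functoriality of the norms. Matching this with the conjugation action on $(\widetilde{E\mathcal{P}}\otimes X)^{K}$ is a genuine coherence problem. Saying you would check that the actions agree does not address it.

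\textbf{The missing idea.} The paper uses the universal property of $\PSp_{G}$ (Theorem \ref{ParametrizedUnivPropPresSp}). A $G$-cocontinuous $G$-functor from $\PSp_{G}$ into a $G$-stable, $G$-presentable $G$-category is determined by where it sends $\Sphere$. The paper then establishes three facts:
\begin{enumerate}
\item $\Fun(\underline{\Oo}_{G}^{\simeq},\Sp)$ is $G$-stable and $G$-presentable (Proposition \ref{CorCofGStableGpresentable}).
\item $\fgt(\phi^{\otimes})$ is $G$-cocontinuous (Lemma \ref{GSMCompFunPreservesColimits}). Only the pointwise values $\Phi^{K}$ and a Beck--Chevalley check for induction are needed here.
\item $\fgt(\phi^{\otimes})$ sends $\Sphere$ to the unit $\const(\Sphere)$ (Lemma \ref{Componentwise}).
\end{enumerate}
Hence $\fgt(\phi^{\otimes})$ agrees with the $G$-functor $\phi$ assembled from Glasman's comparison functors (Proposition \ref{UnderlyingCompFuncIdentification}). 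After rationalization, $\phi$ is an equivalence objectwise by \cite[Theorem 4.2]{StratCat} (Theorem \ref{GEquivalence}). This route never computes the Weyl actions of $\phi^{\otimes}$ directly, which is exactly the step your proposal cannot carry out.

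Once the identification is in hand, your re-proof of the Greenlees--May splitting via compact generators would be fine. The paper simply cites Glasman for that part.
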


Theorem \ref{IntroTheoA} is then obtained using the simplified description of the $G$-symmetric monoidal $G$-$\infty$-category of rational $G$-spectra from Theorem \ref{IntroTheoB}.

\subsection{Outline}
In Section \ref{Section2}, we recall some of the necessary preliminaries. In Section \ref{Section3}, we begin by constructing a model for the $G$-$\infty$-category of rational $G$-spectra. This model is a part of a general construction associating to an $\infty$-category $\Cc$ a $G$-$\infty$-category $\CorCof\Cc)$. We establish some properties of this construction, and define a comparison functor $\phi_{\RatQ}\colon \PSp_{G,\RatQ}\xrightarrow{}\CorCof\Sp_{\RatQ})$. The comparison functor is a parametrized version of the comparison functor from \cite{StratCat}. At the end of this section, we show that $\phi_{\RatQ}$ is an equivalence of $G$-$\infty$-categories, by reducing to the nonparametrized setting of \cite[Theorem 4.2]{StratCat}. In Section \ref{Section4}, we start by associating to a symmetric monoidal $\infty$-category $(\Cc,\otimes)$ a $G$-symmetric monoidal $G$-$\infty$-category $\CorCofMul(\Cc,\otimes))$, which refines the construction of $\CorCof\Cc)$. We show that this assignment is a right adjoint in an adjunction between the $\infty$-categories of symmetric monoidal $\infty$-categories and $G$-symmetric monoidal $G$-$\infty$-categories. Combining this adjunction with the universal property of parametrized equivariant spectra, see \cite[Theorem 10.7]{Partial}, we lift $\phi_{\RatQ}$ to a $G$-symmetric monoidal comparison $G$-functor $\phi^{\otimes}_{\RatQ}\colon \PSp_{G,\RatQ}^{\otimes}\xrightarrow{}\CorCofMul(\Sp_{\RatQ},\otimes))$. The functor $\phi^{\otimes}_{\RatQ}$ is then automatically an equivalence, proving Theorem \ref{IntroTheoB}. In Section \ref{Section5}, we show that the normed algebras in the model of $\PSp_{G,\RatQ}^{\otimes}$ admit a simplified description, proving Theorem \ref{IntroTheoA}.

\subsection*{Acknowledgements}
I would like to  thank my PhD supervisor, Magdalena K\k{e}dziorek, for suggesting the project and for her support throughout. I would like to thank Julius Groenjes, Levan Iashvili, Gr\'egoire Marc, and Irakli Patchkoria for helpful conversations.
I would like to thank the Isaac Newton Institute for Mathematical Sciences, Cambridge,
for support and hospitality during the programme Equivariant Homotopy Theory in Context where
work on this paper was undertaken. This work was supported by EPSRC grant no EP/Z000580/1. The author was supported by NWO grant number OCENW.M.21.231.

\subsection{Notation and conventions}
Throughout the text, we fix a finite group $G$.

We fix a triple $\Uu\in \Vv \in \Ww$ of Grothendieck universes containing the set of natural numbers, and refer to their objects by small, large, and very large, respectively. We refer to $(\infty,1)$-categories simply as categories, and write $\Cat$ for the very large $(\infty,1)$-category of large $(\infty,1)$-categories, $\Spc$ for the large category of small groupoids, $\Spc_{G}$ for the category $\Fun(\OGo,\Spc)$, and $\Spc_{G,*}$ for $\Fun(\OGo,\Spc_{*})$. We refer to colimits indexed by small categories simply by colimits. In other cases, we will specify the size. We refer to large $\Uu$-presentable categories simply by presentable categories. We refer to $(\infty,2)$-categories simply as 2-categories.

Let $\Cc$ be a category. We will denote its finite coproduct completion by $\FCopC[\Cc]$, the category of finite sets $\FCopC[*]$ by $\FCopC$. We denote the orbit category of the finite group $G$ by $\OG$, and the category of finite $G$-sets by $\FinG \coloneqq \FCopC[\OG]$. We denote the underlying groupoid of $\Cc$ by $\Cc^{\simeq}$. Let $F\colon \Cc \xrightarrow{} \Dd$ be a functor, and let $d\in \Dd$. We denote the comma category $\Cc\times_{\Dd}\Dd_{/d}$ simply by $\Cc_{/d}$.

\section{Preliminaries}\label{Section2}

In this section, we review the main preliminaries. We will freely use the language of ($\infty$,1)-categories. Our main references are \cite{HTT} and \cite{HA}.

\subsection{Span categories}

Span categories will play a major role in the paper. We recall basic definitions and fix notation. Span $(\infty,1)$-categories were introduced in \cite{BarwickSpectral}, where they were called effective Burnside categories. We will use \cite{TwoVariable} as our main reference.

\begin{defi}{\cite[Definition 2.1]{TwoVariable}}
    An \textit{adequate triple} $(\Cc,\Cc_B,\Cc_F)$ consists of a category $\Cc$ together with two wide subcategories $\Cc_B$ and $\Cc_F$, such that for any morphism $b\colon X\xrightarrow{}Z$ in $\Cc_B$ and any morphism $f\colon Y\xrightarrow{}Z$ in $\Cc_F$, the pullback
    \[
    \begin{tikzcd}
        X\times_Z Y \arrow[r, "{b'}"] \arrow[d, "{f'}"] &Y \arrow[d, "{f}"]
        \\
        X \arrow[r, "{b}"] &Z
    \end{tikzcd}
    \]
    exists, and the projections $b'$ and $f'$ are morphisms in $\Cc_B$ and $\Cc_F$, respectively.
    
    A morphism of adequate triples $F\colon(\Cc,\Cc_B,\Cc_F) \xrightarrow{} (\Dd,\Dd_B,\Dd_F)$ is a functor $F\colon\Cc \xrightarrow{} \Dd$ such that $F(\Cc_B)\subseteq \Dd_B$, $F(\Cc_F)\subseteq \Dd_F$, and $F$ preserves pullbacks of morphisms in $\Cc_B$ along morphisms in $\Cc_F$. 
    
    We denote the category of adequate triples by $\AdTrip$.
\end{defi}

In \cite[Definition 2.12]{TwoVariable}, the authors define a functor
\begin{equation*}
    \Span \colon \AdTrip \xrightarrow{} \Cat,
\end{equation*}
which associates to each adequate triple its span category. Namely, the objects of $\Span (\Cc,\Cc_B,\Cc_F)$ are the same as objects of $\Cc$, and morphisms are given by spans

\[
\begin{tikzcd}[sep=tiny]
    &Z \arrow[dl, "{b}"'] \arrow[dr, "{f}"]
    \\
    X &&Y
\end{tikzcd}
\]
with $b \in \Cc_B$ and $f \in \Cc_F$. Composition is given by taking pullbacks.

Let $(\Cc,\Cc_B,\Cc_F)$ be an adequate triple. We denote the span category $\Span(\Cc,\Cc_B,\Cc_F)$ by $\Span_{B,F}(\Cc)$. If $\Cc$ admits pullbacks, we denote the span category $\Span_{\text{all},\text{all}}(\Cc)$ simply by $\Span(\Cc)$. It follows from \cite[Proposition 2.15]{TwoVariable} that for any wide subcategory $\Ee\subseteq\Cc$ the triples $(\Cc,\Cc^{\simeq},\Ee)$ and $(\Cc,\Ee,\Cc^{\simeq})$ are adequate. Moreover, there are equivalences $\Span_{\simeq,\Ee}(\Cc)\simeq \Ee$ and $\Span_{\Ee,\simeq}(\Cc)\simeq \Ee^{\op}$. We will often use these equivalences implicitly to identify $\Cc_{B}^{\op}$ and $\Cc_F$ with subcategories of the span category $\Span_{B,F}(\Cc)$.

\begin{rem}\label{SpanPresLimits}
    By \cite[Lemma 2.4, Theorem 2.18]{TwoVariable}, the category $\AdTrip$ admits limits, they are computed componentwise, and the functor $\Span\colon\AdTrip\xrightarrow{}\Cat$ preserves limits.
\end{rem}

\subsection{Mackey functors and \texorpdfstring{$G$}{G}-spectra}

Mackey functors will play a major role in Section 3, in addition, spectral Mackey functors will serve as our main model for $G$-spectra. We will briefly recall the notion of Mackey functors and how they relate to $G$-spectra.

\begin{defi}\cite[Definition 4.2]{BarwickSpectral}
    Let $\Cc$ be a category. We call $\Cc$ \textit{disjunctive} if
    \begin{enumerate}
        \item $\Cc$ admits pullbacks and finite coproducts,
        \item and for a finite family $(X_i)_{i\in I}$ of objects of $\Cc$, the functor
        \begin{equation*}
            \coprod\colon \prod_{i\in I}\Cc_{/X_{i}} \xrightarrow{} \Cc_{/\coprod_{i\in I}X_{i}} 
        \end{equation*}
        is an equivalence.
    \end{enumerate}
\end{defi}

In particular, for any disjunctive category $\Cc$, the triple $(\Cc,\Cc,\Cc)$ is adequate.

\begin{ex}
    The category of finite $G$-sets $\FCopC_{G}$ is disjunctive.
\end{ex}

\begin{prop}\cite[Proposition 4.3]{BarwickSpectral}
    Let $\Cc$ be a disjunctive category. The category $\Span(\Cc)$ is semiadditive.
\end{prop}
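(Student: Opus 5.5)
To show that $\Span(\Cc)$ is semiadditive, I would verify three things in turn: it has a zero object, it has finite products and coproducts, and the canonical comparison map from coproducts to products is an equivalence. The guiding idea is that spans are self-dual, $\Span(\Cc)\simeq\Span(\Cc)^{\op}$ by reversing spans, so it suffices to show that the coproduct $\coprod_i X_i$ of $\Cc$ is a coproduct in $\Span(\Cc)$. The same object is then a product, and I would finish by checking that the comparison map is the identity.

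\emph{Zero object.} I take the initial object $\emptyset$ of $\Cc$, the empty coproduct. Disjunctiveness for the empty family says that $\Cc_{/\emptyset}$ is equivalent to the terminal category, so any map $Z\to\emptyset$ forces $Z\simeq\emptyset$. Hence, for any $Y$, the space of spans $\emptyset\leftarrow Z\to Y$ is the space of spans with $Z\simeq\emptyset$, which is contractible. Likewise the spans $Y\leftarrow Z\to\emptyset$ form a contractible space. So $\emptyset$ is both initial and terminal in $\Span(\Cc)$.

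\emph{Coproducts.} For a finite family $(X_i)_{i\in I}$ with $X=\coprod_i X_i$, I would use the forward inclusions $\iota_i\colon X_i\to X$ as the structure maps and show that restriction along them gives an equivalence
\[
\Map_{\Span(\Cc)}(X,Y)\to\prod_i\Map_{\Span(\Cc)}(X_i,Y).
\]
The mapping space $\Map_{\Span(\Cc)}(X,Y)$ is the groupoid core of $\Cc_{/X}\times\Cc_{/Y}$, i.e.\ of spans $X\leftarrow Z\to Y$. Precomposing with $\iota_i$, viewed as the span $X_i\xleftarrow{=}X_i\to X$, amounts to pulling back $Z\to X$ along $\iota_i$ to get $Z_i = Z\times_X X_i$. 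Disjunctiveness gives $\Cc_{/X}\simeq\prod_i\Cc_{/X_i}$, with inverse given by coproduct, so $Z\simeq\coprod_i Z_i$. The same equivalence identifies a map $Z\to Y$ with a family of maps $Z_i\to Y$, since $\Cc_{/Y}$-data over a coproduct decomposes by the universal property of coproducts in $\Cc$. The main obstacle is making this identification coherent at the level of mapping spaces rather than just objects. I would handle it by writing the mapping space as $(\Cc_{/X}\times_{\Cc}\Cc_{/Y})^{\simeq}$ and applying the disjunctive equivalence fibrewise over $\Cc_{/X}$.

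\emph{Products and the comparison map.} By the duality $\Span(\Cc)\simeq\Span(\Cc)^{\op}$, $X$ with the backward maps $X\leftarrow X_i$ is a product. The comparison map $\coprod X_i\to\prod X_i$ has components $X_i\to X_j$ given by the composite of forward $\iota_i$ with backward $\iota_j$. This composite is computed by the pullback $X_i\times_X X_j$, which disjunctiveness identifies with $X_i$ if $i=j$ and with $\emptyset$ otherwise. So the component is the identity for $i=j$ and the zero map for $i\neq j$. The comparison map is therefore the identity of $X$, hence an equivalence, and $\Span(\Cc)$ is semiadditive.
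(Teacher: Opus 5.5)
Your proposal is correct. The paper gives no proof of its own here and simply quotes Barwick, and your route is essentially the standard proof in that reference: the strict initial object $\emptyset$ is a zero object; the disjunctive equivalence $\Cc_{/\coprod_i X_i}\simeq\prod_i\Cc_{/X_i}$ (whose inverse is pullback along the inclusions, being the right adjoint of $\coprod$) shows $\coprod_i X_i$ is a coproduct in $\Span(\Cc)$; self-duality makes it a product; and $X_i\times_X X_j\simeq\emptyset$ for $i\neq j$, $X_i\times_X X_i\simeq X_i$ make the comparison map the identity. The only slip is that the mapping space is the core of $\Cc_{/X}\times_{\Cc}\Cc_{/Y}$, not of $\Cc_{/X}\times\Cc_{/Y}$, which you state correctly a few lines later.
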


Let $\FunProd(\Aa,\Bb)$ be the full subcategory of $\Fun(\Aa,\Bb)$ spanned by finite product preserving functors.

\begin{defi}\cite[Definition 6.1]{BarwickSpectral}
    Let $\Cc$ be a disjunctive category and let $\Aa$ be a category admitting finite products. We define the category of \textit{$\Aa$-valued Mackey functors on $\Cc$} as
    \begin{equation*}
        \Mack_{\Cc}(\Aa)\coloneq \FunProd(\Span(\Cc),\Aa).
    \end{equation*}
\end{defi}

In the case $\Cc = \FCopC_{G}$, we will write $\Mack_{G}(\Aa)$ instead of $\Mack_{\FCopC_{G}}(\Aa)$. We use spectral Mackey functors as a model for $G$-spectra. We will denote $\Mack_{G}(\Sp)$ by $\Sp_{G}$. The equivalence between the category of spectral Mackey functors and the underlying $\infty$-category of orthogonal $G$-spectra was first shown in \cite{GuiMay}. A purely $\infty$-categorical model of spectral Mackey functors was developed in \cite{BarwickSpectral}.

The cartesian product in the category of finite $G$-sets extends to a symmetric monoidal structure on $\Span(\FCopC_{G})$. The category $\Sp_{G}$ admits a symmetric monoidal structure, which is given by the Day convolution product, with respect to the aforementioned monoidal structure on $\Span(\FCopC_{G})$ and the tensor product on spectra. We denote the monoidal unit by $\Sphere$. There exists a colimit preserving, symmetric monoidal functor
\begin{equation*}
    \Sigma^{\infty}\colon \Spc_{G,*} \xrightarrow{} \Sp_{G},
\end{equation*}
from the category of pointed $G$-spaces equipped with the smash product symmetric monoidal structure. We refer the reader to Appendix A of \cite{Descent} for further details. We will recall the universal property of $G$-spectra.

\begin{theo}\cite[Theorem A.2]{Descent}\label{UnivPropGSp}
    The suspension functor 
    \begin{equation*}
        \Sigma^{\infty}\colon \Spc_{G,*} \xrightarrow{} \Sp_{G}
    \end{equation*}
    is the initial functor among presentably symmetric monoidal functors from $\Spc_{G,*}$ that send representation spheres for all finite orthogonal $G$-representations to invertible objects.
\end{theo}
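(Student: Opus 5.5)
The plan is to deduce the universal property from the stabilization picture: the $\infty$-category $\Sp_{G}$ of spectral Mackey functors is the formal inversion of representation spheres in pointed $G$-spaces, taken inside presentably symmetric monoidal categories. I would proceed in three steps.

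First, establish that $\Sp_{G}\simeq \Spc_{G,*}[\{S^V\}^{-1}]$. Here the right-hand side is Robalo's formal inversion in $\CAlg(\PrL)$, computed as the colimit in $\PrL$ of the tower
\begin{equation*}
    \Spc_{G,*}\xrightarrow{S^{\rho}\otimes-}\Spc_{G,*}\xrightarrow{S^{\rho}\otimes-}\cdots,
\end{equation*}
where $\rho$ is the regular representation. Inverting $S^\rho$ suffices, because every finite orthogonal representation $V$ embeds into some $n\rho$, so $S^{V}\otimes S^{W}\simeq S^{n\rho}$ for a complement $W$, and hence $S^{V}$ is invertible once $S^{n\rho}$ is. Robalo's criterion also requires that the cyclic permutation on $S^{\rho}\otimes S^{\rho}\otimes S^{\rho}$ be homotopic to the identity. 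I would reduce this to the corresponding statement for permutations of $\rho^{\oplus 3}$: even permutation matrices lie in the identity component of $O(\rho^{\oplus 3})^{G}$, which is a product of unitary and orthogonal groups.

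Second, compare the colimit with Mackey functors. I would use the Guillou--May/Nardin identification of $\Sp_{G}=\Mack_{G}(\Sp)$ with the stabilization of $\Spc_{G,*}$ at representation spheres. Concretely, the suspension $\Sigma^{\infty}\colon \Spc_{G,*}\to\Sp_{G}$ is symmetric monoidal by the Day convolution construction recalled above, and it sends $S^{V}$ to an invertible object. This gives a symmetric monoidal left adjoint from the formal inversion to $\Sp_{G}$. To show it is an equivalence, I would check that it is fully faithful and essentially surjective on generators. The compact generators $\Sigma^{\infty}_{+}G/H\otimes S^{-n\rho}$ are hit, and the mapping spectra agree. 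This is where the tom Dieck splitting, equivalently the identification of the Burnside category with stable orbit maps, enters: it identifies the effective Burnside category with $\Span(\FinG)$.

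Third, derive the initiality statement. For any presentably symmetric monoidal $\Dd$, the universal property of the formal inversion identifies symmetric monoidal left adjoints $\Sp_{G}\to\Dd$ with those left adjoints $\Spc_{G,*}\to\Dd$ that send $S^{\rho}$, equivalently all $S^{V}$, to invertible objects. Precomposition with $\Sigma^{\infty}$ realises this identification.

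The main obstacle is the second step: matching the $\infty$-categorical formal inversion with the spectral Mackey functor model. This amounts to Nardin's stabilization theorem combined with Barwick's comparison, and in practice I would cite it rather than reprove it. That is essentially why the paper quotes \cite[Theorem A.2]{Descent} directly.
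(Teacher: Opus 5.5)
The paper gives no proof of this statement. It is imported verbatim from \cite{Descent} and used as a black box, so there is no argument in the paper for yours to diverge from. Your outline is the standard argument behind that citation, and it is correct as a sketch:
\begin{enumerate}
\item Form Robalo's formal inversion in $\CAlg(\PrL)$.
\item Reduce to the single object $S^{\rho}$, since every $S^{V}$ is a tensor factor of some $S^{n\rho}$.
\item Use that the cyclic permutation of $S^{3\rho}$ is $G$-homotopic to the identity to compute the inversion as the $S^{\rho}$-telescope.
\item Compare the telescope with $\Mack_{G}(\Sp)$ on compact generators.
\item Read off initiality from the universal property of the inversion.
\end{enumerate}

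Your sketch is no more self-contained than the paper on the substantive point. Two facts carry the nonformal content: that $\Sigma^{\infty}S^{V}$ is invertible in $\Mack_{G}(\Sp)$ with Day convolution, and that the mapping spectra between the generators $\Sigma^{\infty}_{+}G/H\otimes S^{-n\rho}$ agree (tom Dieck splitting, i.e.\ the Guillou--May and Nardin comparisons). You cite these as well. In particular, the invertibility of $\Sigma^{\infty}S^{V}$ belongs to this cited input; do not present it as a consequence of the Day convolution construction, as your second step does.

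Two small corrections.
\begin{enumerate}
\item The group $O(3\rho)^{G}$ is a product of orthogonal, unitary and symplectic groups, according to the real, complex and quaternionic irreducibles. You do not need its structure anyway. The cyclic permutation is $\id_{\rho}\otimes\sigma$ with $\sigma\in SO(3)$. A path from $\sigma$ to the identity in the connected group $SO(3)$ gives a path of $G$-equivariant isometries, hence a based $G$-homotopy on $S^{3\rho}$.
\item In your last step, the correspondence is with \emph{symmetric monoidal} left adjoints $\Spc_{G,*}\to\Dd$ sending $S^{\rho}$ to an invertible object, not with arbitrary left adjoints.
\end{enumerate}
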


\begin{ex}
    Let $H$ be a subgroup of $G$. The $H$-geometric fixed point functor 
    \begin{equation*}
        \Phi^{H}\colon \Sp_{G} \xrightarrow{} \Sp 
    \end{equation*}
    is defined, using Theorem \ref{UnivPropGSp}, as the unique presentably symmetric monoidal functor making the square
    \[
        \begin{tikzcd}
            \Spc_{G,*} \arrow[d,"{(-)^{H}}"'] \arrow[r,"{\Sigma^{\infty}}"] &\Sp_{G} \arrow[d,"{\Phi^{H}}"']
            \\
            \Spc_{*} \arrow[r,"{\Sigma^{\infty}}"] &\Sp,
        \end{tikzcd}
    \]
    commute.
\end{ex}

\subsection{Rational \texorpdfstring{$G$}{G}-spectra}

Rational $G$-spectra are the central objects of this paper, as they provide a simplified setting in which we can study normed algebras. While the results in this subsection are standard, we include them for completeness and to fix notation. The exposition is inspired by the ongoing book project \cite{BookProjStable}. 

The category of $G$-spectra is additive. For a spectrum $X$, we denote by $n\colon X\xrightarrow{} X$ the composite
\begin{equation*}
    X \xrightarrow{\Delta} \bigoplus_{n} X \xrightarrow{\nabla} X.
\end{equation*}

\begin{defi}
    Let $\{ p_{1},p_{2},... \}$ be the collection of all prime numbers. The \textit{rational sphere $G$-spectrum $\Sphere_{\RatQ}$} is defined as the following colimit
    \begin{equation*}
        \Sphere_{\RatQ} \coloneq \colim (\Sphere \xrightarrow{p_{1}} \Sphere \xrightarrow{p_{1}p_{2}} \Sphere \xrightarrow{p_{1}p_{2}p_{3}} ...),
    \end{equation*}
    in $\Sp_{G}$. It comes equipped with a morphism $\eta\colon \Sphere \xrightarrow{} \Sphere_{\RatQ}$, defined as the inclusion of the first component of the colimit diagram.
\end{defi}

Next, we recall the definition of the equivariant homotopy groups of Mackey functors. Postcomposition with $\Omega^{\infty}\colon \Sp \xrightarrow{} \Spc$ induces the functor
\begin{equation*}
    \Omega^{\infty}\circ -\colon \Sp_{G} \xrightarrow{} \Mack_{G}(\Spc).
\end{equation*}
In \cite[Proposition 7.4]{BarwickSpectral}, it is shown that $\Omega^{\infty}\circ -$ admits a left adjoint
\begin{equation*}
    \Stab \colon \Mack_{G}(\Spc) \xrightarrow{} \Sp_{G}.
\end{equation*}
Let $\Xx\colon \Span(\FCopC_{G})\xrightarrow{} \Sp$ be a Mackey functor, $H$ a subgroup of $G$, and $n\in \IntZ$. One defines the $n$th $H$-homotopy group of $\Xx$ as 
\begin{equation*}
    \pi_{n}^{H}(\Xx)\coloneq \pi_{0}\Hom_{\Sp_{G}}(\Stab(\Hom_{\Span(\FCopC_{G})}(G/H,-))[n],\Xx),
\end{equation*}
where $\Yy[n]$ denotes the n-fold suspension $\Sigma^{n}\Yy$. This construction is functorial in $\Xx$. A priori, this functor lands in the category of sets. Using the fact that the category $\Sp_{G}$ is additive and that functors $\pi_{n}^{H}(-)$ preserve finite products, one obtains a unique lift of these functors to abelian groups
\begin{equation*}
    \pi_{n}^{H}(-)\colon \Sp_{G} \xrightarrow{} \Ab.
\end{equation*}
The last statement is a consequence of \cite[Remark 2.11]{MultInfLoopSpc}. We obtain a chain of natural equivalences
\begin{align*}
    \pi_{n}^{H}(\Xx) & = \pi_{0}\Hom_{\Sp_{G}}(\Stab(\Hom_{\Span(\FCopC_{G})}(G/H,-))[n],\Xx)
    \\
    & \simeq \pi_{0}\Hom_{\Mack_{G}(\Spc)}(\Hom_{\Span(\FCopC_{G})}(G/H,-),\Omega^{\infty}(\Xx[-n]))
    \\
    & \simeq \pi_{0}(\Omega^{\infty}(\Xx[-n])(G/H))
    \\
    & \simeq \pi_{0}(\Omega^{\infty}(\Xx(G/H)[-n]))
    \\
    & \simeq \pi_n(\Xx(G/H)).
\end{align*}
The first equivalence comes from the adjunction $\Stab \dashv \Omega^{\infty}\circ-$, the second one is the Yoneda Lemma, and the third one follows from the fact that limits and colimits in spectral Mackey functors are computed componentwise. 

The collection of functors $\{\pi_{n}^{H}(-)\}_{H\leq G,n\in \IntZ}$ is jointly conservative. This can be seen directly from the fact that equivalences of spectral Mackey functors are componentwise and that $\{\pi_n\}_{n\in \IntZ}$ are jointly conservative on the category of spectra. In addition, the homotopy groups $\pi_{n}^{H}(-)$ preserve sifted colimits by \cite[Proposition 1.4.3.9]{HA}, so we get
\begin{align*}
    \pi_{n}^{H}(\Xx\otimes \Sphere_{\RatQ}) &\simeq \pi_{n}^{H}(\colim(\Xx \xrightarrow{p_{1}} \Xx \xrightarrow{p_{1}p_{2}} \Xx \xrightarrow{p_{1}p_{2}p_{3}}...))
    \\
    & \simeq \colim(\pi_{n}^{H}(\Xx) \xrightarrow{p_{1}} \pi_{n}^{H}(\Xx) \xrightarrow{p_{1}p_{2}} \pi_{n}^{H}(\Xx) \xrightarrow{p_{1}p_{2}p_{3}}...)
    \\
    & \simeq \pi_{n}^{H}(\Xx)\otimes \RatQ.
\end{align*}

\begin{rem}\label{RatLoc}
    Using the equivalence 
    \begin{equation*}
        \pi_{n}^{H}(\Xx\otimes \Sphere_{\RatQ})\simeq \pi_{n}^{H}(\Xx)\otimes \RatQ,
    \end{equation*}
    one can deduce the following statements by reducing to abelian groups:
    \begin{enumerate}
        \item The morphism 
        \begin{equation*}
            \id_{\Sphere_{\RatQ}} \otimes \eta \colon \Sphere_{\RatQ} \otimes \Sphere \xrightarrow{} \Sphere_{\RatQ}\otimes \Sphere_{\RatQ}
        \end{equation*}
            is an equivalence.
        \item For an object $\Xx\in \Sp_{G}$, the morphism
        \begin{equation*}
            \id_{\Xx} \otimes \eta \colon \Xx\otimes \Sphere \xrightarrow{} \Xx\otimes \Sphere_{\RatQ} 
        \end{equation*}
        is an equivalence if and only if for all natural numbers $n$, the morphism $n\colon \Xx\xrightarrow{} \Xx$ is an equivalence.
        \item In the case of $\Sp$, there is an equivalence $\Sphere_{\RatQ}\simeq H\RatQ$. In particular, one additionally obtains an equivalence with the 0th Morava $K$-theory, $\Sphere_{\RatQ}\simeq K(0)$.
    \end{enumerate}
\end{rem}

Part (1) of Remark \ref{RatLoc}, combined with \cite[Proposition 5.2.7.4]{HTT} and \cite[Proposition 2.2.1.9]{HA}, implies that the essential image of $-\otimes \Sphere_{\RatQ}$, which we denote by $\Sp_{G,\RatQ}$, is a symmetric monoidal category, and that 
\begin{equation*}
    L_{\RatQ} \coloneq -\otimes \Sphere_{\RatQ} \colon \Sp_{G} \xrightarrow{} \Sp_{G,\RatQ}
\end{equation*}
is a symmetric monoidal localization functor. Then part (2) of Remark \ref{RatLoc} implies that $\Sp_{G,\RatQ}$ is the full subcategory of $\Sp_{G}$ on objects $\Xx$ for which all morphisms $n\colon \Xx\xrightarrow{}\Xx$ are invertible.

\begin{rem}\label{RatvsMackRat}
    Consider the adjunction 
    \begin{equation*}
        L_{\RatQ}\colon \Sp \rightleftarrows \Sp_{\RatQ} \colon \mkern-3mu \incl.
    \end{equation*}
    Since the categories $\Span(\FinG)$, $\Sp$, and $\Sp_{\RatQ}$ are semiadditive, postcomposition induces an adjunction 
    \begin{equation*}
        L_{\RatQ}\colon\Mack_{G}(\Sp) \rightleftarrows \Mack_{G}(\Sp_{\RatQ})\colon\mkern-3mu \incl,
    \end{equation*}
    with a fully faithful right adjoint. It follows from part (2) of Remark \ref{RatLoc} that the right adjoint is given by the inclusion of $L_{\RatQ}$-local objects. Hence, we obtain an equivalence 
    \begin{equation*}
        \Mack_{G}(\Sp)_{\RatQ}\simeq \Mack_{G}(\Sp_{\RatQ}).
    \end{equation*}
\end{rem}

\subsection{Parametrized categories}

This paper studies normed algebras and therefore makes use of parametrized $\infty$-category theory. We briefly recall the definitions and results that will be needed. The theory was first developed in \cite{Expose}. A treatment in terms of categories internal to an $\infty$-topos was developed by Martini and Wolf in \cite{martini2021yoneda, martini2021colimits}. Our primary references will be \cite{ParStab}, and \cite{Partial}.

\begin{defi}{\cite[Definition 2.1.1]{ParStab}}
    Let $\Tt$ be a small category. A \textit{$\Tt$-category} is a functor $\underline{\Cc}\colon\Tt^{\op} \xrightarrow{} \Cat$.
    The category of $\Tt$-categories is defined to be the functor category $\Cat_{\Tt}\coloneq \Fun(\Tt^{\op},\Cat)$.
\end{defi}

\begin{notation}
    When $\Tt=\OG$, we refer to $\OG$-categories as $G$-categories and write $\Cat_{G}$ for $\Cat_{\Tt}$.
\end{notation}

Throughout the rest of the section, we assume that $\Tt$ is a small category.

\begin{defi}
    Let $\underline{\Cc}$ be a $\Tt$-category. we define \textit{underlying category of $\underline{\Cc}$} to be the limit of the functor $\underline{\Cc}\colon \Tt^{\op}\xrightarrow{} \Cat$. We denote it by
    \begin{equation*}
        \Gamma(\underline{\Cc})\coloneq \lim_{\Tt^{\op}}\underline{\Cc}.
    \end{equation*}
\end{defi}

\begin{rem}\label{AltDefTCat}
    The inclusion of a small category $\Tt$ into its finite coproduct completion $\FCopC(\Tt)$ induces an equivalence of categories 
    \begin{equation*}
        \Cat_{\Tt}\simeq\Fun^{\times}(\FCopC(\Tt)^{\op},\Cat),
    \end{equation*}
    between the category of $\Tt$-categories and the category of finite product preserving functors $\FCopC(\Tt)^{\op}\xrightarrow{}\Cat$.
    \\
    There is another equivalent description of $\Tt$-categories using cocartesian fibrations. The straightening-unstraightening equivalence \cite[Theorem 3.2.0.1]{HTT} provides an equivalence
    \begin{equation*}
        \Cat_{\Tt}\simeq \Cat^{\text{cc}}_{/\Tt^{\op}},
    \end{equation*}
    between the category of $\Tt$-categories and the subcategory of $\Cat_{/\Tt^{\op}}$ on cocartesian fibrations and morphisms preserving cocartesian maps.
    
    Finally, $\Cat_{\Tt}$ is equivalent to the category of categories internal to the $\infty$-topos $\Pp(\Tt) \coloneq \Fun(\Tt^{\op},\Spc)$. In more detail
    \begin{equation*}
        \Cat_{\Tt} \simeq \Fun ^{R}(\Pp(\Tt)^{\op},\Cat) \simeq \Cat(\Pp(\Tt)),
    \end{equation*}
    where $\Fun^R(\Aa,\Bb)$ denotes the full subcategory of $\Fun(\Aa,\Bb)$ on limit preserving functors, see Section 3.5 in \cite{martini2021yoneda} for more details on this equivalence.
    
    We will freely move between the above descriptions.
\end{rem}

\begin{rem}{\cite[Construction 5.3.1]{EnvAlgPat}}{\cite[Remark 2.13]{NormsEq}}\label{2-Cats}
    Let $\Cc$ be a small category, and let $\Ee \subseteq \Cc$ be a wide subcategory. We define $\Cat^{\Ee\text{-cc}}_{/\Cc}$ to be the non-full subcategory of the slice $\Cat_{/\Cc}$, whose objects are those functors that admit cocartesian lifts of morphisms in $\Ee$, and whose morphisms preserve cocartesian morphisms over $\Ee$. Then the functor $-\times \Cc \colon \Cat \xrightarrow{} \Cat^{\Ee\text{-cc}}_{/\Cc}$ preserves finite products, exhibiting $\Cat^{\Ee\text{-cc}}_{/\Cc}$ as a $\Cat$-module. For each $p\colon\Dd \xrightarrow{} \Cc$ in $\Cat^{\Ee\text{-cc}}_{/\Cc}$, the functor 
    \begin{equation*}
        (-\times\Cc)\times_{\Cc}\Dd\colon\Cat\xrightarrow{} \Cat^{\Ee\text{-cc}}_{/\Cc}
    \end{equation*}
    admits a right adjoint. 
    
    It then follows from \cite[Example C.1.11]{6-fun} that $\Cat^{\Ee\text{-cc}}_{/\Cc}$ enhances to a 2-category. Example C.2.2 in \cite{6-fun}, states that 2-functors correspond to lax-$\Cat$-linear functors. Furthermore, by \cite[Corollary 7.3.2.7]{HA}, an adjunction whose left adjoint is $\Cat$-linear upgrades to an adjunction of 2-categories.
\end{rem}

\begin{ex}
     By Remark \ref{2-Cats} and description from Remark \ref{AltDefTCat}, $\Cat_{\Tt}$ admits an enhancement to a 2-category. We denote its $\Cat$-enrichment by
     \begin{equation*}
         \Fun_{\Tt}(-,-)\colon\Cat_{\Tt}^{\op}\times \Cat_{\Tt} \xrightarrow{} \Cat.
     \end{equation*}
\end{ex}

\begin{cons}\label{evidcons}
    Let $\underline{\Cc}$ be a $\Tt$-category, and let $A$ be an object of $\Tt$. We denote the representable functor $\Hom_{\Tt}(-,A)\colon \Tt^{\op}\xrightarrow{} \Cat$ by $\underline{A}$. One obtains a functor
    \begin{equation*}
        \ev_{\id_{A}}\colon \Fun_{\Tt}(\underline{A},\underline{\Cc}) \xrightarrow{} \underline{\Cc}(A),
    \end{equation*}
    given by evaluating on $\id_{A}\in \underline{A}(A)$. More precisely, using the counit $\ev \colon \const(\Fun_{\Tt}(\underline{A},\underline{\Cc})) \times \underline{A} \xrightarrow{} \underline{\Cc}$, we define $\ev_{\id_{A}}$ to be the following composite
    \begin{equation*}
        \Fun_{\Tt}(\underline{A},\underline{\Cc}) \xrightarrow{(\id,\const_{\id_{A})}} \Fun_{\Tt}(\underline{A},\underline{\Cc})\times \underline{A}(A) \xrightarrow{\ev(A)} \underline{\Cc}(A).
    \end{equation*}
\end{cons}

There is an analogue of the Yoneda Lemma for $\Tt$-categories.

\begin{lem}\cite[Lemma 2.2.7]{ParStab}\label{Yoneda}
    Let $\underline{\Cc}$ be a $\Tt$-category, and let $A$ be an object of $\Tt$. The functor
    \begin{equation*}
        \ev_{\id_{A}}\colon \Fun_{\Tt}(\underline{A},\underline{\Cc}) \xrightarrow{} \underline{\Cc}(A)
    \end{equation*}
    is an equivalence of categories.
\end{lem}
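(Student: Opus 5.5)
The plan is to reduce the statement to the ordinary Yoneda lemma, applied objectwise, by passing through the description of $\Tt$-categories as cocartesian fibrations (equivalently, categories internal to $\Pp(\Tt)$) from Remark \ref{AltDefTCat}. First, I will compute the mapping category $\Fun_{\Tt}(\underline{A},\underline{\Cc})$ explicitly. The representable $\underline{A}=\Hom_{\Tt}(-,A)$ takes values in sets, hence in spaces. Its unstraightening is the right fibration $(\Tt_{/A})^{\op}\to\Tt^{\op}$. Let $p\colon\int\underline{\Cc}\to\Tt^{\op}$ be the cocartesian unstraightening of $\underline{\Cc}$. By the construction of the $\Cat$-enrichment in Remark \ref{2-Cats}, $\Fun_{\Tt}(\underline{A},\underline{\Cc})$ is the full subcategory of $\Fun_{/\Tt^{\op}}((\Tt_{/A})^{\op},\int\underline{\Cc})$ spanned by sections that send all morphisms to $p$-cocartesian morphisms.

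Second, I will observe that $(\Tt_{/A})^{\op}$ has an initial object, namely $\id_A$. A section that preserves cocartesian morphisms is therefore a $p$-left Kan extension of its restriction to $\{\id_A\}$: every morphism in $(\Tt_{/A})^{\op}$ out of the initial object is sent to a cocartesian edge, and cocartesian edges are exactly the relative left Kan extension condition from a point. By \cite[Proposition 4.3.2.15]{HTT}, restriction along $\{\id_A\}\hookrightarrow(\Tt_{/A})^{\op}$ is then a trivial fibration from the category of such left Kan extensions onto the category of lifts of $\{\id_A\}\to\Tt^{\op}$. That target is the fiber $p^{-1}(A)\simeq\underline{\Cc}(A)$. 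Lifts always exist, since every section of a cocartesian fibration over a category with an initial object extends by choosing cocartesian lifts.

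Third, I will identify this restriction functor with $\ev_{\id_A}$ of Construction \ref{evidcons}. Under unstraightening, the counit $\ev$ corresponds to evaluation of sections. Restricting along $\const_{\id_A}$ therefore gives evaluation at the point $\id_A$, which is exactly the restriction used in the second step.

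I expect this third step to be the main obstacle. The abstract $\Cat$-enrichment coming from \cite[Example C.1.11]{6-fun} has to be matched with the concrete section-category model, and the counit has to be tracked through that matching. To do this, I would check that the section-category model satisfies the universal property $\Hom_{\Cat}(K,\Fun_{\Tt}(\underline{B},\underline{\Cc}))\simeq\Hom_{\Cat_{\Tt}}(\const(K)\times\underline{B},\underline{\Cc})$. This follows because a functor over $\Tt^{\op}$ from $K\times\int\underline{B}$ to $\int\underline{\Cc}$ that preserves cocartesian edges is the same as a map $K\to$ cocartesian sections. By uniqueness of right adjoints, the two models then agree, and the counits correspond.

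As an alternative, one could argue entirely in $\Fun(\Tt^{\op},\Cat)$. There the enrichment is the end $\int_{T}\Fun(\underline{A}(T),\underline{\Cc}(T))$, and the ordinary enriched Yoneda lemma for $\Cat$-valued presheaves would give the result directly.
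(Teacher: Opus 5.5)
The paper gives no proof of this lemma; it imports it from \cite[Lemma 2.2.7]{ParStab}. Your argument is therefore an independent, self-contained proof, and it is correct. Two details need tightening.

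First, $(\Tt_{/A})^{\op}\to\Tt^{\op}$ is a \emph{left} fibration, the opposite of the right fibration $\Tt_{/A}\to\Tt$. That is what the cocartesian unstraightening $\Cat_{\Tt}\simeq\Cat^{\text{cc}}_{/\Tt^{\op}}$ produces, and it is why every morphism of the source is cocartesian.

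Second, being a $p$-left Kan extension of the restriction to $\{\id_A\}$ only controls morphisms \emph{out of} $\id_A$. To identify the category of such extensions with all of $\Fun_{\Tt}(\underline{A},\underline{\Cc})$, you need the cancellation property of cocartesian edges \cite[\S2.4.1]{HTT}: if $f$ and $g\circ f$ are $p$-cocartesian, then so is $g$. Apply it to $\id_A\to x\to y$. The same point is needed to know that the extension built by choosing cocartesian lifts lands in $\Fun_{\Tt}(\underline{A},\underline{\Cc})$.

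Your third step is less of an obstacle than you expect. Remark \ref{2-Cats} defines the enrichment of $\Cat_{\Tt}$ through $\Cat^{\text{cc}}_{/\Tt^{\op}}$, as the right adjoint of $K\mapsto K\times\Dd$. So your universal-property check is essentially the definition, and the counit is evaluation of functors.

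On what the routes buy: yours gives an explicit model of $\Fun_{\Tt}(\underline{A},\underline{\Cc})$ as cocartesian-edge-preserving functors. Your first two steps can be compressed into the description of limits in $\Cat$ as categories of cocartesian sections \cite[\S3.3.3]{HTT}, together with the fact that a limit over $(\Tt_{/A})^{\op}$ is its value at the initial object $\id_A$.

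A shorter argument never models $\Fun_{\Tt}$ at all. The $\Tt$-category $\const(K)\times\underline{A}$ is the left Kan extension of $K$ along $A\colon *\to\Tt^{\op}$, and the unit $K\to K\times\underline{A}(A)$ is the inclusion at $\id_A$. Hence $\Hom_{\Cat_{\Tt}}(\const(K)\times\underline{A},\underline{\Cc})\simeq\Hom_{\Cat}(K,\underline{\Cc}(A))$, naturally in $K$. Take $K=\Fun_{\Tt}(\underline{A},\underline{\Cc})$ and follow the identity through this chain: it goes to the counit, and then to exactly the map $\ev_{\id_A}$ of Construction \ref{evidcons}. The ordinary Yoneda lemma in $\Cat$ then finishes the proof.
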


\begin{cons}{\cite[Example 2.1.11]{ParStab}}\label{cofree}
    We consider the cocartesian fibration 
    \begin{equation*}
        \ev_1\colon\Fun([1],\Tt) \xrightarrow{} \Tt.
    \end{equation*}
    After straightening, we obtain a slice functor
    \begin{equation*}
        \Tt_{/-}\colon\Tt \xrightarrow{} \Cat,
    \end{equation*}
    with postcomposition functoriality. We then define a functor
    \begin{equation*}
        \Cofree \bullet) \colon \Cat \xrightarrow{\Fun(-,\bullet)} \Fun(\Cat^{\op},\Cat) \xrightarrow{-\circ (\Tt_{/-})^{\op}} \Fun(\Tt^{\op},\Cat)=\Cat_{\Tt},
    \end{equation*}
    and call it the $\Tt$-category associated to $\Cc$. This functor sends a category $\Cc$ to the functor 
    \begin{equation*}
        \Cofree \Cc)=\Fun((\Tt_{/-})^{\op},\Cc)\colon\Tt^{\op}\xrightarrow{} \Cat.
    \end{equation*}
\end{cons}

\begin{ex}
    Using Construction \ref{cofree} one obtains following $\Tt$-categories:
    \begin{itemize}
        \item[-] The $\Tt$-category of small $\Tt$-spaces $\PSpc_{\Tt} \coloneq \Cofree \Spc)$.
        \item[-] The $\Tt$-category of small pointed $\Tt$-spaces $\PSpc_{\Tt,*} \coloneq \Cofree\Spc)$.
        \item[-] The $\Tt$-category of small $\Tt$-categories $\Pcat_{\Tt} \coloneq \Cofree\textup{cat})$.
    \end{itemize}
\end{ex}

\begin{rem}\label{ParAdj}
    Since $\Cat_{\Tt}$ is a 2-category, one obtains a natural notion of $\Tt$-adjunction. Let $L\colon\underline{\Cc} \xrightarrow{} \underline{\Dd}$ be a $\Tt$-functor. By combining \cite[Proposition 3.2.9]{martini2021colimits} and \cite[Corollary 3.2.11]{martini2021colimits}, $L$ admits a $\Tt$-right adjoint if and only if the following conditions hold:
    \begin{enumerate}
        \item for each $A\in\Tt$, $L_A\colon \underline{\Cc}(A) \xrightarrow{} \underline{\Dd}(A)$ admits a right adjoint $R_A$,
        \item and for each morphism $f\colon A\xrightarrow{} B$ in $\Tt$, the mate of the natural equivalence 
        \begin{equation*}
            L_A \circ \underline{\Cc}(f) \simeq \underline{\Dd}(f) \circ L_B,
        \end{equation*}
        defined as the composite
        \begin{equation*}
            \BC_f\colon \underline{\Cc}(f) \circ R_B \xrightarrow{\eta} R_A\circ L_A\circ\underline{\Cc}(f) \circ R_B \simeq R_A\circ \underline{\Dd}(f)\circ L_B \circ R_B \xrightarrow{\epsilon} R_A\circ\underline{\Dd}(f)
        \end{equation*}
        is an equivalence.
    \end{enumerate}
    In this case, the $\Tt$-right adjoint $R$ is given on an object $A$ by $R_A$, and for a morphism $f\colon A\xrightarrow{} B$, the mate $\BC_f$ makes the naturality square commute. There is a similar description of $\Tt$-left adjoints.
\end{rem}

\begin{rem}\label{ParColim}
    The category $\Cat_{\Tt}$ is cartesian closed. Using the inner hom and the notion of $\Tt$-adjunction, one defines $\Tt$-limits and $\Tt$-colimits. Then, it is natural to consider the notion of a $\Tt$-cocomplete $\Tt$-category. By combining \cite[Corollary 5.4.7]{martini2021colimits} and \cite[Remark 5.4.9]{martini2021colimits}, one obtains a simplified characterization of $\Tt$-cocompleteness. A $\Tt$-category $\underline{\Cc}$ is $\Tt$-cocomplete if and only if the following conditions hold:
    \begin{enumerate}
        \item for each $A\in \Tt$, the category $\underline{\Cc}(A)$ is cocomplete,
        \item for each morphism $f\colon A \xrightarrow{} B$ in $\Tt$, the functor $\underline{\Cc}(f)$ preserves small colimits,
        \item for each morphism $f\colon A \xrightarrow{} B$ in $\Tt$, the functor $\underline{\Cc}(f)$ admits a left adjoint $f_{!}$,
        \item and for each pullback square
        \[
        \begin{tikzcd}
            \Xx \arrow[dr, phantom, "\lrcorner", very near start] \arrow[r, "{\alpha'}"] \arrow[d, "{\beta'}"'] & \underline{B} \arrow[d, "{\beta}"']
            \\
            \underline{A} \arrow[r, "{\alpha}"] & \underline{C}
        \end{tikzcd}
        \]
        in $\Pp(\Tt)$, with $A$, $B$, $C$ objects of $\Tt$, the mate
        \begin{equation*}
            \BC_{(\alpha,\beta)} \colon \alpha'_{!} \circ \underline{\Cc}(\beta') \xrightarrow{} \underline{\Cc}(\beta) \circ \alpha_{!}
        \end{equation*}
        is an equivalence.
    \end{enumerate}
\end{rem}

\begin{defi}{\cite[Definition 2.4.1, Definition 2.4.3]{ParStab}}\label{ParStab}
    A $\Tt$-category $\underline{\Cc}$ is called \textit{$\Tt$-presentable} if $\underline{\Cc} \colon \Tt^{\op} \xrightarrow{} \Cat$ factors through $\PrL$ and $\underline{\Cc}$ is $\Tt$-cocomplete. We denote by $\PrLT$ the subcategory of $\Cat_{\Tt}$ spanned by $\Tt$-presentable categories and $\Tt$-left adjoints.
\end{defi}

There is an analogue of the Adjoint Functor Theorem (\cite[Corollary 5.5.2.9]{HTT}) in the context of $\Tt$-categories.

\begin{prop}\cite[Proposition 2.4.8]{ParStab}\label{PAdjFT}
    If $\underline{\Cc}$ and $\underline{\Dd}$ are large $\Tt$-presentable $\Tt$-categories, a $\Tt$-functor $F\colon \underline{\Cc} \xrightarrow{} \underline{\Dd}$ preserves $\Tt$-colimits if and only if it is a $\Tt$-left adjoint.
\end{prop}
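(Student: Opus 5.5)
One direction is formal. If $F$ is a $\Tt$-left adjoint with $\Tt$-right adjoint $G$, then $F$ preserves $\Tt$-colimits. I would prove this the same way as the unparametrized statement, working inside the 2-category $\Cat_{\Tt}$. A $\Tt$-colimit of a diagram indexed by $\underline{K}$ is by definition a $\Tt$-left adjoint to the constant functor $\underline{\Cc}\to\underline{\Fun}_{\Tt}(\underline{K},\underline{\Cc})$. Postcomposition with $F\dashv G$ gives a $\Tt$-adjunction on the parametrized functor categories, since 2-functoriality of the internal hom preserves adjunctions. Composites of left adjoints are left adjoints, and the square with the constant functors commutes. Uniqueness of adjoints then gives $F\circ\colim\simeq\colim\circ F_*$. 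This step uses only the 2-categorical structure from Remark \ref{2-Cats}.

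For the converse, I would use the pointwise criterion of Remark \ref{ParAdj}. Suppose $F$ preserves $\Tt$-colimits. Then each $F_A\colon\underline{\Cc}(A)\to\underline{\Dd}(A)$ preserves small colimits, because fiberwise colimits are special $\Tt$-colimits indexed by constant diagrams. By Definition \ref{ParStab}, both $\underline{\Cc}(A)$ and $\underline{\Dd}(A)$ are presentable, so the Adjoint Functor Theorem \cite[Corollary 5.5.2.9]{HTT} gives right adjoints $R_A$. This is condition (1) of Remark \ref{ParAdj}.

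It remains to show that for each $f\colon A\to B$ the Beck--Chevalley mate $\BC_f\colon\underline{\Cc}(f)\circ R_B\to R_A\circ\underline{\Dd}(f)$ is an equivalence. I would pass to left adjoints. By Remark \ref{ParColim}(3), $\underline{\Cc}(f)$ and $\underline{\Dd}(f)$ have left adjoints $f_!$, and $\BC_f$ is the mate of the transformation $f_!\circ F_A\to F_B\circ f_!$. The latter is invertible because preservation of $\Tt$-colimits includes preservation of the parametrized colimits $f_!$; these are the $\Tt$-colimits indexed over $\underline{A}\to\underline{B}$ in the characterization of Remark \ref{ParColim}. Since mates of invertible transformations under adjunctions are invertible, $\BC_f$ is an equivalence. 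Remark \ref{ParAdj} then assembles the $R_A$ into a $\Tt$-right adjoint.

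The step I expect to be the main obstacle is identifying the mate of $\BC_f$ with the comparison map $f_!F_A\to F_Bf_!$ and justifying that this map is exactly what "preserving $\Tt$-colimits" makes invertible. I would handle it by unwinding \cite[Corollary 5.4.7]{martini2021colimits}, which expresses $\Tt$-colimit preservation as fiberwise colimit preservation together with commutation with the functors $f_!$.
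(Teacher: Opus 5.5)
Your argument is correct. The paper gives no proof of this statement; it imports it from \cite[Proposition 2.4.8]{ParStab}. So the useful comparison is with how the paper uses the result. You give the standard reduction to the unparametrized adjoint functor theorem via the pointwise criterion of Remark \ref{ParAdj}. This is the same chain of equivalences the paper invokes in Remark \ref{PPrescolim}, run in the opposite direction. The paper deduces the criterion ``fiberwise cocontinuous and $f_!F_A\to F_Bf_!$ invertible'' \emph{from} this proposition, whereas you use that criterion as input. That is legitimate only because you take it directly from Martini--Wolf \cite{martini2021colimits}; citing Remark \ref{PPrescolim} instead would make the argument circular. For the same reason, check that the $\Tt$-colimits covered by your forward direction include the ones you use in the converse. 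The $f_!$ are colimits indexed by $\underline{A}\to\underline{B}$, i.e.\ over the slice $\Tt_{/B}$, so run the forward argument after base change to each $\Tt_{/B}$. This is harmless, since base change preserves adjunctions.

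One sentence needs correcting: ``mates of invertible transformations under adjunctions are invertible'' is false in general. The commuting square $F_A\circ f^*\simeq f^*\circ F_B$ is invertible, yet its mate $f_!F_A\to F_Bf_!$ need not be; that is exactly what a Beck--Chevalley condition measures. What you need is conjugation. Since $f_!F_A\dashv R_Af^*$ and $F_Bf_!\dashv f^*R_B$, transformations $f_!F_A\Rightarrow F_Bf_!$ correspond bijectively to transformations $f^*R_B\Rightarrow R_Af^*$. This correspondence sends the mate $f_!F_A\to F_Bf_!$ to $\BC_f$, because both are mates of the same square, and conjugation does preserve invertibility. This is the ``total mate'' step the paper itself uses in the proof of Lemma \ref{leqGhGrpGHCompParametrizedAdj}.
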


\begin{rem}\label{PPrescolim}
    It is an immediate consequence of Remark \ref{ParAdj}, Proposition \ref{PAdjFT}, and the Adjoint Functor Theorem that a $\Tt$-functor $F\colon \underline{\Cc} \xrightarrow{} \underline{\Dd}$ between $\Tt$-presentable $\Tt$-categories, is $\Tt$-cocontinuous if and only if the following hold:
    \begin{enumerate}
        \item for each $A\in \Tt$, the functor $F_{A}\colon \underline{\Cc}(A) \xrightarrow{} \underline{\Dd}(A)$ is cocontinuous,
        \item and for each morphism $f\colon A\xrightarrow{} B$ in $\Tt$, the mate $\BC_f\colon f_{!}\circ F_{A} \xrightarrow{} F_{B}\circ f_{!}$, of the natural equivalence $F_{A}\circ \underline{\Cc}(f)\simeq \underline{\Dd}(f)\circ F_{B}$, is an equivalence.
    \end{enumerate}
\end{rem}

To define a notion of $\Tt$-semiadditivity, one needs to restrict to the case of an atomic orbital category $\Tt$.

\begin{defi}{\cite[Definition 4.3.1]{ParStab}}
    Let $\Tt$ be a small category. We say that $\Tt$ is \textit{orbital} if $\FCopC[\Tt]$ admits pullbacks. We say that $\Tt$ is \textit{atomic} if every section $s\colon A\xrightarrow{} B$ in $\Tt$ is an equivalence.
\end{defi}

\begin{ex}
    For a finite group $G$, the orbit category $\OG$ is an atomic orbital category.
\end{ex}

\begin{rem}\label{Wirth}
    Let $\Tt$ be an atomic orbital category. Assume that $\underline{\Cc}$ is a pointed $\Tt$-category, meaning that $\underline{\Cc}\colon \Tt^{\op}\xrightarrow{} \Cat$ factors through the category $\Cat_*$ of pointed categories and functors preserving the terminal object. Additionally, assume that $\underline{\Cc}$ admits finite $\Tt$-products and $\Tt$-coproducts, see \cite[Definition 4.2.12]{ParStab}. For each morphism $f\colon A\xrightarrow{}B$ in $\Tt$, the functor $\underline{\Cc}(f)$ admits both left and right adjoints, respectively denoted by $f_{!}$ and $f_{*}$. In Construction \cite[Construction 4.3.8]{ParStab}, the authors define a comparison morphism
    \begin{equation*}
        \Nm_{f}\colon f_{!} \xrightarrow{} f_{*}.
    \end{equation*}
\end{rem}

\begin{defi}{\cite[Definition 4.5.1, Corollary 4.5.7]{ParStab}}\label{PSemiadditive}
    Let $\Tt$ be an atomic orbital category. Let $\underline{\Cc}$ be a $\Tt$-category that satisfies the assumptions from Remark \ref{Wirth}. We call $\underline{\Cc}$ \textit{$\Tt$-semiadditive} if it is fiberwise semiadditive and, for every morphism $f\colon A\xrightarrow{}B$ in $\Tt$, the comparison morphism of Remark \ref{Wirth}
    \begin{equation*}
        \Nm_{f}\colon f_{!} \xrightarrow{} f_{*}
    \end{equation*}
    is an equivalence.
\end{defi}

\begin{defi}{\cite[Definition 6.2.3]{ParStab}}\label{PStable}
    Let $\Tt$ be an atomic orbital category, and let $\underline{\Cc}$ be a $\Tt$-category. We call $\underline{\Cc}$ \textit{$\Tt$-stable} if it is $\Tt$-semiadditive and fiberwise stable.
\end{defi}

\begin{notation}
    For a $\Tt$-category $\underline{\Cc}$, and a morphism $f\colon A\xrightarrow{} B$ in $\Tt$, we will denote the functor $\underline{\Cc}(f)$ simply by $f^{*}$. As in Remark \ref{Wirth}, we will denote the left and right adjoints of $\underline{\Cc}(f)$ by $f_{!}$ and $f_{*}$, respectively.
\end{notation}

\subsection{Parametrized higher algebra}
We recall the notions of $G$-symmetric monoidal $G$-categories and $G$-operads. Our main reference will be \cite{NormsEq}, which deals with more generality than we need. Thus, we restrict our attention to the case of finite $G$-sets.

\begin{defi}{\cite[Definition 2.9, Definition 2.53]{NormsEq}}
    We let $\PSMCatG$ denote the functor category $\Fun(\Span(\FinG),\Cat)$, and call its objects \textit{$G$-symmetric monoidal $G$-precategories}. We denote the full subcategory of $\PSMCatG$ spanned by product preserving functors by $\SMCatG \coloneq \FunProd(\Span(\FinG),\Cat)$, and call its objects \textit{$G$-symmetric monoidal $G$-categories}.\!\footnote{In \cite{NormsEq}, they refer to the objects of these categories as $\FinG$-normed $\FinG$-(pre)categories.} When $G$ is a trivial group, we simply write $\PSMCat$ and $\SMCat$ for $\PSMCatG$ and $\SMCatG$.
\end{defi}

\begin{rem}
    Specializing \cite[Definition 2.52]{NormsEq}, to the case $(\Ff,\Nn)=(\FinG,\FinG)$ one obtains a notion of $(\FinG,\FinG)$-operads. We refer to $(\FinG,\FinG)$-operads simply by $G$-operads. In particular, $G$-operads are functors $\Oo\xrightarrow{} \Span(\FinG)$ of categories, satisfying certain conditions. We omit the definition, as it will not be essential for the rest of the paper.
\end{rem}

\begin{defi}\cite[Definition 2.7, Definition 2.52]{NormsEq}
    We let $\POpG$ denote the category $\Cat_{/\Span(\FinG)}^{\FinG^{\op}\text{-cc}}$, and call its objects \textit{$G$-preoperads}. We denote the full subcategory of $\POpG$ spanned by $G$-operads by $\OpG$.\!\footnote{In \cite{NormsEq}, they refer to the objects of these categories as $(\FinG,\FinG)$-(pre)operads.} When $G$ is a trivial group, we simply write $\POp$ and $\Op$ for $\POpG$ and $\OpG$.
\end{defi}

\begin{ex}
    Combining \cite[Proposition C.1]{NormsMotivic} and \cite[Example 2.47]{NormsEq}, pullback along the inclusion
    \begin{equation*}
        \FCopC_{*}\simeq \Span_{\text{inj,all}}(\FCopC)\xrightarrow{} \Span(\FCopC)
    \end{equation*}
    induces equivalences from $\SMCat$ and $\Op$ to the categories of symmetric monoidal $\infty$-categories and $\infty$-operads in the sense of Lurie. 
\end{ex}

\begin{defi}{\cite[Definition 1.2, Theorem 1.4]{BlumbergHillMainDef}}\label{IndexingSystem}
    We call a wide subcategory $\Ii$ of $\FinG$ an \textit{indexing system} if it is pullback stable, $\Ii$ has all finite coproducts, and finite coproducts in $\Ii$ are created in $\FinG$.
\end{defi}

\begin{ex}\label{IndexingSystemOperad}
    Let $\Ii$ be an indexing system. The pair $(\FinG,\Ii)$ is an extensive span pair in the sense of \cite[Definition 2.2.1]{HighTamb}. By \cite[Proposition 2.2.5]{HighTamb}, the span category $\Span_{\text{all},\Ii}(\FinG)$ is semiadditive, and its finite products are given by finite coproducts in $\FinG$. It follows that the inclusion $\Span_{\text{all},\Ii}(\FinG)\xrightarrow{}\Span(\FinG)$ satisfies the conditions of \cite[Definition 2.52]{NormsEq} and thus is a $G$-operad.
\end{ex}

\begin{notation}
    Let $\underline{\Cc}^{\otimes} \colon \Span(\FinG)\xrightarrow{}\Cat$ be a $G$-symmetric monoidal $G$-category. Let
    \[
        \begin{tikzcd}[sep=tiny]
            &X \arrow[dl, "{\id_{X}}"'] \arrow[dr, "{f}"]
            \\
            X &&Y
        \end{tikzcd}
    \]
    be a morphism in the category $\Span(\FinG)$. We denote the functor $\underline{\Cc}^{\otimes}(f)\colon \underline{\Cc}^{\otimes}(X)\xrightarrow{}\underline{\Cc}^{\otimes}(Y)$ simply by $f_{\otimes}$.
\end{notation}

\begin{rem}\label{LaxFuncCat}
    It follows from Remark \ref{2-Cats}, that the categories $\PSMCatG$ and $\POpG$ are 2-categories. We denote their $\Cat$-enrichments by
    \begin{equation*}
        \FunTenG(-,-) \quad \textup{and} \quad \FunLaxG(-,-),
    \end{equation*}
    respectively. When $G$ is a trivial group, we simply write $\FunTen(-,-)$ and $\FunLax(-,-)$. These $\Cat$-enrichments restrict to $\Cat$-enrichments on the full subcategories $\SMCatG$ and $\OpG$.
    
    Let $p\colon\Cc\xrightarrow{}\Span(\FinG)$, and $q\colon\Dd \xrightarrow{}\Span(\FinG)$ be $G$-preoperads. It is easy to verify the equivalence
    \begin{equation*}
        \FunLaxG(\Cc,\Dd)\simeq \Fun^{\FinG^{\op}\text{-cc}}(\Cc,\Dd)\times_{\Fun(\Cc,\Span(\FinG))}\{p\},
    \end{equation*}
    where $\Fun^{\FinG^{\op}\text{-cc}}(\Cc,\Dd)$ denotes the full subcategory of $\Fun(\Cc,\Dd)$ spanned by functors preserving cocartesian morphisms over $\FinG^{\op}$.
\end{rem}

\begin{rem}
    There exist forgetful 2-functors
    \begin{equation*}
        \fgt \colon \PSMCatG \xrightarrow{} \POpG, \quad \text{and} \quad \fgt\colon \POpG \xrightarrow{} \Cat_{G}.
    \end{equation*}
    The first functor is given by the cocartesian unstraightening functor
    \begin{equation*}
        \Uncc \colon \PSMCatG = \Fun(\Span(\FinG),\Cat) \xrightarrow{} \Cat^{\text{cc}}_{/\Span(\FinG)}\xhookrightarrow{} \Cat^{\FinG^{\op}\text{-cc}}_{/\Span(\FinG)}=\POpG.
    \end{equation*}
    The second functor is simply given by pulling back along the inclusion
    \begin{equation*}
        \OGo\xhookrightarrow{} \Span(\FinG).
    \end{equation*}
    The 2-functoriality follows from both functors being $\Cat$-linear and Remark \ref{2-Cats}. The above functors restrict to 2-functors
    \begin{equation*}
        \fgt \colon \SMCatG \xrightarrow{} \OpG, \quad \text{and} \quad \fgt\colon \OpG \xrightarrow{} \Cat_{G}.
    \end{equation*}
    For the first functor, this follows from \cite[Corollary 2.49]{NormsEq}.
\end{rem}

\begin{rem}\label{Env}
    Let $\Ar_{\FinG}(\Span(\FinG))$ denote the full subcategory of $\Fun([1],\Span(\FinG))$ spanned by the forward morphisms. By Theorem 2.23 in \cite{NormsEq}, the composite functor
    \begin{equation*}
        \Cat_{/\Span(\FinG)}\xrightarrow{\ev_{0}^{*}} \Cat_{/\Ar_{\FinG}(\Span(\FinG))}\xrightarrow{\ev_{1}\circ -}  \Cat_{/\Span(\FinG)},
    \end{equation*}
    restricts to a functor
    \begin{equation*}
        \Env \colon \POp_{G} \xrightarrow{} \PSMCatG,
    \end{equation*}
    which is a 2-left adjoint to the functor
    \begin{equation*}
        \fgt \colon \PSMCatG \xrightarrow{} \POpG.
    \end{equation*}
    It follows from the Theorem 2.57 in \cite{NormsEq}, that the above 2-adjunction restrict to a 2-adjunction
    \begin{equation*}
       \Env\colon\OpG \rightleftarrows \SMCatG\colon\mkern-3mu \fgt.
    \end{equation*}
    By Proposition 2.26 and Theorem 2.57 in \cite{NormsEq}, the functor $\fgt\colon\OpG \xrightarrow{} \Cat_{G}$ also admits a 2-left adjoint
    \begin{equation*}
        \Triv\colon \Cat_{G} \xrightarrow{} \OpG.
    \end{equation*}
    We, by abuse of notation, also denote the 2-left adjoint of $\fgt\colon\SMCatG\xrightarrow{} \Cat_{G}$ by
    \begin{equation*}
        \Env\colon\Cat_{G}\xrightarrow{}\SMCatG.
    \end{equation*}
\end{rem}

\section{A model for the \texorpdfstring{$G$}{G}-category of rational \texorpdfstring{$G$}{G}-spectra}\label{Section3}

This section is devoted to upgrading the equivalence from Theorem 4.2 in \cite{StratCat} to the setting of $G$-categories. In \cite{StratCat}, the author works in the general context of epiorbital categories. We specialize to the case of the orbit category $\OG$. We begin by defining a $G$-category that will model the $G$-category of rational $G$-spectra. To do so, we first present a general construction that works for an arbitrary category and establish some of its basic properties. Using this, we construct a comparison $G$-functor and show that it induces an equivalence between the $G$-categories of rational $G$-spectra and the constructed model, by reducing to the nonparametrized setting.

\subsection{An Algebraic model for the \texorpdfstring{$G$}{G}-category of rational \texorpdfstring{$G$}{G}-spectra}

In this subsection, we define a functor that associates a $G$-category to any category. In the case of rational spectra, this recovers a model for the $G$-category of rational $G$-spectra. This functor is similar to the functor of the associated $G$-category from Construction \ref{cofree}.

\begin{cons}\label{CorOG}
    Let $({\OG}_{/-})^{\simeq}\colon \OG \xrightarrow{}\Cat$ denote the composite 
    \begin{equation*}
        \OG \xrightarrow{{\OG}_{/-}} \Cat \xrightarrow{(-)^{\simeq}} \Cat,
    \end{equation*}
    where the first functor is the slice functor with the postcomposition functoriality.
\end{cons}

\begin{defi}\label{CorCof}
    We denote by $\CorCof \bullet)$ the composite
    \begin{equation*}
        \Cat \xrightarrow{\Fun(-,\bullet)} \Fun(\Cat^{\op},\Cat) \xrightarrow{-\circ ({\OG}_{/-})^{\simeq}} \Fun(\OGo,\Cat)=\Cat_{G},
    \end{equation*}
    where the second functor is the precomposition with the functor from Construction \ref{CorOG}. This functor sends a category $\Cc$ to the functor
    \begin{equation*}
        \CorCof\Cc)= \Fun(({\OG}_{/-})^{\simeq},\Cc)\colon \OGo \xrightarrow{} \Cat.
    \end{equation*}
\end{defi}

\begin{rem}
    Consider the commutative diagram
    \[
    \begin{tikzcd}
        \Fun([1],\OG)_{cc} \arrow[rr,"{\incl}"] \arrow[dr,"{\ev_1}"'] && \Fun([1],\OG) \arrow[dl,"{\ev_1}"]
        \\
        & \OG,
    \end{tikzcd}
    \]
    where $\Fun([1],\OG)_{cc}$ denotes the wide subcategory of $\Fun([1],\OG)$ on cocartesian morphisms. Cocartesian morphisms, in this case, are those commutative squares for which evaluation at 0 is an equivalence. Straightening, we obtain a natural transformation
    \begin{equation*}
        ({\OG}_{/-})^{\simeq} \xrightarrow{} {\OG}_{/-},
    \end{equation*}
    of functors from $\OG$ to $\Cat$, which is given degreewise, by the inclusion of the core.
    
    By 2-functoriality of $\Fun(-,-)$, one obtains a natural transformation
    \begin{equation*}
        \CofreeG -) \xrightarrow{} \CorCof -),
    \end{equation*}
    between the associated $G$-category functor and the functor from Definition \ref{CorCof}.
\end{rem}

The $G$-category $\CorCof\Sp_{\RatQ})$ will serve as our model for the $G$-category of rational $G$-spectra. In the next section, we introduce an enhancement of this model to the $G$-symmetric monoidal $G$-categories. The following two results will be needed in the construction of this enhancement.

\begin{lem}\label{CorCofPres}
    The functor
    \begin{equation*}
        \CorCof -) \colon \Cat \xrightarrow{} \Cat_G
    \end{equation*}
    sends presentable categories to $G$-presentable $G$-categories. It restricts to a functor 
    \begin{equation*}
        \CorCof -) \colon \PrL \xrightarrow{} \PrLG.
    \end{equation*}
\end{lem}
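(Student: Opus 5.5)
We verify the criteria of Remark \ref{ParColim} and Remark \ref{PPrescolim} directly, using the explicit form of the restriction functors. First we make $\CorCof\Cc)$ concrete. For $H\leq G$, the groupoid $({\OG}_{/(G/H)})^{\simeq}$ is equivalent to $\Oo_{H}^{\simeq}$, which is a finite disjoint union of groupoids $B\textup{W}_{H}K$ over conjugacy classes $(K\leq H)$. Hence
\begin{equation*}
    \CorCof\Cc)(G/H)\simeq\Fun(\Oo_H^{\simeq},\Cc)\simeq\prod_{(K\leq H)}\Fun(B\textup{W}_{H}K,\Cc).
\end{equation*}
For a morphism $f\colon G/K\to G/H$ in $\OG$, the restriction $f^{*}$ is precomposition with the functor of groupoids $f_{\circ}\colon({\OG}_{/(G/K)})^{\simeq}\to({\OG}_{/(G/H)})^{\simeq}$ given by postcomposition with $f$.

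\textbf{Fiberwise conditions.} If $\Cc$ is presentable, each $\Fun(\Oo_H^{\simeq},\Cc)$ is presentable, since functor categories from small categories into presentable ones are presentable. The functors $f^{*}$ are precompositions, so they preserve all small limits and colimits, which are computed pointwise. By the adjoint functor theorem they admit left adjoints $f_{!}$, given by left Kan extension $\Lan_{f_{\circ}}$ along $f_{\circ}$. Thus $\CorCof\Cc)$ factors through $\PrL$, and conditions (1)--(3) of Remark \ref{ParColim} hold.

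\textbf{Base change.} This is the main step. For a pullback square in $\Pp(\OG)$ of representables $\underline{A}\to\underline{C}\leftarrow\underline{B}$, the pullback $\Xx$ is a finite coproduct of orbits, since $\OG$ is orbital. By Remark \ref{AltDefTCat} we extend $\CorCof\Cc)$ to finite $G$-sets $X$ by $X\mapsto\Fun((\FinG{}_{/X})^{\simeq}\text{-restricted to orbits},\Cc)$, which is the product over orbits. The key observation is that the assignment $X\mapsto({\OG}_{/X})^{\simeq}$, sending $X$ to the groupoid of orbits over $X$, preserves pullbacks of finite $G$-sets. Concretely, an orbit over $A\times_C B$ is the same as an orbit with compatible maps to $A$ and $B$, and since these are groupoids the comma conditions reduce to honest pullbacks. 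In addition, each $f_{\circ}$ is a map of groupoids, hence a cocartesian and cartesian fibration, and indeed a finite covering, because $\Hom(G/L,G/K)\to\Hom(G/L,G/H)$ is a map of finite sets. Left Kan extension along a fibration of groupoids is computed fiberwise as a colimit over the homotopy fiber, so it satisfies base change for pullback squares of groupoids. This is the standard Beck--Chevalley property for $\Lan$ along (co)cartesian fibrations, see \cite{HTT}. Applying it to the pullback square of groupoids gives $\BC_{(\alpha,\beta)}$ as an equivalence. The obstacle is the bookkeeping identifying the pullback in $\Pp(\OG)$ with the pullback of orbit-groupoids; I would handle it by working with the equivalent description via $\FinG$ in Remark \ref{AltDefTCat}, in which $X\mapsto(\FinG{}_{/X}\cap\text{orbits})^{\simeq}$ is evidently pullback preserving.

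\textbf{Restriction to $\PrL$.} For a left adjoint $F\colon\Cc\to\Dd$, the induced $G$-functor is postcomposition with $F$. It is cocontinuous fiberwise, and the mate $\Lan_{f_{\circ}}(F\circ-)\to F\circ\Lan_{f_{\circ}}(-)$ is an equivalence, because $\Lan$ is computed pointwise by colimits, which $F$ preserves. By Remark \ref{PPrescolim} and Proposition \ref{PAdjFT}, this $G$-functor is a $G$-left adjoint, so $\CorCof-)$ restricts to $\PrL\to\PrLG$.
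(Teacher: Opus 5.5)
Your proposal is correct and follows the paper's proof. The fiberwise checks for conditions (1)--(3) are the same, condition (4) rests on the same fact that an orbit over $A\times_C B$ is an orbit with compatible maps to $A$ and $B$, and the left-adjoint case uses the same pointwise Kan extension argument. The only difference is packaging in (4): you combine pullback-preservation of $X\mapsto({\OG}_{/X})^{\simeq}$ with Beck--Chevalley for left Kan extension along maps of groupoids, whereas the paper checks by hand that $\coprod_{i}(({\OG}_{/D_i})^{\simeq})_{/\gamma}\to(({\OG}_{/A})^{\simeq})_{/\beta\gamma}$ is an equivalence of indexing categories.
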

\begin{proof}
    Let $\Cc$ be a presentable category. We first show that $\CorCof \Cc)$ is $G$-presentable. We check the conditions of Definition \ref{ParStab}. For an object $A\in \OG$, the value of this $G$-category at $A$ is $\Fun(({\OG}_{/A})^{\simeq},\Cc)$, which is presentable, since it is a category of functors from a small category to a presentable category. For a morphism $f\colon A\xrightarrow{} B$ in $\OG$, we have that
    \begin{equation*}
        f^{*} \simeq (-\circ (f \circ-))\colon \Fun(({\OG}_{/B})^{\simeq},\Cc) \xrightarrow{} \Fun(({\OG}_{/A})^{\simeq},\Cc),
    \end{equation*}
    preserves all colimits, since colimits in functor categories are computed pointwise. Therefore, $\CorCof \Cc)$ factors through $\PrL$.
    
    It remains to check the conditions of Remark $\ref{ParColim}$. Conditions (1), (2), and (3) hold, since $\CorCof \Cc)$ factors through $\PrL$ and the restriction functors preserve limits. For condition (4), let 
    \[
        \begin{tikzcd}
            \Xx \arrow[dr, phantom, "\lrcorner", very near start] \arrow[r, "{f}"] \arrow[d, "{g}"'] & \underline{B} \arrow[d, "{\beta}"']
            \\
            \underline{A} \arrow[r, "{\alpha}"] & \underline{C}
        \end{tikzcd}
    \]
    be a pullback in $\Pp(\OG)$. The orbit category $\OG$ is orbital, hence there is a decomposition $\Xx \simeq \coprod_{i\in I} \underline{D}_i$. We show that the natural transformation $\BC_{(\alpha,\beta)}$, from Remark \ref{ParColim}, makes the diagram
    \[
        \begin{tikzcd}
            \prod_{i\in I}\Fun(({\OG}_{/D_{i}})^{\simeq},\Cc) \arrow[r, "{\prod(f_{i})_{!}}"]  & \prod_{i\in I}\Fun(({\OG}_{/B})^{\simeq},\Cc)  \arrow[r, "{\coprod}"]  & \Fun(({\OG}_{/B})^{\simeq},\Cc) 
            \\
            \Fun(({\OG}_{/A})^{\simeq},\Cc) \arrow[rr, "{\alpha_{!}}"] \arrow[u, "{(g_{i}^{*})_{i \in I}}"] && \Fun(({\OG}_{/C})^{\simeq},\Cc) \arrow[u, "{\beta^{*}}"]
        \end{tikzcd}
    \]
    commute. We use the pointwise formula for Kan extensions. Let $\Yy\colon ({\OG}_{/A})^{\simeq} \xrightarrow{} \Cc$ be a functor, and let $\gamma \colon E\xrightarrow{} B$ be a morphism in $\OG$. We have equivalences
    \begin{equation*}
        (\beta^{*} \alpha_{!} (\Yy))(\gamma) \simeq \alpha_{!} (\Yy)(\beta \gamma) \simeq \colim_{(({\OG}_{/A})^{\simeq})_{/\beta \gamma}}\Yy,
    \end{equation*}
    and
    \begin{equation*}
        \coprod_{i\in I} ((f_{i})_{!} g_{i}^{*}(\Yy))(\gamma) \simeq \coprod_{i\in I} \colim_{(({\OG}_{/D_{i}})^{\simeq})_{/\gamma}}g_{i}^{*}(\Yy).
    \end{equation*}
    It suffices to check that the functor
    \begin{equation*}
        \coprod_{i\in I} (({\OG}_{/D_{i}})^{\simeq})_{/\gamma} \xrightarrow{} (({\OG}_{/A})^{\simeq})_{/\beta \gamma}, \quad (X,X\xrightarrow{\lambda} D_{i},X\xrightarrow{\simeq}E)\mapsto (X,X\xrightarrow{g_{i}\lambda}A,X\xrightarrow{\simeq}E).
    \end{equation*}
    obtained using $\beta$ and $(g_i)_{i\in I}$ is cofinal. We check the stronger statement that this functor is, in fact, an equivalence. We note that for a functor $F\colon \Bb \xrightarrow{} \Cc$ between (1,1)-categories and an object $c\in\Cc$, the slice category $\Bb_{/c}=\Bb\times_{\Cc}\Cc_{/c}$ is given by a strict (1,1)-categorical pullback, this follows from the forgetful functor $\Cc_{/c}\xrightarrow{}\Cc$ being an isofibration. First, we check fully faithfulness. Injectivity on Hom sets is immediate. For surjectivity, consider a morphism
    \begin{equation*}
        (X,X\xrightarrow{g_{i}\lambda}A,X\xrightarrow{\simeq}E) \xrightarrow{} (Y,Y\xrightarrow{g_{j}\nu}A,Y\xrightarrow{\simeq}E).
    \end{equation*}
    The invertibility of $X\xrightarrow{\simeq}E$ and $Y\xrightarrow{\simeq}E$, together with the fact that $\Xx \simeq \coprod_{i\in I} \underline{D}_i$ is a pullback, shows that $i=j$ and that the morphism $X\xrightarrow{} Y$ is also a morphism between $(X,X\xrightarrow{\lambda}D_{i},X\xrightarrow{\simeq}E)$ and $(X,X\xrightarrow{\mu}D_{j},Y\xrightarrow{\simeq}E)$.
    
    Now let $(Z,Z\xrightarrow{\lambda}A,Z\xrightarrow{\simeq}E)$ be an object of $(({\OG}_{/A})^{\simeq})_{/\beta \gamma}$. This object can be visualized as a commutative diagram
    \[
        \begin{tikzcd}
            Z  \arrow[d, "{\lambda}"'] \arrow[r, "{\simeq}"] & E  \arrow[r, "{\gamma}"] & B \arrow[d, "{\beta}"']
            \\
            A \arrow[rr, "{\alpha}"] && C,
        \end{tikzcd}
    \]
    from which we obtain a morphism $\underline{Z} \xrightarrow{} \Xx$, which necessarily factors through a unique summand $\underline{D}_{i}$. We thus obtain an object $(Z,Z\xrightarrow{\hat{\lambda}}D_{i},Z\xrightarrow{\simeq}E)\in \coprod_{i\in I} (({\OG}_{/D_{i}})^{\simeq})_{/\gamma}$, mapping to $(Z,Z\xrightarrow{\lambda}A,Z\xrightarrow{\simeq}E)$. This proves essential surjectivity. Hence, the functor
    \begin{equation*}
        \coprod_{i\in I} (({\OG}_{/D_{i}})^{\simeq})_{/\gamma} \xrightarrow{} (({\OG}_{/A})^{\simeq})_{/\beta \gamma},
    \end{equation*}
    is an equivalence, showing that $\BC_{(\alpha,\beta)}$ is an equivalence.
    
    Finally, we show that for a left adjoint functor $F\colon \Cc\xrightarrow{}\Dd$ between presentable categories, the $G$-functor
    \begin{equation*}
        \CorCof F)\colon \CorCof \Cc) \xrightarrow{} \CorCof \Dd)
    \end{equation*}
    is a $G$-left adjoint. By Proposition \ref{PAdjFT}, it suffices to check that $F$ is $\Tt$-cocontinuous. We check the conditions of Remark \ref{PPrescolim}. Condition (1) is immediate. Condition (2) follows from the pointwise formula for left Kan extensions combined with the fact that $F$ preserves colimits. 
\end{proof}

\begin{prop}\label{CorCofGStableGpresentable}
    Let $\Cc$ be a stable, presentable category. The $G$-category 
    \begin{equation*}
        \CorCof \Cc)
    \end{equation*}
    is a $G$-stable, $G$-presentable $G$-category.
\end{prop}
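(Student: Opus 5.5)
$G$-presentability is Lemma \ref{CorCofPres}, so it remains to prove $G$-stability. By Definition \ref{PStable}, this means fiberwise stability together with $G$-semiadditivity in the sense of Definition \ref{PSemiadditive}.

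Fiberwise stability is immediate. Each value $\Fun(({\OG}_{/A})^{\simeq},\Cc)$ is a functor category into a stable category, so it is stable. Each restriction $f^{*}$ is precomposition, so it is exact. In particular $\CorCof\Cc)$ is pointed and fiberwise semiadditive. Each $f^{*}$ preserves all limits and colimits, so both adjoints $f_{!}$ and $f_{*}$ exist and are left and right Kan extension along
\begin{equation*}
f\circ-\colon ({\OG}_{/A})^{\simeq}\to ({\OG}_{/B})^{\simeq}.
\end{equation*}
For the existence of finite $G$-products and $G$-coproducts, I would use the characterization in Remark \ref{ParColim}, condition (4), which was verified in the proof of Lemma \ref{CorCofPres}. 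The dual Beck--Chevalley condition for right adjoints holds by the same slice-equivalence argument, using the pointwise formula for right Kan extensions, where slices over $\gamma$ are replaced by coslices.

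The main step is to show that $\Nm_f\colon f_!\to f_*$ is an equivalence for every $f\colon A\to B$ in $\OG$. The key observation is that $f\circ-$ is a functor between groupoids, and it is faithful with discrete fibers. For $\gamma\colon E\to B$ in $({\OG}_{/B})^{\simeq}$, the comma category $(({\OG}_{/A})^{\simeq})_{/\gamma}$ is equivalent to the set of lifts $\lambda\colon E\to A$ with $f\lambda=\gamma$, which is the fiber of $\Hom(E,A)\to\Hom(E,B)$. The same holds for the coslice $(({\OG}_{/A})^{\simeq})_{\gamma/}$. This set is finite because $G$ is finite. So for any $\Yy$,
\begin{equation*}
(f_!\Yy)(\gamma)\simeq\bigoplus_{f\lambda=\gamma}\Yy(\lambda),\qquad (f_*\Yy)(\gamma)\simeq\prod_{f\lambda=\gamma}\Yy(\lambda),
\end{equation*}
and in the stable category $\Cc$ the canonical map from the sum to the product is an equivalence. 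One could also argue more abstractly: $f\circ-$ is a finite covering of groupoids, and for finite coverings left and right Kan extension agree via the canonical comparison map.

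The expected obstacle is checking that the map $\Nm_f$ of \cite[Construction 4.3.8]{ParStab} is, pointwise, this canonical sum-to-product map. $\Nm_f$ is built from the diagonal $A\to A\times_B A$ and its complement summand in $\FinG$. I would compute $\Nm_f$ by evaluating at $\gamma$. The decomposition $\underline{A}\times_{\underline{B}}\underline{A}\simeq \underline{A}\amalg \Xx'$ then corresponds, on fibers over $\gamma$, to splitting pairs of lifts $(\lambda,\lambda')$ into the diagonal ones and the off-diagonal ones. Under this identification, the adjoint construction of $\Nm_f$ yields the matrix whose diagonal entries are identities and whose off-diagonal entries are zero. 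That matrix is the canonical equivalence $\bigoplus\to\prod$. Alternatively, one can reduce via the natural transformation $\CofreeG\Cc)\to\CorCof\Cc)$ to the known $G$-semiadditivity of cofree $G$-categories. I expect the direct computation to be cleaner, since the groupoid fibers here are honest finite sets.
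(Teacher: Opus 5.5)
Your main argument is correct and essentially matches the paper's. The paper also reduces $G$-semiadditivity to the fact that $f\circ-\colon({\OG}_{/A})^{\simeq}\to({\OG}_{/B})^{\simeq}$ has finite discrete fibers. It then asserts that $\Nm_f$ agrees with Lurie's norm \cite[Construction 6.1.6.4]{HA} and concludes by \cite[Proposition 6.1.6.12]{HA}, which is exactly the sum-to-product, identity-matrix computation you do by hand; your separate check of finite $G$-(co)products is harmless, since the paper leaves it implicit in $G$-presentability.

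Drop the suggested alternative via $\Fun(({\OG}_{/-})^{\op},\Cc)\to\Fun(({\OG}_{/-})^{\simeq},\Cc)$, though. Cofree $G$-categories are not $G$-semiadditive: for $f\colon G/e\to G/G$ and $Y\neq 0$ one gets $(f_{!}Y)(G/G)\simeq 0$ but $(f_{*}Y)(G/G)\simeq Y$. A non-invertible $G$-functor would not transfer semiadditivity in any case.
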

\begin{proof}
    It follows directly from Lemma \ref{CorCofPres} that $\CorCof \Cc)$ is $G$-presentable. The fibers 
    \begin{equation*}
        \Fun(({\OG}_{/X})^{\simeq},\Cc)
    \end{equation*}
    are stable, since $\Cc$ is stable and functor categories from a small category to a stable category are themselves stable. The restriction morphisms are exact, since they preserve arbitrary colimits. All that is left to show is $G$-semiadditivity. Note that fiberwise semiadditivity follows from the previous part. Let $f\colon X\xrightarrow{} Y$ be a morphism in $\OG$. We need to show that the norm morphism $\Nm_{f}$ from Remark \ref{Wirth} is an equivalence. Note that the fibers of the functor $f\circ-\colon ({\OG}_{/X})^{\simeq}\xrightarrow{} ({\OG}_{/Y})^{\simeq}$ are given by finite coproducts of contractible spaces. Because of this, the norm morphisms coincide with the ones defined in Construction 6.1.6.4 in \cite{HA}. The Proposition 6.1.6.12 in \cite{HA}, reduces the statement to $\Cc$ being semiadditive, finishing the proof.
\end{proof}

\subsection{Parametrized comparison functor}
As stated before, the $G$-category $\CorCof\Sp_{\RatQ})$ will be our model for the $G$-category of rational $G$-spectra. Having defined this model, we now construct a comparison $G$-functor from the $G$-category of rational $G$-spectra to our model. Componentwise, this $G$-functor is given by the functor from Definition 2.31 in \cite{StratCat}. In the nonparametrized setting, the content of this section is contained in section 2 of \cite{StratCat}. 

We begin by defining $G$-categories of Mackey functors, which will play an essential role in the construction of the comparison $G$-functor.

\begin{cons}\label{PMackSpan}
    Let $F\colon \OG \xrightarrow{} \Cat$ be a functor. Assume that for each $X\in \OG$, the category $F(X)$ is disjunctive. Furthermore, assume that for every morphism $f\colon X \xrightarrow{} Y$ the functor $F(f)\colon F(X) \xrightarrow{} F(Y)$ preserves pullbacks and finite coproducts. Then the functor $F$ factors as
    \begin{equation*}
        \OG \xrightarrow{(F(-),\text{all},\text{all})} \AdTrip \xrightarrow{\fgt} \Cat.
    \end{equation*}
    More precisely, for an object $X\in \OG$ we have
    \begin{equation*}
        (F(-),\text{all},\text{all})(X) = (F(X),F(X),F(X)).
    \end{equation*}
    Postcomposing with the $\Span$ functor, one obtains a functor
    \begin{equation*}
        \Span(F(-),\text{all},\text{all})\colon \OG \xrightarrow{} \Cat.
    \end{equation*}
    Since each $F(X)$ is disjunctive and each $F(f)$ preserves finite products, $\Span(F(-),\text{all},\text{all})$ factors through $\Cat^{\oplus}$, the subcategory of $\Cat$ spanned by semiadditive categories and direct sum preserving functors. We denote this functor again by
    \begin{equation*}
        \Span(F(-),\text{all},\text{all}) \colon \OG \xrightarrow{} \Cat^{\oplus}.
    \end{equation*}
\end{cons}

\begin{defi}
    Let $F \colon \OG \xrightarrow{} \Cat$ be a functor satisfying the conditions of Construction \ref{PMackSpan}, and let $\Aa$ be a category admitting finite products. We define the \textit{$G$-category of $\Aa$-valued Mackey functors on $F$} as the composite
    \begin{equation*}
        \OGo \xrightarrow{\Span(F(-),\text{all},\text{all})} (\Cat^{\times})^{\op} \xrightarrow{\FunProd(-,\Aa)} \Cat,
    \end{equation*}
    where the first functor is the one from Construction \ref{PMackSpan}. We denote this composite by
    \begin{equation*}
        \PMack_{F}(\Aa) \coloneq \FunProd(\Span(F(-),\text{all},\text{all}),\Aa).
    \end{equation*}
\end{defi}

We note that $\PMack_{F}(\Aa)$ is functorial in both $F$ and $\Aa$. To be more precise, let $\Cat^{disj}$ denote the subcategory of $\Cat$ spanned by disjunctive categories and functors preserving pullbacks and finite coproducts. The $G$-category of Mackey functors construction defines a functor
\begin{equation*}
    \PMack_{\bullet}(-)\colon \Fun(\OG,\Cat^{disj})^{\op}\times \Cat^{\times} \xrightarrow{} \Cat.
\end{equation*}
We now introduce several examples of functors $F\colon\OG\xrightarrow{}\Cat$ satisfying the conditions of Construction \ref{PMackSpan}.

\begin{ex}
    Consider the slice functor 
    \begin{equation*}
        {\OG}_{/-} \colon \OG \xrightarrow{} \Cat
    \end{equation*}
    with the postcomposition functoriality. Composing with the finite coproduct completion $\FCopC[-]\colon \Cat \xrightarrow{} \Cat^{\amalg}$ we obtain a functor
    \begin{equation*}
        \FCopC[{\OG}_{/-}] \colon \OG \xrightarrow{} \Cat.
    \end{equation*}
    The inclusion $\OG\xrightarrow{} \FinG$ induces a natural equivalence $\FCopC[{\OG}_{/-}]\simeq {\FinG}_{/-}$, between the functor constructed above and the restriction of the slice functor ${\FinG}_{/-}$ to the orbit category. In particular, ${\FinG}_{/-}$ satisfies the conditions of Construction \ref{PMackSpan}. Let $\Aa$ be a category admitting finite products. We denote the $G$-category of $\Aa$-valued Mackey functors on ${\FinG}_{/-}$, by 
    \begin{equation*}
        \PMack_{G}(\Aa)\coloneq \PMack_{{\FinG}_{/-}}(\Aa).
    \end{equation*}
\end{ex}

\begin{ex}\cite[Construction 5.4.1]{HighTamb}
    The $G$-category
    \begin{equation*}
        \PSp_{G}\coloneq \PMack_{G}(\Sp)
    \end{equation*}
    will be our model for the $G$-category of $G$-spectra.
\end{ex}

\begin{cons}\label{leqGH}
    For each $H\leq G$, we define a subfunctor 
    \begin{equation*}
        ({\OG}_{/-})^{\leq G/H} \colon \OG \xrightarrow{} \Cat
    \end{equation*}
    of 
    \begin{equation*}
        {\OG}_{/-} \colon \OG \xrightarrow{} \Cat.
    \end{equation*}
    For $A\in \OG$, the $({\OG}_{/A})^{\leq G/H}$ is defined to be the full subcategory of ${\OG}_{/A}$ spanned by morphisms $B \xrightarrow{} A$ in $\OG$, that admit a morphism from $G/g^{\inv}Hg \xrightarrow{\alpha} A$ for some choice of $g \in G$ and the morphism $\alpha$ induced by the inclusion of subgroups of $G$.\!\footnote{Note that we could have instead picked an arbitrary morphism $\alpha$ and have avoided conjugating the subgroup $H$.} It is immediate that $({\OG}_{/-})^{\leq G/H}$ is a subfunctor of ${\OG}_{/-}$. We denote by $({\FinG}_{/-})^{\leq G/H}$ the following composite
    \begin{equation*}
        ({\FinG}_{/-})^{\leq G/H} \colon \OG \xrightarrow{ ({\OG}_{/-})^{\leq G/H}} \Cat \xrightarrow{\FCopC[-]} \Cat.
    \end{equation*}
    Applying $\FCopC[-]$ to the inclusion $({\OG}_{/-})^{\leq G/H} \xhookrightarrow{} {\OG}_{/-}$ we obtain a natural inclusion 
    \begin{equation*}
        i^{\leq G/H}\colon ({\FinG}_{/-})^{\leq G/H} \xhookrightarrow{} {\FinG}_{/-}.
    \end{equation*}
    Note that for each $A\in \OG$ the functor $i^{\leq G/H}_{A}\colon ({\FinG}_{/A})^{\leq G/H} \xhookrightarrow{} {\FinG}_{/A}$ is fully faithful, as it is a left Kan extension of the fully faithful functor.
\end{cons}

\begin{rem}\cite[Lemma 2.21]{StratCat}\label{leqGHAdj}
    In the setting of Construction \ref{leqGH}, for each $A \in \OG$, the functor 
    \begin{equation*}
        i^{\leq G/H}_{A}\colon ({\FinG}_{/A})^{\leq G/H} \xhookrightarrow{} {\FinG}_{/A}
    \end{equation*}
    admits a right adjoint 
    \begin{equation*}
        j^{\leq G/H}_{A} \colon {\FinG}_{/A} \xrightarrow{} ({\FinG}_{/A})^{\leq G/H}.
    \end{equation*}
    In more detail, $\FCopC[\Cc]$ is a full subcategory of $\Pp(\Cc)$ spanned by finite coproducts of representables. The functor $i^{\leq G/H}_{A}$ is then defined as the restriction of the left adjoint of precomposing with the inclusion $({\OG}_{/A})^{\leq G/H} \xhookrightarrow{} {\OG}_{/A}$. We define $j^{\leq G/H}_{A} \colon {\FinG}_{/A} \xrightarrow{} ({\FinG}_{/A})^{\leq G/H}$ as the restriction of precomposing with the inclusion $({\OG}_{/A})^{\leq G/H} \xhookrightarrow{} {\OG}_{/A}$. For precomposition to restrict properly, it is enough to note that for any morphism $\alpha \colon B\xrightarrow{} A$ in ${\OG}_{/A}$, we have
    \[ \Hom_{{\OG}_{/A}}(-,\alpha)|_{({\OG}_{/A})^{\leq G/H}} \simeq \begin{cases} 
          \Hom_{({\OG}_{/A})^{\leq G/H}}(-,\alpha) & \alpha \in ({\OG}_{/A})^{\leq G/H} \\
          \emptyset & else.
       \end{cases}
    \]
    In particular, the adjunction between the left Kan extension and precomposition restricts to the adjunction
    \begin{equation*}
        i^{\leq G/H}_{A} \colon ({\FinG}_{/A})^{\leq G/H} \rightleftarrows {\FinG}_{/A} \colon \mkern-3mu j^{\leq G/H}_{A}.
    \end{equation*}
    Furthermore, the functors $j^{\leq G/H}_{A}$, for various $A \in \OG$, assemble into a natural transformation
    \begin{equation*}
        j^{\leq G/H} \colon {\FinG}_{/-} \xrightarrow{} ({\FinG}_{/-})^{\leq G/H}.
    \end{equation*}
    To see this it is enough to observe that each $j^{\leq G/H}_{A}$ preserves finite coproducts and that for each morphism $\gamma \colon A\xrightarrow{} C$ in $\OG$ we have
    \begin{equation*}
        (\gamma \circ -)_{!}(\Hom_{\Pp({\OG}_{/A})}(-,\alpha)|_{({\OG}_{/A})^{\leq G/H}})  \simeq \Hom_{\Pp({\OG}_{/C})}(-,\gamma \circ \alpha)|_{({\OG}_{/C})^{\leq G/H}}.
    \end{equation*}
\end{rem}

\begin{lem}
    The functor 
    \begin{equation*}
        ({\FinG}_{/-})^{\leq G/H}\colon \OG \xrightarrow{} \Cat
    \end{equation*}
    from Construction \ref{leqGH} satisfies the conditions of Construction \ref{PMackSpan}.
\end{lem}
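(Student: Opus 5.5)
We must check two things. First, each $({\FinG}_{/A})^{\leq G/H}$ is disjunctive. Second, for each $\gamma\colon A\to C$ in $\OG$, the functor $(\gamma\circ-)$ induced on these categories preserves pullbacks and finite coproducts.

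The plan is to realize $({\FinG}_{/A})^{\leq G/H}$ as a full subcategory of the disjunctive category ${\FinG}_{/A}\simeq \FCopC[{\OG}_{/A}]$, via the fully faithful inclusion $i^{\leq G/H}_A$ of Construction \ref{leqGH}. We then show it is closed under finite coproducts and pullbacks there. Disjunctiveness would follow, because both the coproducts and the slices of a full subcategory closed under these operations agree with those computed in ${\FinG}_{/A}$. Disjunctiveness of ${\FinG}_{/A}$ itself follows from that of $\FinG$, since slices of a disjunctive category are disjunctive.

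\textbf{Finite coproducts.} Closure holds by construction: $({\FinG}_{/A})^{\leq G/H}$ is the finite coproduct completion of $({\OG}_{/A})^{\leq G/H}$, and $i^{\leq G/H}_A$ is the left Kan extension of the inclusion of orbits. It therefore preserves finite coproducts, and its essential image consists of the finite coproducts of orbits $B\to A$ lying in $({\OG}_{/A})^{\leq G/H}$.

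\textbf{Pullbacks.} This is the main point. Since pullbacks in ${\FinG}_{/A}$ distribute over coproducts (disjunctiveness), it suffices to treat a cospan of orbits $B_1\to B_3\leftarrow B_2$ over $A$ with each $B_k\to A$ in $({\OG}_{/A})^{\leq G/H}$, and to show that every orbit summand $D$ of $B_1\times_{B_3}B_2$ again lies in $({\OG}_{/A})^{\leq G/H}$. The key observation is that this subcategory is a sieve (downward closed) in ${\OG}_{/A}$. The condition on $B\to A$ is the existence of a map $G/g^{\inv}Hg\to B$ over $A$, or equivalently, by the footnote to Construction \ref{leqGH}, any map from $G/H$ into $B$ compatible over $A$. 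An object $D$ mapping to some $B_1$ in the subcategory need not itself receive such a map, so a pure sieve argument is not available and I will need care here.

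I would first try to reinterpret the condition via Lemma 2.21 and the surrounding setup of \cite{StratCat}: membership means the stabilizer of points of $B$ is subconjugate to $H$ in a way compatible with $A$. Reading ``$\leq G/H$'' literally in the epiorbital framework, $B\leq G/H$ means there is a map $B\to G/H$, i.e.\ the stabilizers of $B$ are subconjugate to $H$. That condition is inherited by anything mapping to $B$, so the subcategory is a sieve. A summand $D$ of the pullback maps to $B_1$, hence lies in the subcategory. This gives closure under pullbacks.

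\textbf{Functoriality.} For $\gamma\colon A\to C$, postcomposition ${\FinG}_{/A}\to{\FinG}_{/C}$ preserves pullbacks and coproducts, because these are computed in $\FinG$. The functor restricts to the subcategories, since $({\OG}_{/-})^{\leq G/H}$ is a subfunctor. By the closure established above, the restricted functor still preserves pullbacks and finite coproducts.

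The expected obstacle is pinning down the membership condition precisely enough to get closure under pullbacks. If the orientation of the condition makes the subcategory a cosieve rather than a sieve, I would instead use the right adjoint $j^{\leq G/H}_A$ from Remark \ref{leqGHAdj}. Being a right adjoint, it preserves pullbacks; it also preserves coproducts, and $i$ is fully faithful. Pullbacks in $({\FinG}_{/A})^{\leq G/H}$ could then be computed as $j(iX\times_{iZ} iY)$, which yields disjunctiveness directly, with compatibility under $\gamma$ coming from the naturality of $j$.
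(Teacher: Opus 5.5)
\textbf{There is a genuine gap: your main argument misreads which orbits belong to the subcategory.}

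By Construction \ref{leqGH}, an orbit $B\to A$ lies in $({\OG}_{/A})^{\leq G/H}$ when it receives a map over $A$ from some $G/g^{\inv}Hg$. That means $H$ is subconjugate to the stabilizers of $B$, not the other way round as you write.

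\begin{itemize}
\item This property passes to \emph{targets} of maps. So the subcategory is a cosieve, not a sieve.
\item The reading ``$B$ maps to $G/H$'' that you switch to defines a different subcategory from the one in the statement.
\item With the actual definition, $({\FinG}_{/A})^{\leq G/H}$ is not closed under pullbacks in ${\FinG}_{/A}$. Take $G=\Sigma_3$, $H=C_2$, $A=G/G$. The cospan $G/H\to G/G\leftarrow G/H$ lies in the subcategory, but $G/H\times G/H\cong G/H\amalg G/e$, and the free orbit receives no map from $G/H$.
\end{itemize}

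So your claim that every orbit summand of $B_1\times_{B_3}B_2$ stays in the subcategory is false. The step in your functoriality paragraph, that the restricted functors preserve pullbacks ``by the closure established above'', fails with it.

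\textbf{Your fallback is correct, and it is the paper's proof.}

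\begin{itemize}
\item Since $i^{\leq G/H}_A$ is fully faithful with right adjoint $j^{\leq G/H}_A$, you have $j^{\leq G/H}_A i^{\leq G/H}_A\simeq\id$. Hence pullbacks in the subcategory exist and are computed as $j^{\leq G/H}_A(iX\times_{iZ}iY)$.
\item These genuinely differ from pullbacks in ${\FinG}_{/A}$; in the example above, $j$ discards the free orbit.
\item Preservation by ${\FCopC}[\gamma\circ-]$ follows from two commuting squares, ${\FCopC}[\gamma\circ-]\circ j^{\leq G/H}_A\simeq j^{\leq G/H}_C\circ(\gamma\circ-)$ and $(\gamma\circ-)\circ i^{\leq G/H}_A\simeq i^{\leq G/H}_C\circ{\FCopC}[\gamma\circ-]$. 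You also need that $\gamma\circ-$ preserves pullbacks in $\FinG$.
\item The first square is the naturality of $j$ from Remark \ref{leqGHAdj}. It holds because membership of $B\to A$ depends only on the orbit $B$.
\end{itemize}

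For disjunctiveness the paper simply cites \cite{StratCat}. In your setup it suffices to combine existence of pullbacks via $j$ with the fact that the slice decomposition of ${\FinG}_{/A}$ restricts to the subcategory, since the subcategory is closed under finite coproducts and summands. Make the fallback your main argument and drop the sieve claim entirely.
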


\begin{proof}
    To see this, first note that for each $A\in \OG$, the category $({\FinG}_{/A})^{\leq G/H}$ is disjunctive by combining Lemma 2.14, Example 2.3, and the discussion before Lemma 2.21 in \cite{StratCat}. Now we check that for a morphism $f\colon A\xrightarrow{}B$ in $\OG$, the functor $\FCopC[f\circ-] \colon ({\FinG}_{/A})^{\leq G/H} \xrightarrow{} ({\FinG}_{/B})^{\leq G/H}$ preserves coproducts and pullbacks. It preserves coproducts directly from the definition. For pullback preservation, we note that, pullbacks in $({\FinG}_{/A})^{\leq G/H}$ are computed by first applying the functor $i^{\leq G/H}_{A}$, then taking pullbacks, and finally applying the functor $j^{\leq G/H}_{A}$. This is a consequence of the equivalence $\id_{({\FinG}_{/A})^{\leq G/H}} \simeq j^{\leq G/H}_{A}i^{\leq G/H}_{A}$, given by the unit of the adjunction. We have a commutative diagram
    \[
        \begin{tikzcd}
            ({\FinG}_{/A})^{\leq G/H}  \arrow[r, "{i^{\leq G/H}_{A}}"] \arrow[d, "{\FCopC[f\circ-]}"'] & {\FinG}_{/A} \arrow[r, "{j^{\leq G/H}_{A}}"] \arrow[d, "{f\circ-}"'] & ({\FinG}_{/A})^{\leq G/H} \arrow[d,"{\FCopC[f\circ-]}"']
            \\
            ({\FinG}_{/B})^{\leq G/H} \arrow[r, "{i^{\leq G/H}_{B}}"] & {\FinG}_{/B} \arrow[r, "{j^{\leq G/H}_{B}}"] & ({\FinG}_{/B})^{\leq G/H}.
        \end{tikzcd}
    \]
    Now, for $X,Y,Z\in ({\FinG}_{/A})^{\leq G/H}$ one computes 
    \begin{align*}
         \FCopC[f\circ-](X\times_{Y}Z) & \simeq \FCopC[f\circ-]j^{\leq G/H}_{A}(i^{\leq G/H}_{A}(X)\times_{i^{\leq G/H}_{A}(Y)}i^{\leq G/H}_{A}(Z))
         \\
         & \simeq j^{\leq G/H}_{B}(f\circ-)(i^{\leq G/H}_{A}(X)\times_{i^{\leq G/H}_{A}(Y)}i^{\leq G/H}_{A}(Z))
         \\
         & \simeq j^{\leq G/H}_{B}((f\circ-)i^{\leq G/H}_{A}(X)\times_{(f\circ-)i^{\leq G/H}_{A}(Y)}(f\circ-)i^{\leq G/H}_{A}(Z))
         \\
         & \simeq j^{\leq G/H}_{B}(i^{\leq G/H}_{B}\FCopC[f\circ-](X)\times_{i^{\leq G/H}_{B}\FCopC[f\circ-](Y)}i^{\leq G/H}_{B}\FCopC[f\circ-](Z))
         \\
         & \simeq \FCopC[f\circ-](X)\times_{\FCopC[f\circ-](Y)}\FCopC[f\circ-](Z).
    \end{align*}
    The first and the last equivalences follow from the remark above about pullbacks in the category $({\FinG}_{/A})^{\leq G/H}$. The second and the fourth equivalences follow from the commutative diagram above. Finally, the third equivalence is the consequence of the fact that $f\circ -\colon {\FinG}_{/A} \xrightarrow{} {\FinG}_{/B}$ preserves pullbacks.
\end{proof}

\begin{ex}
    We have shown in the lemma above that the functor $({\FinG}_{/-})^{\leq G/H}$ satisfies conditions of Construction \ref{PMackSpan}. Let $\Aa$ be a category admitting finite products. We denote the $G$-category of $\Aa$-valued Mackey functors on $({\FinG}_{/-})^{\leq G/H}$, by 
    \begin{equation*}
        \PMack_{\leq G/H}(\Aa) \coloneq \PMack_{({\FinG}_{/-})^{\leq G/H}}(\Aa).
    \end{equation*}
\end{ex}

\begin{ex}\label{GroupoidGH}
    Let $G/H$ be an object in $\OG$. We denote by 
    \begin{equation*}
        \{G/H\}^{\simeq}_{/-} \colon \OG \xrightarrow{} \Cat
    \end{equation*}
    the subfunctor of ${\OG}_{/-} \colon \OG \xrightarrow{} \Cat$, for $A\in \OG$ given by a full subcategory of ${\OG}_{/A}$ spanned by the objects equivalent to $G/g^{\inv}Hg\xrightarrow{\alpha} A$ for some choice of $g\in G$ and the morphism $\alpha$ induced by inclusion of subgroups of $G$. The composite
    \begin{equation*}
        \FCopC[\{G/H\}^{\simeq}_{/-}] \colon \OG \xrightarrow{\{G/H\}^{\simeq}_{/-}} \Cat \xrightarrow{\FCopC[-]} \Cat
    \end{equation*}
    satisfies conditions of Construction \ref{PMackSpan}. For each $A\in \OG$ the category $\FCopC[\{G/H\}^{\simeq}_{/A}]$ admits pullbacks, since already $\{G/H\}^{\simeq}_{/A}$ admits pullbacks, which are the same as commutative squares. Then $\FCopC[\{G/H\}^{\simeq}_{/A}]$ is automatically disjunctive, as the finite coproduct completion is always disjunctive if it admits pullbacks. For any morphism $f\colon A\xrightarrow{} B$ in $\OG$, the functor $\FCopC[f\circ-] \colon \FCopC[\{G/H\}^{\simeq}_{/A}]\xrightarrow{} \FCopC[\{G/H\}^{\simeq}_{/B}]$ preserves finite coproducts by definition. It also preserves pullbacks, since it preserves commutative squares before passing to finite coproduct completions.
    
    Let $\Aa$ be a category admitting finite products. We denote the $G$-category of $\Aa$-valued Mackey functors on $\FCopC[\{G/H\}^{\simeq}_{/-}]$, by 
    \begin{equation*}
        \PMack_{G/H}(\Aa)\coloneq \PMack_{\FCopC[\{G/H\}^{\simeq}_{/-}]}(\Aa).
    \end{equation*}
\end{ex}

\begin{cons}\label{GroupoidGHcomparison}
    In the context of Example \ref{GroupoidGH}, the inclusions $\{G/H\}^{\simeq}_{/A}\xhookrightarrow{} \FCopC[\{G/H\}^{\simeq}_{/A}]$ for various $A\in \OG$ assemble into a natural transformation
    \begin{equation*}
        \{G/H\}^{\simeq}_{/-} \xrightarrow{} \FCopC[\{G/H\}^{\simeq}_{/-}].
    \end{equation*}
    To be more precise, this natural transformation is the same as the unit of the adjunction
    \begin{equation*}
        \FCopC[-]_{*} \colon \Fun(\OG,\Cat) \rightleftarrows \Fun(\OG,\Cat^{\amalg}) \colon \mkern-3mu \fgt_{*}.
    \end{equation*}
    This natural transformation can be lifted to the natural transformation
    \begin{equation*}
        (\{G/H\}^{\simeq}_{/-},\text{all},\text{all})\xrightarrow{} (\FCopC[\{G/H\}^{\simeq}_{/-}],\text{all},\text{all})
    \end{equation*}
    of functors into the category $\AdTrip$. This is a direct consequence of the inclusion $\{G/H\}^{\simeq}_{/-} \xrightarrow{} \FCopC[\{G/H\}^{\simeq}_{/-}]$ preserving pullbacks in each degree. We apply the functor $\Span$ to obtain a natural transformation
    \begin{equation*}
        \Span(\{G/H\}^{\simeq}_{/-})\xrightarrow{} \Span(\FCopC[\{G/H\}^{\simeq}_{/-}]).
    \end{equation*}
    Observe that the functors
    \begin{equation*}
        \{G/H\}^{\simeq}_{/A} \xrightarrow{} \Span(\{G/H\}^{\simeq}_{/A}), \quad X \mapsto X, \quad X\xrightarrow{f} Y \mapsto
        \begin{tikzcd}[sep=tiny]
            &X \arrow[dl, "{\id_{X}}"'] \arrow[dr, "{f}"]
            \\
            X && Y,
        \end{tikzcd}
    \end{equation*}
    between (2,1)-categories, are equivalences. They assemble into a natural equivalence
    \begin{equation*}
        \{G/H\}^{\simeq}_{/-} \xrightarrow{\simeq} \Span(\{G/H\}^{\simeq}_{/-}).
    \end{equation*}
    Precomposing with the composite natural transformation
    \begin{equation*}
        \{G/H\}^{\simeq}_{/-} \xrightarrow{\simeq} \Span(\{G/H\}^{\simeq}_{/-})\xrightarrow{} \Span(\FCopC[\{G/H\}^{\simeq}_{/-}]),
    \end{equation*}
    one obtains a $G$-functor
    \begin{equation*}
        \theta^{G/H}\colon \PMack_{G/H}(\Aa) \xrightarrow{} \Fun(\{G/H\}^{\simeq}_{/-},\Aa).
    \end{equation*}
\end{cons}

\begin{lem}\label{thetaequiv}
    The $G$-functor 
    \begin{equation*}
        \theta^{G/H}\colon \PMack_{G/H}(\Aa) \xrightarrow{} \Fun(\{G/H\}^{\simeq}_{/-},\Aa),
    \end{equation*}
    from Construction \ref{GroupoidGHcomparison}, is an equivalence.
\end{lem}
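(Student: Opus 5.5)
Since a $G$-functor is an equivalence of $G$-categories exactly when it is an objectwise equivalence (equivalences in $\Fun(\OGo,\Cat)$ are detected pointwise), I will fix $A\in\OG$ and show that
\begin{equation*}
    \theta^{G/H}_{A}\colon \FunProd(\Span(\FCopC[\{G/H\}^{\simeq}_{/A}]),\Aa)\xrightarrow{}\Fun(\{G/H\}^{\simeq}_{/A},\Aa)
\end{equation*}
is an equivalence. By construction, $\theta^{G/H}_{A}$ is restriction along the composite $\{G/H\}^{\simeq}_{/A}\simeq\Span(\{G/H\}^{\simeq}_{/A})\xrightarrow{}\Span(\FCopC[\{G/H\}^{\simeq}_{/A}])$. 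I will abbreviate $\Cc\coloneq\{G/H\}^{\simeq}_{/A}$.

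\textbf{Step 1: $\Cc$ is a groupoid.} Any morphism in $\Cc$ is a map $G/K\xrightarrow{}G/K'$ over $A$ with $K,K'$ both conjugate to $H$. Such a map is $G$-equivariant between orbits of the same cardinality, hence bijective, hence an isomorphism in $\OG$. So $\Cc$ is a (finite, $1$-truncated) groupoid.

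\textbf{Step 2: the span category of $\FCopC[\Cc]$.} For a groupoid $\Cc$, the category $\FCopC[\Cc]$ is the category of finite families of objects of $\Cc$. In it:
\begin{itemize}
    \item pullbacks are computed summandwise, since every map between connected objects is invertible;
    \item every morphism is, up to isomorphism, a composite of fold maps and coproducts of isomorphisms.
\end{itemize}
Consequently $\Span(\FCopC[\Cc])$ is semiadditive, with biproduct given by $\amalg$, and the canonical functor $\Span(\Cc)\simeq\Cc\xrightarrow{}\Span(\FCopC[\Cc])$ exhibits $\Span(\FCopC[\Cc])$ as the free semiadditive category on the groupoid $\Cc$. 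I plan to verify this by computing mapping spaces. The space of spans $c\leftarrow Z\rightarrow d$ between connected objects $c,d$ in which $Z$ is connected is $\Hom_{\Cc}(c,d)$. Spans with general $Z=\coprod_{i} Z_i$ decompose, and the biproduct decomposition then yields
\begin{equation*}
    \Hom_{\Span(\FCopC[\Cc])}(c,d)\simeq \coprod_{n\geq 0}\bigl(\Hom_{\Cc}(c,d)^{n}\bigr)_{h\Sigma_n},
\end{equation*}
that is, the free $E_\infty$-monoid on $\Hom_{\Cc}(c,d)$. This matches the formula for the free semiadditive category on a groupoid (the Barwick/Glasman--Harpaz description of $\Span(\FCopC[\Cc])$ for $\Cc$ a groupoid).

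\textbf{Step 3: conclude via the universal property.} Product-preserving functors out of a semiadditive category into $\Aa$ are additive, so restriction gives
\begin{equation*}
    \FunProd(\Span(\FCopC[\Cc]),\Aa)\simeq\FunProd(\FCopC[\Cc]^{\op},\Aa)\simeq\Fun(\Cc^{\op},\Aa)\simeq\Fun(\Cc,\Aa).
\end{equation*}
The last step uses that $\Cc$ is a groupoid, via the identification $\Cc\simeq\Cc^{\op}$ by inverses, which matches the identification $\Span(\Cc)\simeq\Cc$. Taken at face value this chain only establishes the first equivalence for $\Aa$ additive, where $\FCopC[\Cc]^{\op}$ embeds into the spans as the backward maps and product-preservation extends uniquely along the semiadditive closure.

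\textbf{Main obstacle: general $\Aa$.} For $\Aa$ with merely finite products, the values of a product-preserving functor on $\Span(\FCopC[\Cc])$ are only $E_\infty$-monoid-valued. The clean statement is then the universal property itself: product-preserving functors from the free semiadditive category are determined by their restriction to the generating groupoid. I expect this to be the hardest point. My first attempt is a direct argument showing that the restriction functor is both
\begin{itemize}
    \item \emph{essentially surjective}: right Kan extend a functor $\Cc\xrightarrow{}\Aa$ by sending $\coprod_i c_i\mapsto\prod_i F(c_i)$;
    \item \emph{fully faithful}: every object is a finite product of objects of $\Cc$, every morphism is generated by fold/diagonal spans and isomorphisms, and a product-preserving functor is forced on fold and diagonal spans.
\end{itemize}
If this becomes cumbersome, the fallback is to cite the universal property of spans as the free semiadditive category (\cite{BarwickSpectral}, or Harpaz's ambidexterity results). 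Once the objectwise equivalences are in hand, naturality in $A$ holds because $\theta^{G/H}$ is already given as a $G$-functor, so it is an equivalence of $G$-categories.
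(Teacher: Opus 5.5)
Your objectwise reduction is exactly what the paper does. For semiadditive $\Aa$, your Steps 1--3 correctly unpack the result the paper simply cites, Glasman's Theorem 2.27. Concretely: $\{G/H\}^{\simeq}_{/A}$ is a finite groupoid, $\Span(\FCopC[\Cc])$ is the free semiadditive category on it, and product-preserving functors between semiadditive categories preserve biproducts. For the free-semiadditive identification, cite Harpaz or Glasman; the mapping-space count alone does not prove the universal property. Semiadditivity of $\Aa$ is enough, so you do not need additivity.

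The part that fails is your ``main obstacle'', because for $\Aa$ with only finite products the lemma is false. Take $A=G/H$. Then $\{G/H\}^{\simeq}_{/(G/H)}$ is contractible: up to isomorphism its only object is $\id_{G/H}$, and it has no nontrivial automorphisms over $G/H$. So $\theta^{G/H}_{G/H}$ is the forgetful functor $\FunProd(\Span(\FCopC),\Aa)=\CMon(\Aa)\to\Aa$, which is an equivalence only when $\Aa$ is semiadditive; for instance it fails for $\Aa=\Spc$.

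Your direct argument breaks at the forward fold span $c\amalg c\xleftarrow{\id}c\amalg c\xrightarrow{\nabla}c$. Product preservation forces the image of the backward fold, which is the diagonal $c\to c\oplus c$. It does not force the image of the forward fold, which is a map $F(c)\times F(c)\to F(c)$ carrying genuinely extra $E_\infty$-data, so restriction is not fully faithful. Likewise, the assignment $\coprod_i c_i\mapsto\prod_i F(c_i)$ is functorial only in backward maps, so it does not produce a Mackey functor. Your fallback citation does not help either, since the free-semiadditive universal property concerns semiadditive targets only.

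The fix is a hypothesis, not more argument: read the lemma with $\Aa$ semiadditive. This is implicit in the paper. Glasman's theorem assumes it, and it holds in every application (Propositions \ref{phiGHleftadj} and \ref{SphereGH}, where $\Aa$ is presentable semiadditive). With that hypothesis, delete the last part of your proposal and the rest is a complete proof.
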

\begin{proof}
    The statement can be checked objectwise, where it is a content of Theorem 2.27 in \cite{StratCat}.
\end{proof}

\begin{cons}\label{leqGHcomparison}
    Let $\Aa$ be a presentable, semiadditive category. We consider the natural transformation
    \begin{equation*}
        j^{\leq G/H} \colon {\FinG}_{/-} \xrightarrow{} ({\FinG}_{/-})^{\leq G/H},
    \end{equation*}
    from Remark \ref{leqGHAdj}. For any object $A$ of $\OG$, the functor $j^{\leq G/H}_{A}$ preserves pullbacks and finite coproducts. Therefore, the natural transformation $j^{\leq G/H}$ lifts to a natural transformation
    \begin{equation*}
        j^{\leq G/H} \colon ({\FinG}_{/-},\text{all},\text{all}) \xrightarrow{} (({\FinG}_{/-})^{\leq G/H},\text{all},\text{all})
    \end{equation*}
    between functors into the category $\AdTrip$. The induced natural transformation $\Span(j^{\leq G/H})$ objectwise preserves direct sums. Precomposing with the natural transformation $\Span(j^{\leq G/H})$ we obtain a $G$-functor
    \begin{equation*}
        (j^{\leq G/H})^{*} \colon \PMack_{\leq G/H}(\Aa) \xrightarrow{} \PMack_{G}(\Aa).
    \end{equation*}
    For each $B \in \OG$, the functor $(j^{\leq G/H})^{*}_{B}\simeq -\circ \Span(j^{\leq G/H}_{B})$ admits a left adjoint given by the left Kan extension along $\Span(j^{\leq G/H}_{B})$, see \cite[Lemma 2.20]{StratCat}. We denote this left adjoint by
    \begin{equation*}
        \phi^{\leq G/H}_{B}\colon \PMack_{G}(\Aa)(B) \xrightarrow{} \PMack_{\leq G/H}(\Aa)(B).
    \end{equation*}
    We note that the $G$-functor $(j^{\leq G/H})^{*}$ is the one we get by the functoriality of $\PMack_{\bullet}(\Aa)$.
\end{cons}

We check that the objectwise adjunction in the above construction lifts to a $G$-adjunction.

\begin{lem}\label{leqGhGrpGHCompParametrizedAdj}
    Let $\Aa$ be a presentable, semiadditive category. For various $B\in \OG$, the functors $\phi^{\leq G/H}_{B}\colon \PMack_{G}(\Aa)(B) \xrightarrow{} \PMack_{\leq G/H}(\Aa)(B)$ assemble into a $G$-functor 
    \begin{equation*}
        \phi^{\leq G/H}\colon \PMack_{G}(\Aa) \xrightarrow{} \PMack_{\leq G/H}(\Aa),
    \end{equation*}
    which is a $G$-left adjoint to the $G$-functor
    \begin{equation*}
        (j^{\leq G/H})^{*} \colon \PMack_{\leq G/H}(\Aa) \xrightarrow{} \PMack_{G}(\Aa).
    \end{equation*}
\end{lem}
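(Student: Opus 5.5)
We use the criterion of Remark \ref{ParAdj}, applied to the $G$-functor $(j^{\leq G/H})^{*}$, but in its dual form for $G$-right adjoints. It then suffices to show two things. First, each component $(j^{\leq G/H})^{*}_{B}$ admits a left adjoint; this is $\phi^{\leq G/H}_{B}$ from Construction \ref{leqGHcomparison}. Second, for every morphism $f\colon A\xrightarrow{}B$ in $\OG$, the mate
\begin{equation*}
    \BC_f\colon \phi^{\leq G/H}_{A}\circ f^{*} \xrightarrow{} f^{*}\circ \phi^{\leq G/H}_{B}
\end{equation*}
of the commutative square $(j^{\leq G/H})^{*}_{A}\circ f^{*}\simeq f^{*}\circ (j^{\leq G/H})^{*}_{B}$ must be an equivalence. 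Once this holds, the $\phi^{\leq G/H}_{B}$ assemble into a $G$-functor $\phi^{\leq G/H}$, with the naturality squares given by the $\BC_f$, and this $G$-functor is $G$-left adjoint to $(j^{\leq G/H})^{*}$.

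To check that $\BC_f$ is an equivalence, we use the explicit description of all four functors. Write $p_{B}\coloneq \Span(j^{\leq G/H}_{B})$ and $q_{f}\coloneq \Span(f\circ-)$, and similarly for the restricted slices. The restrictions $f^{*}$ are given by precomposition with $q_f$, and $\phi^{\leq G/H}_{B}$ is left Kan extension along $p_B$, restricted to product preserving functors; \cite[Lemma 2.20]{StratCat} says that this Kan extension preserves finite products. The mate is then the Beck--Chevalley transformation of the commutative square of span categories
\[
    \begin{tikzcd}
        \Span({\FinG}_{/A}) \arrow[r, "{p_A}"] \arrow[d, "{q_f}"'] & \Span(({\FinG}_{/A})^{\leq G/H}) \arrow[d, "{q_f}"]
        \\
        \Span({\FinG}_{/B}) \arrow[r, "{p_B}"] & \Span(({\FinG}_{/B})^{\leq G/H}),
    \end{tikzcd}
\]
which comes from the naturality of $j^{\leq G/H}$ in Remark \ref{leqGHAdj}.

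The most direct route to the equivalence is to show that $\phi^{\leq G/H}_{B}$ is computed by precomposition with a functor of span categories, since precomposition commutes strictly with the restrictions $f^{*}$. The candidate is $\Span(i^{\leq G/H}_{B})$. The adjunction $i^{\leq G/H}_{B}\dashv j^{\leq G/H}_{B}$ consists of functors preserving pullbacks and finite coproducts: $i$ preserves pullbacks by the description of pullbacks used in the previous lemma, and $i$ is fully faithful. We expect this adjunction to induce, via the 2-functoriality of $\Span$ on adequate triples, an adjunction between the span categories, with $\Span(i)$ on one side of $p_B=\Span(j)$. Because spans reverse backward maps, $\Span(i)$ could be either a left or a right adjoint of $\Span(j)$, and this needs to be sorted out. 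If $\Span(i_B)\dashv \Span(j_B)$ holds, then left Kan extension along $\Span(j_B)$ is precomposition with $\Span(i_B)$. Since $i^{\leq G/H}$ is natural in $B$ (Construction \ref{leqGH}), the Beck--Chevalley map reduces to the identity of $-\circ\Span(i_B)\circ q_f\simeq -\circ q_f\circ\Span(i_A)$, so it is an equivalence. This direction of the span adjunction is exactly what underlies \cite[Lemma 2.20]{StratCat}, and we would extract it from there.

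If that approach fails, we fall back on the pointwise formula. For $\Xx\in\PMack_{G}(\Aa)(B)$ and an object $Y$ of $({\FinG}_{/A})^{\leq G/H}$, compare the colimit of $\Xx$ over the comma category $\Span({\FinG}_{/A})_{/Y}$ with the colimit over $\Span({\FinG}_{/B})_{/f\circ Y}$. We would then show that the comparison functor between these comma categories is cofinal, in the same way as the cofinality argument in the proof of Lemma \ref{CorCofPres}, using that $f\circ-$ induces an equivalence ${\FinG}_{/A}\simeq({\FinG}_{/B})_{/(A\to B)}$. The main obstacle is pinning down the correct direction of the induced adjunction on span categories, or equivalently carrying out this cofinality check.
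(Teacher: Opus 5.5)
\textbf{Verdict: there is a genuine gap.} Your reduction to the Beck--Chevalley mate $\phi^{\leq G/H}_{A}\circ f^{*}\to f^{*}\circ\phi^{\leq G/H}_{B}$ is correct, and it is how the paper begins. But your main route for proving this mate is an equivalence rests on a false claim. The claim is that $\phi^{\leq G/H}_{B}$ is precomposition with $\Span(i^{\leq G/H}_{B})$, in one adjunction direction or the other; it is neither. Write $i,j$ for $i^{\leq G/H},j^{\leq G/H}$.

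There are three separate problems with the main route.
\begin{enumerate}
\item $i_{B}$ does not preserve pullbacks in general. The formula $j(iX\times_{iY}iZ)$ from the previous lemma is needed precisely for this reason. For $G=\Sigma_{3}$, $H=C_{2}$, $B=G/G$, one has $G/C_{2}\times G/C_{2}\cong G/C_{2}\amalg G/e$ in $\FinG$, and the free orbit does not lie in the subcategory. So $\Span(i)$ is not even defined.
\item Even when $\Span(i)$ is defined, it is adjoint to $\Span(j)$ on neither side. Take $G=C_{2}$, $H=G$, $B=G/G$, so $i$ is the inclusion of trivial $G$-sets and $j=(-)^{G}$. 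Then $\Hom_{\Span(\FinG)}(G/G,G/G)\simeq\FinG^{\simeq}$ is not equivalent to $\Hom_{\Span(\FCopC)}(*,*)\simeq\FCopC^{\simeq}$. Concretely, for $\Aa=\Sp$ the functor $\phi^{\leq G/G}_{G/G}$ is the geometric fixed points $\Phi^{C_{2}}$. Precomposition with $\Span(i)$ is instead evaluation at $G/G$, i.e.\ categorical fixed points. These already differ on $\Sigma^{\infty}_{+}C_{2}$. The underlying reason is that the counit $ij\to\id$ is not cartesian: an orbit over a ``$\leq G/H$'' orbit need not itself be ``$\leq G/H$''. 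So \cite[Corollary C.21]{NormsMotivic} does not apply.
\item The direction you wrote is backwards. $\Lan_{p}$ is precomposition with a \emph{right} adjoint of $p$, so you would need $\Span(j)\dashv\Span(i)$, not $\Span(i)\dashv\Span(j)$. Also, \cite[Lemma 2.20]{StratCat} only says that the genuine left Kan extension preserves product-preserving functors; it does not supply such an adjunction.
\end{enumerate}

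Your fallback, cofinality of the comparison between comma categories, is where all the content of the lemma lies, and you do not carry it out. The equivalence $\FinG_{/A}\simeq(\FinG_{/B})_{/A}$ alone does not control these comma categories. Their morphisms are spans, and the objects over $f\circ Y$ include $X'\to B$ that do not factor through $A$.

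The missing idea is to put the precompositions on the other side. Do not try to write $\phi$ as a restriction; instead describe the adjoints of the restriction functors.
\begin{itemize}
\item The slice adjunction $f\circ-\dashv A\times_{B}-$ has cartesian unit and counit. So does the induced adjunction $\FCopC[f\circ-]\dashv R$ on the ``$\leq G/H$'' parts, where $R=j_{A}(A\times_{B}i_{B}(-))$.
\item By \cite[Corollary C.21]{NormsMotivic}, the span functors are then adjoint on both sides. Hence $f_{!}\simeq f_{*}$ on both $\PMack_{G}(\Aa)$ and $\PMack_{\leq G/H}(\Aa)$. On the former, $f_{*}$ is precomposition with $\Span(A\times_{B}-)$; on the latter, with $\Span(R)$.
\item Passing to total mates, $\BC_{f}$ is an equivalence iff $(j^{\leq G/H})^{*}_{B}\circ f_{*}\to f_{*}\circ(j^{\leq G/H})^{*}_{A}$ is. This is now a transformation between precomposition functors.
\item It therefore reduces to $j_{A}(A\times_{B}-)\simeq R\,j_{B}(-)$ on finite $G$-sets. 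Check this on an orbit $D\to B$. Either $D$ lies in the $\leq G/H$ part, or no orbit of $A\times_{B}D$ does, since such an orbit would map to $D$ over $B$.
\end{itemize}
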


\begin{proof}
    We check the conditions dual to those of Remark \ref{ParAdj}. Condition (1) follows from Construction \ref{leqGHcomparison}. Fix a morphism $f\colon B\xrightarrow{} C$ in $\OG$. For condition (2), we need to check that the Beck-Chevalley map
    \begin{equation*}
        \BC_{f}\colon \phi^{\leq G/H}_{B}\circ f^{*} \xrightarrow{} f^{*}\circ \phi^{\leq G/H}_{C}
    \end{equation*}
    is an equivalence. The natural transformation $\BC_f$ is obtained by passing to the left adjoints of the horizontal functor in the diagram 
    \[
        \begin{tikzcd}[column sep=huge]
            \PMack_{\leq G/H}(\Aa)(C) \arrow[r, "{(j^{\leq G/H})^{*}_{C}}"] \arrow[d, "{f^{*}}"'] & \PMack_{G}(\Aa)(C) \arrow[d, "{f^{*}}"']
            \\
            \PMack_{\leq G/H}(\Aa)(B) \arrow[r, "{(j^{\leq G/H})^{*}_{B}}"] & \PMack_{G}(\Aa)(B).
        \end{tikzcd}
    \]
    The map $\BC_f$ is equivalently given by first passing to right adjoints of the vertical functors in the above diagram and then passing to a total mate. Therefore, it is enough to check that the natural transformation
    \begin{equation*}
        (j^{\leq G/H})^{*}_{C}\circ f_{*} \xrightarrow{} f_{*}\circ (j^{\leq G/H})^{*}_{B},
    \end{equation*}
    obtained by passing to the right adjoints of the vertical functors in the above diagram is an equivalence. For this, we first study the functors $f_{*}$ for both $G$-categories.
    
    Let's first consider the functor $f_{*}$ for the $G$-category $\PMack_{G}(\Aa)$. For finite $G$-sets, we have an adjunction
    \begin{equation}\label{EqPostCompAdjPullb}
        f\circ - \colon {\FinG}_{/B} \rightleftarrows {\FinG}_{/C} \colon \mkern-3mu B\times_{C}-,
    \end{equation}
    with the right adjoint given by pulling back along $f\colon B\xrightarrow{} C$. Consider arbitrary morphisms
    \[
        \begin{tikzcd}[sep=tiny]
            X \arrow[rr, "{\alpha}"] \arrow[rd] && Y \arrow[dl] && X' \arrow[rr, "{\alpha'}"] \arrow[rd] && Y' \arrow[dl]
            \\
            & B, &&&& C
        \end{tikzcd}
    \]
    in ${\FinG}_{/B}$ and ${\FinG}_{/C}$, respectively. The commutative squares
    \begin{equation}\label{EqSpanAdjLiftSquare}
        \begin{tikzcd}
            X \arrow[dr, phantom, "\lrcorner", very near start] \arrow[r, "{\eta}"] \arrow[d, "{\alpha}"'] & B\times_{C}X \arrow[d, "{B\times_{C}\alpha}"'] & B\times_{C}X' \arrow[dr, phantom, "\lrcorner", very near start] \arrow[r, "{\epsilon}"] \arrow[d, "{B\times_{C}\alpha'}"'] & X' \arrow[d, "{\alpha'}"']
            \\
            Y \arrow[r, "{\eta}"] & B\times_{C}Y, & B\times_{C}Y' \arrow[r, "{\epsilon}"] & Y',
        \end{tikzcd}
    \end{equation}
    coming from the naturality of the unit and the counit of the above adjunction, are pullbacks in ${\FinG}_{/B}$ and ${\FinG}_{/C}$, respectively. This can be checked directly after forgetting to $\FinG$, where it is clear. We can now apply Corollary C.21 in \cite{NormsMotivic}. It follows that the functor
    \begin{equation*}
        \Span(B\times_{C}-)\colon \Span({\FinG}_{/C}) \xrightarrow{} \Span({\FinG}_{/B})
    \end{equation*}
    is both left and right adjoint to
    \begin{equation*}
        \Span(f\circ -)\colon \Span({\FinG}_{/B}) \xrightarrow{} \Span({\FinG}_{/C}).
    \end{equation*}
    By 2-functoriality of $\FunProd(-,\Aa)$ we get that
    \begin{equation*}
        f_{!} \simeq \Span(B\times_{C}-)^{*} \simeq f_{*}.
    \end{equation*}
    In particular. the adjunction $f_{!} \dashv f^{*} \dashv f_{*}$ for the $G$-category $\PMack_{G}(\Aa)$ is ambidextrous.
    
    Now, let's consider the map $f_{*}$ for the $G$-category $\PMack_{\leq G/H}(\Aa)$. We start by considering a right adjoint of the functor
    \begin{equation*}
        \FCopC[f\circ -] \colon ({\FinG}_{/B})^{\leq G/H} \xrightarrow{} ({\FinG}_{/C})^{\leq G/H}.
    \end{equation*}
    It is given by the composite
    \begin{equation*}
        R_{\FCopC[f\circ -]}(-)\coloneq j^{\leq G/H}_{B}(B\times_{C}i^{\leq G/H}_{C}(-))\colon ({\FinG}_{/C})^{\leq G/H} \xrightarrow{}({\FinG}_{/B})^{\leq G/H}.
    \end{equation*}
    Indeed, for $U\in ({\FinG}_{/B})^{\leq G/H}$ and for $V \in ({\FinG}_{/C})^{\leq G/H}$ one computes
    \begin{align*}
        \Hom_{({\FinG}_{/C})^{\leq G/H}}(\FCopC[f\circ -](U),V) & \simeq \Hom_{{\FinG}_{/C}}(i^{\leq G/H}_{C}\FCopC[f\circ -](U),i^{\leq G/H}_{C}(V))
        \\
        & \simeq \Hom_{{\FinG}_{/C}}((f\circ -)i^{\leq G/H}_{B}(U),i^{\leq G/H}_{C}(V))
        \\
        & \simeq \Hom_{{\FinG}_{/B}}(i^{\leq G/H}_{B}(U),B\times_{C}i^{\leq G/H}_{C}(V))
        \\
        & \simeq \Hom_{({\FinG}_{/B})^{\leq G/H}}(U,j^{\leq G/H}_{B}(B\times_{C}i^{\leq G/H}_{C}(V)))
        \\
        & = \Hom_{({\FinG}_{/B})^{\leq G/H}}(U,R_{\FCopC[f\circ -]}(V)).
    \end{align*}
    The first equivalence follows from $i^{\leq G/H}_{C}$ being fully faithful. The second equivalence follows from the fact that $i^{\leq G/H}$ is a natural transformation. The third equivalence is the adjunction (\ref{EqPostCompAdjPullb}). Finally, the last equivalence follows from the adjunction $i^{\leq G/H}_{B} \dashv j^{\leq G/H}_{B}$. These equivalences are natural in $U$ and $V$, verifying the adjunction. Furthermore, the unit and the counit are given by
    \begin{equation*}
        U\simeq j^{\leq G/H}_{B}i^{\leq G/H}_{B}(U) \xrightarrow{j^{\leq G/H}_{B}\eta_{i^{\leq G/H}_{B}(U)}} j^{\leq G/H}_{B}(B\times_{C}(f\circ -)(i^{\leq G/H}_{B}(U)))\simeq R_{\FCopC[f\circ -]}\FCopC[f\circ-](U),
    \end{equation*}
    and
    \begin{equation*}
        \FCopC[f\circ-]R_{\FCopC[f\circ -]}(V) \simeq j^{\leq G/H}_{C}(f\circ -)(B\times_{C}i^{\leq G/H}_{C}(V)) \xrightarrow{j^{\leq G/H}_{C}\epsilon_{i^{\leq G/H}_{C}(V)}} j^{\leq G/H}_{C}i^{\leq G/H}_{C}(V)\simeq V.
    \end{equation*}
    To see this, one can check the triangle identities, but we omit the details here. Consider arbitrary morphisms $U\xrightarrow{\beta} V$ and $U'\xrightarrow{\beta'}V'$ in $({\FinG}_{/B})^{\leq G/H}$ and $({\FinG}_{/C})^{\leq G/H}$, respectively. The commutative squares
    \[
        \begin{tikzcd}
            U \arrow[dr, phantom, "\lrcorner", very near start] \arrow[r, "{\eta}"] \arrow[d, "{\beta}"'] & R_{\FCopC[f\circ -]}\FCopC[f\circ-](U) \arrow[d, "{R_{\FCopC[f\circ -]}\FCopC[f\circ-](\beta)}"'] & \FCopC[f\circ-]R_{\FCopC[f\circ -]}(U') \arrow[dr, phantom, "\lrcorner", very near start] \arrow[r, "{\epsilon}"] \arrow[d, "{\FCopC[f\circ-]R_{\FCopC[f\circ -]}(\beta')}"'] & U' \arrow[d, "{\beta'}"']
            \\
            V \arrow[r, "{\eta}"] & R_{\FCopC[f\circ -]}\FCopC[f\circ-](V), & \FCopC[f\circ-]R_{\FCopC[f\circ -]}(V') \arrow[r, "{\epsilon}"] & V',
        \end{tikzcd}
    \]
    coming from the naturality of the unit and the counit of the adjunction $\FCopC[f\circ -] \dashv R_{\FCopC[f\circ -]}$, are pullbacks in $({\FinG}_{/B})^{\leq G/H}$ and $({\FinG}_{/C})^{\leq G/H}$, respectively. To see this, one observes that these commutative squares are equivalently given by considering the squares of (\ref{EqSpanAdjLiftSquare}) with $\alpha = i^{\leq G/H}_{B}(\beta)$ and $\alpha' = i^{\leq G/H}_{C}(\beta')$, and then applying $j^{\leq G/H}_{B}$ and $j^{\leq G/H}_{C}$, respectively. The conclusion follows from $j^{\leq G/H}_{A}$ preserving pullbacks for all $A \in \OG$. Once again, we can apply Corollary C.21 in \cite{NormsMotivic}. We get that the functor
    \begin{equation*}
        \Span(R_{\FCopC[f\circ -]})\colon \Span(({\FinG}_{/C})^{\leq G/H}) \xrightarrow{} \Span(({\FinG}_{/B})^{\leq G/H})
    \end{equation*}
    is both left and right adjoint to
    \begin{equation*}
        \Span(\FCopC[f\circ -]) \colon \Span(({\FinG}_{/B})^{\leq G/H}) \xrightarrow{} \Span(({\FinG}_{/C})^{\leq G/H}).
    \end{equation*}
    Once again, by 2-functoriality of $\FunProd(-,\Aa)$ it follows that
    \begin{equation*}
        f_{!} \simeq \Span(R_{\FCopC[f\circ -]})^{*} \simeq f_{*}.
    \end{equation*}
    In particular, the adjunction $f_{!} \dashv f^{*} \dashv f_{*}$ for the $G$-category $\PMack_{\leq G/H}(\Aa)$ is ambidextrous.
    
    We are ready to show that the natural transformation
    \begin{equation*}
        (j^{\leq G/H})^{*}_{C}\circ f_{*} \xrightarrow{} f_{*}\circ (j^{\leq G/H})^{*}_{B},
    \end{equation*}
    is an equivalence. By 2-functoriality of $\FunProd(-,\Aa)$, this reduces to showing that the mate of the commutative square
    \[
        \begin{tikzcd}[column sep=huge]
            \Span(({\FinG}_{/C})^{\leq G/H})   & \Span({\FinG}_{/C}) \arrow[l, "{\Span(j^{\leq G/H}_{C})}"']
            \\
            \Span(({\FinG}_{/B})^{\leq G/H}) \arrow[u, "{\Span(\FCopC[f\circ -])}"]  & \Span({\FinG}_{/B}) \arrow[l, "{\Span(j^{\leq G/H}_{B})}"'] \arrow[u, "{\Span(f\circ -)}"],
        \end{tikzcd}
    \]
    obtained by passing to the left adjoints of vertical morphisms, is an equivalence. Using the above established adjunctions $\Span(B\times_{C}-) \dashv \Span(f\circ -)$ and $\Span(R_{\FCopC[f\circ -]}) \dashv \Span(\FCopC[f\circ -])$, one can check the statement before applying the $\Span$ functor. Hence, it is enough to show an equivalence
    \begin{equation}\label{EqRightAdjIndentification}
        j^{\leq G/H}_{B}(B\times_{C}-) \simeq R_{\FCopC[f\circ -]}(j^{\leq G/H}_{C}(-)).
    \end{equation}
    Expanding the functor $R_{\FCopC[f\circ -]}$ we get 
    \begin{equation*}
        R_{\FCopC[f\circ -]}(j^{\leq G/H}_{B}(-)) \simeq j^{\leq G/H}_{B}(B\times_{C}i^{\leq G/H}_{C}j^{\leq G/H}_{C}(-)).
    \end{equation*}
    Both sides in (\ref{EqRightAdjIndentification}) preserve finite coproducts, as such, we can check the equivalence on the objects of ${\OG}_{/C}$. Let $D\xrightarrow{} C$ be an object of ${\OG}_{/C}$. There are two options, either $i^{\leq G/H}_{C}j^{\leq G/H}_{C}(D) = D$, or $i^{\leq G/H}_{C}j^{\leq G/H}_{C}(D)=\emptyset$. In the first case, equivalence is immediate. In the second case, equivalence follows from the fact that if $B\times_{C}D$ admits a morphism from $G/g^{\inv}Hg\xrightarrow{}B$, then by postcomposing with the counit, one obtains a morphism from $G/g^{\inv}Hg\xrightarrow{}C$ to $D\xrightarrow{} C$. This can't happen since $i^{\leq G/H}_{C}j^{\leq G/H}_{C}(D)=\emptyset$, but then also $j^{\leq G/H}_{B}(B\times_{C}D) = \emptyset$. This finishes the proof.
\end{proof}

\begin{cons}\label{leqGHToGH}
    In Example \ref{GroupoidGH}, we defined a subfunctor 
    \begin{equation*}
        \{G/H\}^{\simeq}_{/-} \xrightarrow{} {\OG}_{/-}.
    \end{equation*}
    This subfunctor factors as a composite
    \begin{equation*}
        \{G/H\}^{\simeq}_{/-} \xrightarrow{} ({\OG}_{/-})^{\leq G/H} \xrightarrow{} {\OG}_{/-},
    \end{equation*}
    through the functor $({\OG}_{/-})^{\leq G/H}$ from Construction \ref{leqGH}. We obtain a natural transformation
    \begin{equation*}
        r^{G/H} \colon \FCopC[\{G/H\}^{\simeq}_{/-}] \xrightarrow{} ({\FinG}_{/-})^{\leq G/H}.
    \end{equation*}
    This natural transformation is objectwise finite coproduct preserving and fully faithful. We claim that $r^{G/H}$ objectwise preserves pullbacks. Indeed, the functor
    \begin{equation*}
        \{G/H\}^{\simeq}_{/A} \xrightarrow{} ({\OG}_{/A})^{\leq G/H}
    \end{equation*}
    preserves pullbacks for all $A\in \OG$, since $\{G/H\}^{\simeq}_{/A}$ is a groupoid. The claim for $r^{G/H}$ follows from the way pullbacks are calculated in the finite coproduct completions.
    
    Let $\Aa$ be a category admitting finite products. By the functoriality of $\PMack_{\bullet}(\Aa)$, we obtain a $G$-functor
    \begin{equation*}
        (r^{G/H})^{*} \colon \PMack_{\leq G/H}(\Aa) \xrightarrow{} \PMack_{G/H}(\Aa).
    \end{equation*}
\end{cons}

\begin{lem}\label{rGHadj}
    Let $\Aa$ be a presentable, semiadditive category. The $G$-functor 
    \begin{equation*}
        (r^{G/H})^{*} \colon \PMack_{\leq G/H}(\Aa) \xrightarrow{} \PMack_{G/H}(\Aa),
    \end{equation*}
    from Construction \ref{leqGHToGH}, is a $G$-left adjoint $G$-functor.
\end{lem}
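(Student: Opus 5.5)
Since both $\PMack_{\leq G/H}(\Aa)$ and $\PMack_{G/H}(\Aa)$ are built from $\FunProd(-,\Aa)$ with $\Aa$ presentable and semiadditive, I would use Remark \ref{PPrescolim} together with Proposition \ref{PAdjFT}. The first step is to show that both $G$-categories are $G$-presentable. For $\PMack_{G/H}(\Aa)$ this follows from Lemma \ref{thetaequiv} and Lemma \ref{CorCofPres}, since $\Fun(\{G/H\}^{\simeq}_{/-},\Aa)$ is handled exactly as $\CorCof\Aa)$. For $\PMack_{\leq G/H}(\Aa)$, I would argue as in the proof of Lemma \ref{leqGhGrpGHCompParametrizedAdj}. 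Each fiber is a category of product preserving functors out of a small semiadditive category into a presentable category, hence presentable. Moreover, the ambidextrous adjunctions $f_{!}\dashv f^{*}\dashv f_{*}$ established there give the existence of $f_!$ and show that $f^*$ preserves colimits. The Beck--Chevalley condition (4) of Remark \ref{ParColim} can alternatively be sidestepped: one can prove directly that $(r^{G/H})^*$ is a $G$-left adjoint via the dual of Remark \ref{ParAdj}, which needs no presentability input beyond the fiberwise one. I will take that route.

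Fiberwise, for $A\in\OG$, the functor $(r^{G/H})^*_A=-\circ\Span(r^{G/H}_A)$ is restriction along the product preserving functor $\Span(r^{G/H}_A)$. The key observation is that $r^{G/H}_A\colon \FCopC[\{G/H\}^{\simeq}_{/A}]\hookrightarrow({\FinG}_{/A})^{\leq G/H}$ is the inclusion of a coproduct-closed, pullback-closed full subcategory onto the minimal orbits. I expect it to admit a right adjoint $s_A$ sending an orbit to itself if it lies in $\{G/H\}^{\simeq}_{/A}$ and to $\emptyset$ otherwise. This mirrors Remark \ref{leqGHAdj}: a map from a $G/H$-type orbit into an orbit of $({\OG}_{/A})^{\leq G/H}$ that is not of type $G/H$ does not exist, since any such orbit lies strictly above the $G/H$ type. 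Hence $s_A$ preserves pullbacks and coproducts, and $s_Ar^{G/H}_A\simeq\id$. The squares coming from naturality of the unit and counit should be pullbacks, so Corollary C.21 in \cite{NormsMotivic} makes $\Span(s_A)$ both left and right adjoint to $\Span(r^{G/H}_A)$. Consequently $(r^{G/H})^*_A\simeq\Span(s_A)^*$ has a right adjoint, namely $\Span(r^{G/H}_A)^*$. This verifies condition (1).

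For condition (2), fix $f\colon B\to C$ in $\OG$. By the same total-mate reduction used in Lemma \ref{leqGhGrpGHCompParametrizedAdj}, using that $f_!\simeq f_*$ on both sides, it suffices to check that a square of span functors commutes after passing to adjoints. This in turn reduces, before applying $\Span$, to an identification of the form
\[
s_B\bigl(R_{\FCopC[f\circ-]}(-)\bigr)\simeq R'_{f}\bigl(s_C(-)\bigr),
\]
where $R'_f$ is the right adjoint of $\FCopC[f\circ-]$ on $\FCopC[\{G/H\}^{\simeq}_{/-}]$ (pullback along $f$ followed by $s_B$). Both sides preserve finite coproducts, so the identification can be checked on orbits $D\to C$ in $({\OG}_{/C})^{\leq G/H}$. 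If $D$ is of type $G/H$, then $B\times_C D$ decomposes into $G/H$-type orbits and both sides agree. If $D$ is not of type $G/H$, the right side is $\emptyset$. The left side is also $\emptyset$, since any $G/H$-type orbit over $B\times_CD$ would map to $D$, and an orbit of strictly bigger type admits no map from $G/H$ unless $D$ itself has that type; this orbit-type counting uses that $D$ and $G/H$ have comparable stabilizer orders.

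I expect the main obstacle to be this last compatibility, together with verifying that the unit and counit squares for $r^{G/H}_A\dashv s_A$ are pullbacks. The pullback check itself should be routine, since $s_A$ is simply the identity on one family of orbits and $\emptyset$ on the rest. The subtle point is the case distinction: the correct argument is that the orbits in $B\times_C D$ with $D=G/K$ have stabilizers subconjugate to $K$ and of order at most $|K|$. So if $|K|>|H|$, none of them is of type $G/H$, while if $|K|=|H|$, then $D$ is already of type $G/H$.
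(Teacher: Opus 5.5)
Your framework, checking the two conditions of Remark \ref{ParAdj}, is the same as the paper's. However, the step you build both conditions on is false: $r^{G/H}_A$ does not have the right adjoint $s_A$ you describe, and in general it has no right adjoint at all.

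Your justification reverses the direction of maps. By the very definition of $({\OG}_{/A})^{\leq G/H}$, every orbit in it \emph{receives} a map over $A$ from a $G/H$-type orbit. What is true is only the converse: no orbit outside $\{G/H\}^{\simeq}_{/A}$ maps \emph{into} a $G/H$-type orbit. So $\{G/H\}^{\simeq}_{/A}$ is a sieve in $({\OG}_{/A})^{\leq G/H}$. The ``itself or $\emptyset$'' formula of Remark \ref{leqGHAdj} instead relies on $({\OG}_{/A})^{\leq G/H}$ being upward closed in ${\OG}_{/A}$. Consequently, for any orbit $D$ not of type $G/H$ there is a $G/H$-type orbit $X$ with $\Hom(r^{G/H}_A X,D)\neq\emptyset=\Hom(X,\emptyset)$.

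For a case with no right adjoint whatsoever, take $G=C_2$, $H=1$, $A=G/G$. Then $r^{G/H}_A$ is the inclusion of free finite $C_2$-sets into $\FinG$. A right adjoint would require a free $C_2$-set $s_A(*)$ with $\Hom_{C_2}(C_2,s_A(*))\cong\Hom_{C_2}(C_2,*)=*$, i.e.\ one with exactly one underlying point, which is impossible.

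This is consistent with the computation in Proposition \ref{SphereGH}. There the fiberwise right adjoint of $(r^{G/H})^*_A$ is computed by limits over $\FCopC[\{G/H\}^{\simeq}_{/A}]^{\op}$, i.e.\ Weyl-group homotopy fixed points, so in general it is not a precomposition functor. Note also that $(r^{G/H})^*_A=\Span(r^{G/H}_A)^*$, while $\Span(s_A)^*$ points in the opposite direction, so your displayed identification is mistyped.

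As a result, your reduction of condition (2) to $s_B\circ R_{\FCopC[f\circ-]}\simeq R'_f\circ s_C$ has no content, and with your explicit $s$ it is false. Take $H\neq G$, $f\colon B=G/H\to C=G/G$, and $D=G/G$. Then $R_{\FCopC[f\circ-]}(D)=(G/H\xrightarrow{\id}G/H)$, so the left side is this orbit while the right side is $\emptyset$. The same example refutes your closing claim that for $|K|>|H|$ no orbit of $B\times_C D$ has type $G/H$.

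The fix is to use only the \emph{vertical} adjoints, as the paper does.
\begin{enumerate}
\item For condition (1), take the fiberwise right adjoint to be the right Kan extension along $\Span(r^{G/H}_A)$, which preserves product preserving functors (dual of \cite[Lemma 2.20]{StratCat}).
\item For condition (2), pass to the mate $f_!\circ(r^{G/H})^*_B\to(r^{G/H})^*_C\circ f_!$. On both sides, $f_!$ is precomposition with $\Span$ of the right adjoint of $\FCopC[f\circ-]$.
\end{enumerate}
The key input for (2) is that $R_{\FCopC[f\circ-]}=j^{\leq G/H}_B(B\times_C i^{\leq G/H}_C(-))$ sends $G/H$-type orbits to sums of $G/H$-type orbits. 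Indeed, each orbit of $j^{\leq G/H}_B(B\times_C D)$ receives a map from a $G/H$-type orbit and maps to $D$, so it has type $G/H$ by atomicity of $\OG$. Hence $R_{\FCopC[f\circ-]}$ restricts to a right adjoint $R'$ on $\FCopC[\{G/H\}^{\simeq}_{/-}]$. Corollary C.21 of \cite{NormsMotivic} still applies, because this subcategory is closed under pullbacks. Finally, $R_{\FCopC[f\circ-]}\circ r^{G/H}_C\simeq r^{G/H}_B\circ R'$ makes the Beck--Chevalley map an equivalence.

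Your observation that pulling back a $G/H$-type orbit yields $G/H$-type orbits is exactly the right ingredient, though it holds only after applying $j^{\leq G/H}_B$. It has to be used for this restriction, not to build a horizontal adjoint.
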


\begin{proof}
    The idea of the proof is similar to that of Lemma \ref{leqGhGrpGHCompParametrizedAdj}, thus, we skip details. We check the conditions of Remark \ref{ParAdj}. Let $B$ be an object of $\OG$. The right Kan extension functor $\Ran_{\Span(r^{G/H}_{B})}$ along 
    \begin{equation*}
        \Span(r^{G/H}_{B})\colon \Span(\FCopC[\{G/H\}^{\simeq}_{/B}]) \xrightarrow{} \Span(({\FinG}_{/B})^{\leq G/H}),
    \end{equation*}
    restricts to a right adjoint
    \begin{equation*}
        \Ran_{\Span(r^{G/H}_{B})}\colon \PMack_{G/H}(\Aa)(B) \xrightarrow{} \PMack_{\leq G/H}(\Aa)(B)
    \end{equation*}
    of $(r^{G/H})^{*}_{B}$, which follows from the dual of Lemma 2.20 in \cite{StratCat}.
    
    It remains to verify that for every morphism $f\colon B\xrightarrow{} C$ in $\OG$, the Beck-Chevalley transformation
    \begin{equation*}
        f^{*}\circ \Ran_{\Span(r^{G/H}_{C})} \xrightarrow{} \Ran_{\Span(r^{G/H}_{B})} \circ f^{*}
    \end{equation*}
    is an equivalence. By an argument similar to the one in Lemma \ref{leqGhGrpGHCompParametrizedAdj}, this is equivalent to the Beck-Chevalley transformation
    \begin{equation*}
        f_{!}\circ (r^{G/H})^{*}_{B} \xrightarrow{} (r^{G/H})^{*}_{C}\circ f_{!}
    \end{equation*}
    being an equivalence. Our next step is to explicitly describe $f_{!}$ for $\PMack_{G/H}(\Aa)$. We consider a commutative square
    \begin{equation}\label{EqBeckChevalleyForr}
        \begin{tikzcd}[column sep=huge]
            \FCopC[\{G/H\}^{\simeq}_{/B}] \arrow[r, "{r^{G/H}_{B}}"] \arrow[d, "{\FCopC[f\circ-]}"'] & ({\FinG}_{/B})^{\leq G/H} \arrow[d,"{\FCopC[f\circ-]}"']
            \\
            \FCopC[\{G/H\}^{\simeq}_{/C}] \arrow[r, "{r^{G/H}_{C}}"] & ({\FinG}_{/C})^{\leq G/H}.
        \end{tikzcd}
    \end{equation}
    In the proof of Lemma \ref{leqGhGrpGHCompParametrizedAdj}, we showed the existence of a right adjoint $R_{\FCopC[f\circ -]}$ of the right vertical functor. Both horizontal functors are fully faithful and the composite functor $R_{\FCopC[f\circ -]}r^{G/H}_{C}$ factors through $\FCopC[\{G/H\}^{\simeq}_{/B}]$. To see this, consider an arbitrary $X\in \FCopC[\{G/H\}^{\simeq}_{/C}]$. We can view
    \begin{equation*}
        R_{\FCopC[f\circ -]}r^{G/H}_{C}(X) = j^{\leq G/H}_{B}(B\times_{C}i^{\leq G/H}_{C}r^{G/H}_{C}(X))
    \end{equation*}
    as an object of ${\FinG}_{/B}$. We can decompose it as a finite coproduct of orbits
    \begin{equation*}
        R_{\FCopC[f\circ -]}r^{G/H}_{C}(X)\simeq \coprod_{s\in S}Y_{s}.
    \end{equation*}
    Let $g$ be an element of $G$, after forgetting to $\FinG$, the projection 
    \begin{equation*}
        B\times_{C}i^{\leq G/H}_{C}r^{G/H}_{C}(X)\xrightarrow{} i^{\leq G/H}_{C}r^{G/H}_{C}(X)
    \end{equation*}
    provides us with a morphism from $Y_{s}$ to $G/g^{\inv}Hg$, for each $s\in S$. From the definition of the functor $j^{\leq G/H}_{B}$, it is immediate that for each $s\in S$, there exists $g_{s}\in G$ such that there is a morphism $\gamma_{s}$ from $G/g_{s}^{\inv}Hg_{s}\xrightarrow{} B$ to $Y_{s}$, in ${\FinG}_{/B}$. Hence, since $\OG$ is atomic the morphism $\gamma_{s}$ is an equivalence for each $s\in S$ and therefore $R_{\FCopC[f\circ -]}r^{G/H}_{C}(X)\in \FCopC[\{G/H\}^{\simeq}_{/B}]$.
    
    We have established that a right adjoint of the right vertical morphism in diagram (\ref{EqBeckChevalleyForr}) restricts to a right adjoint to the left vertical morphism. Furthermore, for all $A\in \OG$ the full subcategory $\FCopC[\{G/H\}^{\simeq}_{/A}]$ of $({\FinG}_{/A})^{\leq G/H}$ is closed under pullbacks. Therefore, the restricted adjunction also satisfies conditions of Corollary C.21 in \cite{NormsMotivic}. It follows that we obtain adjunctions on the span categories, which induce adjunctions after applying $\FunProd(-,\Aa)$. We note that passing to the right adjoints of the vertical morphisms in diagram (\ref{EqBeckChevalleyForr}) produces a commutative square. Therefore, the Beck-Chevalley transformation
    \begin{equation*}
        f_{!}\circ (r^{G/H})^{*}_{B} \xrightarrow{} (r^{G/H})^{*}_{C}\circ f_{!}
    \end{equation*}
    is an equivalence. This finishes the proof.
\end{proof}

\begin{defi}
    Let $G/H$ be an object of $\OG$. Let $\Aa$ be a presentable, semiadditive category. We will denote the composite $G$-functor
    \begin{equation*}
        \PMack_{G}(\Aa) \xrightarrow{\phi^{\leq G/H}} \PMack_{\leq G/H}(\Aa) \xrightarrow{(r^{G/H})^{*}} \PMack_{G/H}(\Aa) \xrightarrow{\theta^{G/H}} \Fun(\{G/H\}^{\simeq}_{/-},\Aa)
    \end{equation*}
    by
    \begin{equation*}
        \phi^{G/H} \colon \PMack_{G}(\Aa) \xrightarrow{} \Fun(\{G/H\}^{\simeq}_{/-},\Aa).
    \end{equation*}
\end{defi}

For various subgroups $H\leq G$, the $G$-functors $\phi^{G/H}$ assemble into a $G$-functor
\begin{equation*}
    \PSp_{G}\xrightarrow{} \CorCof\Sp).
\end{equation*}
This functor becomes an equivalence after inverting rational equivalences on both sides. In Section \ref{Section4} we will define a $G$-symmetric monoidal enhancement of the above $G$-functor. To show that its underlying $G$-functor coincides with the one from this section we will use the universal property of the $G$-category of $G$-spectra, which we recall in Theorem \ref{ParametrizedUnivPropPresSp}. The universal property states that the $G$-colimit preserving functors out of the $G$-category of $G$-spectra into a $G$-presentable, $G$-stable $G$-category are determined by their value at the sphere spectrum. We have already shown in Proposition \ref{CorCofGStableGpresentable} that for a presentable, stable category $\Cc$ the $G$-category $\CorCof\Cc)$ is $G$-presentable and $G$-stable. Our next goal is to construct the above $G$-functor from the $G$-category of $G$-spectra, and to show that it preserves $G$-colimits and sends the sphere spectrum to the constant diagram at the sphere spectrum.

\begin{prop}\label{phiGHleftadj}
    Let $G/H$ be an object of $\OG$. Let $\Aa$ be a presentable, semiadditive category. The $G$-functor
    \begin{equation*}
        \phi^{G/H} \colon \PMack_{G}(\Aa) \xrightarrow{} \Fun(\{G/H\}^{\simeq}_{/-},\Aa)
    \end{equation*}
    is a $G$-left adjoint.
\end{prop}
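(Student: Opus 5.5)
The plan is to use the fact that $G$-left adjoints compose. Then it suffices to show that each of the three factors
\begin{equation*}
    \PMack_{G}(\Aa) \xrightarrow{\phi^{\leq G/H}} \PMack_{\leq G/H}(\Aa) \xrightarrow{(r^{G/H})^{*}} \PMack_{G/H}(\Aa) \xrightarrow{\theta^{G/H}} \Fun(\{G/H\}^{\simeq}_{/-},\Aa)
\end{equation*}
is a $G$-left adjoint. The first factor is a $G$-left adjoint by Lemma \ref{leqGhGrpGHCompParametrizedAdj}, since it is $G$-left adjoint to $(j^{\leq G/H})^{*}$. The second is a $G$-left adjoint by Lemma \ref{rGHadj}. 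The third is an equivalence of $G$-categories by Lemma \ref{thetaequiv}, and an equivalence is in particular a $G$-left adjoint, with its inverse as $G$-right adjoint.

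The one step that needs justification is that $G$-left adjoints are closed under composition. I would deduce this from the fact that $\Cat_{G}$ is a 2-category (Remark \ref{2-Cats} and the subsequent example), and adjunctions in any 2-category compose: if $L_1\dashv R_1$ and $L_2\dashv R_2$, then $L_2L_1\dashv R_1R_2$, with unit and counit given by pasting.

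Alternatively, I would verify this directly with the criterion of Remark \ref{ParAdj}. Objectwise, composites of left adjoints are left adjoints. For a morphism $f$ in $\OG$, the Beck--Chevalley mate of a pasted square is the pasting of the two individual mates. The individual mates are equivalences by the two lemmas, and for the equivalence $\theta^{G/H}$ the mate is trivially invertible. So the mate of the composite is an equivalence.

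I do not expect a real obstacle here. The only care needed is to ensure that "$G$-left adjoint" in Lemma \ref{rGHadj} and Lemma \ref{leqGhGrpGHCompParametrizedAdj} is meant in the same 2-categorical sense. Both were verified via Remark \ref{ParAdj} (or its dual), so they are.
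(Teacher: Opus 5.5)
Your proposal is correct and follows the paper's proof, which also obtains the statement by combining Lemmas \ref{thetaequiv}, \ref{leqGhGrpGHCompParametrizedAdj}, and \ref{rGHadj} for the three factors of $\phi^{G/H}$. The fact that $G$-adjunctions compose in the 2-category $\Cat_{G}$ is used implicitly there; you state it explicitly and justify it, either via 2-categorical composition of adjunctions or by pasting Beck--Chevalley mates.
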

\begin{proof}
    This follows immediately by combining Lemmas \ref{thetaequiv}, \ref{leqGhGrpGHCompParametrizedAdj}, and \ref{rGHadj}.
\end{proof}

\begin{prop}\label{SphereGH}
    Let $G/H$ be an object of $\OG$. The functor
    \begin{equation*}
        \phi^{G/H}_{G/G} \colon \PMack_{G}(\Sp)(G/G) \xrightarrow{} \Fun(\{G/H\}^{\simeq}_{/(G/G)},\Sp)
    \end{equation*}
    sends sphere spectrum $\Sphere\in \Sp_{G}$ to the constant functor
    \begin{equation*}
        \const(\Sphere)\colon \{G/H\}^{\simeq}_{/(G/G)} \xrightarrow{} \Sp,
    \end{equation*}
    taking value $\Sphere\in \Sp$.
\end{prop}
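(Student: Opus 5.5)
Taking the three pieces of $\phi^{G/H}_{G/G}$ one at a time, I will compute the image of $\Sphere$ under each of them at level $G/G$. Set $\Ss:=\Span(\FinG)$, so that $\Sp_G=\FunProd(\Ss,\Sp)$. In the Mackey model the sphere $\Sphere$ is $\Stab$ of the representable Mackey functor $\Hom_{\Ss}(G/G,-)$, i.e. $\Sigma^{\infty}_{+}$ applied pointwise followed by group completion. Its restriction to the point is $\Sigma^\infty_+$ of the space of spans $G/G\leftarrow U\to X$, which is the Burnside/Segal description.

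\emph{Step 1.} $\phi^{\leq G/H}_{G/G}$ is left Kan extension along $\Span(j^{\leq G/H}_{G/G})$. Left Kan extension commutes with $\Stab$ (both are left adjoints, and the relevant diagram of right adjoints commutes). Also, left Kan extension of a corepresentable along a functor $F$ is corepresented by $F$ of the object. Hence
\begin{equation*}
\phi^{\leq G/H}_{G/G}(\Sphere)\simeq \Stab\bigl(\Hom_{\Span((\FinG)^{\leq G/H})}(j^{\leq G/H}(G/G),-)\bigr).
\end{equation*}
Here $j^{\leq G/H}(G/G)$ is the terminal object of $(\FinG)^{\leq G/H}$; by Remark \ref{leqGHAdj}, $j$ is the right adjoint of a coproduct-preserving fully faithful inclusion, so it preserves terminal objects. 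Thus this object is the sphere (the unit, corepresented by the terminal object) of the Mackey category on $(\FinG)^{\leq G/H}$.

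\emph{Step 2.} Apply $(r^{G/H})^*$, which is just restriction along the full inclusion $\Span(\FCopC[\{G/H\}^{\simeq}])\to\Span((\FinG)^{\leq G/H})$. Then apply $\theta^{G/H}$, which evaluates on objects of the groupoid $\{G/H\}^{\simeq}_{/(G/G)}$. This groupoid is equivalent to $B\,W_GH$, with the single object $G/H$. The value there is $\Stab$ of the spans $*\leftarrow U\to G/H$ in $(\FinG)^{\leq G/H}$. The key computation is that the group-completed space of such spans is the same as the space of spans with $U\simeq G/H\times(\text{finite set})$ mapping to $G/H$.

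\emph{Step 3.} This step is the heart of the argument. A span $*\leftarrow U\to G/H$ with $U\in(\FinG)^{\leq G/H}$ consists of a finite $G$-set over $G/H$ whose orbits all receive a map from a conjugate of $G/H$, i.e. have isotropy containing a conjugate of $H$. Over $G/H$ such orbits are forced to have isotropy exactly conjugate to $H$. By atomicity of $\OG$, as in the proof of Lemma \ref{rGHadj}, every such orbit maps isomorphically onto $G/H$. Hence the category of these spans is equivalent to the core of $\FCopC[\{G/H\}^{\simeq}_{/G/H}]$ restricted to objects over $\id_{G/H}$, i.e. to finite sets. Its group completion is $\IntZ$ and its suspension spectrum gives $\Stab\simeq\Sphere$. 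Naturality in the $W_GH$-action shows the action is trivial, because automorphisms act on $U\simeq\coprod G/H$ compatibly. Throughout, Step 2 will be checked using the equivalence of Lemma \ref{thetaequiv} (Theorem 2.27 of \cite{StratCat}), under which the corepresented Mackey functor on $\FCopC[\{G/H\}^{\simeq}]$ at a point corresponds to the sphere.

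I expect Step 3 to be the main obstacle: identifying precisely the space of spans in $(\FinG)^{\leq G/H}$ and checking that the Weyl-group action on the resulting sphere is trivial, so that the functor is genuinely $\const(\Sphere)$ and not merely pointwise $\Sphere$. Alternatively, one can bypass the explicit computation and cite the nonparametrized statement of \cite[Section 2]{StratCat}, namely that the comparison functor sends the unit to the constant unit. This applies because $\phi^{G/H}_{G/G}$ coincides componentwise with their Definition 2.31.
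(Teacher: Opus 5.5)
Your route differs from the paper's. The paper never computes $\phi^{G/H}_{G/G}(\Sphere)$ directly. It uses the right adjoint $(j^{\leq G/H})^{*}_{G/G}\circ\Ran_{\Span(r^{G/H}_{G/G})}\circ(\theta^{G/H}_{G/G})^{-1}$, the pointwise formula for the right Kan extension at the terminal object, and a cofinality statement (the dual of Glasman's Lemma 2.28). This gives $\Hom(\phi^{G/H}_{G/G}(\Sphere),\Xx)\simeq\Omega^{\infty}\lim_{\{G/H\}^{\simeq}_{/(G/G)}}\Xx\simeq\Hom(\const(\Sphere),\Xx)$ naturally in $\Xx$, and the paper concludes by Yoneda. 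You instead push the corepresentable through the left adjoints and count spans.

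Your Steps 1 and 2 are sound. The left adjoint of restriction sends $\Hom(c,-)$ to $\Hom(Fc,-)$ and commutes with $\Stab$. Also $j^{\leq G/H}_{G/G}(G/G)=G/G$ is terminal, and $(r^{G/H})^{*}$ and $\theta^{G/H}$ are precompositions. Your span count is also right: for $V\in\{G/H\}^{\simeq}_{/(G/G)}$, the groupoid of spans $*\leftarrow U\to V$ in $(\FinG)^{\leq G/H}$ is $\FCopC^{\simeq}$. Two points, however, need repair.

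\textbf{First, your description of $\Stab$ is wrong, and taken literally it gives the wrong answer.} $\Stab$ is not $\Sigma^{\infty}_{+}$ applied pointwise. Because $\Span(\FinG)$ is semiadditive, a space-valued Mackey functor is canonically $\CMon(\Spc)$-valued. $\Stab$ is then pointwise the left adjoint $\mathbb{L}\colon\CMon(\Spc)\to\Sp$ of $\Omega^{\infty}$, i.e.\ group completion followed by $\mathrm{CGrp}(\Spc)\simeq\Sp_{\geq 0}$.
\begin{itemize}
\item So $\Sphere(X)$ is not $\Sigma^{\infty}_{+}$ of the groupoid of spans $G/G\leftarrow U\to X$. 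For $X=G/G$, $\pi_{0}$ of $\Sigma^{\infty}_{+}\FinG^{\simeq}$ is free of infinite rank, whereas $\pi_{0}$ of $\Sphere(G/G)$ is the Burnside ring $A(G)$.
\item In Step 3, the group completion of $(\FCopC^{\simeq},\amalg)$ is $\Omega^{\infty}\Sphere$, not $\IntZ$; only its $\pi_{0}$ is $\IntZ$. Moreover $\Sigma^{\infty}_{+}\IntZ\not\simeq\Sphere$.
\item The correct reason the value is $\Sphere$: $(\FCopC^{\simeq},\amalg)$ is the free commutative monoid on a point, so $\mathbb{L}(\FCopC^{\simeq})\simeq\Sigma^{\infty}_{+}(*)=\Sphere$ (equivalently, Barratt--Priddy--Quillen).
\end{itemize}

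\textbf{Second, as you suspect, the pointwise value is not enough.} ``Automorphisms act compatibly'' does not yet give a coherent trivialization of the $W_{G}H$-action. A clean argument runs as follows. Set $\mathcal{G}=\{G/H\}^{\simeq}_{/(G/G)}$. Your span count shows that $V\mapsto\bigl(((\FinG)^{\leq G/H})_{/V}\bigr)^{\simeq}$, with postcomposition functoriality, is naturally equivalent to $V\mapsto\FCopC[\mathcal{G}_{/V}]^{\simeq}$. The functor $V\mapsto\mathcal{G}_{/V}$ is the straightening of the left fibration $\ev_{1}\colon\Fun([1],\mathcal{G})\to\mathcal{G}$. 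Its fibres are contractible, so it is naturally the constant functor at a point. Hence the $\CMon(\Spc)$-valued functor is $\const(\FCopC^{\simeq})$, and applying $\mathbb{L}$ pointwise gives $\const(\Sphere)$.

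With these fixes your argument works. The paper's Yoneda computation only uses that $\Sphere$ corepresents $\Omega^{\infty}\ev_{G/G}$. It therefore needs neither Barratt--Priddy--Quillen nor a separate coherence argument, since constancy is built into $\Hom(\Sphere,\lim\Xx)\simeq\Hom(\const(\Sphere),\Xx)$. Its cost is the cofinality lemma for the comma category of the right Kan extension. Your approach is more explicit about which spans contribute, but it requires the correct description of $\Stab$ and the left-fibration argument above.
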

\begin{proof}
    Recall that the right adjoint of
    \begin{equation*}
        \phi^{G/H}_{G/G} \colon \PMack_{G}(\Sp)(G/G) \xrightarrow{} \Fun(\{G/H\}^{\simeq}_{/(G/G)},\Sp)
    \end{equation*}
    is given by the composite
    \begin{align*}
        R_{\phi^{G/H}_{G/G}} \colon \Fun(\{G/H\}^{\simeq}_{/(G/G)},\Sp) &\xrightarrow{(\theta^{G/H}_{G/G})^{\inv}} \PMack_{G/H}(\Sp)(G/G)
        \\
        &\xrightarrow{\Ran_{\Span(r^{G/H}_{G/G})}} \PMack_{\leq G/H}(\Sp)(G/G)
        \\
        & \xrightarrow{(j^{\leq G/H})^{*}_{G/G}} \PMack_{G}(\Sp)(G/G).
    \end{align*}
    These adjunctions are the content of Lemma \ref{thetaequiv}, Lemma \ref{rGHadj}, and Construction \ref{leqGHcomparison}, respectively. Let $\Xx\colon \{G/H\}^{\simeq}_{/(G/G)} \xrightarrow{} \Sp$ be a functor. We have a chain of equivalences natural in $\Xx$
    \begin{align}
        \Hom_{\Fun(\{G/H\}^{\simeq}_{/(G/G)},\Sp)}(\phi^{G/H}_{G/G}(\Sphere),\Xx) &\simeq \Hom_{\PMack_{G}(\Sp)(G/G)}(\Sphere,R_{\phi^{G/H}_{G/G}}(\Xx)) \label{eq5}
        \\ 
        &\simeq \Omega^{\infty}R_{\phi^{G/H}_{G/G}}(\Xx)(G/G) \label{eq6}
        \\
        &\simeq \Omega^{\infty}(\Ran_{\Span(r^{G/H}_{G/G})}(\theta^{G/H}_{G/G})^{\inv}(\Xx))(G/G) \label{eq7}
        \\
        &\simeq \Omega^{\infty}\lim_{\Span(\FCopC[\{G/H\}^{\simeq}_{/(G/G)}])_{(G/G)/}}(\theta^{G/H}_{G/G})^{\inv}(\Xx) \label{eq8}
        \\
        &\simeq \Omega^{\infty}\lim_{\FCopC[\{G/H\}^{\simeq}_{/(G/G)}]^{\op}}(\theta^{G/H}_{G/G})^{\inv}(\Xx) \label{eq9}
        \\
        &\simeq \Omega^{\infty}\lim_{(\{G/H\}_{/(G/G)}^{\simeq})^{\op}}(\theta^{G/H}_{G/G})^{\inv}(\Xx) \label{eq10}
        \\
        &\simeq \Omega^{\infty}\lim_{\{G/H\}_{/(G/G)}^{\simeq}}\Xx \label{eq11}
        \\
        &\simeq \Hom_{\Sp}(\Sphere, \lim_{\{G/H\}_{/(G/G)}^{\simeq}}\Xx) \label{eq12}
        \\
        &\simeq \Hom_{\Fun(\{G/H\}^{\simeq}_{/(G/G)},\Sp)}(\const(\Sphere),\Xx). \label{eq13}
    \end{align}
    Before we explain where each equivalence comes from, we first note that this proves the result. Indeed, by the Yoneda Lemma we obtain an equivalence
    \begin{equation*}
        \phi^{G/H}_{G/G}(\Sphere) \simeq \const(\Sphere).
    \end{equation*}
    We now explain the above equivalences. The equivalence (\ref{eq5}) is the adjunction $\phi^{G/H}_{G/G} \dashv R_{\phi^{G/H}_{G/G}}$. The equivalence (\ref{eq6}) follows from the description of the sphere spectrum in $\Sp_{G}$ as $\Stab(\Hom_{\Span(\FinG)}(G/G,-))$. The equivalence (\ref{eq7}) follows from the description of $R_{\phi^{G/H}_{G/G}}$ as a composite, combined with the equivalence $\ev_{G/G}(j^{\leq G/H})^{*}_{G/G}\simeq \ev_{G/G}$. The equivalence (\ref{eq8}) is the pointwise formula for right Kan extensions. The equivalence (\ref{eq9}) is a consequence of the dual of Lemma 2.28 in \cite{StratCat}. The equivalence (\ref{eq10}) follows from the fact that a limit of a finite product preserving extension of a functor is the same as its limit. The equivalence (\ref{eq11}) follows from the fact that $\theta^{G/H}_{G/G}$ is defined as a precomposition with an appropriate functor, see Construction \ref{GroupoidGHcomparison}. The equivalence (\ref{eq12}) follows from the equivalence $\Sphere \simeq \Sigma^{\infty}_{+}*$. The equivalence (\ref{eq13}) is the definition of a limit. This finishes the proof.  
\end{proof}

\begin{rem}
    Let $\Cc$ be a category. Note that we have an equivalence
    \begin{equation*}
        \CorCof \Cc) \simeq \prod_{(H\leq G)}\Fun(\{G/H\}^{\simeq}_{/-},\Cc).
    \end{equation*}
    Here $\prod_{(H\leq G)}$ denotes the product indexed by the conjugacy classes of subgroups of $G$.
\end{rem}

\begin{defi}\label{phi}
    Let $\Aa$ be a presentable, semiadditive category. We will denote the composite $G$-functor
    \begin{equation*}
        \PMack_{G}(\Aa) \xrightarrow{\triangle} \prod_{(H\leq G)} \PMack_{G}(\Aa) \xrightarrow{\prod_{(H\leq G)}\phi^{G/H}} \prod_{(H\leq G)}\Fun(\{G/H\}^{\simeq}_{/-},\Aa) \simeq \CorCof \Aa)
    \end{equation*}
    by
    \begin{equation*}
        \phi \colon \PMack_{G}(\Aa) \xrightarrow{} \CorCof \Aa).
    \end{equation*}
\end{defi}

\begin{prop}\label{phileftadj}
    Let $\Aa$ be a presentable, semiadditive category. The $G$-functor 
    \begin{equation*}
        \phi \colon \PMack_{G}(\Aa) \xrightarrow{} \CorCof \Aa)
    \end{equation*}
    is a $G$-left adjoint.
\end{prop}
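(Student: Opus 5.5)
The plan is to reduce the statement to Proposition \ref{phiGHleftadj} by checking that each ingredient of Definition \ref{phi} is a $G$-left adjoint, and then using closure of $G$-left adjoints under composition. By Definition \ref{phi}, $\phi$ is the composite of three $G$-functors: the diagonal $\triangle\colon \PMack_{G}(\Aa)\to \prod_{(H\leq G)}\PMack_{G}(\Aa)$, the product $\prod_{(H\leq G)}\phi^{G/H}$, and the equivalence $\prod_{(H\leq G)}\Fun(\{G/H\}^{\simeq}_{/-},\Aa)\simeq \CorCof\Aa)$.

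Since $\Cat_{G}$ is a 2-category, $G$-adjunctions compose, and any equivalence is a $G$-left adjoint. It therefore suffices to treat the first two factors.

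For the product $\prod\phi^{G/H}$, Proposition \ref{phiGHleftadj} gives $G$-right adjoints $R^{G/H}$ of each $\phi^{G/H}$. The product $\prod R^{G/H}$ is then a $G$-right adjoint of $\prod \phi^{G/H}$. The reason is that the product in $\Cat_{G}$ is computed objectwise, and units, counits and triangle identities can be taken componentwise. Alternatively, one can check the criterion of Remark \ref{ParAdj} directly: objectwise right adjoints exist, and the Beck--Chevalley maps are products of the Beck--Chevalley maps of the factors, which are equivalences.

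For the diagonal $\triangle$, the main point is that it has a $G$-right adjoint given by the $G$-product functor $\prod\colon \prod_{(H\leq G)}\PMack_{G}(\Aa)\to\PMack_{G}(\Aa)$, since the index set of conjugacy classes is finite. I would verify this via Remark \ref{ParAdj}:
\begin{enumerate}
\item At each $B\in\OG$, the category $\PMack_{G}(\Aa)(B)=\FunProd(\Span({\FinG}_{/B}),\Aa)$ admits finite products, computed pointwise because $\Aa$ is presentable. Hence $\triangle_{B}$ has right adjoint $\prod$.
\item For $f\colon B\to C$, the restriction $f^{*}$ is precomposition, so it preserves these pointwise finite products. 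Consequently the Beck--Chevalley map $f^{*}\circ\prod\to\prod\circ f^{*}$ is the canonical comparison map, which is an equivalence.
\end{enumerate}

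I expect no real obstacle here. The only care needed is to confirm that the mate in condition (2) of Remark \ref{ParAdj} is indeed the canonical product-comparison map. Even this can be avoided: $\Aa$ is semiadditive, so $\PMack_{G}(\Aa)(B)$ is semiadditive and finite products agree with finite coproducts. The diagonal is then also a left adjoint in the naive sense that it preserves all colimits levelwise, and one could alternatively conclude with Proposition \ref{PAdjFT} and Remark \ref{PPrescolim}.

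Composing the three $G$-left adjoints then shows that $\phi$ is a $G$-left adjoint.
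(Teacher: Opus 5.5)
Your proposal is correct and is essentially the paper's proof: the paper also takes the $G$-right adjoint to be the identification $\CorCof \Aa)\simeq\prod_{(H\leq G)}\Fun(\{G/H\}^{\simeq}_{/-},\Aa)$, followed by $\prod_{(H\leq G)}R_{\phi^{G/H}}$ (from Proposition \ref{phiGHleftadj}), and then by $\bigoplus_{(H\leq G)}$. The only difference is that the paper justifies $\triangle\dashv\bigoplus_{(H\leq G)}$ in one line, by noting that $\PMack_{G}(\Aa)$ admits $G$-limits indexed by constant $G$-categories; your fiberwise product and Beck--Chevalley check via Remark \ref{ParAdj} spells out exactly that fact.
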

\begin{proof}
    The $G$-right adjoint is given by
    \begin{equation*}
        \CorCof \Aa) \simeq \prod_{(H\leq G)}\Fun(\{G/H\}^{\simeq}_{/-},\Aa) \xrightarrow{\prod_{(H\leq G)}R_{\phi^{G/H}}} \prod_{(H\leq G)} \PMack_{G}(\Aa) \xrightarrow{\bigoplus_{(H\leq G)}} \PMack_{G}(\Aa).
    \end{equation*}
    The $G$-functor $R_{\phi^{G/H}}$ is a $G$-right adjoint from Proposition \ref{phiGHleftadj}. The $G$-adjunction $\triangle \dashv \bigoplus_{(H\leq G)}$ follows from the fact that $\PMack_{G}(\Aa)$ admits $G$-limits indexed by constant $G$-categories.
\end{proof}

\begin{prop}\label{ComparFunBeforeRationalizing}
    The $G$-functor 
    \begin{equation*}
        \phi \colon \PMack_{G}(\Sp) \xrightarrow{} \CorCof \Sp)
    \end{equation*}
    is a $G$-left adjoint. Furthermore, the functor
    \begin{equation*}
        \phi_{G/G}\colon \PMack_{G}(\Sp)(G/G) \xrightarrow{} \CorCof \Sp)(G/G)
    \end{equation*}
    sends the sphere spectrum $\Sphere \in \Sp_{G}$ to the contstant functor
    \begin{equation*}
        \const(\Sphere)\colon \OG^{\simeq} \xrightarrow{} \Sp
    \end{equation*}
    taking value $\Sphere\in \Sp$.
\end{prop}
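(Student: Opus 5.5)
The first assertion is the special case $\Aa=\Sp$ of Proposition \ref{phileftadj}. This case applies because $\Sp$ is presentable and stable, hence semiadditive. So the only real content is the identification of $\phi_{G/G}(\Sphere)$, and I would reduce it to the conjugacy-class-wise statement already proved in Proposition \ref{SphereGH}.

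By Definition \ref{phi}, the component $\phi_{G/G}$ is the composite of three functors:
\begin{itemize}
\item the diagonal $\Sp_G\to\prod_{(H\leq G)}\Sp_G$;
\item the product of the functors $\phi^{G/H}_{G/G}$;
\item the equivalence $\prod_{(H\leq G)}\Fun(\{G/H\}^{\simeq}_{/(G/G)},\Sp)\simeq \Fun(\OG^{\simeq},\Sp)$.
\end{itemize}
The last equivalence is restriction along the decomposition of groupoids
\[
\OG^{\simeq}\simeq({\OG}_{/(G/G)})^{\simeq}\simeq\coprod_{(H\leq G)}\{G/H\}^{\simeq}_{/(G/G)},
\]
where each $\{G/H\}^{\simeq}_{/(G/G)}$ is the connected component of $\OG^{\simeq}$ containing $G/H$. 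This decomposition holds because objects of $\OG$ are isomorphic exactly when the corresponding subgroups are conjugate.

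Applying the composite to $\Sphere$ gives the tuple $(\phi^{G/H}_{G/G}(\Sphere))_{(H)}$. By Proposition \ref{SphereGH}, each entry is $\const(\Sphere)$ on the component $\{G/H\}^{\simeq}_{/(G/G)}$. Under the equivalence with $\Fun(\OG^{\simeq},\Sp)$, a family of functors on the components corresponds to the functor on the coproduct that restricts to them. A family of constant functors with the same value $\Sphere$ therefore corresponds to $\const(\Sphere)\colon\OG^{\simeq}\to\Sp$.

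The only point needing care is that the equivalence in the Remark preceding Definition \ref{phi} really is restriction along the component inclusions at level $G/G$, so that constant functors match up. I expect this to be routine: $\Fun(-,\Sp)$ sends coproducts of groupoids to products, and restriction of a constant functor is constant. Given that, the proof consists of invoking Proposition \ref{phileftadj} and Proposition \ref{SphereGH} and unwinding the definitions.
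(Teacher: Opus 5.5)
Your proposal is correct and follows the same route as the paper, whose proof simply combines Definition \ref{phi}, Proposition \ref{phileftadj} (with $\Aa=\Sp$), and Proposition \ref{SphereGH}. Your unwinding of the decomposition of $\OG^{\simeq}$ into the components $\{G/H\}^{\simeq}_{/(G/G)}$ makes explicit a step the paper leaves implicit.
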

\begin{proof}
    Combine Definition \ref{phi}, Proposition \ref{phileftadj}, and Proposition \ref{SphereGH}.
\end{proof}

\subsection{Parametrized rational \texorpdfstring{$G$}{G}-spectra}
We are now ready to define the $G$-category of rational $G$-spectra and construct the comparison $G$-functor to our model $\CorCof\Sp_{\RatQ})$. At the end of this subsection, we show that the comparison $G$-functor is an equivalence. In the nonparametrized setting, this is a content of section 4 in \cite{StratCat}.
\begin{rem}\label{RelCat}
    Let $\RelCat$ be a category whose objects are pairs $(\Cc,\Cc_{W})$, where $\Cc_W$ is a wide subcategory of $\Cc$. The functor
    \begin{equation*}
        \Cat \xrightarrow{} \RelCat, \quad \Cc\mapsto (\Cc,\Cc^{\simeq})
    \end{equation*}
    admits a left adjoint
    \begin{equation*}
        \Ll \colon \RelCat \xrightarrow{} \Cat
    \end{equation*}
    assigning to a pair $(\Cc,\Cc_{W})$ the Dwyer-Kan localization $\Ll=\Cc[\Cc_{W}^{\inv}]$. By applying $\Fun(\OGo,-)$ we obtain an adjunction
    \begin{equation*}
        \Ll\colon \Fun(\OGo,\RelCat) \rightleftarrows \Cat_{G}\colon\mkern-3mu (-,(-)^{\simeq}).
    \end{equation*}
\end{rem}

\begin{cons}\label{EquipWithRatEq}
    The $G$-category $\PMack_{G}(\Sp)$ lifts to a functor
    \begin{equation*}
        (\PMack_{G}(\Sp), \text{Rational equivalences})\colon \OGo \xrightarrow{} \RelCat.
    \end{equation*}
    This functor sends $G/H\in G$ to a pair $\PMack_{G}(\Sp)(G/H)$ equipped with a wide subcategory on those morphisms $f\colon \Xx\xrightarrow{}\Yy$ which induce an equivalence
    \begin{equation*}
        f_{\RatQ} \colon \colim (\Xx \xrightarrow{p_{1}} \Xx \xrightarrow{p_{1}p_{2}} \Xx \xrightarrow{p_{1}p_{2}p_{3}} ...) \xrightarrow{\simeq} \colim (\Yy \xrightarrow{p_{1}} \Yy \xrightarrow{p_{1}p_{2}} \Yy \xrightarrow{p_{1}p_{2}p_{3}} ...).
    \end{equation*}
    In particular, the restriction functors preserve colimits, hence preserve these wide subcategories.
\end{cons}

\begin{defi}
    We denote the $G$-category
    \begin{equation*}
        \Ll(\PMack_{G}(\Sp), \text{Rational equivalences})\in\Cat_{G}
    \end{equation*}
    by
    \begin{equation*}
        \PSp_{G,\RatQ},
    \end{equation*}
    and call it the \textit{$G$-category of rational $G$-spectra}.
\end{defi}

Observe that in this case the unit of the adjunction from Remark \ref{RelCat} is a $G$-functor
\begin{equation*}
    \underline{L}_{\RatQ} \colon \PSp_{G} \xrightarrow{} \PSp_{G,\RatQ},
\end{equation*}
levelwise sending rational equivalences to equivalences.

\begin{cons}\label{ParamComparFunct}
    The composite $G$-functor 
    \begin{equation*}
        \PMack_{G}(\Sp) \xrightarrow{\phi} \CorCof \Sp) \xrightarrow{(L_{\RatQ})_*} \CorCof \Sp_{\RatQ})
    \end{equation*}
    sends rational equivalences to equivalences, in each degree. To see this, observe that this composite preserves $G$-colimits, and that we have already inverted objectwise rational equivalences on the right. Hence, by Remark \ref{RelCat}, we obtain a $G$-functor
    \begin{equation*}
        \phi_{\RatQ}\colon \PSp_{G,\RatQ} \xrightarrow{} \CorCof \Sp_{\RatQ}),
    \end{equation*}
    fitting in a commutative diagram
    \[
        \begin{tikzcd}
        \PSp_{G} \arrow[r,"{(L_{\RatQ})_*\phi}"] \arrow[d,"{\underline{L}_{\RatQ}}"'] & \CorCof \Sp_{\RatQ})
        \\
        \PSp_{G,\RatQ} \arrow[ur,"{\phi_{\RatQ}}"']
        \end{tikzcd}
    \]
    of $G$-categories.
\end{cons}

\begin{theo}\label{GEquivalence}
    The $G$-functor
    \begin{equation*}
        \phi_{\RatQ}\colon \PSp_{G,\RatQ} \xrightarrow{} \CorCof \Sp_{\RatQ})
    \end{equation*}
    is an equivalence.
\end{theo}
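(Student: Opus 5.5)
Since a $G$-functor is an equivalence exactly when it is an equivalence at every level, it suffices to show that for each orbit $B=G/K\in\OG$ the component
\begin{equation*}
    (\phi_{\RatQ})_{B}\colon \PSp_{G,\RatQ}(B)\xrightarrow{}\Fun(({\OG}_{/B})^{\simeq},\Sp_{\RatQ})
\end{equation*}
is an equivalence of categories. Since $\PSp_{G,\RatQ}$ was defined as a levelwise Dwyer--Kan localization (the left adjoint $\Ll$ of Remark \ref{RelCat} is computed pointwise after applying $\Fun(\OGo,-)$), the level $\PSp_{G,\RatQ}(B)$ is the localization of $\PMack_{G}(\Sp)(B)$ at rational equivalences. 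We identify this level with nonequivariant input. First, ${\FinG}_{/G/K}\simeq\FCopC_{K}$, so $\PMack_{G}(\Sp)(G/K)\simeq\Sp_{K}$. Second, by Remark \ref{RatLoc} and Remark \ref{RatvsMackRat}, the localization at rational equivalences is the Bousfield localization $\Sp_{K,\RatQ}\simeq\Mack_{K}(\Sp_{\RatQ})$. Third, $({\OG}_{/G/K})^{\simeq}\simeq\Oo_{K}^{\simeq}$.

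The next step is to check that, under these identifications, $(\phi_{\RatQ})_{G/K}$ agrees with the comparison functor of \cite[Definition 2.31]{StratCat} for the group $K$, postcomposed with rationalization. By Definition \ref{phi}, $\phi_{B}$ is the product over conjugacy classes of the composites $\theta^{G/H}_{B}\circ(r^{G/H})^{*}_{B}\circ\phi^{\leq G/H}_{B}$, and each factor is built from the categories $({\FinG}_{/B})^{\leq G/H}$ and $\FCopC[\{G/H\}^{\simeq}_{/B}]$. Under ${\FinG}_{/G/K}\simeq\FCopC_{K}$, these correspond to the sieve-type subcategories and groupoid pieces used in \cite{StratCat} for the $G$-conjugacy classes of subgroups $H$ that are subconjugate into $K$. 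These decompose further into $K$-conjugacy classes, so regrouping the product over $(H\leq G)$ yields the product over $(L\leq K)$. Because all the structure maps are left Kan extensions and restrictions along the same functors of span categories, the identification is essentially definitional. This regrouping is the step where some care is needed.

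With the levelwise identification in place, $(\phi_{\RatQ})_{G/K}$ becomes the functor $\Sp_{K,\RatQ}\to\Fun(\Oo_{K}^{\simeq},\Sp_{\RatQ})$ induced from the comparison functor of \cite{StratCat}, applied to $\Sp$ and then rationalized. By \cite[Theorem 4.2]{StratCat}, this functor is an equivalence. To make the reduction clean, I would argue through the factorization in Construction \ref{ParamComparFunct}: both $(L_{\RatQ})_*\phi$ and the nonparametrized comparison are colimit-preserving functors out of $\Sp_{K}$ that invert rational equivalences. They agree on the generators $\Sigma^{\infty}_{+}K/L$, using Proposition \ref{SphereGH} together with compatibility with induction, which is the Beck--Chevalley property established in Lemmas \ref{leqGhGrpGHCompParametrizedAdj} and \ref{rGHadj}. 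Hence they agree on all of $\Sp_{K}$, and therefore after localization.

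The main obstacle is the bookkeeping in the second step. We must verify that evaluating the parametrized construction at $G/K$ genuinely recovers the $K$-equivariant comparison functor, rather than only a functor with the same source and target. In particular, the regrouping of $G$-conjugacy classes into $K$-conjugacy classes must be compatible with the functors $j^{\leq G/H}$ and $r^{G/H}$. If this becomes messy, the fallback is direct: show levelwise that $(\phi_{\RatQ})_{G/K}$ is fully faithful on the compact generators $\Sigma^{\infty}_{+}K/L\otimes\Sphere_{\RatQ}$ by computing both sides as rational Burnside-type mapping spectra, and show essential surjectivity from the fact that both sides are generated under colimits by these objects.
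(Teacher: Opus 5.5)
Your outline matches the paper's proof: check levelwise, identify the level at $G/K$ with Glasman's comparison functor for $K$, and invoke \cite[Theorem 4.2]{StratCat}. However, two steps do not go through as written.

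\textbf{The link to Glasman's theorem.} As the paper invokes it, that theorem concerns the comparison functor for $\Sp_{\RatQ}$-valued Mackey functors. Your identification only produces the localization of the comparison functor for $\Sp$. So you need $(L_{\RatQ})_{*}\circ\phi\simeq\phi\circ(L_{\RatQ})_{*}$, with $\phi$ for $\Sp$ on the left and for $\Sp_{\RatQ}$ on the right.

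Your justification is that two colimit-preserving functors out of $\Sp_{K}$ which agree on the generators $\Sigma^{\infty}_{+}K/L$ agree everywhere. This is not valid. Such a functor is determined by its restriction to the full subcategory spanned by the generators, including all restriction and transfer maps between them, not by its values on objects. ``Compatibility with induction'' only computes objects. A correct version would go through Theorem \ref{ParametrizedUnivPropPresSp}, but that requires packaging both functors as $K$-cocontinuous $K$-functors into a $K$-stable target.

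The paper's fix is cheap. By Remark \ref{RatvsMackRat}, $(L_{\RatQ})_{*}$ induces $\PSp_{G,\RatQ}\simeq\PMack_{G}(\Sp_{\RatQ})$. The right adjoints commute on the nose: $\incl_{*}$ is postcomposition while $(j^{\leq G/H})^{*}$ is precomposition. Likewise $(L_{\RatQ})_{*}$ commutes with the precomposition functors $(r^{G/H})^{*}$ and $\theta^{G/H}$. Passing to left adjoints gives the square, and everything reduces to $\phi$ for $\Sp_{\RatQ}$.

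\textbf{The regrouping is not definitional, contrary to your claim.} The groupoid pieces do split: $\{G/H\}^{\simeq}_{/(G/K)}\simeq\coprod_{(L)}\textup{B}\textup{W}_{K}L$, over $K$-conjugacy classes of subgroups $L\leq K$ that are $G$-conjugate to $H$. But $({\FinG}_{/(G/K)})^{\leq G/H}$ is generated by the $K$-orbits receiving a map from $K/L'$ for any such $L'$. That is a union over all of these $K$-classes. Glasman's localization for $(L)$ uses only the orbits receiving a map from $K/L$.

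So $\phi^{\leq G/H}_{G/K}$ is a genuinely different localization from Glasman's. The needed input, which the paper records at the end of its proof, is that the two agree after evaluating at $K/L$. This holds because every $G$-conjugate of $H$ has order $|L|$. Restricted to $L$, the family of subgroups killed by the larger localization is therefore exactly the family of proper subgroups of $L$, so both values at $K/L$ are $\Phi^{L}$. With these two repairs your argument becomes the paper's.
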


\begin{proof}
    The adjunction 
    \begin{equation*}
        L_{\RatQ}\colon\Sp \rightleftarrows \Sp_{\RatQ}\colon\mkern-3mu \incl,
    \end{equation*}
    from Remark \ref{RatvsMackRat}, induces $G$-adjunction
    \begin{equation*}
        (L_{\RatQ})_{*}\colon\PMack_{G}(\Sp) \rightleftarrows \PMack_{G}(\Sp_{\RatQ})\colon\mkern-3mu \incl_{*}.
    \end{equation*}
    The $G$-functor $(L_{\RatQ})_{*}$ inverts rational equivalences in each degree. Hence, we obtain a $G$-functor $\PSp_{G,\RatQ} \xrightarrow{} \PMack_{G}(\Sp_{\RatQ})$, making the diagram
    \[
        \begin{tikzcd}
        \PSp_{G} \arrow[r,"{(L_{\RatQ})_{*}}"] \arrow[d,"{\underline{L}_{\RatQ}}"'] & \PMack_{G}(\Sp_{\RatQ})
        \\
        \PSp_{G,\RatQ}, \arrow[ur]
        \end{tikzcd}
    \]
    of $G$-categories commute. It follows directly from Remark \ref{RatvsMackRat}, that the $G$-functor
    \begin{equation*}
        \PSp_{G,\RatQ} \xrightarrow{} \PMack_{G}(\Sp_{\RatQ})
    \end{equation*}
    is an equivalence. Furthermore, we claim that the following diagram
    \[
        \begin{tikzcd}
        \PSp_{G} \arrow[r,"{(L_{\RatQ})_{*}\phi}"] \arrow[d,"{(L_{\RatQ})_{*}}"'] & \CorCof \Sp_{\RatQ})
        \\
        \PMack_{G}(\Sp_{\RatQ}) \arrow[ur,"{\phi}"']
        \end{tikzcd}
    \]
    of $G$-categories commutes. It is immediate from the definition of the $G$-functor $\phi$ that it is enough to show that, for each $G/H\in \OG$, the diagram
    \[
        \begin{tikzcd}
            \PMack_{G}(\Sp) \arrow[r,"{\phi^{G/H}}"] \arrow[d,"{(L_{\RatQ})_{*}}"'] & \Fun(\{G/H\}^{\simeq}_{/-}\Sp) \arrow[d,"{(L_{\RatQ})_{*}}"']
            \\
            \PMack_{G}(\Sp_{\RatQ}) \arrow[r,"{\phi^{G/H}}"] & \Fun(\{G/H\}^{\simeq}_{/-},\Sp_{\RatQ})
        \end{tikzcd}
    \]
    commutes. To see this, observe that we have commutative diagrams
    \[
        \begin{tikzcd}
        \PMack_{\leq G/H}(\Sp) \arrow[d,"{(L_{\RatQ})_{*}}"'] \arrow[r,"{(r^{G/H})^{*}}"] & \PMack_{G/H}(\Sp) \arrow[d,"{(L_{\RatQ})_{*}}"'] \arrow[r,"{\theta^{G/H}}"] & \Fun(\{G/H\}^{\simeq}_{/-},\Sp) \arrow[d,"{(L_{\RatQ})_{*}}"']
        \\
        \PMack_{\leq G/H}(\Sp_{\RatQ}) \arrow[r,"{(r^{G/H})^{*}}"] & \PMack_{G/H}(\Sp_{\RatQ}) \arrow[r,"{\theta^{G/H}}"] & \Fun(\{G/H\}^{\simeq}_{/-},\Sp_{\RatQ})
        \end{tikzcd}
    \]
    and
    \[
        \begin{tikzcd}
            \PMack_{G}(\Sp) & \PMack_{\leq G/H}(\Sp) \arrow[l,"{(j^{\leq G/H})^{*}}"'] 
            \\
            \PMack_{G}(\Sp_{\RatQ}) \arrow[u,"{\incl_{*}}"] & \PMack_{\leq G/H}(\Sp_{\RatQ}). \arrow[u,"{\incl_{*}}"] \arrow[l,"{(j^{\leq G/H})^{*}}"']
        \end{tikzcd}
    \]
    Passing to the $G$-right adjoints in the above diagram we obtain a commutative diagram, which combined with the previous commutative diagram, proves the claim. Therefore, the statement reduces to the $G$-functor
    \begin{equation*}
        \phi\colon \PMack_{G}(\Sp_{\RatQ}) \xrightarrow{} \CorCof \Sp_{\RatQ})
    \end{equation*}
    being an equivalence. This can be checked objectwise, where it is a content of Theorem 4.2 in \cite{StratCat}. We note that the construction of the comparison functor from Definition 2.31 in \cite{StratCat}, does not coincide with the construction of the functor 
    \begin{equation*}
        \phi_{G/K}^{G/H}\colon \PMack_{G}(\Aa)(G/K) \xrightarrow{} \CorCof \Aa)(G/K).
    \end{equation*}
    Let $H\leq K \leq G$ be subgroups, and let $\Aa$ be a presentable, semiadditive category. The difference comes from us defining
    \begin{equation*}
        \phi_{G/K}^{\leq G/H} \colon \PMack_{G}(\Aa)(G/K) \xrightarrow{} \PMack_{\leq G/H}(\Aa)(G/K)
    \end{equation*}
    as a left Kan extension along $\Span(j^{\leq G/H}_{G/K})$, where $j^{\leq G/H}_{G/K}$ is a right adjoint to the free coproduct completion of the inclusion of those $K$-orbits which admit a morphism from one of the $K$-orbits $K/g^{\inv}Hg$, for $g\in G$. The construction of the functor from Definition 2.31 in \cite{StratCat}, instead, considers the case of those $K$-orbits that admit a morphism from $K/H$, without any conjugation. Nonetheless, the values of the left Kan extensions at $K/H$ coincide, and it follows that the comparison functors are equivalent. This finishes the proof.
\end{proof}

\section{A model for the \texorpdfstring{$G$}{G}-symmetric monoidal \texorpdfstring{$G$}{G}-category of rational \texorpdfstring{$G$}{G}-spectra}\label{Section4}

In this section, we show that, whenever $\Cc$ is a symmetric monoidal category, the $G$-category $\CorCof\Cc)$ naturally carries the structure of a $G$-symmetric monoidal $G$-category. We then show that the comparison $G$-functor
\begin{equation*}
        \phi_{\RatQ}\colon \PSp_{G,\RatQ} \xrightarrow{} \CorCof \Sp_{\RatQ})
\end{equation*}
refines to a $G$-symmetric monoidal $G$-functor, which is automatically an equivalence.

\subsection{Algebraic model for \texorpdfstring{$G$}{G}-symmetric monoidal \texorpdfstring{$G$}{G}-category of rational \texorpdfstring{$G$}{G}-spectra}

We begin by lifting the functor $\CorCof-)$ to a functor
\begin{equation*}
    \CorCofMul-)\colon\SMCat\xrightarrow{}\SMCatG,
\end{equation*}
and establishing its properties.

\begin{cons}\label{CorCofMulConst}
    We consider the slice functor
    \begin{equation*}
        {\FinG}_{/-} \colon \FinG^{\op} \xrightarrow{} \Cat
    \end{equation*}
    with the pullback functoriality. This functor is a cartesian unstraightening of the cartesian fibration 
    \begin{equation*}
        \ev_{1}\colon \Fun([1],\FinG)\xrightarrow{} \FinG.
    \end{equation*}
    Let $f\colon X\xrightarrow{} Y$ be a morphism in $\FinG$. The pullback functor $X\times_{Y}-\colon {\FinG}_{/Y} \xrightarrow{} {\FinG}_{/X}$ admits a left adjoint $f\circ- \colon {\FinG}_{/X} \xrightarrow{} {\FinG}_{/Y}$. Furthermore, given a pullback diagram
    \[
        \begin{tikzcd}
            X' \arrow[dr, phantom, "\lrcorner", very near start] \arrow[r, "{f'}"] \arrow[d, "{\alpha}"'] & Y' \arrow[d, "{\beta}"']
            \\
            X \arrow[r, "{f}"] & Y
        \end{tikzcd}
    \]
    in $\FinG$, the mate natural transformation $(f'\circ-)\circ(X'\times_{X}-)\xrightarrow{} (Y'\times_{Y}-)\circ (f\circ-)$ is an equivalence. Therefore, by Barwick's unfurling construction, \cite[Example 3.4]{TwoVariable}, we obtain a functor
    \begin{equation*}
        {\FinG}_{/-} \colon \Span(\FinG) \xrightarrow{} \Cat,
    \end{equation*}
    with the pullback functoriality on the backwards morphisms, and the postcomposition functoriality on the forwards morphisms. This functor factors through the subcategory $\Cat^{\amalg}$ of categories admitting finite coproducts and finite coproduct preserving functors. We postcompose with the fully-faithful inclusion $\Cat^{\amalg}\xrightarrow{} \SMCat$, to obtain a functor
    \begin{equation*}
        ({\FinG}_{/-},\amalg) \colon \Span(\FinG) \xrightarrow{} \SMCat.
    \end{equation*}
    Finally, we postcompose with the underlying groupoid functor $(-)^{\simeq}\colon\SMCat\xrightarrow{}\SMCat$ and take the opposite of this functor. We denote the resulting functor by
    \begin{equation*}
        (({\FinG}_{/-})^{\simeq},\amalg) \colon \Span(\FinG)\simeq\Span(\FinG)^{\op} \xrightarrow{} (\SMCat)^{\op}.
    \end{equation*}
\end{cons}

\begin{defi}\label{CorCofMul}
    We denote by $\CorCofMul \bullet)$ the composite functor
    \begin{equation*}
        \PSMCat \xrightarrow{\FunTen(-,\bullet)} \Fun((\PSMCat)^{\op},\Cat) \xrightarrow{-\circ (({\FinG}_{/-})^{\simeq},\amalg)} \Fun(\Span(\FinG),\Cat) = \PSMCatG,
    \end{equation*}
    where the second functor is the precomposition with the functor from Construction \ref{CorCofMulConst}. This functor sends a symmetric monoidal precategory $\Ee$ to the functor
    \begin{equation*}
        \CorCofMul\Ee)=\FunTen((({\FinG}_{/-})^{\simeq},\amalg),\Ee).
    \end{equation*}
\end{defi}

We now show that for a symmetric monoidal category $(\Cc,\otimes)$, the $G$-symmetric monoidal $G$-precategory $\CorCofMul(\Cc,\otimes))$ is in fact a $G$-symmetric monoidal $G$-category, and that this construction equips the $G$-category $\CorCof\Cc)$ with a $G$-symmetric monoidal structure.

\begin{rem}\label{CMon}
    In this section, we will need the notion of commutative monoids in a category $\Cc$ admitting finite products. One can define the category of commutative monoids in $\Cc$ by
    \begin{equation*}
        \CMon(\Cc)\coloneq \FunProd(\Span(\FCopC),\Cc).
    \end{equation*}
    By \cite[Proposition C.1]{NormsMotivic}, this definition agrees with the definition of commutative monoids as a full subcategory of the functor category $\Fun(\FCopC_{*},\Cc)$ spanned by objects satisfying Segal condition. The category of commutative monoids $\CMon(\Cc)$ is semiadditive by \cite[Corollary 2.5]{MultInfLoopSpc}. The forgetful functor
    \begin{equation*}
        \fgt=\ev_{*}\colon \CMon(\Cc)=\FunProd(\Span(\FCopC),\Cc) \xrightarrow{} \Cc
    \end{equation*}
    induces equivalences 
    \begin{equation*}
        \CMon(\ev_{*}), \ev_* \colon \CMon(\CMon(\Cc))\xrightarrow{} \CMon(\Cc)
    \end{equation*}
    by the same Corollary. In particular, we have that
    \begin{equation*}
        \SMCat \simeq \CMon(\Cat).
    \end{equation*}
\end{rem}

\begin{rem}\label{FinEnvOG}
    Let $\Ss$ be a groupoid. The symmetric monoidal envelope of $\Ss$ coincides with the free commutative monoid $\Free(\Ss)\in \CMon(\Spc)$. In particular, it is given by the formula
    \begin{equation*}
        \coprod_{n\geq 0}\Ss^{\times n}/\Sigma_{n}.
    \end{equation*}
    Using the above formula, we see that
    \begin{equation*}
        \Env(\OG^{\simeq})\simeq (\FCopC[\OG^{\simeq}]^{\simeq},\amalg) \simeq (\FCopC[\OG]^{\simeq},\amalg) \simeq (\FinG^{\simeq},\amalg).
    \end{equation*}
\end{rem}

\begin{lem}\label{SMMontoGSMMon}
    The functor
    \begin{equation*}
        \CorCofMul -) \colon \PSMCat \xrightarrow{} \PSMCatG,
    \end{equation*}
    from Definition \ref{CorCofMul}, restricts to a functor
    \begin{equation*}
        \CorCofMul -) \colon \SMCat \xrightarrow{} \SMCatG,
    \end{equation*}
    from the category of symmetric monoidal categories to the category of $G$-symmetric monoidal $G$-categories.
\end{lem}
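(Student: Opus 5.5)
We need to show that for $\Ee=(\Cc,\otimes)\in\SMCat$, the functor
\[
\CorCofMul\Ee)=\FunTen((({\FinG}_{/-})^{\simeq},\amalg),\Ee)\colon \Span(\FinG)\to\Cat
\]
preserves finite products. Finite products in $\Span(\FinG)$ are disjoint unions $X\amalg Y$, with projections given by the backward spans along the inclusions $X\hookrightarrow X\amalg Y$. So the plan is to check two things. First, $\CorCofMul\Ee)(\emptyset)\simeq *$. Second, the map
\[
\CorCofMul\Ee)(X\amalg Y)\to \CorCofMul\Ee)(X)\times\CorCofMul\Ee)(Y)
\]
induced by restriction along $X\hookrightarrow X\amalg Y\hookleftarrow Y$ is an equivalence.

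\textbf{Step 1: the source functor sends products to coproducts.} By Construction \ref{CorCofMulConst}, the backward maps act by pullback. Since $\FinG$ is disjunctive, pulling back along the two summand inclusions gives an equivalence ${\FinG}_{/X\amalg Y}\simeq {\FinG}_{/X}\times{\FinG}_{/Y}$. Passing to cores, and noting that this equivalence preserves $\amalg$, we get an equivalence of symmetric monoidal groupoids
\[
(({\FinG}_{/X\amalg Y})^{\simeq},\amalg)\simeq (({\FinG}_{/X})^{\simeq},\amalg)\times(({\FinG}_{/Y})^{\simeq},\amalg).
\]
Also, ${\FinG}_{/\emptyset}\simeq *$.

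We must check that the structure maps are the right ones. The functor in Construction \ref{CorCofMulConst} is a functor to $(\SMCat)^{\op}$ obtained by taking opposites. Under this, the backward span $X\hookleftarrow X\hookrightarrow X\amalg Y$ corresponds to the symmetric monoidal functor given by postcomposition with the inclusion, ${\FinG}_{/X}\to{\FinG}_{/X\amalg Y}$. I would verify this via the unfurling description: after the identification $\Span(\FinG)\simeq\Span(\FinG)^{\op}$, forward and backward arrows swap. Granting this, the two inclusions exhibit $({\FinG}_{/X\amalg Y})^{\simeq}$ as the coproduct, equivalently the biproduct, of the two factors in $\SMCat\simeq\CMon(\Cat)$, which is semiadditive by Remark \ref{CMon}.

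\textbf{Step 2: the mapping functor turns coproducts into products.} The functor $\FunTen(-,\Ee)$ on $\SMCat$ sends finite coproducts to products. This is because $\FunTen(-,\Ee)$ is the $\Cat$-enriched hom of the 2-category $\SMCat$ (Remark \ref{LaxFuncCat}), and the corepresented hom sends colimits in the first variable to limits. In particular it sends the initial object $*$ to $*$. Combining with Step 1 gives the Segal/product condition for $\CorCofMul\Ee)$.

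The one subtlety is that $\FunTen$ was defined on the larger category $\PSMCat$. Since $\SMCat\subseteq\PSMCat$ is full and the enrichment restricts to it, the coproduct computed in $\SMCat$ is the relevant one. Here both the source and target are genuine symmetric monoidal categories, so no issue arises.

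\textbf{Main obstacle.} I expect the hardest point to be the identification of functoriality in Step 1. One must confirm that, after taking opposites in Construction \ref{CorCofMulConst}, the image of the product projections in $\Span(\FinG)$ really is the coproduct inclusions in $\SMCat$, rather than the pullback maps. Once the equivalence in Step 1 is established, either description works, because in a semiadditive category the product projections and coproduct inclusions of a biproduct are compatible. So I would fall back on semiadditivity of $\CMon(\Cat)$ to conclude.
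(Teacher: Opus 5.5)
Your argument is correct and has the same skeleton as the paper's proof. You identify the restriction maps as precomposition with $\iota_j\circ-$, which is what the unfurling gives and what the paper also asserts. You use disjunctivity of $\FinG$ to write $(({\FinG}_{/\coprod_i X_i})^{\simeq},\amalg)\simeq\prod_i(({\FinG}_{/X_i})^{\simeq},\amalg)$, under which $\iota_j\circ-$ becomes the biproduct inclusion. You then conclude using semiadditivity of $\SMCat$.

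The real difference is the last step. The paper first reduces to orbits. It then uses the 2-adjunction $\Env\dashv\fgt$ and Remark \ref{FinEnvOG} to rewrite both sides as $\Fun(\coprod_j({\OG}_{/A_j})^{\simeq},\Cc)\simeq\prod_j\Fun(({\OG}_{/A_j})^{\simeq},\Cc)$. This turns the product property of $\FunTen$ into a triviality about functor categories. You instead treat all finite $G$-sets uniformly, including $\emptyset$, and appeal to the enriched hom sending coproducts to products. That is cleaner, but it needs one more sentence.

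\textbf{The missing sentence.} A coproduct in the underlying $\infty$-category of a 2-category only gives the equivalence on cores of the mapping categories. You need it to be a $\Cat$-enriched coproduct. That holds here because $\SMCat$ is cotensored over $\Cat$:
\begin{itemize}
\item for $\Ee\in\SMCat$ and $K\in\Cat$, the pointwise functor $\Fun(K,\Ee(-))$ is again product preserving;
\item and $\Hom_{\Cat}(K,\FunTen(\Ee',\Ee))\simeq\Hom_{\SMCat}(\Ee',\Fun(K,\Ee(-)))$.
\end{itemize}
So the coproduct property in $\SMCat$ transfers to the mapping categories by the Yoneda lemma in $\Cat$. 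The same observation disposes of your worry about $\FunTen$ being defined on $\PSMCat$.

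\textbf{Your fallback.} The fallback in your last paragraph is not needed, and as stated it is not really an argument. Variance already forces the structure maps to go $({\FinG}_{/X})^{\simeq}\to({\FinG}_{/X\amalg Y})^{\simeq}$, so they cannot be the pullback functors. Your primary identification of them as $\iota_X\circ-$ is the correct one, and it is all you need.
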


\begin{proof}
    Let $(\Cc,\otimes)$ be a symmetric monoidal category. We need to show that the functor
    \begin{equation*}
        \CorCofMul(\Cc,\otimes))=\FunTen((({\FinG}_{/-})^{\simeq},\amalg),(\Cc,\otimes)) \colon \Span(\FinG) \xrightarrow{} \Cat
    \end{equation*}
    preserves finite products. Let $X_{i}$, $i\in I$ be a finite collection of finite $G$-sets. The product in $\Span(\FinG)$ is given by the coproduct $\coprod_{i\in I}X_{i}$ of $G$-sets. The projection morphisms are given by the spans
    \[
        \begin{tikzcd}[sep=tiny]
            &X_{j} \arrow[dl,"{\iota_{j}}"'] \arrow[dr, "{id_{X_{j}}}"]
            \\
            \coprod_{i\in I}X_{i} &&X_{j},
        \end{tikzcd}
    \]
    where the backwards morphism is the inclusion into the coproduct. We need to show that the morphism
    \begin{equation*}
        ((\iota_{i},id_{X_{i}})_{*})_{i\in I}\colon \CorCofMul(\Cc,\otimes))(\coprod_{i\in I}X_{i}) \xrightarrow{} \prod_{i\in I}\CorCofMul(\Cc,\otimes))(X_{i})
    \end{equation*}
    is an equivalence. By the construction of the functor $\CorCofMul-)$, we have that this morphism is induced by precomposing with functors
    \begin{equation*}
        \iota_{j}\circ-\colon(({\FinG}_{/X_{j}})^{\simeq},\amalg)\xrightarrow{}(({\FinG}_{/\coprod_{i\in I}X_{i}})^{\simeq},\amalg).
    \end{equation*}
    Observe that the finite coproduct preserving functors
    \begin{equation*}
        X_{j}\times_{\coprod_{i\in I}X_{i}}-\colon({\FinG}_{/\coprod_{i\in I}X_{i}})^{\simeq}\xrightarrow{}({\FinG}_{/X_{j}})^{\simeq}
    \end{equation*}
    induce an equivalence
    \begin{equation*}
        ({\FinG}_{/\coprod_{i\in I}X_{i}})^{\simeq} \simeq \prod_{i\in I}({\FinG}_{/X_{i}})^{\simeq},
    \end{equation*}
    fitting into the commutative diagram
    \begin{equation}\label{EqProdCoprodIdentification}
        \begin{tikzcd}
            &({\FinG}_{/X_{j}})^{\simeq} \arrow[dl,"{\iota_{j}\circ-}"'] \arrow[dr]
            \\
            ({\FinG}_{/\coprod_{i\in I}X_{i}})^{\simeq} \arrow[rr,"{\simeq}"] &&\prod_{i\in I}({\FinG}_{/X_{i}})^{\simeq}.
        \end{tikzcd}
    \end{equation}
    Where the top right functor is the finite coproduct preserving functor
    \begin{equation*}
        ({\FinG}_{/X_{j}})^{\simeq}\xrightarrow{} \prod_{i\in I}({\FinG}_{/X_{i}})^{\simeq}, \quad (f\colon Y \xrightarrow{} X_{j}) \mapsto (\emptyset,...,f\colon Y \xrightarrow{} X_{j},...,\emptyset).
    \end{equation*}
    Recall that $\SMCat$ is semiadditive, with the direct sum given by the underlying product of the categories, see \cite[Proposition 2.3]{MultInfLoopSpc}. One can show that the above functors exhibit the target as a coproduct in the category of symmetric monoidal categories. We will first show the Lemma for a finite collection $A_j$, $j\in J$, of transitive $G$-sets. This follows from the commutative diagram
    \[
        \begin{tikzcd}[column sep =huge]
        \CorCofMul(\Cc,\otimes))(\coprod_{j\in J}A_{j}) \arrow[r] \arrow[d,"="'] &\prod_{j\in J}\CorCofMul(\Cc,\otimes))(A_{j}) \arrow[dd,"{=}"']
        \\
        \FunTen((({\FinG}_{/\coprod_{j\in J}A_{j}})^{\simeq},\amalg),(\Cc,\otimes))
        \\
        \FunTen(\prod_{j\in J}(({\FinG}_{/A_{j}})^{\simeq},\amalg),(\Cc,\otimes)) \arrow[r] \arrow[u,"{\simeq}"] \arrow[d,"{\simeq}"'] &\prod_{j\in J}\FunTen((({\FinG}_{/A_{j}})^{\simeq},\amalg),(\Cc,\otimes)) \arrow[d,"{\simeq}"']
        \\
        \Fun(\coprod_{j\in J}({\OG}_{/A_{j}})^{\simeq},\Cc) \arrow[r,"{\simeq}"] &\prod_{j\in J}\Fun(({\OG}_{/A_{j}})^{\simeq},\Cc),
        \end{tikzcd}
    \]
    where the top square commutes by commutative diagram (\ref{EqProdCoprodIdentification}). The bottom square commutes using the adjunction $\Env \dashv \fgt$, combined with Remark \ref{FinEnvOG}. The claim for a general finite collection $X_{i}$, $i\in I$, of finite $G$-sets then immediately follows from the commutative diagram
    \[
        \begin{tikzcd}[column sep =huge]
            \CorCofMul(\Cc,\otimes))(\coprod_{i\in I}X_{i}) \arrow[d,"{\simeq}"'] \arrow[r] &\prod_{i\in I}\CorCofMul(\Cc,\otimes))(X_{i}) \arrow[d,"{\simeq}"']
            \\
            \CorCofMul(\Cc,\otimes))(\coprod_{i\in I}\coprod_{j\in J_{i}}A_{j}^{i})  \arrow[r,"{\simeq}"] &\prod_{i\in I}\prod_{j\in J_{i}}\CorCofMul(\Cc,\otimes))(A_{j}^{i}),
        \end{tikzcd}
    \]
    where the $A_{j}^{i}$ are cosets coming from the coset decompositions $X_i\simeq\coprod_{j\in J_{i}}A_{j}^{i}$. The bottom horizontal and the right vertical morphisms are equivalences from the discussion beforehand.
\end{proof}

\begin{cons}\label{CnsTauComparison}
    The functors
    \begin{equation*}
        ({\OG}_{/A})^{\simeq} \xrightarrow{} ({\FinG}_{/A})^{\simeq}
    \end{equation*}
    for various $A\in\OG$ assemble into a natural transformation
    \begin{equation*}
        \incl\colon ({\OG}_{/-})^{\simeq} \xrightarrow{} ({\FinG}_{/-})^{\simeq}|_{\OGo},
    \end{equation*}
    from the functor that is opposite to the one of Construction \ref{CorOG} to the restriction of the functor
    \begin{equation*}
        ({\FinG}_{/-})^{\simeq} \colon \Span(\FinG) \xrightarrow{} \Cat^{\op},
    \end{equation*}
    from Construction \ref{CorCofMulConst}, to $\OGo$. We define a natural transformation
    \[
        \begin{tikzcd}[column sep=huge]
            \PSMCat \arrow[r,"{\CorCofMul-)}"]  \arrow[d,"{\fgt}"'] &\PSMCatG \arrow[d,"{\fgt}"'] \arrow[Rightarrow, dl, "{\tau}"']
            \\
            \Cat \arrow[r,"{\CorCof-)}"'] &\Cat_{G},
        \end{tikzcd}
    \]
    using the following diagram
    \[
        \begin{tikzcd}[sep=large]
            \PSMCat \arrow[rr,"{\FunTen(-,\bullet)}"] \arrow[d,"{\fgt}"'] & \arrow[d,Rightarrow, "{\fgt}"'] &\Fun((\PSMCat)^{\op},\Cat) \arrow[rr,"{-\circ (({\FinG}_{/-})^{\simeq},\amalg)|_{\OGo}}"] && \Cat_{G}
            \\
            \Cat \arrow[rr,"{\Fun(-,\bullet)}"'] &\phantom{-} &\Fun(\Cat^{\op},\Cat) \arrow[u,"{-\circ \fgt}"] \arrow[urr,"{-\circ ({\FinG}_{/-})^{\simeq}|_{\OGo}}" description, ""{name =A, below}] \arrow[urr, out=0, in=-100, "{({\OG}_{/-})^{\simeq}.}"', ""{name =B}]
            \arrow[Rightarrow, from =A, to =B, shorten =1mm, "{-\circ \incl}"']
        \end{tikzcd}
    \]
    The 2-arrow in the left square is induced by the natural transformation
    \begin{equation*}
        \fgt \colon \FunTen(-,-) \xrightarrow{} \Fun(-,-),
    \end{equation*}
    coming from 2-functoriality of the forgetful functor $\fgt \colon \PSMCat \xrightarrow{} \Cat$. The commutativity of the top right triangle follows from the equivalence
    \begin{equation*}
        \fgt((({\FinG}_{/-})^{\simeq},\amalg)|_{\OGo}) \simeq ({\FinG}_{/-})^{\simeq}|_{\OGo}.
    \end{equation*}
    The right bottom 2-arrow is given by precomposing with the inclusion $\incl\colon ({\OG}_{/-})^{\simeq} \xrightarrow{} ({\FinG}_{/-})^{\simeq}|_{\OGo}$. The top composite functor is equivalent to the functor
    \begin{equation*}
        \fgt\circ \CorCofMul-)\colon \PSMCat \xrightarrow{} \Cat_{G},
    \end{equation*}
    and the bottom composite functor is equivalent to the functor
    \begin{equation*}
        \CorCof-)\circ \fgt \colon \PSMCatG \xrightarrow{} \Cat_{G}.
    \end{equation*}
    We obtain the natural transformation $\tau\colon \fgt\circ \CorCofMul-)\xrightarrow{}\CorCof-)\circ \fgt$ as the composite in the above diagram.
    
    We spell out the details. Let $\Ee\colon\Span(\FinG)\xrightarrow{}\Cat$ be a functor. The natural transformation $\tau$ is given by the composite
    \begin{align*}
        \tau_{\Ee}\colon \CorCofMul\Ee)|_{\OGo}=\FunTen((({\FinG}_{/-})^{\simeq},\amalg)|_{\OGo},\Ee) &\xrightarrow{\fgt} \Fun(({\FinG}_{/-})^{\simeq}|_{\OGo},\fgt(\Ee)) 
        \\
        &\xrightarrow{-\circ\incl} \Fun(({\OG}_{/-})^{\simeq},\fgt(\Ee))=\CorCof\fgt(\Ee)),
    \end{align*}
    where the second functor is given by precomposition.
\end{cons}

\begin{prop}\label{GSMlifting}
    The natural transformation
    \begin{equation*}
        \tau \colon  \fgt\circ \CorCofMul-)\xrightarrow{} \CorCof-)\circ \fgt
    \end{equation*}
    from Construction \ref{CnsTauComparison} makes the diagram
    \[
        \begin{tikzcd}[column sep=huge]
            \SMCat \arrow[r,"{\CorCofMul-)}"]  \arrow[d,"{\fgt}"'] &\SMCatG \arrow[d,"{\fgt}"']
            \\
            \Cat \arrow[r,"{\CorCof-)}"] &\Cat_{G}
        \end{tikzcd}
    \]
    commute.
\end{prop}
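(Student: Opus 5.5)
The claim is that, restricted to $\SMCat$, the transformation $\tau$ is a natural equivalence. Since equivalences in $\Cat_{G}=\Fun(\OGo,\Cat)$ are detected objectwise, the plan is to fix a symmetric monoidal category $(\Cc,\otimes)$ and an orbit $A\in\OG$, and show that the component
\begin{equation*}
    \tau_{(\Cc,\otimes),A}\colon \FunTen((({\FinG}_{/A})^{\simeq},\amalg),(\Cc,\otimes)) \xrightarrow{\fgt} \Fun(({\FinG}_{/A})^{\simeq},\Cc) \xrightarrow{-\circ\incl} \Fun(({\OG}_{/A})^{\simeq},\Cc)
\end{equation*}
is an equivalence of categories. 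Naturality in $A$ and in $(\Cc,\otimes)$ is already built into Construction \ref{CnsTauComparison}, so nothing more is needed once the components are equivalences.

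\textbf{Step 1: identify the source as an envelope.} The main input is an analogue of Remark \ref{FinEnvOG} over a base orbit. We need an equivalence of symmetric monoidal categories
\begin{equation*}
    (({\FinG}_{/A})^{\simeq},\amalg)\simeq \Env(({\OG}_{/A})^{\simeq}),
\end{equation*}
under which the unit of the envelope corresponds to $\incl$. To prove it:
\begin{itemize}
    \item ${\FinG}_{/A}\simeq\FCopC[{\OG}_{/A}]$, by the identification $\FCopC[{\OG}_{/-}]\simeq{\FinG}_{/-}$ recalled earlier.
    \item For any category $\Dd$ there is an equivalence $\FCopC[\Dd]^{\simeq}\simeq \FCopC[\Dd^{\simeq}]^{\simeq}$. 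This holds because an isomorphism between finite coproducts of objects of $\Dd$ is a bijection of index sets together with isomorphisms of summands.
    \item $\FCopC[\Ss]^{\simeq}$, with $\amalg$, is the free commutative monoid $\coprod_{n}\Ss^{\times n}/\Sigma_{n}$ on the groupoid $\Ss$. By Remark \ref{FinEnvOG} this free commutative monoid is the symmetric monoidal envelope of $\Ss$.
\end{itemize}
Applying this with $\Ss=({\OG}_{/A})^{\simeq}$ gives the claim, and the inclusion of generators is exactly $\incl$.

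\textbf{Step 2: apply the adjunction.} The adjunction $\Env\dashv\fgt$ between $\SMCat$ and $\Cat$ (the nonequivariant case of Remark \ref{Env}, composed with $\Triv$) is a 2-adjunction. Because $(\Cc,\otimes)$ lies in $\SMCat$, which is needed so that lax and strong functors out of the envelope behave correctly, restriction along the unit gives an equivalence
\begin{equation*}
    \FunTen(\Env(\Ss),(\Cc,\otimes))\xrightarrow{\ \simeq\ }\Fun(\Ss,\Cc).
\end{equation*}
This restriction is exactly $(-\circ\incl)\circ\fgt$, so $\tau_{(\Cc,\otimes),A}$ is an equivalence.

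\textbf{Main obstacle.} I expect the hardest point to be matching the 2-categorical data. One must check that the equivalence of Step 1 is compatible with the unit map, so that the composite defining $\tau$ is literally the adjunction equivalence and not merely an abstractly equivalent functor. My first approach is to verify this on the free groupoid level, where the identification $\FCopC[\Ss]^{\simeq}\simeq\Free(\Ss)$ is essentially by construction. An alternative is to argue as in the bottom square of the diagram in the proof of Lemma \ref{SMMontoGSMMon}, where the same identification was already used.
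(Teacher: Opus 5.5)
Your proposal is correct and follows the paper's proof: you reduce to showing that each component $\tau_{(\Cc,\otimes),A}$ is an equivalence, identify $(({\FinG}_{/A})^{\simeq},\amalg)$ with the envelope of $({\OG}_{/A})^{\simeq}$ via Remark \ref{FinEnvOG}, and conclude from the 2-adjunction $\Env\dashv\fgt$. Your extension of Remark \ref{FinEnvOG} from $G/G$ to a general orbit $A$, and your check that the unit of the envelope matches $\incl$, spell out what the paper's one-line proof leaves implicit.
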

\begin{proof}
    It is enough to show that the natural transformation $\tau$ is an equivalence componentwise. Let $(\Cc,\otimes)$ be a $G$-symmetric monoidal $G$-category, and let $A$ be an object of $\OG$. We need to show that the functor
    \begin{align*}
        \tau_{\Ee}(A)\colon \FunTen((({\FinG}_{/A})^{\simeq},\amalg),(\Cc,\otimes)) &\xrightarrow{\fgt} \Fun(({\FinG}_{/A})^{\simeq},\fgt(\Cc,\otimes)) 
        \\
        &\xrightarrow{-\circ\incl} \Fun(({\OG}_{/A})^{\simeq},\fgt(\Cc,\otimes))
    \end{align*}
    is an equivalence. This immediately follows from the adjunction $\Env \dashv \fgt$ combined with Remark \ref{FinEnvOG}.
\end{proof}

As mentioned above, once we define a $G$-symmetric monoidal comparison $G$-functor, we will use the universal property of the $G$-category of $G$-spectra to show that its underlying $G$-functor coincides with the one from Section \ref{Section3}. A $G$-symmetric monoidal $G$-functor out of the $G$-category of $G$-spectra sends the sphere spectrum to the $G$-symmetric monoidal unit of the target $G$-symmetric monoidal $G$-category. We now show that, for each $G/H\in\OG$, the $G$-symmetric monoidal $G$-category $\CorCofMul(\Cc,\otimes))$ is given by the pointwise symmetric monoidal structure induced by the one on $\Cc$. In particular, the $G$-symmetric monoidal unit is given by the constant diagram at the unit of $\Cc$.

\begin{cons}
    The functor $G/H\colon*\xrightarrow{}\OG$ induces a finite product preserving functor
    \begin{equation*}
        l_{G/H}\colon \Span(\FCopC)\xrightarrow{} \Span(\FinG), \quad A\mapsto \coprod_{A}G/H.
    \end{equation*}
\end{cons}

\begin{lem}\label{Componentwise}
    The composite
    \begin{equation*}
        \SMCat \xrightarrow{\CorCofMul-)} \SMCatG \xrightarrow{(l_{G/H})^{*}} \SMCat
    \end{equation*}
    is equivalent to the functor
    \begin{equation*}
        \Fun(\Oo_{H}^{\simeq},-)_{*}\colon \SMCat \xrightarrow{} \SMCat.
    \end{equation*}
    In particular, for a symmetric monoidal category $(\Cc,\otimes)$, the symmetric monoidal unit of the symmetric monoidal category $\FunTen((({\FinG}_{/l_{G/H}(-)})^{\simeq},\amalg),(\Cc,\otimes))$ is given by the constant diagram at the symmetric monoidal unit of $(\Cc,\otimes)$.
\end{lem}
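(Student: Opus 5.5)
The strategy is to identify the composite as a functor $\SMCat\to\SMCat$, i.e.\ $\FunProd(\Span(\FCopC),\Cat)$, by computing it at the level of the "coefficient" diagram. Unwinding Definition \ref{CorCofMul}, the composite $(l_{G/H})^{*}\circ\CorCofMul-)$ sends $(\Cc,\otimes)$ to
\begin{equation*}
\FunTen\big((({\FinG}_{/-})^{\simeq},\amalg)\circ l_{G/H},(\Cc,\otimes)\big)\colon\Span(\FCopC)\xrightarrow{}\Cat.
\end{equation*}
So the first step is to understand the functor $(({\FinG}_{/-})^{\simeq},\amalg)\circ l_{G/H}\colon\Span(\FCopC)\to(\SMCat)^{\op}$. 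On objects, $A\mapsto({\FinG}_{/\coprod_A G/H})^{\simeq}\simeq\prod_{A}({\FinG}_{/G/H})^{\simeq}$, and ${\FinG}_{/G/H}\simeq\FCopC_{H}$. By Remark \ref{FinEnvOG} (applied to $H$), $(\FCopC_H^{\simeq},\amalg)\simeq\Env(\Oo_H^{\simeq})$. I would show that, as a functor on $\Span(\FCopC)$, this composite is equivalent to $A\mapsto \Env(\Oo_H^{\simeq})\otimes A$, i.e.\ the "free" span-functor obtained from the object $\Env(\Oo_H^{\simeq})$ of the semiadditive category $\SMCat$: backward maps act by the projections/pullbacks and forward maps $A\to B$ act by the fold maps $\coprod_A\to\coprod_B$, which in $\SMCat$ (where coproduct equals product, Remark \ref{CMon}) correspond exactly to the canonical commutative-monoid structure of $\Env(\Oo_H^{\simeq})$ in $\SMCat$. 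Concretely, the forward functoriality is postcomposition with $\nabla\colon\coprod_A G/H\to\coprod_B G/H$, which under ${\FinG}_{/\coprod_A G/H}\simeq\prod_A\FCopC_H$ becomes "disjoint union along fibres", i.e.\ the symmetric monoidal structure $\amalg$ on $\FCopC_H^{\simeq}$.

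Second step: apply $\FunTen(-,(\Cc,\otimes))$. Since $\FunTen(-,\Cc)$ turns coproducts in $\SMCat$ into products and the commutative monoid structure (via codiagonals) into the pointwise one, we get $A\mapsto\prod_A\FunTen(\Env(\Oo_H^{\simeq}),\Cc)\simeq\prod_A\Fun(\Oo_H^{\simeq},\Cc)$, using $\Env\dashv\fgt$ (the same equivalence as in Proposition \ref{GSMlifting}). The forward maps become the symmetric monoidal structure on $\FunTen(\Env(\Oo_H^\simeq),\Cc)$ induced from $\otimes$ on the target, which under the adjunction equivalence is the pointwise tensor product on $\Fun(\Oo_H^{\simeq},\Cc)$; this is precisely $\Fun(\Oo_H^{\simeq},-)_{*}(\Cc,\otimes)$, the value of the product-preserving $\Fun(\Oo_H^\simeq,-)$ applied to $\Cc\in\CMon(\Cat)$. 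Naturality in $(\Cc,\otimes)$ is automatic as everything is built from $\FunTen(X,-)$ for a fixed diagram $X$. To make this rigorous I would use Remark \ref{CMon}: $\CMon(\SMCat)\simeq\SMCat$ via $\ev_*$, so an object of $\FunProd(\Span(\FCopC),\SMCat^{\op})$ is determined by its value at $*$; the diagram $(({\FinG}_{/-})^{\simeq},\amalg)\circ l_{G/H}$ is product-preserving (coproducts in $\SMCat$), so it is the canonical one on $(\FCopC_H^\simeq,\amalg)$, and likewise $\FunTen(-,\Cc)$ identifies the result with the canonical $\CMon$-structure on $\FunTen(\Env(\Oo_H^\simeq),\Cc)$, which $\ev_*$-equivalence shows agrees with the pointwise structure.

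The unit statement then follows: the unit of $\Fun(\Oo_H^{\simeq},\Cc)$ with pointwise structure is $\const(\mathbb 1)$, and $\tau$ (Proposition \ref{GSMlifting}) is the equivalence used. The main obstacle I expect is the first step: verifying that the unfurled functor of Construction \ref{CorCofMulConst}, restricted along $l_{G/H}$, is product-preserving into $(\SMCat)^{\op}$ with forward maps matching codiagonals, so that the $\CMon(\CMon)\simeq\CMon$ argument applies; I would check this via the decomposition ${\FinG}_{/\coprod_A G/H}\simeq\prod_A{\FinG}_{/G/H}$ already used in Lemma \ref{SMMontoGSMMon}.
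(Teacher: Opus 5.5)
Your outline is correct, but it works in the opposite variable from the paper. The paper never analyses the coefficient diagram $D=(({\FinG}_{/-})^{\simeq},\amalg)\circ l_{G/H}$. It only uses that the two functors in the composite preserve finite products, lifts everything to $\CMon(-)$, and applies the Eckmann--Hilton identification $\ev_{*}\simeq\CMon(\ev_{*})\colon\CMon(\CMon(\Cat))\xrightarrow{\simeq}\CMon(\Cat)$ of Remark~\ref{CMon}, in the target variable. This reduces the composite to $\CMon$ of its underlying functor $\Cat\to\Cat$, which Proposition~\ref{GSMlifting} identifies with $\Fun(\Oo_{H}^{\simeq},-)$. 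The pointwise structure is then forced, and naturality in $(\Cc,\otimes)$ is built in.

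You instead identify $D$ as the canonical commutative monoid on $\Env(\Oo_H^{\simeq})\simeq(\FCopC_H^{\simeq},\amalg)$ in the semiadditive category $(\SMCat)^{\op}$, and push it through $\FunTen(-,(\Cc,\otimes))$. This needs two extra inputs. First, $D$ must be product-preserving into $(\SMCat)^{\op}$; this is the coproduct decomposition of slices, which is only asserted in Lemma~\ref{SMMontoGSMMon}. You also need $\CMon(\Dd)\simeq\Dd$ for the semiadditive $\Dd=(\SMCat)^{\op}$, which is not literally Remark~\ref{CMon}. Second, and more substantially, the commutative monoid structure on $\FunTen(\Env(\Oo_H^{\simeq}),\Cc)$ induced through the first variable must agree with the pointwise one induced through the second. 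You only gesture at this step. It is a two-variable form of the same Eckmann--Hilton principle: regard $(A,B)\mapsto\FunTen(D(A),\Cc^{\times B})$ as an object of $\CMon(\CMon(\Cat))$ and restrict to $B=*$ or to $A=*$. So your route is longer, but it explains concretely where the pointwise tensor comes from.

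In that explanation you have the variance reversed. With the convention $\Span(\FinG)\simeq\Span(\FinG)^{\op}\to(\SMCat)^{\op}$ of Construction~\ref{CorCofMulConst}, the forward fold $\coprod_A G/H\to G/H$ acts by the opposite of pullback along it, i.e.\ by the diagonal $\FCopC_H^{\simeq}\to\prod_A\FCopC_H^{\simeq}$. Then $\FunTen(-,\Cc)$ turns this into precomposition with the diagonal, which is exactly the pointwise tensor product. Postcomposition with $\nabla$ is the backward functoriality. This slip does not affect the argument, since product preservation alone pins down $D$.
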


\begin{proof}
    In this proof, we denote $\SMCat$ by $\CMon(\Cat)$, see Remark \ref{CMon}. We consider a commutative diagram
    \[
        \begin{tikzcd}[column sep =2.5cm]
            \CMon(\Cat) \arrow[r,"{\CorCofMul-)}"] &\SMCatG \arrow[r,"{(l_{G/H})^{*}}"] &\CMon(\Cat)
            \\
            \CMon(\CMon(\Cat)) \arrow[r,"{\CMon(\CorCofMul-))}"] \arrow[u,"{\ev_{*}}", "{\simeq}"'] \arrow[d,"{\CMon(\ev_{*})}"', "{\simeq}"] &\CMon(\SMCatG) \arrow[r,"{\CMon((l_{G/H})^{*})}"] \arrow[u,"{\ev_{*}}"] \arrow[d,"{\CMon(\fgt)}"'] &\CMon(\CMon(\Cat)) \arrow[u,"{\ev_{*}}", "{\simeq}"'] \arrow[d,"{\CMon(\ev_{*})}"', "{\simeq}"]
            \\
            \CMon(\Cat) \arrow[r,"{\CMon(\CorCof-))}"] &\CMon(\Cat_{G}) \arrow[r,"{\CMon(\ev_{G/H})}"] &\CMon(\Cat),
        \end{tikzcd}
    \]
    where the bottom left square commutes by Lemma \ref{GSMlifting}, and other squares commute essentially by definition. We observe that the left and right composite functors are equivalent to the identity functors, see Remark \ref{CMon}. It is immediate that the bottom composite functor in the above diagram is equivalent to the functor
    \begin{equation*}
        \Fun(\Oo_{H}^{\simeq},-)_{*}\colon \SMCat \xrightarrow{} \SMCat,
    \end{equation*}
    finishing the proof.
\end{proof}

Next, we show that the functor
\begin{equation*}
    \CorCofMul-)\colon \SMCat \xrightarrow{} \SMCatG
\end{equation*}
is right adjoint to
\begin{equation*}
    l_{G/G}^{*}\colon \SMCatG \xrightarrow{} \SMCat.
\end{equation*}
We then define the $G$-symmetric monoidal comparison $G$-functor using this adjunction. In addition, we record the counit of this adjunction, which provides sufficient control over the $G$-symmetric monoidal comparison $G$-functor.

\begin{prop}\label{GSymmMonAdj}
    The functor 
    \begin{equation*}
        l_{G/G}^{*} \colon \PSMCatG \xrightarrow{} \PSMCat
    \end{equation*}
    is a 2-left adjoint of the functor
    \begin{equation*}
        \CorCofMul-)\colon \PSMCat \xrightarrow{} \PSMCatG.
    \end{equation*}
    This 2-adjunction restricts to a 2-adjunction
    \begin{equation*}
        l_{G/G}^{*}\colon \SMCatG \rightleftarrows \SMCat \colon \mkern-3mu \CorCofMul-)
    \end{equation*}
    between the categories of $G$-symmetric monoidal $G$-categories and symmetric monoidal categories. Furthermore, on the underlying categories, the counit of the adjunction is given for symmetric monoidal categories by evaluating at $G/G$. More precisely, let $(\Cc,\otimes)$ be a symmetric monoidal category then
    \begin{equation*}
        \fgt(\varepsilon)\simeq \ev_{G/G}\colon \Fun(\OG^{\simeq},\Cc) \xrightarrow{} \Cc.
    \end{equation*}
\end{prop}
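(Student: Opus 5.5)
The plan is to realize both sides as weighted constructions over $\Span(\FinG)$ and to reduce the adjunction to the Yoneda-type identification of $l_{G/G}^{*}$ as a weighted limit. By Definition \ref{CorCofMul}, $\CorCofMul\Ee)$ is the functor $X\mapsto\FunTen((({\FinG}_{/X})^{\simeq},\amalg),\Ee)$. This is the composite of the enriched hom $\FunTen(-,\Ee)$ with the functor $W\coloneq(({\FinG}_{/-})^{\simeq},\amalg)\colon\Span(\FinG)\xrightarrow{}(\PSMCat)^{\op}$. By the usual enriched formalism for 2-categories (Remark \ref{2-Cats}), the functor $\Ee\mapsto\FunTen(W(-),\Ee)$ is 2-right adjoint to the "$W$-weighted colimit" functor
\begin{equation*}
    \Ff\mapsto \colim^{\Ff} W\colon\PSMCatG\xrightarrow{}\PSMCat,
\end{equation*}
that is, the $\Cat$-tensored left Kan extension / coend $\int^{X\in\Span(\FinG)}\Ff(X)\otimes W(X)$. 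Here $\otimes$ is the $\Cat$-tensoring of $\PSMCat$. So the first step is to record this formal adjunction, using that $\PSMCat=\Fun(\Span(\FCopC),\Cat)$ is cotensored and tensored over $\Cat$.

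The second step is to identify this weighted colimit with $l_{G/G}^{*}$. I would compute $W$ on objects $A\in\Span(\FCopC)$. The key claim is that, viewed as a functor $\Span(\FinG)^{\op}\times\Span(\FCopC)\xrightarrow{}\Cat$, $W$ is equivalent to the corepresentable bifunctor
\begin{equation*}
    (X,A)\mapsto\Hom_{\Span(\FinG)}(l_{G/G}(A),X)^{\simeq}\simeq (({\FinG})_{/X\times\coprod_{A}*})^{\simeq}\simeq ({\FinG}_{/X})^{\simeq,\times A}.
\end{equation*}
This holds because $\Hom_{\Span(\FinG)}(\coprod_{A}G/G,X)$ is the groupoid of spans $\coprod_A G/G\leftarrow Z\xrightarrow{}X$, i.e.\ of $G$-sets over $X$ labelled by $A$. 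The functoriality in $A$ (the forward maps $\nabla$) recovers the coproduct structure $\amalg$. The functoriality in $X$ (pullback backward, postcomposition forward) recovers Barwick's unfurling from Construction \ref{CorCofMulConst}. I will check the compatibility via the unfurling universal property of \cite[Example 3.4]{TwoVariable}, comparing both functors on backward and forward maps together with their Beck--Chevalley data. Once $W$ is shown to be corepresented by $l_{G/G}$, the co-Yoneda lemma gives $\int^{X}\Ff(X)\otimes\Hom(l_{G/G}(-),X)\simeq\Ff\circ l_{G/G}=l_{G/G}^{*}\Ff$. This proves the 2-adjunction on precategories. I expect this identification of $W$ with a representable, including all higher coherences of the span functoriality, to be the main obstacle. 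If the direct comparison is awkward, I would instead construct the unit $\Ff\xrightarrow{}\CorCofMul l_{G/G}^{*}\Ff$ directly, by sending $x\in\Ff(X)$ and $(Z\xrightarrow{}X)$ to the image of $x$ under the span $X\leftarrow Z\xrightarrow{}G/G$. I would then verify the triangle identities.

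The restriction to product-preserving functors is then formal. $l_{G/G}$ preserves finite products, so $l_{G/G}^{*}$ sends $\SMCatG$ into $\SMCat$, and Lemma \ref{SMMontoGSMMon} shows that $\CorCofMul-)$ sends $\SMCat$ into $\SMCatG$. Since both are full 2-subcategories, the adjunction restricts. Finally, for the counit, note that $l_{G/G}^{*}\CorCofMul\Cc)$ has underlying category $\FunTen((({\FinG}_{/G/G})^{\simeq},\amalg),(\Cc,\otimes))\simeq\Fun(\OG^{\simeq},\Cc)$ by Proposition \ref{GSMlifting} (cf.\ Lemma \ref{Componentwise}). Under the co-Yoneda identification, the counit is evaluation at the identity span of $l_{G/G}(*)=G/G$, i.e.\ at the object $\id\colon G/G\xrightarrow{}G/G$ of $({\FinG}_{/G/G})^{\simeq}$. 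After restricting along $\incl$ this is exactly $\ev_{G/G}$.
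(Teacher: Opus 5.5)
Your proposal is the paper's argument in dual form. The paper starts from the right adjoint $\hat{R}$ of $l_{G/G}^{*}$ (a right Kan extension), computes $\hat{R}(\Ee)(X)\simeq\FunTen(\Hom_{\Span(\FinG)}(X,l_{G/G}(-)),\Ee)$ by Yoneda, and identifies $\Hom_{\Span(\FinG)}(\bullet,l_{G/G}(-))$ with $(({\FinG}_{/-})^{\simeq},\amalg)$. You instead start from the weighted-colimit left adjoint of $\Ee\mapsto\FunTen(W(-),\Ee)$ and apply co-Yoneda after the very same identification, and your restriction and counit steps are the same as the paper's.

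The one point to adjust is where you run the unfurling/Beck--Chevalley comparison. A groupoid-valued functor on $\Span(\FinG)$ carries no such data, so, as the paper does, compare the $\Cat$-valued mapping categories $\HOM_{\Span(\FinG)}(X,G/G)\simeq{\FinG}_{/X}$ of the $2$-category of spans with the slice functor. Then lift to the cocartesian monoidal structure, and only afterwards pass to cores.
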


\begin{proof}
    It is immediate that the functor $l_{G/G}^{*}\colon\PSMCatG \xrightarrow{} \PSMCat$ is a $\Cat$-linear, and that it admits a right adjoint $\hat{R}\colon \PSMCat \xrightarrow{} \PSMCatG$ given by a right Kan extension. It follows from Remark \ref{2-Cats}, that the adjunction $l_{G/G}^{*}\dashv \hat{R}$ lifts to a 2-adjunction. Let $\Ee$ be a $G$-symmetric monoidal $G$-precategory, and let $X$ be a finite $G$-set. We have a sequence of equivalences
    \begin{equation*}
        \hat{R}(\Ee)(X)\simeq \FunTenG(\Hom_{\Span(\FinG)}(X,-),\hat{R}(\Ee))\simeq \FunTen(\Hom_{\Span(\FinG)}(X,l_{G/G}(-)),\Ee),
    \end{equation*}
    natural in both $\Ee$ and $X$. The first equivalence is a consequence of Lemma \ref{Yoneda}, and the second equivalence follows from the adjunction. We claim that the $\hat{R}$ is equivalent to $\CorCofMul-)$. The claim reduces to showing that the functor
    \begin{equation*}
        \Hom_{\Span(\FinG)}(\bullet,l_{G/G}(-)) \colon \Span(\FinG) \xrightarrow{} \SMCat, \quad X\mapsto \Hom_{\Span(\FinG)}(X,l_{G/G}(-))
    \end{equation*}
    is equivalent to the functor
    \begin{equation*}
        (({\FinG}_{/-})^{\simeq},\amalg) \colon \Span(\FinG) \xrightarrow{} \SMCat,
    \end{equation*}
    from Construction \ref{CorCofMulConst}. One way to see this is to use the description of the cocartesian symmetric monoidal structure \cite[Proposition 3.1.1]{HomotCommutRingBisp}. Alternatively, observe that $\Span(\FinG)$ is naturally a 2-category, with 2-morphisms being commutative diagrams of the form
    \[
        \begin{tikzcd}[sep=tiny]
            &Z \arrow[dl, "{b}"'] \arrow[dr, "{f}"] \arrow[dd]
            \\
            X &&Y
            \\
            &Z'. \arrow[ul, "{b'}"] \arrow[ur, "{f'}"']
        \end{tikzcd}
    \]
    Let
    \begin{equation*}
        \HOM_{\Span(\FinG)}(-,-)\colon \Span(\FinG)^{\op}\times \Span(\FinG) \xrightarrow{} \Cat
    \end{equation*}
    denote the functor coming from the $\Cat$-enrichment. We get an induced functor
    \begin{equation*}
        \HOM_{\Span(\FinG)}(\bullet,l_{G/G}(-)) \colon \Span(\FinG) \xrightarrow{} \SMCat, \quad X\mapsto \HOM_{\Span(\FinG)}(X,l_{G/G}(-)).
    \end{equation*}
    Using the description
    \begin{equation*}
        \HOM_{\Span(\FinG)}(X,Y)\simeq {\FinG}_{/X\times Y},
    \end{equation*}
    it is easy to check that after postcomposing with the underlying category functor $\fgt\colon \SMCat \xrightarrow{} \Cat$, we get an equivalence
    \begin{equation*}
        \HOM_{\Span(\FinG)}(\bullet,G/G)\simeq {\FinG}_{/-}\colon \Span(\FinG) \xrightarrow{} \Cat.
    \end{equation*}
    Then using the definition of the cocartesian symmetric monoidal structure \cite[Definition 2.4.0.1]{HA}, one deduces that this equivalence lifts to an equivalence
    \begin{equation*}
        \HOM_{\Span(\FinG)}(\bullet,l_{G/G}(-))\simeq({\FinG}_{/-},\amalg)\colon \Span(\FinG) \xrightarrow{} \SMCat.
    \end{equation*}
    We obtain the desired equivalence by applying the underlying groupoid functor to the above equivalence. Hence, we get that $\hat{R}\simeq \CorCofMul-)$ and therefore we have a 2-adjunction
    \begin{equation*}
        l_{G/G}^{*}\colon \PSMCatG \rightleftarrows \PSMCat \colon \mkern-3mu \CorCofMul-).
    \end{equation*}
    The above 2-adjunction restricts to a 2-adjunction
    \begin{equation*}
        l_{G/G}^{*}\colon \SMCatG \rightleftarrows \SMCat \colon \mkern-3mu \CorCofMul-).
    \end{equation*}
    This is a direct consequence of Lemma \ref{SMMontoGSMMon}, together with the fact that $l_{G/G}$ preserves finite products.
    
    All that is left is to check that the counit $\varepsilon$ of the adjunction $l_{G/G}^{*}\dashv\CorCofMul-)$ is given for symmetric monoidal categories by evaluating at $G/G$. Consider a commutative triangle
    \begin{equation}\label{EqCounitTriangle}
        \begin{tikzcd}
            \PSMCat \arrow[dr,"{\FunTen((\FinG^{\simeq},\amalg),-)}"'] \arrow[rr,"{\CorCofMul-)}"] &&\PSMCatG \arrow[dl,"{\ev_{G/G}}"]
            \\
            &\Cat.
        \end{tikzcd}
    \end{equation}
    From Lemma \ref{Yoneda}, we get a natural equivalence
    \begin{equation*}
        \ev_{G/G}\simeq \FunTenG(\Hom_{\Span(\FinG)}(G/G,-),\bullet)\colon\PSMCatG \xrightarrow{} \Cat,
    \end{equation*}
    and therefore by Remark \ref{2-Cats}, an adjunction
    \begin{equation*}
        \Hom_{\Span(\FinG)}(G/G,-)\times\bullet \colon \Cat \rightleftarrows \PSMCatG \colon \mkern-3mu \ev_{G/G}.
    \end{equation*}
    Furthermore, the counit $\epsilon$ of this adjunction after evaluating at $G/G$ is given by identity when restricted to $\id_{G/G}\in \Hom_{\Span(\FinG)}(G/G,G/G)$. More precisely, for a $G$-symmetric monoidal $G$-precategory $\Ff$ we have that the value of the counit at $G/G$ restricts to a functor
    \begin{equation*}
        \{\id_{G/G}\}\times \Ff(G/G) \xrightarrow{} \Hom_{\Span(\FinG)}(G/G,G/G)\times \Ff(G/G) \xrightarrow{} \Ff(G/G)
    \end{equation*}
    equivalent to the identity functor. Let $\Ee$ be a symmetric monoidal precategory. From the commutative diagram (\ref{EqCounitTriangle}), we obtain a commutative diagram of counits
    \[
        \begin{tikzcd}
            \Hom_{\Span(\FinG)}(G/G,l_{G/G}(-))\times\FunTen((\FinG^{\simeq},\amalg),\Ee) \arrow[dr,"{\ev}"'] \arrow[rr,"{\epsilon}"]  &&\CorCofMul\Ee)(l_{G/G}(-)) \arrow[dl,"{\varepsilon}"]
            \\
            &\Ee.
        \end{tikzcd}
    \]
    By the above discussion, after evaluating this diagram at $*\in\Span(\FCopC)$ and precomposing with the functor $\{\id_{G/G}\}\times \FunTen((\FinG^{\simeq},\amalg),\Ee) \xrightarrow{} \Hom_{\Span(\FinG)}(G/G,G/G)\times \FunTen((\FinG^{\simeq},\amalg),\Ee)$ we obtain a commutative diagram
    \[
        \begin{tikzcd}
            \FunTen((\FinG^{\simeq},\amalg),\Ee) \arrow[dr,"{\ev_{id_{G/G}}}"'] \arrow[rr,"{\id}"]  &&\FunTen((\FinG^{\simeq},\amalg),\Ee) \arrow[dl,"{\varepsilon_{G/G}}"]
            \\
            &\Ee(*),
        \end{tikzcd}
    \]
    where the functor $\ev_{id_{G/G}}$ is equivalent to the composite
    \begin{equation*}
        \FunTen((\FinG^{\simeq},\amalg),\Ee) \xrightarrow{(G/G,\id)} \FinG^{\simeq}\times\FunTen((\FinG^{\simeq},\amalg),\Ee) \xrightarrow{\ev} \Ee(*).
    \end{equation*}
    We now specialize to the case $\Ee=(\Cc,\otimes)$ of a symmetric monoidal category. The 2-functor $\fgt\colon \SMCat \xrightarrow{} \Cat$ induces a commutative diagram
    \[
        \begin{tikzcd}[column sep =large]
             \FunTen((\FinG^{\simeq},\amalg),(\Cc,\otimes)) \arrow[r,"{(G/G,\id)}"] \arrow[d,"{\fgt}"']  &\FinG^{\simeq}\times\FunTen((\FinG^{\simeq},\amalg),(\Cc,\otimes)) \arrow[d,"{id_{\FinG^{\simeq}}\times\fgt}"'] \arrow[dr, "{\ev}"]
             \\
             \Fun(\FinG^{\simeq},\Cc) \arrow[r,"{(G/G,\id)}"] \arrow[d,"{-\circ \incl}"'] &\FinG^{\simeq}\times\Fun(\FinG^{\simeq},\Cc) \arrow[r,"{\ev}"] &\Cc.
             \\
             \Fun(\OG^{\simeq},\Cc) \arrow[urr, out=0, in=-160, "{\ev_{G/G}}"']
        \end{tikzcd}
    \]
    The composite of the leftmost horizontal functors is the equivalence from Proposition \ref{GSMlifting}. This finishes the proof. 
\end{proof}

\subsection{Normed comparison functor}

We are now ready to extend the parametrized comparison functor from Construction \ref{ParamComparFunct} to the normed case. We will need to consider the $G$-symmetric monoidal $G$-category of $G$-spectra. The construction of this $G$-symmetric monoidal category can be found in Section 9.2 in \cite{NormsMotivic}, and Section 5.5 in \cite{HighTamb}. We will briefly recall the relevant facts without going into details.

The $G$-category $\PSp_{G}$ upgrades by \cite[Theorem 5.5.1]{HighTamb}, to a $G$-symmetric monoidal $G$-category, which we denote by $\PSp_{G}^{\otimes}$. The way to think about this $G$-symmetric monoidal $G$-category structure is the following. One first equips the $G$-category $\PSpc_{G,*}$ with the $G$-symmetric monoidal $G$-category structure given by smash product. More precisely, on the forward morphism $K\leq H\leq G$, it is given by localization at equivariant weak homotopy equivalences of the functor $\Fun(\textup{B}K,\textup{SSet}_{*}) \xrightarrow{} \Fun(\textup{B}H,\textup{SSet}_{*})$ sending $X\colon \textup{BK}\xrightarrow{}\textup{SSet}_{*}$ to the $|H/K|$-fold smash product $\bigwedge_{|H/K|}X$ with the $H$-action given by restricting along a certain homomorphism $H\xrightarrow{}\Sigma_{|H/K|}\wr K$, see \cite[Construction 5.2.10, Construction A.3.4]{HighTamb} for details. The functors $\Sigma^{\infty}\colon\PSpc_{G,*}(G/H)\xrightarrow{}\PSp_{G}(G/H)$ assemble into a $G$-functor. Furthermore, these symmetric monoidal functors, by \cite[Proposition 5.5.6]{HighTamb}, exhibit $\PSp_{G}(G/H)$ as the universal symmetric monoidal inversion of representation spheres in the sense of \cite[Lemma 4.1, Remark 4.2]{NormsMotivic}. For a morphism $f\colon X\xrightarrow{}Y$ in $\FinG$, the functor $f_{\otimes}\colon \PSp_{G}(X) \xrightarrow{} \PSp_{G}(Y)$ is then the unique sifted colimit preserving, symmetric monoidal functor making the square
\[
    \begin{tikzcd}
        \PSpc_{G,*}(X) \arrow[r,"{\Sigma^{\infty}}"] \arrow[d,"{f_{\otimes}}"] &\PSp_{G}(X) \arrow[d,"{f_{\otimes}}"]
        \\
        \PSpc_{G,*}(Y) \arrow[r,"{\Sigma^{\infty}}"] &\PSp_{G}(Y),
    \end{tikzcd}
\]
commute. In particular, for a morphism $i$ corresponding to the subgroup inclusion $K\leq H\leq G$, we get an equivalence $i_{\otimes}\simeq \textup{N}_{K}^{H}$ with the HHR-norm. 

\begin{defi}\label{GSMComparisonFunctor}
    We denote by
    \begin{equation*}
         \phi^{\otimes}\colon \PSp_{G}^{\otimes} \xrightarrow{} \CorCofMul(\Sp,\otimes))
    \end{equation*}
    the $G$-symmetric monoidal $G$-functor obtained using the adjunction $l_{G/G}^{*}\dashv \CorCofMul-)$ (Proposition \ref{GSymmMonAdj}) from the symmetric monoidal functor
    \begin{equation*}
        \Phi^{G}\colon (\Sp_{G},\otimes) \xrightarrow{} (\Sp,\otimes).
    \end{equation*}
\end{defi}

We will show that the $G$-symmetric monoidal $G$-functor $\phi^{\otimes}$ restricts to the $G$-functor $\phi\colon \PSp_{G} \xrightarrow{} \CorCof\Sp)$ (from Proposition \ref{ComparFunBeforeRationalizing}) on the underlying $G$-categories. We will do this using the universal property of the $G$-category of $G$-spectra, which we now recall.

Let $\underline{\Cc}$ and $\underline{\Dd}$ be $G$-presentable $G$-categories, we denote by $\Fun_{G}^{L}(\underline{\Cc},\underline{\Dd})$ the full subcategory of $\Fun_{G}(\underline{\Cc},\underline{\Dd})$ spanned by the $G$-colimit preserving $G$-functors. The next theorem is a combination of \cite[Theorem 10.7]{Partial} and \cite[Theorem 5.4.4]{HighTamb}.

\begin{theo}\cite[Theorem 10.7]{Partial}\label{ParametrizedUnivPropPresSp}
    Let $\underline{\Cc}$ be a $G$-stable, $G$-presentable $G$-category. The evaluation at the sphere spectrum functor
    \begin{equation*}
        \Fun_{G}^{L}(\PSp_{G},\underline{\Cc})\xrightarrow{\simeq} \underline{\Cc}(G/G)
    \end{equation*}
    is an equivalence.
\end{theo}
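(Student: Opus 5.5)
The plan is to split the statement into two universal properties and then compose them. First, identify the $G$-category $\PSp_{G}=\PMack_{G}(\Sp)$ with the $G$-stabilization of $G$-spaces. This identification is \cite[Theorem 5.4.4]{HighTamb}: there is an equivalence
\begin{equation*}
    \PMack_{G}(\Sp)\simeq \underline{\Sp}^{G}(\PSpc_{G,*}),
\end{equation*}
where the right-hand side is the $G$-stable, $G$-presentable $G$-category obtained by universally $G$-stabilizing $\PSpc_{G}$ in $\PrLG$. The stabilization is taken in the sense of \cite{ParStab}: first pass to pointed objects, then make the $G$-category fiberwise stable and $G$-semiadditive, as in Definitions \ref{PSemiadditive} and \ref{PStable}. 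Under this equivalence the sphere spectrum corresponds to $\Sigma^{\infty}_{+}$ of the point. I would check this matching on the value at $G/G$, using the description of $\Sphere$ as $\Stab(\Hom_{\Span(\FinG)}(G/G,-))$ recalled in Section \ref{Section2}.

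Second, invoke the universal property of $G$-stabilization, as in \cite[Theorem 10.7]{Partial} (and its precursors in \cite{ParStab}). Let $\underline{\Cc}$ be $G$-stable and $G$-presentable. Precomposition with $\Sigma^{\infty}_{+}\colon \PSpc_{G}\to\PSp_{G}$ then induces an equivalence
\begin{equation*}
    \Fun_{G}^{L}(\PSp_{G},\underline{\Cc})\xrightarrow{\simeq}\Fun_{G}^{L}(\PSpc_{G},\underline{\Cc}).
\end{equation*}
The argument has three parts:
\begin{enumerate}
    \item Pointing is free: a $G$-colimit preserving functor into a pointed $G$-category factors uniquely through $\PSpc_{G,*}$.
    \item Fiberwise stabilization is free: colimit-preserving functors into fiberwise stable targets factor uniquely through the fiberwise stabilization.
    \item The remaining step, adjoining the norm equivalences $\Nm_{f}$, is the genuinely parametrized content and is where \cite{Partial} does the work.
\end{enumerate}
I would not reprove the third part but cite it.

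Third, use that $\PSpc_{G}$ is the free $G$-cocompletion of the terminal $G$-category, i.e.\ of the representable $\underline{G/G}$. Combined with the parametrized Yoneda lemma (Lemma \ref{Yoneda}), this gives
\begin{equation*}
    \Fun_{G}^{L}(\PSpc_{G},\underline{\Cc})\simeq \Fun_{G}(\underline{G/G},\underline{\Cc})\simeq \underline{\Cc}(G/G),
\end{equation*}
with the composite given by evaluation at the point. Composing the three equivalences, the resulting functor is evaluation at $\Sigma^{\infty}_{+}(*)\simeq\Sphere$, which is the statement.

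The main obstacle is the first step: ensuring that the Mackey-functor model $\PMack_{G}(\Sp)$, with restriction functoriality as set up in Section \ref{Section3}, agrees as a $G$-category with the model used in \cite{Partial}, compatibly with the sphere. I would handle this by comparing both models fiberwise via \cite[Theorem 5.4.4]{HighTamb}. Naturality in $G/H$ should then follow because both sides are built from the slice functor ${\FinG}_{/-}$ with the same functoriality.
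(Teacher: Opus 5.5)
Your proposal is correct and takes essentially the same route as the paper. The paper gives no independent argument; it presents the statement as the combination of \cite[Theorem 10.7]{Partial}, the universal property of $G$-spectra as the free $G$-stable, $G$-presentable $G$-category on a point, with \cite[Theorem 5.4.4]{HighTamb}, which identifies that $G$-category with the Mackey model $\PMack_{G}(\Sp)$. Your additional unpacking (the stages of $G$-stabilization, $\PSpc_{G}$ as the free $G$-cocompletion of the point together with Lemma \ref{Yoneda}, and matching $\Sphere$ with $\Sigma^{\infty}_{+}(*)$) only makes explicit what those two citations already contain.
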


To apply the universal property of the $G$-category of $G$-spectra, it remains to check one statement. We already know the value of both $\phi$ and $\phi^{\otimes}$ on the sphere spectrum, and that $\phi$ preserves $G$-colimits. It therefore remains to verify that $\phi^{\otimes}$ also preserves $G$-colimits.

\begin{lem}\label{GSMCompFunPreservesColimits}
    The $G$-functor $\fgt(\phi^{\otimes})$ underlying the $G$-symmetric monoidal $G$-functor
    \begin{equation*}
         \phi^{\otimes}\colon \PSp_{G}^{\otimes} \xrightarrow{} \CorCofMul(\Sp,\otimes))
    \end{equation*}
    preserves $G$-colimits.
\end{lem}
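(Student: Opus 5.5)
I will verify the two conditions of Remark \ref{PPrescolim} for the $G$-functor $F\coloneq\fgt(\phi^{\otimes})\colon \PSp_{G}\xrightarrow{}\CorCof\Sp)$. Both $G$-categories are $G$-presentable: the source by \cite[Theorem 5.4.4]{HighTamb}, the target by Proposition \ref{CorCofGStableGpresentable}. The first task is to identify $F$ levelwise. By Proposition \ref{GSMlifting}, the underlying $G$-category of $\CorCofMul(\Sp,\otimes))$ is $\CorCof\Sp)$. The component of $\phi^{\otimes}$ at an orbit $A=G/K$ is the adjunct of $\Phi^{G}$ under $l_{G/G}^{*}\dashv\CorCofMul-)$. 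Unwinding the Yoneda description in the proof of Proposition \ref{GSymmMonAdj} and using Proposition \ref{GSMlifting}, a $K$-spectrum $X$ is sent to the functor
\begin{equation*}
    ({\OG}_{/A})^{\simeq}\xrightarrow{}\Sp,\qquad (\alpha\colon B\to A)\mapsto \Phi^{G}(\alpha_{\otimes}\, \iota_{B}^{*}X)\quad\text{(roughly)}.
\end{equation*}
More precisely, it is sent to the composite of a span $G/G\leftarrow B\xrightarrow{\alpha}A$ acting on $X$ and then $\Phi^{G}$ applied after the norm along $B\to G/G$. For $B=G/L$, this is $\Phi^{G}\textup{N}_{L}^{G}\textup{res}^{K}_{L}X$, up to the conjugations that occur.

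For condition (1), I need each $F_{A}$ to preserve small colimits. By the formula above, $F_{A}$ is built from restrictions, HHR-norms and $\Phi^{G}$. Norms only preserve sifted colimits, but $\Phi^{G}\circ\textup{N}_{L}^{G}\simeq\Phi^{L}$ is exact and colocalizing. The cleanest route avoids explicit formulas. Each $F_{A}$ is a symmetric monoidal functor between stable categories, and it is a $\otimes$-functor composed of sifted-colimit-preserving functors, since $f_{\otimes}$ preserves sifted colimits by construction. It therefore suffices to show that $F_{A}$ preserves finite coproducts, or equivalently the zero object and finite sums. I will show this directly with the diagonal identity $\Phi^{G}\textup{N}_{L}^{G}\simeq\Phi^{L}$. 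This identity follows from Theorem \ref{UnivPropGSp}, because both sides are presentably symmetric monoidal functors out of $\Sp_{L}$ that agree on suspension spectra: for a pointed $L$-space $Y$ one has $(\textup{N}_{L}^{G}Y)^{G}\simeq Y^{L}$. Consequently the pointwise value of $F_{A}(X)$ at $(G/L\to A)$ is $\Phi^{L}\textup{res}X$, which is colimit preserving. An alternative route is to show $F_{A}$ is exact because it is a filtered-colimit-preserving symmetric monoidal functor between stable categories that preserves $0$, since $\Phi^{L}(0)=0$; together with sifted colimits this gives all colimits.

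For condition (2), the Beck–Chevalley maps for induction $f_{!}$, I will exploit $G$-semiadditivity on both sides: $\PSp_{G}$ is $G$-stable, and the target is $G$-stable by Proposition \ref{CorCofGStableGpresentable}. A fiberwise exact $G$-functor between $G$-semiadditive categories commutes with $f_{!}\simeq f_{*}$ as soon as it commutes with the restrictions $f^{*}$, which it does. More precisely, the mate $f_{!}F_{A}\to F_{B}f_{!}$ identifies, via the norm equivalences $\Nm_{f}$, with the inverse of the canonical map $F_{B}f_{*}\to f_{*}F_{A}$. The latter is a Beck–Chevalley map for right adjoints of the restrictions, and it is an equivalence for finite-product-preserving functors, by the parametrized analogue \cite[Corollary 4.5.7]{ParStab}, since $f_{*}$ is a finite $G$-product. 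The main obstacle is this compatibility of the mate with the norm maps; I expect to cite the $G$-semiadditive statement that fiberwise additive $G$-functors between $G$-semiadditive categories preserving finite $G$-products automatically preserve finite $G$-coproducts. Failing that, I would compute both sides pointwise using the explicit formula $\Phi^{L}\textup{res}$ together with the left Kan extension description of $f_{!}$ on $\CorCof\Sp)$ from the proof of Lemma \ref{CorCofPres}, and compare them with the double coset formula for induction followed by geometric fixed points.
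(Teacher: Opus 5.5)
Your proposal has a genuine gap in condition (2). The principle you rely on is false: a fiberwise exact $G$-functor between $G$-semiadditive $G$-categories does not automatically commute with $f_{!}\simeq f_{*}$ just because it commutes with $f^{*}$, since commuting with restrictions is simply what it means to be a $G$-functor. For a counterexample take $G=C_{2}$, so that $\CorCof\Sp)(G/G)\simeq\Sp\times\Fun(\textup{B}C_{2},\Sp)$ and $\CorCof\Sp)(G/e)\simeq\Sp$. Consider the $G$-functor $\CorCof\Sp)\to\CorCof\Sp)$ which is the identity at $G/e$ and sends $(X_{G},X_{e})\mapsto(X_{G}\oplus X_{e}^{hC_{2}},X_{e})$ at $G/G$. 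It is fiberwise exact. But for $f\colon G/e\to G/G$ we have $f_{!}Y=(0,Y\oplus Y)$ with the swap action, and applying the functor gives $Y$ rather than $0$ in the first coordinate.

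Rephrasing via $\Nm_{f}$ does not rescue the argument. Invertibility of $F_{B}f_{*}\to f_{*}F_{A}$ is exactly preservation of finite $G$-products, which is no easier than the claim. The result you plan to cite from \cite{ParStab} (preserving finite $G$-products implies preserving finite $G$-coproducts) takes this as a hypothesis, so the argument is circular. You need an input specific to $\phi^{\otimes}$, namely a double coset formula for geometric fixed points of induced spectra.

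Your fallback is the right idea, and it is what the paper does. Evaluate at $\alpha\colon G/K\to G/H$, use the pointwise Kan extension formula to write $\ev_{\alpha}f_{!}$ as a sum over $S=\pi_{0}(({\OG}_{/(G/L)})^{\simeq}_{/\alpha})$, and reduce to a statement about $\Phi^{K}$. What you still have to address is that the Beck--Chevalley map itself must be invertible, not merely that source and target are abstractly equivalent. The paper handles this by writing $\Phi^{K}\simeq\ev_{*}(\Pp_{\Sigma}(\Span((-)^{K}))\otimes\Sp)$ via \cite[Appendix A]{Descent}. Then every functor in the square, and the relevant left adjoints, are induced from functors of finite $G$-sets. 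The mate becomes the canonical bijection $\coprod_{[P]\in S}((\beta_{P}\gamma_{P}^{\inv})^{*}\mu)^{K}\cong(\alpha^{*}(f\mu))^{K}$.

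For condition (1), your final argument matches the paper: identify $\ev_{\alpha}F_{A}$ with $\Phi^{G}\textup{N}_{L}^{G}\textup{res}$ using $G$-functoriality, $G$-symmetric monoidality and the counit $\ev_{G/G}$, then use $\Phi^{G}\textup{N}_{L}^{G}\simeq\Phi^{L}$. Two side claims are wrong, though.
\begin{enumerate}
\item Theorem \ref{UnivPropGSp} cannot give $\Phi^{G}\textup{N}_{L}^{G}\simeq\Phi^{L}$. It applies only to colimit-preserving functors, and $\Phi^{G}\textup{N}_{L}^{G}$ is not yet known to be one, which is the very point at issue. Use instead the universal property among sifted-colimit-preserving symmetric monoidal functors recalled before Definition \ref{GSMComparisonFunctor}, or cite the identity as standard, as the paper does.
\item Your alternative route fails. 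The functor $X\mapsto X\otimes X$ on $\Sp$ is symmetric monoidal and preserves sifted colimits and $0$, but it is not exact.
\end{enumerate}
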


\begin{proof}
    The $G$-categories $\PSp_{G}$ and $\CorCof\Sp)$ are $G$-presentable, for $\PSp_{G}$ this follows from \cite[Theorem 5.4.10]{HighTamb}, and for $\CorCof\Sp)$ this is Lemma \ref{CorCofPres}. Therefore, to see that the $G$-functor $\fgt(\phi^{\otimes})$ preserves $G$-colimits, it is enough to check the conditions of Remark \ref{PPrescolim}. 
    
    We start by checking condition (1), let $G/H$ be an object of $\OG$, we need to show that the functor
    \begin{equation*}
        \phi^{\otimes}_{G/H}\colon \PSp_{G}(G/H) \xrightarrow{} \CorCof\Sp)(G/H)\simeq \Fun(({\OG}_{/(G/H)})^{\simeq},\Sp)
    \end{equation*}
    preserves colimits. Since the target is a functor category, where the colimits are calculated objectwise, it is enough to check that for any morphism $\alpha\colon G/K\xrightarrow{}G/H$ in $\OG$ the functor
    \begin{equation*}
        \ev_{\alpha}\phi^{\otimes}_{G/H}\colon \PSp_{G}(G/H) \xrightarrow{} \Sp
    \end{equation*}
    preserves colimits. We can assume after conjugating that $\alpha\colon G/K\xrightarrow{}G/H$ corresponds to a subgroup inclusion $K\leq H$. We then calculate
    \begin{align*}
        \ev_{\alpha}\phi^{\otimes}_{G/H} &\simeq \ev_{id_{G/K}}\alpha^{*}\phi^{\otimes}_{G/H}
        \\
        &\simeq \ev_{id_{G/K}}\phi^{\otimes}_{G/K}\alpha^{*}.
    \end{align*}
    Let $i\colon G/K \xrightarrow{} G/G$ be the unique morphism to the terminal object. Consider a commutative diagram
    \[
        \begin{tikzcd}[column sep =huge, row sep =large]
             \FunTen((\FinG^{\simeq},\amalg),(\Sp,\otimes)) \arrow[r,"{i_{\otimes}}"] \arrow[d,"{\fgt}"']  &\FunTen((\FinG^{\simeq},\amalg),(\Sp,\otimes)) \arrow[d,"{\fgt}"'] \arrow[dr, "{\ev_{\id_{G/G}}}"]
             \\
             \Fun(({\FinG}_{/(G/K)})^{\simeq},\Sp) \arrow[r,"{-\circ (G/K\times_{G/G}-)}"] \arrow[d,"{-\circ \incl}"'] &\Fun(({\FinG}_{/(G/G)})^{\simeq},\Sp) \arrow[r,"{\ev_{id_{G/G}}}"] \arrow[d,"{-\circ \incl}"'] &\Sp.
             \\
             \Fun(({\OG}_{/(G/K)})^{\simeq},\Sp) \arrow[r,"{i_{\otimes}}"] &\Fun(({\OG}_{/(G/G)})^{\simeq},\Sp) \arrow[ur,"{\ev_{\id_{G/G}}}"']
        \end{tikzcd}
    \]
    The middle horizontal composite in the above diagram is equivalent to the functor $\ev_{\id_{G/K}}$, and we get that $\ev_{id_{G/G}}i_{\otimes}\simeq \ev_{id_{G/K}}$. Therefore, we have
    \begin{align*}
        \ev_{\alpha}\phi^{\otimes}_{G/H} &\simeq \ev_{id_{G/K}}\phi^{\otimes}_{G/K}\alpha^{*}
        \\
        &\simeq \ev_{id_{G/G}}i_{\otimes}\phi^{\otimes}_{G/K}\alpha^{*}
        \\
        &\simeq \ev_{id_{G/G}}\phi^{\otimes}_{G/G}i_{\otimes}\alpha^{*}.
    \end{align*}
    Now, from Proposition \ref{GSymmMonAdj} and Definition \ref{GSMComparisonFunctor}, we note that $\ev_{\id_{G/G}}$ is the functor underlying the counit of the adjunction $l_{G/G}^{*}\dashv \CorCofMul-)$, and therefore 
    \begin{equation*}
        \ev_{id_{G/G}}\phi^{\otimes}_{G/G}\simeq \ev_{id_{G/G}}l_{G/G}^{*}(\phi^{\otimes})\simeq \Phi^{G}.
    \end{equation*}
    We continue the calculation
    \begin{align*}
        \ev_{\alpha}\phi^{\otimes}_{G/H} &\simeq \ev_{id_{G/G}}\phi^{\otimes}_{G/G}i_{\otimes}\alpha^{*}
        \\
        &\simeq \Phi^{G}i_{\otimes}\alpha^{*}
        \\
        &\simeq \Phi^{K}\alpha^{*}
        \\
        &\simeq \Phi^{K},
    \end{align*}
    where the third equivalence simply says that $\Phi^{G}\textup{N}_{K}^{G}\simeq \Phi^{K}$. We can now conclude, since the functor $\Phi^{K}\colon \Sp_{H}\xrightarrow{}\Sp$ preserves colimits.
    
    It is left to check condition (2) of Remark \ref{PPrescolim}. For a morphism $f\colon G/L \xrightarrow{} G/H$ in $\OG$, we get a commutative square
    \[
        \begin{tikzcd}
            \PSp_{G}(G/L)  \arrow[r,"{\phi^{\otimes}_{G/L}}"] &\CorCof\Sp)(G/L) 
            \\
            \PSp_{G}(G/H) \arrow[u,"{f^{*}}"'] \arrow[r,"{\phi^{\otimes}_{G/H}}"] &\CorCof\Sp)(G/H). \arrow[u,"{f^{*}}"']
        \end{tikzcd}
    \]
    We need to show that the Beck-Chevalley transformation
    \[
        \begin{tikzcd}
            \PSp_{G}(G/L) \arrow[d,"{f_{!}}"'] \arrow[r,"{\phi^{\otimes}_{G/L}}"] &\CorCof\Sp)(G/L) \arrow[d,"{f_{!}}"'] \arrow[dl, Rightarrow, "{\BC_{f}}"']
            \\
            \PSp_{G}(G/H) \arrow[r,"{\phi^{\otimes}_{G/H}}"'] &\CorCof\Sp)(G/H)
        \end{tikzcd}
    \]
    is an equivalence. Since the target category is a functor category and evaluation functors are jointly conservative, we can show the equivalence after first evaluating. Let $\alpha\colon G/K \xrightarrow{} G/H$ be a morphism in $\OG$, which we once again can assume to correspond to a subgroup inclusion $K\leq H$. Let $S$ denote the following set
    \begin{equation*}
        S\coloneq \pi_{0}(({\OG}_{/(G/L)})^{\simeq}_{/\alpha}),
    \end{equation*}
    where we fix a representative $(\beta_{P}\colon G/P\xrightarrow{}G/L,\gamma_{P}\colon G/P\xrightarrow{\simeq}G/K)$ for each path component. In addition, we assume that $\beta_{P}\colon G/P\xrightarrow{}G/L$ corresponds to a subgroup inclusion $P\leq L$. We have the following commutative diagram
    \begin{equation}\label{EqBCTransfSq1}
        \begin{tikzcd}[column sep =large]
            \PSp_{G}(G/L)  \arrow[r,"{\phi^{\otimes}_{G/L}}"] &\CorCof\Sp)(G/L) \arrow[r,"{(\ev_{\beta_{P}})_{[P]\in S}}"] &\prod_{[P]\in S}\Sp
            \\
            \PSp_{G}(G/H) \arrow[u,"{f^{*}}"'] \arrow[r,"{\phi^{\otimes}_{G/H}}"] &\CorCof\Sp)(G/H) \arrow[u,"{f^{*}}"'] \arrow[r,"{\ev_{\alpha}}"] &\Sp. \arrow[u,"{\Delta}"]
        \end{tikzcd}
    \end{equation}
    In the above diagram, the Beck-Chevalley transformation obtained by passing to the left adjoints of vertical functors in the right-hand square is an equivalence. To see this, let $\Xx\colon ({\OG}_{/(G/L)})^{\simeq} \xrightarrow{} \Sp$ be a functor. By the pointwise formula for left Kan extensions, we have
    \begin{equation*}
        \ev_{\alpha}f_{!}(\Xx) \simeq \Lan_{f\circ -}\Xx \simeq \colim_{({\OG}_{/(G/L)})^{\simeq}_{/\alpha}}\Xx,
    \end{equation*}
    and the claim follows by observing that the functor
    \begin{equation*}
        S\xrightarrow{} ({\OG}_{/(G/L)})^{\simeq}_{/\alpha}, \quad [P]\mapsto (\beta_{P},\gamma_{P}\colon f\beta_{P}\simeq \alpha)
    \end{equation*}
    is an equivalence of discrete groupoids. Therefore, it suffices to show that the Beck-Chevalley transformation obtained by passing to the left adjoints of the outer vertical functors in the diagram (\ref{EqBCTransfSq1}) is an equivalence. Under identifications similar to the ones in the first half of this proof, we can equivalently work with the diagram
    \begin{equation}\label{EqBCTransfSq2}
        \begin{tikzcd}[column sep =large]
            \PSp_{G}(G/L) \arrow[r,"{(t_{\otimes}(\beta_{P}\gamma_{P}^{\inv})^{*})_{S}}"'] &\prod_{[P]\in S}\PSp_{G}(G/G) \arrow[r,"{\phi^{\otimes}_{G/G}}"'] &\prod_{[P]\in S}\Fun(\OG^{\simeq},\Sp) \arrow[r,"{\prod_{S}\ev_{G/G}}"'] &\prod_{[P]\in S}\Sp
            \\
            \PSp_{G}(G/H) \arrow[u,"{f^{*}}"] \arrow[r,"{t_{\otimes}\alpha^{*}}"'] &\PSp_{G}(G/G) \arrow[u,"{\Delta}"] \arrow[r,"{\phi^{\otimes}_{G/G}}"'] &\Fun(\OG^{\simeq},\Sp) \arrow[u,"{\Delta}"] \arrow[r,"{\ev_{G/G}}"'] &\Sp \arrow[u,"{\Delta}"],
        \end{tikzcd}
    \end{equation}
    where $t$ denotes the unique map $G/K\xrightarrow{}G/G$. The left-most square in the above diagram commutes by functoriality of $\PSp_{G}$, and the other two squares commute in the obvious way. For the rest of the proof, we will denote by $\Pp_{\Sigma}(-)$ the sifted colimit completion functor with the covariant functoriality, and by $-\otimes-$ the Lurie tensor product of presentable categories. Using results from \cite[Appendix A]{Descent}, we have an identification
    \begin{equation*}
        \Phi^{K}\simeq \ev_{*}(\Pp_{\Sigma}(\Span((-)^{K}))\otimes \Sp)\colon \FunProd(\Span({\FinG}_{/(G/K)}),\Sp)\xrightarrow{} \FunProd(\Span({\FCopC}),\Sp)\simeq\Sp.
    \end{equation*}
    We combine the equivalence $\ev_{G/G}\phi^{\otimes}_{G/G}t_{\otimes}\simeq \Phi^{K}$, together with the above identification to rewrite the diagram (\ref{EqBCTransfSq2}), as
    \begin{equation*}
        \begin{tikzcd}[column sep =3 cm]
            \PSp_{G}(G/L)  \arrow[r,"{(\Pp_{\Sigma}(\Span((\beta_{P}\gamma_{P}^{\inv})^{*}))\otimes\Sp)_{S}}"'] &\prod_{[P]\in S}\PSp_{G}(G/K) \arrow[r,"{\prod_{S}\Pp_{\Sigma}(\Span((-)^{K}))\otimes\Sp}"'] &\prod_{[P]\in S}\FunProd(\Span({\FCopC}),\Sp)
            \\
            \PSp_{G}(G/H) \arrow[r,"{\Pp_{\Sigma}(\Span(\alpha^{*}))\otimes\Sp}"'] \arrow[u,"{\Pp_{\Sigma}(\Span(f^{*}))\otimes\Sp}"'] &\PSp_{G}(G/K) \arrow[r,"{\Pp_{\Sigma}(\Span((-)^{K}))\otimes\Sp}"'] \arrow[u,"{\Delta}"'] &\FunProd(\Span({\FCopC}),\Sp). \arrow[u,"{\Delta}"']
        \end{tikzcd}    
    \end{equation*}
    We note that the left adjoint of the functor $\Pp_{\Sigma}(\Span(f^{*}))\otimes\Sp$ is given by $\Pp_{\Sigma}(\Span(f\circ-))\otimes\Sp$, and the left adjoint of $\Delta\colon \Sp\xrightarrow{}\prod_{[P]\in S}\Sp$ is given by $\Pp_{\Sigma}(\Span(\amalg))\otimes\Sp$. Hence, $\BC_{f}$ being an equivalence reduces to showing that the mate natural transformation makes the diagram
    \[
        \begin{tikzcd}[column sep =large]
            {\FinG}_{/(G/L)} \arrow[d,"{f\circ -}"'] \arrow[r,"{((\beta_{P}\gamma_{P}^{\inv})^{*})_{S}}"] &\prod_{[P]\in S}{\FinG}_{/(G/K)} \arrow[r,"{\prod_{S}(-)^{K}}"] &\prod_{[P]\in S}\FCopC \arrow[d,"{\amalg}"']
            \\
            {\FinG}_{/(G/H)} \arrow[r,"{\alpha^{*}}"] &{\FinG}_{/(G/K)} \arrow[r,"{(-)^{K}}"] &\FCopC
        \end{tikzcd}
    \]
    commute. Let $\mu\colon X\xrightarrow{}G/L$ be an object of ${\FinG}_{/(G/L)}$. We have bijections
    \begin{equation*}
        (\alpha^{*}(f\mu))^{K}\cong \{\lambda\colon G/K\xrightarrow{}X|f\mu\lambda=\alpha\},
    \end{equation*}
    and
    \begin{equation*}
        \coprod_{[P]\in S}((\beta_{P}\gamma_{P}^{\inv})^{*}(\mu))^{K} \cong \coprod_{[P]\in S}\{\widetilde{\lambda}\colon G/K\xrightarrow{}X|\mu\widetilde{\lambda}=\beta_{P}\gamma_{P}^{\inv}\}.
    \end{equation*}
    Under these identifications, one can check that the canonical map
    \begin{equation*}
        \coprod_{[P]\in S}((\beta_{P}\gamma_{P}^{\inv})^{*}(\mu))^{K}\xrightarrow{}(\alpha^{*}(f\mu))^{K}, \quad \widetilde{\lambda} \mapsto \widetilde{\lambda}
    \end{equation*}
    is a bijection. This finishes the proof.
\end{proof}

We are now ready to compare the $G$-functors $\phi$ and $\fgt(\phi^{\otimes})$.

\begin{prop}\label{UnderlyingCompFuncIdentification}
    The $G$-functor
    \begin{equation*}
        \phi \colon \PSp_{G} \xrightarrow{} \CorCof \Sp)
    \end{equation*}
    from Proposition \ref{ComparFunBeforeRationalizing}, is the underlying $G$-functor of the $G$-symmetric monoidal $G$-functor
    \begin{equation*}
         \phi^{\otimes}\colon \PSp_{G}^{\otimes} \xrightarrow{} \CorCofMul(\Sp,\otimes))
    \end{equation*}
    from Definition \ref{GSMComparisonFunctor}.
\end{prop}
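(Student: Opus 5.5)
We will deduce the identification from the universal property of the $G$-category of $G$-spectra, Theorem \ref{ParametrizedUnivPropPresSp}, applied with target $\underline{\Cc}=\CorCof\Sp)$. This target is $G$-stable and $G$-presentable by Proposition \ref{CorCofGStableGpresentable}. Both $\phi$ and $\fgt(\phi^{\otimes})$ are $G$-functors $\PSp_{G}\xrightarrow{}\CorCof\Sp)$. Here we use Proposition \ref{GSMlifting} to identify the underlying $G$-category of $\CorCofMul(\Sp,\otimes))$ with $\CorCof\Sp)$ via the equivalence $\tau$. Both functors lie in $\Fun_{G}^{L}(\PSp_{G},\CorCof\Sp))$: for $\phi$ this holds because it is a $G$-left adjoint (Proposition \ref{ComparFunBeforeRationalizing}), and for $\fgt(\phi^{\otimes})$ it is Lemma \ref{GSMCompFunPreservesColimits}. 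Since evaluation at the sphere spectrum is an equivalence, in particular conservative on objects, it suffices to produce an equivalence $\phi_{G/G}(\Sphere)\simeq \phi^{\otimes}_{G/G}(\Sphere)$ in $\CorCof\Sp)(G/G)\simeq\Fun(\OG^{\simeq},\Sp)$.

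For $\phi$, Proposition \ref{ComparFunBeforeRationalizing} gives $\phi_{G/G}(\Sphere)\simeq\const(\Sphere)$. For $\phi^{\otimes}$, we use that a $G$-symmetric monoidal $G$-functor restricts, via $l_{G/G}^{*}$, to a symmetric monoidal functor
\begin{equation*}
    \phi^{\otimes}_{G/G}\colon \Sp_{G}\xrightarrow{}\FunTen(((\FinG)^{\simeq},\amalg),(\Sp,\otimes)).
\end{equation*}
Such a functor preserves units, so $\phi^{\otimes}_{G/G}(\Sphere)$ is the unit of the target. By Lemma \ref{Componentwise} with $H=G$, this unit is the constant diagram at $\Sphere$ after passing through $\tau$. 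Hence both values agree, and the universal property gives $\phi\simeq\fgt(\phi^{\otimes})$ as $G$-functors.

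The main obstacle is bookkeeping: checking that the underlying $G$-category of $\CorCofMul(\Sp,\otimes))$ is identified with $\CorCof\Sp)$ compatibly with the identification of units in Lemma \ref{Componentwise}. In particular, the unit computed in the symmetric monoidal category $l_{G/H}^{*}\CorCofMul(\Sp,\otimes))$ must correspond under $\tau_{G/G}$ to $\const(\Sphere)\in\Fun(\OG^{\simeq},\Sp)$. The bottom-left square of the diagram in Lemma \ref{Componentwise} is precisely built from $\tau$ (Proposition \ref{GSMlifting}), so this compatibility should follow by tracking that square. We would also note that the theorem only requires the objects to agree, not any extra structure, so no further coherence needs checking.
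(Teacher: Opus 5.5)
Your proposal is correct and follows the paper's proof essentially step for step. It applies Theorem \ref{ParametrizedUnivPropPresSp} to the $G$-stable, $G$-presentable target $\CorCof\Sp)$, gets $G$-cocontinuity from Proposition \ref{ComparFunBeforeRationalizing} and Lemma \ref{GSMCompFunPreservesColimits}, and matches the values at $\Sphere$ using unit preservation of the symmetric monoidal functor $\phi^{\otimes}_{G/G}$ together with Lemma \ref{Componentwise}; your added remark about tracking the identification $\tau$ makes explicit a bookkeeping point the paper leaves implicit.
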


\begin{proof}
    We will use the universal property of the $G$-category of $G$-spectra from Theorem \ref{ParametrizedUnivPropPresSp}. The $G$-category $\CorCof \Sp)$ is $G$-stable and $G$-presentable by Proposition \ref{CorCofGStableGpresentable}. By Proposition \ref{ComparFunBeforeRationalizing}, the $G$-functor 
    \begin{equation*}
        \phi \colon \PSp_{G} \xrightarrow{} \CorCof \Sp)
    \end{equation*}
    preserves $G$-colimits and sends the sphere spectrum to the constant functor taking value $\Sphere\in\Sp$. The $G$-functor
    \begin{equation*}
        \fgt(\phi^{\otimes})\colon \PSp_{G} \xrightarrow{} \CorCof \Sp)
    \end{equation*}
    preserves $G$-colimits by Lemma \ref{GSMCompFunPreservesColimits}. The functor
    \begin{equation*}
        \phi^{\otimes}_{G/G}\colon \PSp_{G}(G/G) \xrightarrow{} \CorCof \Sp)(G/G)
    \end{equation*}
    is a symmetric monoidal functor with respect to the symmetric monoidal structure obtained by applying the functor
    \begin{equation*}
        l_{G/G}^{*} \colon \SMCatG \xrightarrow{} \SMCat.
    \end{equation*}
    This symmetric monoidal structure on the category $\CorCof \Sp)(G/G)$ is the componentwise one according to Lemma \ref{Componentwise}, and in particular, the functor $\phi^{\otimes}_{G/G}(\Sphere)$ is equivalent to the constant functor taking value $\Sphere\in\Sp$. We now apply Theorem \ref{ParametrizedUnivPropPresSp} to obtain an equivalence $\fgt(\phi^{\otimes})\simeq\phi$, finishing the proof.
\end{proof}

We now invert rational equivalences in our $G$-symmetric monoidal $G$-categories and show that, after rationalization, the $G$-symmetric monoidal $G$-functor $\phi^{\otimes}$ becomes an equivalence.

\begin{rem}
    Similar to Remark \ref{RelCat}, the adjunction
    \begin{equation*}
        \Ll \colon \RelCat \rightleftarrows \Cat \colon \mkern-3mu (-,(-)^{\simeq})
    \end{equation*}
    induces an adjunction
    \begin{equation*}
        \Ll \colon \Fun(\Span(\FinG),\RelCat) \rightleftarrows \Fun(\Span(\FinG),\Cat) \colon \mkern-3mu (-,(-)^{\simeq}).
    \end{equation*}
    The finite products in $\RelCat$ are computed componentwise, and the Dwyer-Kan localization functor preserves finite products. Therefore, the above adjunction restricts to an adjunction
    \begin{equation*}
        \Ll \colon \FunProd(\Span(\FinG),\RelCat) \rightleftarrows \SMCat \colon \mkern-3mu (-,(-)^{\simeq}).
    \end{equation*}
\end{rem}

\begin{cons}
    We lift the $G$-symmetric monoidal $G$-category $\PSp_{G}^{\otimes}\colon\Span(\FinG)\xrightarrow{}\Cat$ to a finite product preserving functor
    \begin{equation*}
        (\PSp_{G}^{\otimes},\text{Rational equivalences})\colon \Span(\FinG) \xrightarrow{} \RelCat
    \end{equation*}
    which is defined on transitive $G$-sets in the same way as in Construction \ref{EquipWithRatEq}. The requirement of preserving finite products uniquely extends this structure to all $G$-sets. Let $f\colon X\xrightarrow{}Y$ be a morphism of finite $G$-sets. The statement that $f^*$ preserves rational equivalences boils down to the argument in Construction \ref{EquipWithRatEq}. To show that the functor $f_{\otimes}$ preserves rational equivalences it is enough to consider the following two cases
    \begin{equation*}
        G/H\amalg G/H \xrightarrow{\nabla} G/H, \quad and \quad G/K \xrightarrow{\alpha} G/H,
    \end{equation*}
    in particular, we can assume that $\alpha$ corresponds to a subgroup inclusion $K\leq H$. In the case of $\nabla$ the statement follows from the natural equivalence
    \begin{equation*}
        (\Xx\otimes\Yy)\otimes\Sphere_{\RatQ}\simeq (\Xx\otimes\Yy)\otimes(\Sphere_{\RatQ}\otimes\Sphere_{\RatQ}) \simeq (\Xx\otimes\Sphere_{\RatQ})\otimes(\Yy\otimes\Sphere_{\RatQ}).
    \end{equation*}
    The case of $\alpha\colon G/K\xrightarrow{}G/H$ follows from the fact that $\alpha_{\otimes}\colon\Sp_{K}\xrightarrow{}\Sp_{H}$ is symmetric monoidal and the equivalence $\alpha_{\otimes}(\Sphere_{\RatQ})\simeq \Sphere_{\RatQ}$. To verify the last claim, we look at the chain of equivalences
    \begin{align*}
        \alpha_{\otimes}(\Sphere_{\RatQ}) &\simeq \colim (\Sphere \xrightarrow{\alpha_{\otimes}(p_{1})} \Sphere \xrightarrow{\alpha_{\otimes}(p_{1}p_{2})} \Sphere \xrightarrow{\alpha_{\otimes}(p_{1}p_{2}p_{3})} ...)
        \\
        &\simeq \colim (\Sphere \xrightarrow{p_{1}^{|H/K|}} \Sphere \xrightarrow{(p_{1}p_{2})^{|H/K|}} \Sphere \xrightarrow{(p_{1}p_{2}p_{3})^{|H/K|}} ...)
        \\
        &\simeq \Sphere_{\RatQ}.
    \end{align*}
    The first equivalence is the consequence of the functor $\alpha_{\otimes}$ preserving sifted colimits. For the second equivalence observe that for a natural number $n$, the morphism $n\colon\Sphere\xrightarrow{}\Sphere$ is defined as a composite $\Sphere\xrightarrow{\Delta}\bigoplus_{n}\Sphere\xrightarrow{\nabla}\Sphere$. Using definition of $\alpha_{\otimes}$ we get
    \begin{equation*}
        \alpha_{\otimes}(\bigoplus_{n}\Sphere)\simeq \alpha_{\otimes}\Sigma^{\infty}\bigvee_{n} S^{0}\simeq \Sigma^{\infty}\bigwedge_{|H/K|}\bigvee_{n} S^{0} \simeq \Sigma^{\infty}\bigvee_{n^{|H/K|}} S^{0} \simeq \bigoplus_{n^{|H/K|}} \Sphere,
    \end{equation*}
    showing the claim.
\end{cons}

\begin{defi}\label{GSMLocFunc}
    We denote the $G$-symmetric monoidal $G$-category
    \begin{equation*}
        \Ll(\PSp_{G}^{\otimes},\text{Rational equivalences})\in\SMCatG
    \end{equation*}
    by
    \begin{equation*}
        \PSp_{G,\RatQ}^{\otimes},
    \end{equation*}
    and call it the \textit{$G$-symmetric monoidal $G$-category of rational $G$-spectra}.
\end{defi}

We get a $G$-symmetric monoidal localization functor 
\begin{equation*}
    \underline{L}_{\RatQ}^{\otimes}\colon \PSp_{G}^{\otimes} \xrightarrow{} \PSp_{G,\RatQ}^{\otimes}.
\end{equation*}
It is immediate from the above definition that the underlying $G$-category of $\PSp_{G,\RatQ}^{\otimes}$ is $\PSp_{G,\RatQ}$.

\begin{cons}
    The composite $G$-symmetric monoidal $G$-functor 
    \begin{equation*}
        \PSp_{G}^{\otimes} \xrightarrow{\phi^{\otimes}} \CorCofMul (\Sp,\otimes)) \xrightarrow{(L_{\RatQ})_*} \CorCofMul (\Sp_{\RatQ},\otimes))
    \end{equation*}
    sends rational equivalences to equivalences, in each degree. This follows immediately from the argument in Construction \ref{ParamComparFunct}. Hence, we obtain a $G$-symmetric monoidal $G$-functor
    \begin{equation*}
        \phi_{\RatQ}^{\otimes}\colon \PSp_{G,\RatQ}^{\otimes} \xrightarrow{} \CorCofMul (\Sp_{\RatQ},\otimes)),
    \end{equation*}
    fitting in a commutative diagram
    \[
        \begin{tikzcd}
        \PSp_{G}^{\otimes} \arrow[r,"{(L_{\RatQ}^{\otimes})_*\phi^{\otimes}}"] \arrow[d,"{\underline{L}_{\RatQ}^{\otimes}}"'] & \CorCofMul (\Sp_{\RatQ},\otimes))
        \\
        \PSp_{G,\RatQ}^{\otimes} \arrow[ur,"{\phi_{\RatQ}^{\otimes}}"']
        \end{tikzcd}
    \]
    of $G$-symmetric monoidal $G$-categories.
\end{cons}

\begin{theo}\label{SMEquivalence}
    The $G$-symmetric monoidal $G$-functor
    \begin{equation*}
        \phi_{\RatQ}^{\otimes}\colon \PSp_{G,\RatQ}^{\otimes} \xrightarrow{} \CorCofMul (\Sp_{\RatQ},\otimes))
    \end{equation*}
    is an equivalence.
\end{theo}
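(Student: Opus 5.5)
The plan is to reduce to the underlying $G$-functor. A $G$-symmetric monoidal $G$-functor, i.e.\ a natural transformation of product-preserving functors $\Span(\FinG)\to\Cat$, is an equivalence as soon as its components are equivalences. Since both source and target preserve finite products, it suffices to check components at the transitive $G$-sets $G/H$. These components are exactly the components of the underlying $G$-functor $\fgt(\phi^{\otimes}_{\RatQ})$, which is obtained by restricting along $\OGo\hookrightarrow\Span(\FinG)$. So it is enough to show that $\fgt(\phi^{\otimes}_{\RatQ})$ is an equivalence of $G$-categories.

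Next I identify $\fgt(\phi^{\otimes}_{\RatQ})$ with the functor $\phi_{\RatQ}$ of Construction \ref{ParamComparFunct}. The underlying $G$-category of $\PSp^{\otimes}_{G,\RatQ}$ is $\PSp_{G,\RatQ}$, since the Dwyer--Kan localization is computed levelwise and forgetting is restriction. By Proposition \ref{GSMlifting}, the underlying $G$-category of $\CorCofMul(\Sp_{\RatQ},\otimes))$ is $\CorCof\Sp_{\RatQ})$, and $(L_{\RatQ})_{*}$ on the model forgets to $(L_{\RatQ})_{*}$ on $\CorCof-)$ by naturality of $\tau$. Apply $\fgt$ to the commutative triangle defining $\phi^{\otimes}_{\RatQ}$. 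Then use Proposition \ref{UnderlyingCompFuncIdentification}, which gives $\fgt(\phi^{\otimes})\simeq\phi$. Together these show that $\fgt(\phi^{\otimes}_{\RatQ})\circ\underline{L}_{\RatQ}\simeq(L_{\RatQ})_{*}\phi$.

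By the universal property of the localization in Remark \ref{RelCat}, $\underline{L}_{\RatQ}$ is an epimorphism in the appropriate sense: precomposition with it is fully faithful on $G$-functors. Hence $\fgt(\phi^{\otimes}_{\RatQ})\simeq\phi_{\RatQ}$, and Theorem \ref{GEquivalence} shows this is an equivalence, completing the proof.

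The main obstacle is the identification step. Specifically, one must check that the forgetful functor $\SMCatG\to\Cat_{G}$ commutes with the localization $\Ll$, so that $\fgt(\PSp^{\otimes}_{G,\RatQ})$ really has the universal property defining $\PSp_{G,\RatQ}$. Both are computed by applying $\Ll$ pointwise to functors into $\RelCat$, and restriction along $\OGo\hookrightarrow\Span(\FinG)$ commutes with pointwise operations, so this should be formal. Some care is also needed in comparing the rational equivalences used in Construction \ref{EquipWithRatEq} with those used in the multiplicative construction. These agree on transitive $G$-sets by definition.
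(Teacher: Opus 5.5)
Your proposal is correct and is essentially the paper's proof. The paper also identifies $\fgt(\phi^{\otimes}_{\RatQ})$ with $\phi_{\RatQ}$ using Proposition \ref{UnderlyingCompFuncIdentification}, Proposition \ref{GSMlifting} and $\fgt(\underline{L}^{\otimes}_{\RatQ})\simeq\underline{L}_{\RatQ}$, then concludes from Theorem \ref{GEquivalence} and the conservativity of $\fgt\colon\SMCatG\to\Cat_{G}$; you just spell out two steps the paper leaves implicit, namely why $\fgt$ is conservative and why $\underline{L}_{\RatQ}$ can be cancelled from the identification.
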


\begin{proof}
    We have a commutative diagram
    \[
        \begin{tikzcd}[column sep =large]
            \PSp_{G} \arrow[r,"{\phi}"] \arrow[d,"{\simeq}"'] &\CorCof \Sp) \arrow[d,"{\simeq}"'] \arrow[r,"{(L_{\RatQ})_{*}}"] & \CorCof \Sp_{\RatQ}) \arrow[d,"{\simeq}"']
            \\
            \fgt(\PSp_{G}^{\otimes}) \arrow[r,"{\fgt(\phi^{\otimes})}"] \arrow[d,"{\fgt(\underline{L}_{\RatQ}^{\otimes})}"'] &\fgt(\CorCofMul (\Sp,\otimes))) \arrow[r,"{\fgt((L_{\RatQ}^{\otimes})_{*})}"] & \fgt(\CorCofMul (\Sp_{\RatQ},\otimes)))
            \\
            \fgt(\PSp_{G,\RatQ}^{\otimes}) \arrow[urr, out=0, in=-160, "{\fgt(\phi_{\RatQ}^{\otimes})}"']
        \end{tikzcd}
    \]
    where the top left square commutes by Proposition \ref{UnderlyingCompFuncIdentification}, and the top right square commutes by Proposition \ref{GSMlifting}. The left vertical composite is equivalent to the functor
    \begin{equation*}
        \underline{L}_{\RatQ}\colon \PSp_{G} \xrightarrow{} \PSp_{G,\RatQ},
    \end{equation*}
    and it follows that we have an equivalence $\fgt(\phi_{\RatQ}^{\otimes})\simeq\phi_{\RatQ}$. By Theorem \ref{GEquivalence}, the $G$-functor $\phi_{\RatQ}$ is an equivalence. Since the forgetful functor $\fgt\colon\SMCatG\xrightarrow{}\Cat_{G}$ is conservative, It follows that the $G$-symmetric monoidal $G$-functor
    \begin{equation*}
        \phi_{\RatQ}^{\otimes}\colon \PSp_{G,\RatQ}^{\otimes} \xrightarrow{} \CorCofMul (\Sp_{\RatQ},\otimes))
    \end{equation*}
    is an equivalence.
\end{proof}

\section{A model for normed algebras in the \texorpdfstring{$G$}{G}-symmetric monoidal \texorpdfstring{$G$}{G}-category of rational \texorpdfstring{$G$}{G}-spectra}\label{Section5}

In the last section we study normed algebras in the $G$-symmetric monoidal $G$-category $\PSp_{G,\RatQ}^{\otimes}$. We start by recalling the notion of normed algebras.

\begin{defi}\cite[Definition 3.2.7]{HighTamb}
    Let $\underline{\Cc}^{\otimes}$ be a $G$-symmetric monoidal $G$-category, and let $\Ii$ be an indexing system in the sense of Definition \ref{IndexingSystem}. We denote the category
    \begin{equation*}
        \FunLaxG(\Span_{\text{all},\Ii}(\FinG),\Uncc(\underline{\Cc}^{\otimes}))
    \end{equation*}
    by $\NAlg_{\Ii}(\underline{\Cc}^{\otimes})$, and call its objects \textit{$\Ii$-normed algebras in $\underline{\Cc}^{\otimes}$}. In particular, for a trivial group we denote $\NAlg_{\FCopC}(\Cc,\otimes)$ simply by $\CAlg(\Cc,\otimes)$.
\end{defi}

Remark \ref{LaxFuncCat}, tells us that $\Ii$-normed algebra in $\underline{\Cc}^{\otimes}$ is simply a functor from $\Span_{\text{all},\Ii}(\FinG)$ to $\Uncc(\underline{\Cc}^{\otimes})$ over $\Span(\FinG)$, which sends backwards morphisms to cocartesian edges. Informally, one can think about an $\Ii$-normed algebra $\Zz$ in $\underline{\Cc}^{\otimes}$ as the following data
\begin{itemize}
    \item[-] an object $\Zz(G/G)$ of $\underline{\Cc}^{\otimes}(G/G)$,
    \item[-] a commutative algebra structure on $\Zz(G/G)$ inside $\underline{\Cc}^{\otimes}(G/G)$,
    \item[-] and for each morphism $f\colon G/H\xrightarrow{}G/K$ in $\Ii$ a norm morphism $f_{\otimes}\alpha^{*}\Zz(G/G)\xrightarrow{}\beta^{*}\Zz(G/G)$, where $\alpha$ and $\beta$ are unique morphisms from $G/H$ and $G/K$ to $G/G$,
\end{itemize}
satisfying certain coherence conditions.

To understand $\Ii$-normed algebras in $\PSp_{G,\RatQ}^{\otimes}$, we will need a simplified description of the $\Ii$-normed algebras in $\CorCofMul(\Sp_{\RatQ},\otimes))$. To obtain such a description, we construct an adjunction 
\begin{equation*}
        \Cat_{/\Span(\FinG)}^{\FinG^{\op}\text{-cc}} \rightleftarrows \Cat_{/\Span(\FCopC)}^{\FCopC^{\op}\text{-cc}}
\end{equation*}
compatible, in a suitable sense, with the adjunction $l_{G/G}^{*}\dashv \CorCofMul-)$. This adjunction will allow us to describe the $\Ii$-normed algebras in $\CorCofMul(\Cc,\otimes))$, for a symmetric monoidal category $\Cc$. We start by constructing the left adjoint
\begin{equation*}
    \Lambda_{G/G}\colon\Cat_{/\Span(\FinG)}^{\FinG^{\op}\text{-cc}} \xrightarrow{} \Cat_{/\Span(\FCopC)}^{\FCopC^{\op}\text{-cc}},
\end{equation*}
mentioned above. Then we check that there exists a natural equivalence $\Env\Lambda_{G/G}\simeq l_{G/G}^{*}\Env$ and that $\Lambda_{G/G}$ admits a 2-right adjoint. Finally, in Proposition \ref{NAlgVsAlg}, we obtain the simplified description of $\Ii$-normed algebras in $\CorCofMul(\Cc,\otimes))$.

\begin{cons}\label{ConsOpLAdj}
    We consider an adequate triple $(\FinG,\FCopC[\OG^{\simeq}],\FinG)$, and the associated inclusion on span categories
    \begin{equation*}
        \nu\colon\Span_{\FCopC[\OG^{\simeq}],\text{all}}(\FinG) \xrightarrow{} \Span(\FinG).
    \end{equation*}
    The unique functor $\OG\xrightarrow{}*$ induces a morphism of adequate triples $(\FinG,\FCopC[\OG^{\simeq}],\FinG)\xrightarrow{}(\FCopC,\FCopC,\FCopC)$, we denote the associated functor on span categories by
    \begin{equation*}
        \sigma \colon \Span_{\FCopC[\OG^{\simeq}],\text{all}}(\FinG) \xrightarrow{} \Span(\FCopC).
    \end{equation*}
    We have a composite functor
    \begin{equation*}
        \OpLAdj\colon \Cat_{/\Span(\FinG)} \xrightarrow{\nu^{*}} \Cat_{/\Span_{\FCopC[\OG^{\simeq}],\text{all}}(\FinG)} \xrightarrow{\sigma\circ-} \Cat_{/\Span(\FCopC)}.
    \end{equation*}
    The functor $\sigma \colon \Span_{\FCopC[\OG^{\simeq}],\text{all}}(\FinG) \xrightarrow{} \Span(\FCopC)$ admits cocartesian lifts of backwards morphisms. Furthermore, every backwards morphism in $\Span_{\FCopC[\OG^{\simeq}],\text{all}}(\FinG)$ is cocartesian. It is immediate from the above statement on cocartesian edges that the functor $\OpLAdj$ factors as
    \begin{equation*}
        \Cat_{/\Span(\FinG)}^{\FinG^{\op}\text{-cc}} \xrightarrow{\nu^{*}} \Cat_{/\Span_{\FCopC[\OG^{\simeq}],\text{all}}(\FinG)}^{\FCopC[\OG^{\simeq}]^{\op}\text{-cc}} \xrightarrow{\sigma\circ-} \Cat_{/\Span(\FCopC)}^{\FCopC^{\op}\text{-cc}}.
    \end{equation*}
\end{cons}

\begin{prop}\label{EnvLambdalEnv}
    There exists a natural transformation making the square
    \[
        \begin{tikzcd}
            \PSMCatG \arrow[r,"{l_{G/G}^{*}}"] &\PSMCat
            \\
            \POpG \arrow[u,"{\Env}"] \arrow[r,"{\OpLAdj}"] &\POp \arrow[u,"{\Env}"]
        \end{tikzcd}
    \]
    commute. In particular, the functor 
    \begin{equation*}
        \OpLAdj\colon \POpG \xrightarrow{} \POp,
    \end{equation*}
    from Construction \ref{ConsOpLAdj}, restricts to a functor
    \begin{equation*}
        \OpLAdj\colon \OpG \xrightarrow{} \Op.
    \end{equation*}
\end{prop}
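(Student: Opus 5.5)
The strategy is to compute both composites explicitly using the formula for $\Env$ from Remark \ref{Env}. Then I will produce a comparison map between them and check that it is an equivalence fibrewise over $\Span(\FCopC)$.

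For a $G$-preoperad $p\colon\Oo\to\Span(\FinG)$, $\Env(\Oo)$ is the unstraightening-level object
\begin{equation*}
\Oo\times_{\Span(\FinG)}\Ar_{\FinG}(\Span(\FinG)) \xrightarrow{\ev_1}\Span(\FinG).
\end{equation*}
Restricting along $l_{G/G}\colon\Span(\FCopC)\to\Span(\FinG)$ gives $l_{G/G}^{*}\Env(\Oo)$. Its objects over $A\in\FCopC$ are pairs of an object $o\in\Oo$ over $X\in\FinG$ and a forward map $X\to \coprod_A G/G=A$.

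On the other side, $\Env(\OpLAdj\Oo)$ is the pullback
\begin{equation*}
\nu^{*}\Oo\times_{\Span(\FCopC)}\Ar_{\FCopC}(\Span(\FCopC)) \xrightarrow{\ev_1}\Span(\FCopC).
\end{equation*}
Its objects are $o$ over $X$ together with a map of finite sets $\sigma(X)=X/G\to A$.

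The comparison comes from the observation that forward maps $X\to A$ of $G$-sets, with $A$ given the trivial action, are exactly maps of finite sets $X/G\to A$. Concretely, I will construct a functor
\begin{equation*}
\kappa\colon \Ar_{\FinG}(\Span(\FinG))\times_{\Span(\FinG)}\Span(\FCopC)\longrightarrow \Ar_{\FCopC}(\Span(\FCopC)),
\end{equation*}
where the pullback is taken along $\ev_1$ and $l_{G/G}$. Here $\kappa$ sends $X\to A$ to $X/G\to A$, and on morphisms it applies $\sigma$ to the source component. The source component of a morphism must be checked to lie in $\Span_{\FCopC[\OG^{\simeq}],\text{all}}(\FinG)$, i.e.\ to have backward leg a coproduct of isomorphisms of orbits. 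This follows because a morphism in the arrow category has its source span obtained by pulling back the target span along $X\to A$. Backward maps in the image of $l_{G/G}$ are of the form $\coprod_B G/G\to\coprod_A G/G$, and their pullbacks are coproducts of isomorphisms of orbits. So the source lands in $\nu$'s domain, and $\kappa$ is well defined over $\Span(\FCopC)$. Pulling $\kappa$ back along $p$ gives a natural map
\begin{equation*}
l_{G/G}^{*}\Env(\Oo)\to\Env(\OpLAdj\Oo)
\end{equation*}
over $\Span(\FCopC)$. Naturality in $\Oo$ is automatic, since everything is a pullback of a fixed map of categories.

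The remaining task is to show that $\kappa$ is an equivalence. I expect this to be the main obstacle, because it involves mapping spaces in span categories rather than just objects. Essential surjectivity and the identification on objects are given by the bijection $\Hom_{\FinG}(X,A)\cong\Hom_{\FCopC}(X/G,A)$. For mapping spaces, I would use the explicit description of morphisms in $\Ar_{\FinG}(\Span(\FinG))$ as pullback-compatible span diagrams. I would then reduce, using disjunctivity and the equivalence $\FinG\times_{\FCopC}\{A\}$-slices ${\FinG}_{/A}\simeq\prod_A\FinG$, to the case $A=*$. In that case the claim is that the cores of the relevant slice categories agree: backward-isomorphism data over $G/G$ is the same as data in $\FCopC[\OG^{\simeq}]$. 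This is a $(2,1)$-categorical check, and the fact that everything in sight is $1$-truncated keeps it finite.

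Given $\kappa$, the "in particular" statement follows as well. Both $\Env$ functors detect operads: by Theorem 2.57 of \cite{NormsEq}, a preoperad is an operad iff its envelope is product-preserving. Lemma-type reasoning shows that $l_{G/G}^{*}$ preserves $\SMCatG\to\SMCat$, since $l_{G/G}$ preserves finite products. Hence $\Env(\OpLAdj\Oo)\simeq l_{G/G}^{*}\Env(\Oo)$ is product-preserving whenever $\Oo$ is a $G$-operad, so $\OpLAdj\Oo\in\Op$.
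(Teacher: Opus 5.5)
Your proposal is correct and follows the same outline as the paper's proof. Both composites are pullbacks of $\Oo$ along a fixed universal category, and the comparison is induced by the orbit functor $X\mapsto X/G$ (the paper's $\kappa\colon\FinG\to\FCopC$). Naturality in $\Oo$ comes from pulling back one fixed equivalence. The statement about operads follows because envelopes detect operads and $l_{G/G}$ preserves finite products. One small correction: your comparison must land in $\Span_{\FCopC[\OG^{\simeq}],\text{all}}(\FinG)\times_{\Span(\FCopC)}\Ar_{\FCopC}(\Span(\FCopC))$, which remembers $X$ and not only $X/G$; you use this implicitly.

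The real difference is how the universal comparison is shown to be an equivalence. You flag this as the main obstacle and plan to compute mapping groupoids; the paper avoids that computation.

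\begin{enumerate}
\item It uses $\Ar_{\FinG}(\Span(\FinG))\simeq\Span_{\text{pb},\text{all}}(\Ar(\FinG))$ and the analogue for $\FCopC$, together with Remark \ref{SpanPresLimits}. This writes both universal pullbacks as $\Span$ applied to pullbacks of adequate triples.
\item It then suffices that $(\Ar(\kappa),\ev_{0})\colon\FCopC\times_{\FinG}\Ar(\FinG)\to\Ar(\FCopC)\times_{\FCopC}\FinG$ is an equivalence of adequate triples.
\item That is a purely $1$-categorical check. On underlying categories it is the bijection $\Hom_{\FinG}(X,A)\cong\Hom_{\FCopC}(X/G,A)$ for trivial $A$.
\item The backward classes correspond for two reasons. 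For a map $C\to A$ of trivial $G$-sets one has $(X\times_{A}C)/G\cong X/G\times_{A}C$. Conversely, a map $Z\to X$ in $\FCopC[\OG^{\simeq}]$ is recovered as $X\times_{X/G}Z/G\to X$.
\end{enumerate}

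This route also builds the comparison functor coherently, as $\Span$ of a map of adequate triples. In your version, ``applying $\sigma$ to the source component'' needs one more remark. Every map of trivial $G$-sets lies in $\FCopC[\OG^{\simeq}]$, so the whole commutative square lies in $\Span_{\FCopC[\OG^{\simeq}],\text{all}}(\FinG)$. Only then can you postcompose with $\sigma$ and use $\sigma\circ l_{G/G}\simeq\id$.

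Your mapping-space computation would go through, since it is just the unwound form of that adequate-triple equivalence. The paper's reduction simply makes that bookkeeping unnecessary.
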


\begin{proof}
    We first assume that the square commutes and show the statement about $\OpLAdj$ restricting to the categories of operads. This follows from \cite[Proposition 2.50]{NormsEq}, combined with the commutativity of the square, and $l_{G/G}^{*}$ sending $G$-symmetric monoidal $G$-categories to symmetric monoidal categories. 
    
    We now show that there exists a natural equivalence making the aforementioned square commute. Expanding definitions, the functor $\Env\OpLAdj\colon\POpG\xrightarrow{}\PSMCat$ is given by the restriction of the composite functor
    \begin{align*}
        \Cat_{/\Span(\FinG)} \xrightarrow{\nu^{*}} \Cat_{/\Span_{\FCopC[\OG^{\simeq}],\text{all}}(\FinG)} \xrightarrow{\sigma\circ-} \Cat_{/\Span(\FCopC)}
        \\
        \xrightarrow{\ev_{0}^{*}}\Cat_{/\Ar_{\FCopC}(\Span(\FCopC))} \xrightarrow{\ev_{1}\circ-}\Cat_{\Span(\FCopC)},
    \end{align*}
    and the functor $l_{G/G}^{*}\Env$ is given by the restriction of
    \begin{equation*}
        \Cat_{/\Span(\FinG)} \xrightarrow{\ev_{0}^{*}} \Cat_{/\Ar_{\FinG}(\Span(\FinG))} \xrightarrow{\ev_{1}\circ-} \Cat_{/\Span(\FinG)} \xrightarrow{l_{G/G}^{*}} \Cat_{\Span(\FCopC)}.
    \end{equation*}
    By Remark 2.25 in \cite{NormsEq} we have equivalences
    \begin{equation*}
        \Ar_{\FCopC}(\Span(\FCopC)) \simeq \Span_{\text{pb},\text{all}}(\Ar(\FCopC)), \quad \text{and} \quad \Ar_{\FinG}(\Span(\FinG)) \simeq \Span_{\text{pb},\text{all}}(\Ar(\FinG)),
    \end{equation*}
    where in the span categories, on the right side of the above equivalences, backwards morphisms are given by pullback squares. It is not difficult to see that to exhibit a natural equivalence between the above two functors, it is enough to construct an equivalence, compatible with certain functors, between the pullbacks of the diagrams
    \[
        \begin{tikzcd}
            \bullet \arrow[r] \arrow[d] &\Span_{\FCopC[\OG^{\simeq}],\text{all}}(\FinG) \arrow[d,"{\sigma}"'] &\bullet \arrow[r] \arrow[d] &\Span_{\text{pb},\text{all}}(\Ar(\FinG)) \arrow[d,"{\Span(\ev_{1})}"']
            \\
            \Span_{\text{pb},\text{all}}(\Ar(\FCopC)) \arrow[r,"\Span(\ev_{0})"] &\Span(\FCopC), &\Span(\FCopC) \arrow[r,"{l_{G/G}}"] &\Span(\FinG).
        \end{tikzcd}
    \]
    By Remark \ref{SpanPresLimits}, the above pullbacks are equivalently given by first taking pullbacks in the category $\AdTrip$ and then applying the span functor. We consider the composite functor
    \begin{equation*}
        \FCopC\times_{\FinG}\Ar(\FinG) \xrightarrow{\pr_{\Ar(\FinG)}} \Ar(\FinG) \xrightarrow{(\Ar(\kappa),\ev_{0})} \Ar(\FCopC)\times_{\FCopC}\FinG,
    \end{equation*}
    where $\kappa \colon \FinG \xrightarrow{} \FCopC$ denotes the unique finite coproduct preserving functor sending transitive $G$-sets to a set with one element. One observes that the above functor is an equivalence of adequate triples, and the induced equivalence on span categories produces a natural equivalence
    \begin{equation*}
        \Env\OpLAdj\simeq l_{G/G}^{*}\Env,
    \end{equation*}
    finishing the proof.
\end{proof}

\begin{prop}\label{OpLAdjIsLAdj}
    The functor
    \begin{equation*}
        \OpLAdj\colon \POpG \xrightarrow{} \POp
    \end{equation*}
    is a 2-functor, and admits a 2-right adjoint.
\end{prop}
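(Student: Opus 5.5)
The plan is to exhibit $\OpLAdj$ as a composite of two functors between slice categories, each having a right adjoint that is compatible with the $\Cat$-module structures of Remark \ref{2-Cats}. Then we invoke \cite[Corollary 7.3.2.7]{HA}: an adjunction whose left adjoint is $\Cat$-linear upgrades to an adjunction of 2-categories.

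Recall that $\OpLAdj = (\sigma\circ-)\circ\nu^{*}$.

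\textbf{First factor.} The functor $\nu^{*}\colon \Cat_{/\Span(\FinG)}\to\Cat_{/\Span_{\FCopC[\OG^{\simeq}],\text{all}}(\FinG)}$ is pullback along $\nu$. On full slices it has a right adjoint given by $\nu_{*}$, the right Kan extension (pushforward) along $\nu$. This exists as long as $\nu$ is an exponentiable functor. I would check that $\nu$ is a Conduché fibration: factorizations of spans in $\Span(\FinG)$ whose composite has backward leg in $\FCopC[\OG^{\simeq}]$ should be controlled by factorizations inside the subcategory.

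Pushforward alone does not respect the conditions on cocartesian lifts. So I would instead define the right adjoint directly on the restricted slices, representing it by
\begin{equation*}
    \Dd \mapsto \Span(\FinG)\times_{\Fun(\ldots)}\Fun^{\text{cc}}(\nu^{*}(\ldots),\Dd),
\end{equation*}
that is, the full subcategory of $\nu_{*}\Dd$ on those sections preserving cocartesian edges over backward maps. Alternatively, and more cleanly, I would avoid Conduché arguments altogether. The restricted slices are presentable, namely localizations of presheaf-type categories of marked simplicial sets over the base. Both $\nu^{*}$ and $\sigma\circ-$ preserve colimits in the marked model, so the adjoint functor theorem produces right adjoints $\nu_{\sharp}$ and $\sigma^{\sharp}$.

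\textbf{Second factor.} Postcomposition $\sigma\circ-$ has right adjoint $\sigma^{*}$, pullback along $\sigma$. Construction \ref{ConsOpLAdj} already records that $\sigma$ admits cocartesian lifts of backward morphisms and that every backward morphism upstairs is cocartesian. Hence $\sigma^{*}$ sends objects of $\Cat_{/\Span(\FCopC)}^{\FCopC^{\op}\text{-cc}}$ to objects of the restricted slice over $\Span_{\FCopC[\OG^{\simeq}],\text{all}}(\FinG)$. It preserves the required morphisms, and the adjunction restricts because cocartesian edges in a pullback are detected componentwise.

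\textbf{$\Cat$-linearity.} For 2-functoriality, it suffices by Remark \ref{2-Cats} that $\OpLAdj$ is $\Cat$-linear, meaning it commutes with $K\times-$ for $K\in\Cat$. Here $-\times\Span(\FinG)$ is sent to $-\times\Span(\FCopC)$. For $\nu^{*}$ this is immediate, since pullback commutes with products: $\nu^{*}(K\times\Dd)\simeq K\times\nu^{*}\Dd$. For $\sigma\circ-$ it holds because the $\Cat$-action is $(K\times\Cc)\times_{\Cc}\Dd\simeq K\times\Dd$ and postcomposition does not change the total category.

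\textbf{Main obstacle.} The hardest step is showing that the first factor $\nu^{*}$ genuinely admits a right adjoint on the restricted (non-full) slice categories, not merely on full slices. I would first try the presentability route: identify $\Cat^{\Ee\text{-cc}}_{/\Cc}$ with marked categories over $\Cc$ whose marked edges over $\Ee$ are cocartesian, as a reflective and presentable localization. I would then check that $\nu^{*}$ preserves colimits there, since colimits of such fibrations are computed after localization and pullback along $\nu$ commutes with that localization. Given this, the adjoint functor theorem applies.
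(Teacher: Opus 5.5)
There is a genuine gap at exactly the step you flag as the main obstacle. Nothing in the proposal shows that $\nu^{*}$ (or $\OpLAdj$) preserves colimits on the restricted slices, and both mechanisms you suggest break down.

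\textbf{The Conduch\'e route.} This fails outright: for nontrivial $G$ the inclusion $\nu$ is not exponentiable. Since $\nu$ is a wide subcategory inclusion whose mapping spaces are unions of components, exponentiability would force $g$ and $h$ to lie in $\Span_{\FCopC[\OG^{\simeq}],\text{all}}(\FinG)$ whenever $h\circ g$ does. Take a proper subgroup $H<G$. The span $(G/G\leftarrow\emptyset\to G/G)$ has backward leg in $\FCopC[\OG^{\simeq}]$. It is the composite of $(G/G\leftarrow G/H\xrightarrow{=}G/H)$ and $(G/H\leftarrow\emptyset\to G/G)$, and the first factor does not lie in the subcategory.

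\textbf{The marked-model route.} The same factorization refutes the claim that pullback along $\nu$ commutes with the localization. Over this $2$-simplex, the strict pullback of the inner horn $\Lambda^{2}_{1}\to\Delta^{2}$ is $\{0\}\amalg\Delta^{\{1,2\}}\to\Delta^{\{0,2\}}\cup_{\{2\}}\Delta^{\{1,2\}}$. The free partially cocartesian object on the source is a disjoint union in which $0$ and $2$ lie in different components. In the free object on the target they are joined by a morphism. So this map is not an equivalence, although the horn inclusion was one. Strict pullback therefore does not preserve the weak equivalences, and the fact that it preserves strict colimits of marked simplicial sets gives you nothing.

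\textbf{The missing idea.} The paper avoids treating $\nu^{*}$ separately. Because $\POpG$ and $\POp$ are ($\Vv$-)presentable, it suffices that the composite $\OpLAdj$ preserves large colimits. This can be checked after applying $\Env\colon\POp\to\PSMCat$, which is a conservative left adjoint. By Proposition \ref{EnvLambdalEnv}, $\Env\circ\OpLAdj\simeq l_{G/G}^{*}\circ\Env$. The right-hand side is a left adjoint followed by restriction along $l_{G/G}$, hence preserves colimits.

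A posteriori this does show that $\nu^{*}$ preserves colimits on the restricted slices, since $\sigma\circ-$ is a conservative left adjoint there. But that follows from this argument, not from anything in your proposal.

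\textbf{$\Cat$-linearity.} This step and the appeal to \cite[Corollary 7.3.2.7]{HA} agree with the paper. For $\sigma\circ-$, the paper makes your observation coherent in two steps. It first obtains an oplax $\Cat$-linear structure from the $\Cat$-linear right adjoint $\sigma^{*}$. It then checks that the structure maps $(\Ee\times\Span(\FCopC))\times_{\Span(\FCopC)}\Cc\to(\Ee\times\Span_{\FCopC[\OG^{\simeq}],\text{all}}(\FinG))\times_{\Span_{\FCopC[\OG^{\simeq}],\text{all}}(\FinG)}\Cc$ are equivalences. That check is the precise content of your remark that postcomposition does not change the total category.
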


\begin{proof}
    First observe that by \cite[Proposition 2.3.7]{EnvAlgPat}, both categories $\POpG$, and $\POp$ are $\Vv$-presentable. Therefore, by the Adjoint Functor Theorem, it suffices to show that the functor $\OpLAdj$ preserves large colimits. It suffices to show this after postcomposing with the conservative, left adjoint functor $\Env\colon \POp\xrightarrow{}\PSMCat$, see \cite[Theorem 2.23]{NormsEq}. By Proposition \ref{EnvLambdalEnv}, we have a natural equivalence $\Env\OpLAdj\simeq l_{G/G}^{*}\Env$, and the claim follows since $l_{G/G}^{*}\Env$ preserves large colimits.
    
    It only remains to show that the functor $\OpLAdj$ is $\Cat$-linear, since then, by Remark \ref{2-Cats}, the adjunction automatically upgrades to a 2-adjunction. The functor $\OpLAdj$ is defined as the composite
    \begin{equation*}
        \Cat_{/\Span(\FinG)}^{\FinG^{\op}\text{-cc}} \xrightarrow{\nu^{*}} \Cat_{/\Span_{\FCopC[\OG^{\simeq}],\text{all}}(\FinG)}^{\FCopC[\OG^{\simeq}]^{\op}\text{-cc}} \xrightarrow{\sigma\circ-} \Cat_{/\Span(\FCopC)}^{\FCopC^{\op}\text{-cc}}.
    \end{equation*}
    We show that both of the above functors are $\Cat$-linear separately. The functor $\nu^{*}$ is $\Cat$-linear since it preserves finite products, and for a category $\Ee$ there is an equivalence 
    \begin{equation*}
        \nu^{*}(\Ee\times\Span(\FinG))\simeq \Ee\times \Span_{\FCopC[\OG^{\simeq}],\text{all}}(\FinG),
    \end{equation*}
    natural in $\Ee$. For the functor $\sigma\circ-$, first observe that there is an adjunction
    \begin{equation*}
        \sigma\circ- \colon \Cat_{/\Span_{\FCopC[\OG^{\simeq}],\text{all}}(\FinG)} \rightleftarrows \Cat_{/\Span(\FCopC)} \colon \sigma^{*}.
    \end{equation*}
    This adjunction, by the description of cocartesian edges over the backwards morphisms from Construction \ref{ConsOpLAdj}, restricts to an adjunction 
    \begin{equation*}
        \sigma\circ- \colon \Cat_{/\Span_{\FCopC[\OG^{\simeq}],\text{all}}(\FinG)}^{\FCopC[\OG^{\simeq}]^{\op}\text{-cc}} \rightleftarrows \Cat_{/\Span(\FCopC)}^{\FCopC^{\op}\text{-cc}} \colon \sigma^{*}.
    \end{equation*}
    The right adjoint $\sigma^{*}$ is $\Cat$-linear by the same argument as for $\nu^{*}$, and it follows that $\sigma\circ-$ is an oplax $\Cat$-linear functor, see \cite[Theorem 3.4.7]{LaxMonTwoVar}. We conclude by observing that the above oplax $\Cat$-linear structure is actually $\Cat$-linear since, for any functor $\Cc\xrightarrow{}\Span_{\FCopC[\OG^{\simeq}],\text{all}}(\FinG)$ and any category $\Ee$, the natural functor 
    \begin{equation*}
        (\Ee\times \Span(\FCopC))\times_{\Span(\FCopC)}\Cc \xrightarrow{} (\Ee\times\Span_{\FCopC[\OG^{\simeq}],\text{all}}(\FinG))\times_{\Span_{\FCopC[\OG^{\simeq}],\text{all}}(\FinG)}\Cc,
    \end{equation*}
    is an equivalence. Therefore, the functor $\OpLAdj$ upgrades to a 2-functor, finishing the proof.
\end{proof}

\begin{notation}
    For an indexing system $\Ii\subseteq \FinG$, we denote the full subcategory of $\Ii$ spanned by transitive $G$-sets by $\Oo_{G,\Ii}$.
\end{notation}

\begin{prop}\label{NAlgVsAlg}
    Let $(\Cc,\otimes)$ be a symmetric monoidal category, and let $\Ii$ be an indexing system. There exists a natural equivalence of categories
    \begin{equation*}
        \NAlg_{\Ii}(\CorCofMul(\Cc,\otimes)))\simeq \Fun(\Oo_{G,\Ii},\CAlg(\Cc,\otimes)).
    \end{equation*}
\end{prop}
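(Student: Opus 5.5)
The plan is to use the 2-adjunction of Proposition \ref{OpLAdjIsLAdj} to move the question from $G$-operads to ordinary operads, and then recognise the resulting source operad as a cocartesian operad. Write $\Lambda_{G/G}\dashv R$ for that 2-adjunction. The proof has three steps.

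\emph{Step 1: the right adjoint $R$ models $\CorCofMul-)$ on underlying operads.} I would show that there is a natural equivalence
\begin{equation*}
    \fgt\circ\CorCofMul-)\simeq R\circ\fgt\colon \SMCat\xrightarrow{}\OpG .
\end{equation*}
The argument is by mates. Proposition \ref{EnvLambdalEnv} gives $\Env\circ\OpLAdj\simeq l_{G/G}^{*}\circ\Env$, and every functor in this square is a 2-left adjoint:
\begin{itemize}
    \item $\Env\dashv\fgt$ by Remark \ref{Env};
    \item $l_{G/G}^{*}\dashv\CorCofMul-)$ by Proposition \ref{GSymmMonAdj};
    \item $\OpLAdj\dashv R$ by Proposition \ref{OpLAdjIsLAdj}.
\end{itemize}
Passing to right adjoints, the total mate is an equivalence $R\circ\fgt\simeq\fgt\circ\CorCofMul-)$. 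Taking mates of an equivalence in a square of left adjoints always produces an equivalence of the right adjoints, so no extra check is needed. Using the 2-adjunction on $\Cat$-enrichments (Remark \ref{LaxFuncCat}), this yields
\begin{equation*}
    \NAlg_{\Ii}(\CorCofMul(\Cc,\otimes)))\simeq \FunLaxG\bigl(\Span_{\text{all},\Ii}(\FinG),R(\Cc^{\otimes})\bigr)\simeq \FunLax\bigl(\OpLAdj(\Span_{\text{all},\Ii}(\FinG)),\Cc^{\otimes}\bigr),
\end{equation*}
naturally in $(\Cc,\otimes)$.

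\emph{Step 2: compute $\OpLAdj(\Span_{\text{all},\Ii}(\FinG))$.} By Construction \ref{ConsOpLAdj}, this is the pullback of $\Span_{\text{all},\Ii}(\FinG)$ along $\nu$, composed with $\sigma$. Since $\Span$ preserves pullbacks (Remark \ref{SpanPresLimits}), the pullback is the span category $\Span_{\FCopC[\OG^{\simeq}],\Ii}(\FinG)$ of the triple $(\FinG,\FCopC[\OG^{\simeq}],\Ii)$. It lies over $\Span(\FCopC)$ via $\sigma$.

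Because $\Ii$ is closed under coproducts and is pullback stable, every map in $\Ii$ splits as a coproduct of maps between orbits lying in $\Oo_{G,\Ii}$. Hence $\Ii\simeq\FCopC[\Oo_{G,\Ii}]$ as wide subcategories, and the above span category is
\begin{equation*}
    \Span_{\FCopC[\Oo_{G,\Ii}^{\simeq}],\FCopC[\Oo_{G,\Ii}]}\bigl(\FCopC[\Oo_{G,\Ii}]\bigr)\xrightarrow{}\Span(\FCopC).
\end{equation*}
I expect this to be precisely the span-model presentation of Lurie's cocartesian operad $\Oo_{G,\Ii}^{\amalg}$. An object is a finite family of objects of $\Oo_{G,\Ii}$, and a multimorphism $\{x_s\}_{s\in S}\to y$ is a family of maps $x_s\to y$. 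To make this precise, I would compare it with the pullback-along-$\FCopC_{*}\to\Span(\FCopC)$ description of operads used earlier in the paper (via \cite[Proposition C.1]{NormsMotivic} and \cite[Example 2.47]{NormsEq}), and match with \cite[Construction 2.4.3.1]{HA}.

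\emph{Step 3: conclude.} By \cite[Theorem 2.4.3.18]{HA}, lax maps $\Dd^{\amalg}\to\Cc^{\otimes}$ are equivalent to $\Fun(\Dd,\CAlg(\Cc,\otimes))$. Taking $\Dd=\Oo_{G,\Ii}$ and combining with Steps 1 and 2 gives the claimed natural equivalence.

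The main obstacle is Step 2. One has to identify the span-model operad over $\Span(\FCopC)$ with the cocartesian operad $\Oo_{G,\Ii}^{\amalg}$, which requires matching cocartesian lifts of backward maps and checking the operadic Segal conditions. If a direct comparison is awkward, I would instead compare environments. Proposition \ref{EnvLambdalEnv} gives $\Env\OpLAdj(\Span_{\text{all},\Ii}(\FinG))\simeq l_{G/G}^{*}\Env(\Span_{\text{all},\Ii}(\FinG))$, and I would then recognise both sides via the arrow-category description of $\Env$ from Remark \ref{Env}.
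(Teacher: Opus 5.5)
Your proposal is correct and follows the paper's proof essentially step for step. The paper likewise passes to right adjoints in the square of Proposition \ref{EnvLambdalEnv} to get $\Uncc\circ\CorCofMul-)\simeq\widetilde{R}\circ\Uncc$, uses the 2-adjunction $\OpLAdj\dashv\widetilde{R}$ to reduce to lax maps out of $\Span_{\FCopC[\OG^{\simeq}],\Ii}(\FinG)$, and recognises this source, via $\Ii\simeq\FCopC[\Oo_{G,\Ii}]$, as the cocartesian operad on $\Oo_{G,\Ii}$. The only difference is that the paper avoids your anticipated model comparison by staying in the span model: it cites \cite{NormsEq} for the fact that the 2-left adjoint of $\CAlg(-)=\FunLax(\Span(\FCopC),-)\colon\Op\to\Cat$ sends $\Aa$ to $\Span_{\text{ct},\text{all}}(\Unct(\hat{\Aa}))\simeq\Span_{\FCopC[\Aa^{\simeq}],\text{all}}(\FCopC[\Aa])$, instead of passing through Lurie's $\Oo_{G,\Ii}^{\amalg}$ and \cite[Theorem 2.4.3.18]{HA}.
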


\begin{proof}
    First, recall that, by definition,
    \begin{equation*}
        \NAlg_{\Ii}(\CorCofMul(\Cc,\otimes)))\coloneq \FunLaxG(\Span_{\text{all},\Ii}(\FinG),\Uncc(\CorCofMul(\Cc,\otimes)))).
    \end{equation*}
    It follows from Proposition \ref{OpLAdjIsLAdj}, that $\OpLAdj$ admits a 2-right adjoint, which we denote by 
    \begin{equation*}
        \widetilde{R}\colon \POp \xrightarrow{} \POpG,
    \end{equation*}
    for the duration of this proof. Passing to the right adjoints in the commutative square from Proposition \ref{EnvLambdalEnv}, we obtain a commutative square
    \[
        \begin{tikzcd}[column sep =huge]
            \PSMCatG \arrow[d,"{\Uncc}"'] &\PSMCat \arrow[d,"{\Uncc}"'] \arrow[l,"{\CorCofMul-)}"']
            \\
            \POpG &\POp \arrow[l,"{\widetilde{R}}"'].
        \end{tikzcd}
    \]
    Combining all of the above, we get a chain of equivalences
    \begin{align*}
        \NAlg_{\Ii}(\CorCofMul(\Cc,\otimes))) &\coloneq \FunLaxG(\Span_{\text{all},\Ii}(\FinG),\Uncc(\CorCofMul(\Cc,\otimes))))
        \\
        &\simeq \FunLaxG(\Span_{\text{all},\Ii}(\FinG),\widetilde{R}\Uncc(\Cc,\otimes))
        \\
        &\simeq \FunLax(\OpLAdj(\Span_{\text{all},\Ii}(\FinG)),\Uncc(\Cc,\otimes))
        \\
        &\simeq \FunLax(\Span_{\FCopC[\OG^{\simeq}],\Ii}(\FinG),\Uncc(\Cc,\otimes)).
    \end{align*}
    To finish the proof, we consider the functor
    \begin{equation*}
        \CAlg(-)\coloneq \FunLax(\Span(\FCopC),-)\colon \Op \xrightarrow{} \Cat, 
    \end{equation*}
    by combining Proposition 2.26, Definition 2.28, and Theorem 2.57 in \cite{NormsEq}, this functor admits a 2-left adjoint. The 2-left adjoint of $\CAlg(-)$ is given on a category $\Aa$ by
    \begin{equation*}
        \Span_{\text{ct},\text{all}}(\Unct(\hat{\Aa}))\xrightarrow{} \Span(\FCopC),
    \end{equation*}
    where the functor $\hat{\Aa}\colon \FCopC^{\op}\xrightarrow{}\Cat$ is the unique finite product preserving functor sending the point to $\Aa$, and the backwards morphisms in the category $\Span_{\text{ct},\text{all}}(\Unct(\hat{\Aa}))$ are given by cartesian edges. One observes that the total space of $\Unct(\Aa)$ is equivalent to the finite coproduct completion $\FCopC[\Aa]$ of $\Aa$. For $\Aa=\Oo_{G,\Ii}$, we get equivalences
    \begin{equation*}
        \Span_{\text{ct},\text{all}}(\Unct(\hat{\Oo}_{G,\Ii}))\simeq \Span_{\FCopC[\Oo_{G,\Ii}^{\simeq}],\text{all}}(\Ii)\simeq \Span_{\FCopC[\OG^{\simeq}],\Ii}(\FinG),
    \end{equation*}
    where the first equivalence follows from $\Ii$ being a finite coproduct completion of $\Oo_{G,\Ii}$. It immediately follows that
    \begin{align*}
        \NAlg_{\Ii}(\CorCofMul(\Cc,\otimes))) &\simeq \FunLax(\Span_{\FCopC[\OG^{\simeq}],\Ii}(\FinG),(\Cc,\otimes))
        \\
        &\simeq \Fun(\Oo_{G,\Ii},\CAlg(\Cc,\otimes)),
    \end{align*}
    finishing the proof.
\end{proof}

We are ready to prove the main result of this section.

\begin{theo}\label{MainTh}
    Let $\Ii$ be an indexing system. There exists an equivalence of categories
    \begin{equation*}
        \NAlg_{\Ii}(\PSp_{G,\RatQ}^{\otimes}) \simeq \Fun(\Oo_{G,\Ii},\CAlg(\Sp_{\RatQ},\otimes)).
    \end{equation*}
\end{theo}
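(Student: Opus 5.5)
The plan is to combine Theorem \ref{SMEquivalence} with Proposition \ref{NAlgVsAlg}. The key point is that $\NAlg_{\Ii}(-)$ is a functor of the $G$-symmetric monoidal $G$-category, so it carries equivalences to equivalences.

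First, I would note that $\NAlg_{\Ii}(\underline{\Cc}^{\otimes})$ is defined as
\begin{equation*}
    \FunLaxG(\Span_{\text{all},\Ii}(\FinG),\Uncc(\underline{\Cc}^{\otimes})).
\end{equation*}
The assignment $\underline{\Cc}^{\otimes}\mapsto \FunLaxG(\Span_{\text{all},\Ii}(\FinG),\Uncc(\underline{\Cc}^{\otimes}))$ is the composite of the functor $\Uncc\colon\SMCatG\xrightarrow{}\OpG$ with the corepresented functor $\FunLaxG(\Span_{\text{all},\Ii}(\FinG),-)\colon \POpG\xrightarrow{}\Cat$ coming from the $\Cat$-enrichment of Remark \ref{LaxFuncCat}. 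Here $\Span_{\text{all},\Ii}(\FinG)$ is a $G$-operad by Example \ref{IndexingSystemOperad}. Both functors preserve equivalences. Applying this composite to the equivalence
\begin{equation*}
    \phi^{\otimes}_{\RatQ}\colon\PSp_{G,\RatQ}^{\otimes}\xrightarrow{\simeq}\CorCofMul(\Sp_{\RatQ},\otimes))
\end{equation*}
of Theorem \ref{SMEquivalence} gives
\begin{equation*}
    \NAlg_{\Ii}(\PSp_{G,\RatQ}^{\otimes})\simeq \NAlg_{\Ii}(\CorCofMul(\Sp_{\RatQ},\otimes))).
\end{equation*}

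Second, I would apply Proposition \ref{NAlgVsAlg} with $(\Cc,\otimes)=(\Sp_{\RatQ},\otimes)$, which is a symmetric monoidal category since $L_{\RatQ}$ is a symmetric monoidal localization. This identifies the right-hand side with $\Fun(\Oo_{G,\Ii},\CAlg(\Sp_{\RatQ},\otimes))$. Composing the two equivalences proves the theorem.

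To match the description in the introduction, I would also remark that tracing through the adjunctions sends a normed algebra to the diagram of geometric fixed points. This uses the counit description $\fgt(\varepsilon)\simeq\ev_{G/G}$ from Proposition \ref{GSymmMonAdj} and the computation $\ev_{\alpha}\phi^{\otimes}_{G/H}\simeq\Phi^{K}$ from Lemma \ref{GSMCompFunPreservesColimits}. This identification is not needed for the bare existence of the equivalence.

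I do not expect a real obstacle, since all the substantive work is in the two cited results. The only point to state carefully is the functoriality of $\NAlg_{\Ii}$ in equivalences of $G$-symmetric monoidal $G$-categories, and that follows directly from the 2-functoriality of $\Uncc$ and of the $\Cat$-enrichment.
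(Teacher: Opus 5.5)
Your proposal is correct and matches the paper's proof: the paper likewise transports $\NAlg_{\Ii}$ along the equivalence $\phi^{\otimes}_{\RatQ}$ of Theorem~\ref{SMEquivalence} and then applies Proposition~\ref{NAlgVsAlg} with $(\Cc,\otimes)=(\Sp_{\RatQ},\otimes)$. Your extra remarks, on the functoriality of $\NAlg_{\Ii}$ in equivalences and on identifying the resulting diagram with geometric fixed points, only make explicit what the paper leaves implicit.
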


\begin{proof}
    Theorem \ref{SMEquivalence}, states that there is an equivalence $\PSp_{G,\RatQ}^{\otimes}\simeq \CorCofMul(\Sp_{\RatQ},\otimes))$. This combined with Proposition \ref{NAlgVsAlg}, shows the claim.
\end{proof}

In particular, the above theorem lets us recover an equivalence from Theorem 4.16 in \cite{Wimmer}. We let $\Sp_{G}^{O}$ denote the (1,1)-category of orthogonal $G$-spectra, see \cite[Definition II.2.6]{MandellMay}, or \cite[Definition 3.1.3, and Remark 3.1.8]{GlobalSchwede}. The (1,1)-category $\Sp_{G}^{O}$ admits a positive stable model category structure, see \cite[Theorem III.5.3]{MandellMay}. We refer to the weak equivalences in this model structure as $\pi_{*}$-isomorphisms. This model category structure, by Theorem III.8.1 in \cite{MandellMay}, lifts to a model structure on the (1,1)-category of strictly commutative monoids in $\Sp_{G}^{O}$.\!\footnote{We note that there is a mistake in \cite{MandellMay}, but the arguments still go through with a modification, see Remark 2.6 in \cite{Wimmer} for more details.} We denote the (1,1)-category of strictly commutative monoids by $\CMon(\Sp_{G}^{O})$, and refer to the weak equivalences from the lifted model category structure again by $\pi_{*}$-isomorphisms. Combining Theorem 7.27 with Remark A.6 in \cite{NormsEq}, we obtain an equivalence
\begin{equation*}
    \NAlg_{\FinG}(\PSp_{G}^{\otimes})\simeq\CMon(\Sp_{G}^{O})[\pi_{*}\text{-isomorphisms}^{\inv}].
\end{equation*}
There is a forgetful functor
\begin{equation*}
    \CMon(\Sp_{G}^{O})[\pi_{*}\text{-isomorphisms}^{\inv}]\xrightarrow{}\Sp_{G}^{O}[\pi_{*}\text{-isomorphisms}^{\inv}]\simeq \Sp_{G},
\end{equation*}
we lift rational equivalences to the category $\CMon(\Sp_{G}^{O})[\pi_{*}\text{-isomorphisms}^{\inv}]$, and denote the localization at the lifted rational equivalences by $(\CMon(\Sp_{G}^{O})[\pi_{*}\text{-isomorphisms}^{\inv}])_{\RatQ}$.

\begin{lem}\label{RatNAlgVsNAlgRat}
    Let $\Ii$ be an indexing system. There exists an equivalence
    \begin{equation*}
        \NAlg_{\Ii}(\PSp_{G,\RatQ}^{\otimes})\simeq \NAlg_{\Ii}(\PSp_{G}^{\otimes})_{\RatQ},
    \end{equation*}
    over $\Sp_{G,\RatQ}$, where the right hand side denotes the localization on those morphisms that forget to rational equivalences in $\Sp_{G}$.
\end{lem}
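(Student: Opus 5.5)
Let $\underline{L}_{\RatQ}^{\otimes}\colon \PSp_{G}^{\otimes}\xrightarrow{}\PSp_{G,\RatQ}^{\otimes}$ be the $G$-symmetric monoidal localization from Definition \ref{GSMLocFunc}. The plan is to show that $\underline{L}_{\RatQ}^{\otimes}$ is a \emph{fiberwise Bousfield localization} compatible with all the $G$-symmetric monoidal structure, and then invoke the standard fact that such localizations pass to algebras.

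\textbf{Step 1: the localization is a reflective subcategory compatible with the structure.} Levelwise, $\PSp_{G,\RatQ}(G/H)\simeq \Sp_{H,\RatQ}$ is the reflective subcategory of $\Sp_{H}$ on rational objects, with reflector $-\otimes\Sphere_{\RatQ}$ (Remark \ref{RatvsMackRat}). The local equivalences are preserved by:
\begin{itemize}
\item[-] restrictions $f^{*}$, by Construction \ref{EquipWithRatEq};
\item[-] the functors $f_{\otimes}$ for $\nabla$ and for $G/K\xrightarrow{}G/H$, by the verification in the construction preceding Definition \ref{GSMLocFunc}.
\end{itemize}
Moreover, the rational objects are closed under $f^{*}$. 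Hence $\PSp_{G,\RatQ}^{\otimes}$ is a full $G$-symmetric monoidal subcategory of $\PSp_{G}^{\otimes}$, though not one closed under $f_{\otimes}$. On cocartesian unstraightenings this gives a localization $\Uncc(\PSp_{G}^{\otimes})\xrightarrow{}\Uncc(\PSp_{G,\RatQ}^{\otimes})$ over $\Span(\FinG)$. It is left adjoint, relative to $\Span(\FinG)$, to a lax inclusion: a map of $G$-operads that preserves cocartesian edges over backwards maps, because $f^{*}$ preserves rational objects. This is the parametrized analogue of \cite[Proposition 2.2.1.9]{HA}.

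\textbf{Step 2: pass to algebras.} By Remark \ref{LaxFuncCat}, $\NAlg_{\Ii}(-)$ is computed as $\FunLaxG(\Span_{\text{all},\Ii}(\FinG),-)$. Postcomposition with the relative adjunction of Step 1 therefore yields an adjunction
\begin{equation*}
\NAlg_{\Ii}(\PSp_{G}^{\otimes})\rightleftarrows \NAlg_{\Ii}(\PSp_{G,\RatQ}^{\otimes}),
\end{equation*}
whose right adjoint is fully faithful with essential image the normed algebras with rational underlying spectrum. The left adjoint is compatible with forgetting to $\Sp_{G}$ and $\Sp_{G,\RatQ}$; this compatibility is part of the relative-adjunction statement, since the unit is computed underlying-wise. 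A reflective localization is the Dwyer--Kan localization at the morphisms it inverts. A morphism of normed algebras is inverted exactly when its underlying map becomes an equivalence after $-\otimes\Sphere_{\RatQ}$, i.e.\ when it forgets to a rational equivalence in $\Sp_{G}$. This gives the claimed equivalence over $\Sp_{G,\RatQ}$.

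\textbf{Main obstacle.} The hardest point is Step 1's claim that the levelwise localizations assemble into a localization of $G$-operads inducing an adjunction on lax sections. In particular, the unit $X\xrightarrow{}X\otimes\Sphere_{\RatQ}$ must be compatible with norms. I would handle this by checking the criterion of \cite[Proposition 2.2.1.9]{HA} fiberwise: $f_{\otimes}$ sends local equivalences in each variable to local equivalences, which is exactly what was verified, using that $f_{\otimes}$ is symmetric monoidal with $f_{\otimes}\Sphere_{\RatQ}\simeq\Sphere_{\RatQ}$. For the parametrized version, I would cite the analogous statement for normed categories in \cite{NormsEq}, or deduce it from the fact that relative left adjoints over $\Span(\FinG)$ induce adjunctions on $\FunLaxG$ via the $2$-categorical structure of Remark \ref{2-Cats}.
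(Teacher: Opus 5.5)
Your proposal is correct and follows essentially the same route as the paper: you build a relative adjunction $\Uncc(\underline{L}_{\RatQ}^{\otimes})\dashv\incl$ over $\Span(\FinG)$ whose fully faithful right adjoint preserves cocartesian edges over backwards maps, transport it to $\NAlg_{\Ii}$ by 2-functoriality of $\FunLaxG(\Span_{\text{all},\Ii}(\FinG),-)$, and identify the result as the localization at underlying rational equivalences via the forgetful functors to $\Sp_{G}$ and $\Sp_{G,\RatQ}$ (you use their conservativity implicitly; the paper states it). One harmless slip: the rational objects are in fact closed under $f_{\otimes}$, since $f_{\otimes}$ is symmetric monoidal with $f_{\otimes}\Sphere_{\RatQ}\simeq\Sphere_{\RatQ}$, so the inclusion is even strong $G$-symmetric monoidal, although laxness is all your argument needs.
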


\begin{proof}
    We consider the $G$-adjunction
    \begin{equation*}
        (L_{\RatQ})_{*}\colon\PMack_{G}(\Sp) \rightleftarrows \PMack_{G}(\Sp_{\RatQ})\colon\mkern-3mu \incl_{*},
    \end{equation*}
    where the left adjoint admits a lift
    \begin{equation*}
        \underline{L}_{\RatQ}^{\otimes}\colon \PSp_{G}^{\otimes} \xrightarrow{} \PSp_{G,\RatQ}^{\otimes},
    \end{equation*}
    to $G$-symmetric monoidal $G$-categories, see discussion after Definition \ref{GSMLocFunc}. A standard argument shows that there is a relative adjunction
    \begin{equation*}
        \Uncc(\underline{L}_{\RatQ}^{\otimes})\colon\Uncc(\PSp_{G}^{\otimes}) \rightleftarrows \Uncc(\PSp_{G,\RatQ}^{\otimes})\colon\mkern-3mu \incl
    \end{equation*}
    over $\Span(\FinG)$, with a fully faithful right adjoint, which preserves cocartesian edges over backwards morphisms. By 2-functoriality of $\FunLaxG(-,-)$ we obtain an adjunction
    \begin{equation*}
        \FunLaxG(\Span_{\text{all},\Ii}(\FinG),\Uncc(\PSp_{G}^{\otimes})) \rightleftarrows \FunLaxG(\Span_{\text{all},\Ii}(\FinG),\Uncc(\PSp_{G,\RatQ}^{\otimes})),
    \end{equation*}
    with a fully faithful right adjoint. We observe that there is a commutative square
    \[
        \begin{tikzcd}
            \FunLaxG(\Span_{\text{all},\Ii}(\FinG),\Uncc(\PSp_{G}^{\otimes})) \arrow[d,"{\ev_{G/G}}"'] \arrow[r] &\FunLaxG(\Span_{\text{all},\Ii}(\FinG),\Uncc(\PSp_{G,\RatQ}^{\otimes})) \arrow[d,"{\ev_{G/G}}"']
            \\
            \Sp_{G} \arrow[r] &\Sp_{G,\RatQ},
        \end{tikzcd}
    \]
    where the left and right vertical morphisms are conservative. Therefore, the functor $\NAlg_{\Ii}(\PSp_{G}^{\otimes})\xrightarrow{} \NAlg_{\Ii}(\PSp_{G,\RatQ}^{\otimes})$ is a localization functor, and by the above commutative square, combined with the forgetful functors being conservative, we deduce that the localization is at the morphisms which forget to rational equivalences. 
\end{proof}

We are now ready to record the following implication of the previous theorem, which recovers Theorem 4.16 from \cite{Wimmer}.

\begin{cor}
    There exists an equivalence of categories
    \begin{equation*}
        (\CMon(\Sp_{G}^{O})[\pi_{*}\text{-isomorphisms}^{\inv}])_{\RatQ}\simeq \Fun(\OG,(\CMon(\Sp^{O})[\pi_{*}\text{-isomorphisms}^{\inv}])_{\RatQ}).
    \end{equation*}
\end{cor}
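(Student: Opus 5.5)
The plan is to chain together four identifications. The first is the equivalence
\begin{equation*}
    \NAlg_{\FinG}(\PSp_{G}^{\otimes})\simeq\CMon(\Sp_{G}^{O})[\pi_{*}\text{-isomorphisms}^{\inv}]
\end{equation*}
recalled above from \cite{NormsEq}. One checks that it lies over the forgetful functors to $\Sp_{G}$, so that it identifies the morphisms which forget to rational equivalences on both sides.

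Localizing both sides at these morphisms gives
\begin{equation*}
    (\CMon(\Sp_{G}^{O})[\pi_{*}\text{-isomorphisms}^{\inv}])_{\RatQ}\simeq\NAlg_{\FinG}(\PSp_{G}^{\otimes})_{\RatQ}.
\end{equation*}
Lemma \ref{RatNAlgVsNAlgRat} with $\Ii=\FinG$ then identifies the right-hand side with $\NAlg_{\FinG}(\PSp_{G,\RatQ}^{\otimes})$. Theorem \ref{MainTh} with $\Ii=\FinG$, the maximal indexing system, turns this into $\Fun(\Oo_{G,\FinG},\CAlg(\Sp_{\RatQ},\otimes))$. Here $\Oo_{G,\FinG}$ is the full subcategory of $\FinG$ on transitive $G$-sets, which is exactly $\OG$.

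It remains to identify $\CAlg(\Sp_{\RatQ},\otimes)$ with $(\CMon(\Sp^{O})[\pi_{*}\text{-isomorphisms}^{\inv}])_{\RatQ}$. This is the same argument run for the trivial group, where $\FinG=\FCopC$. The result of \cite{NormsEq} specializes to
\begin{equation*}
    \CAlg(\Sp,\otimes)=\NAlg_{\FCopC}(\Sp^{\otimes})\simeq\CMon(\Sp^{O})[\pi_{*}\text{-isomorphisms}^{\inv}].
\end{equation*}
Lemma \ref{RatNAlgVsNAlgRat} for the trivial group gives $\CAlg(\Sp_{\RatQ},\otimes)\simeq\CAlg(\Sp,\otimes)_{\RatQ}$, because for the trivial group $\PSp_{e,\RatQ}^{\otimes}$ is just $\Sp_{\RatQ}$ with its localized smash product. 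Postcomposing the functor category with this equivalence finishes the chain.

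The main obstacle is the compatibility in the first step: the equivalence of \cite{NormsEq} must commute with the forgetful functors to $\Sp_{G}$, so that the two notions of ``rational equivalence'' (lifted along the orthogonal-spectra forgetful functor, and along $\ev_{G/G}$ for normed algebras) coincide. I would argue this by tracing the construction in \cite[Theorem 7.27, Remark A.6]{NormsEq}. It is built compatibly with the underlying equivalence $\Sp_{G}^{O}[\pi_{*}\text{-isomorphisms}^{\inv}]\simeq\Sp_{G}$, so it lies over $\Sp_{G}$. A localization at the preimage of a class of maps under equivalent functors is then transported along the equivalence.
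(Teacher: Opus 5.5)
Your proposal is correct and follows the paper's proof essentially step for step. It uses the same chain through \cite{NormsEq}, Lemma \ref{RatNAlgVsNAlgRat}, Theorem \ref{MainTh} with $\Ii=\FinG$ (so $\Oo_{G,\FinG}=\OG$), and the trivial-group instances of the lemma and of \cite{NormsEq}; your point that the equivalence of \cite{NormsEq} must lie over $\Sp_{G}$, so that the two notions of rational equivalence agree, makes explicit a compatibility the paper uses without comment.
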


\begin{proof}
    By combining Theorem \ref{MainTh}, and Lemma \ref{RatNAlgVsNAlgRat}, one computes
    \begin{align*}
        (\CMon(\Sp_{G}^{O})[\pi_{*}\text{-isomorphisms}^{\inv}])_{\RatQ} &\simeq \NAlg_{\FinG}(\PSp_{G}^{\otimes})_{\RatQ}
        \\
        &\simeq \NAlg_{\FinG}(\PSp_{G,\RatQ}^{\otimes})
        \\
        &\simeq \Fun(\Oo_{G},\CAlg(\Sp_{\RatQ},\otimes))
        \\
        &\simeq \Fun(\Oo_{G},\CAlg(\Sp,\otimes)_{\RatQ})
        \\
        &\simeq \Fun(\OG,(\CMon(\Sp^{O})[\pi_{*}\text{-isomorphisms}^{\inv}])_{\RatQ}),
    \end{align*}
    finishing the proof.
\end{proof}

\begin{rem}
    All results in this paper remain valid if one replaces $\RatQ$ with any subring $R$ of rational numbers in which the order of $G$ is invertible. In particular, one replaces $\Sphere_{\RatQ}$ with 
    \begin{equation*}
        \Sphere_{R}\coloneq \colim(\Sphere\xrightarrow{r_{1}}\Sphere\xrightarrow{r_{1}r_{2}}\Sphere\xrightarrow{r_{1}r_{2}r_{3}}...),
    \end{equation*}
    where the sequence $(r_{i})_{i\in\mathbb{N}}$ enumerates primes that are invertible in $R$.
\end{rem}

\bibliographystyle{amsalpha}
\bibliography{Ref.bib}

@book{HTT,
  title={Higher Topos Theory},
  author={Lurie, Jacob},
  isbn={9780691140483},
  lccn={2008038170},
  series={Academic Search Complete},
  url={https://books.google.nl/books?id=OT6uGouiuzgC},
  year={2009},
  publisher={Princeton University Press}
}

@unpublished{HA,
    author = {Lurie, Jacob},
    title = {Higher algebra},
    year = {2017},
    note = {Available at \protect\url{https://www.math.ias.edu/~lurie/papers/HA.pdf}}
}

@article{BarwickSpectral,
  title={Spectral Mackey functors and equivariant algebraic {K}-theory (I)},
  author={Barwick, Clark},
  journal={Advances in Mathematics},
  volume={304},
  pages={646--727},
  year={2017},
  publisher={Elsevier}
}

@inproceedings{TwoVariable,
  title={Two-variable fibrations, factorisation systems and-categories of spans},
  author={Haugseng, Rune and Hebestreit, Fabian and Linskens, Sil and Nuiten, Joost},
  booktitle={Forum of Mathematics, Sigma},
  volume={11},
  pages={e111},
  year={2023},
  organization={Cambridge University Press}
}

@article{StratCat,
  title={Stratified categories, geometric fixed points and a generalized Arone-Ching theorem},
  author={Glasman, Saul},
  journal={arXiv preprint arXiv:1507.01976},
  year={2015}
}

@article{Expose,
  title={Parametrized higher category theory and higher algebra: Expos{\'e} {I}--Elements of parametrized higher category theory},
  author={Barwick, Clark and Dotto, Emanuele and Glasman, Saul and Nardin, Denis and Shah, Jay},
  journal={arXiv preprint arXiv:1608.03657},
  year={2016}
}

@article{ParStab,
  title={Parametrized stability and the universal property of global spectra},
  author={Cnossen, Bastiaan and Lenz, Tobias and Linskens, Sil},
  journal={Journal of Topology},
  volume={18},
  number={4},
  pages={e70044},
  year={2025},
  publisher={Wiley Online Library}
}

@article{NormsEq,
  title={Norms in equivariant homotopy theory},
  author={Lenz, Tobias and Linskens, Sil and P{\"u}tzst{\"u}ck, Phil},
  journal={arXiv preprint arXiv:2503.02839},
  year={2025}
}

@article{martini2021yoneda,
  title={Yoneda's lemma for internal higher categories},
  author={Martini, Louis},
  journal={arXiv preprint arXiv:2103.17141},
  year={2021}
}

@article{martini2021colimits,
  title={Colimits and cocompletions in internal higher category theory},
  author={Martini, Louis and Wolf, Sebastian},
  journal={arXiv preprint arXiv:2111.14495},
  year={2021}
}

@article{6-fun,
  title={6-functor formalisms and smooth representations},
  author={Heyer, Claudius and Mann, Lucas},
  journal={arXiv preprint arXiv:2410.13038},
  year={2024}
}

@article{EnvAlgPat,
  title={Envelopes for algebraic patterns},
  author={Barkan, Shaul and Haugseng, Rune and Steinebrunner, Jan},
  journal={arXiv preprint arXiv:2208.07183},
  year={2022}
}

@article{Partial,
  title={Partial parametrized presentability and the universal property of equivariant spectra},
  author={Cnossen, Bastiaan and Lenz, Tobias and Linskens, Sil},
  journal={Transactions of the American Mathematical Society},
  volume={378},
  number={10},
  pages={6913--6974},
  year={2025}
}

@article{Descent,
  title={Descent and vanishing in chromatic algebraic {K}-theory via group actions},
  author={Clausen, Dustin and Mathew, Akhil and Naumann, Niko and Noel, Justin},
  journal={arXiv preprint arXiv:2011.08233},
  year={2020}
}

@article{GuiMay,
  title={Models of {G}--spectra as presheaves of spectra},
  author={Guillou, Bertrand J and May, J Peter},
  journal={Algebraic \& Geometric Topology},
  volume={24},
  number={3},
  pages={1225--1275},
  year={2024},
  publisher={Mathematical Sciences Publishers}
}

@unpublished{BookProjStable,
    author = {Cnossen, Bastiaan},
    title = {Stable homotopy theory and higher algebra},
    note = {Ongoing book project. It can be found on the authors website \url{https://sites.google.com/view/bastiaan-cnossen/home}}
}

@article{MultInfLoopSpc,
  title={Universality of multiplicative infinite loop space machines},
  author={Gepner, David and Groth, Moritz and Nikolaus, Thomas},
  journal={Algebraic \& Geometric Topology},
  volume={15},
  number={6},
  pages={3107--3153},
  year={2016},
  publisher={Mathematical Sciences Publishers}
}

@article{HighTamb,
  title={Normed equivariant ring spectra and higher {T}ambara functors},
  author={Cnossen, Bastiaan and Haugseng, Rune and Lenz, Tobias and Linskens, Sil},
  journal={arXiv preprint arXiv:2407.08399},
  year={2024}
}

@article{NormsMotivic,
  title={Norms in motivic homotopy theory},
  author={Bachmann, Tom and Hoyois, Marc},
  journal={arXiv preprint arXiv:1711.03061},
  year={2017}
}

@article{HomotCommutRingBisp,
  title={Homotopical commutative rings and bispans},
  author={Cnossen, Bastiaan and Haugseng, Rune and Lenz, Tobias and Linskens, Sil},
  journal={Journal of the London Mathematical Society},
  volume={111},
  number={6},
  pages={e70200},
  year={2025},
  publisher={Wiley Online Library}
}

@article{BlumbergHill,
  title={Operadic multiplications in equivariant spectra, norms, and transfers},
  author={Blumberg, Andrew J and Hill, Michael A},
  journal={Advances in Mathematics},
  volume={285},
  pages={658--708},
  year={2015},
  publisher={Elsevier}
}

@article{BlumbergHillMainDef,
  title={Incomplete {T}ambara functors},
  author={Blumberg, Andrew J and Hill, Michael A},
  journal={Algebraic \& Geometric Topology},
  volume={18},
  number={2},
  pages={723--766},
  year={2018},
  publisher={Mathematical Sciences Publishers}
}

@article{LaxMonTwoVar,
  title={Lax monoidal adjunctions, two-variable fibrations and the calculus of mates},
  author={Haugseng, Rune and Hebestreit, Fabian and Linskens, Sil and Nuiten, Joost},
  journal={Proceedings of the London Mathematical Society},
  volume={127},
  number={4},
  pages={889--957},
  year={2023},
  publisher={Wiley Online Library}
}

@article{Wimmer,
  title={A model for genuine equivariant commutative ring spectra away from the group order},
  author={Wimmer, Christian},
  journal={arXiv preprint arXiv:1905.12420},
  year={2019}
}

@book{MandellMay,
  title={Equivariant orthogonal spectra and {$S$}-modules},
  author={Mandell, Michael A and May, J Peter},
  year={2002},
  publisher={American Mathematical Soc.}
}

@book{GlobalSchwede,
  title={Global homotopy theory},
  author={Schwede, Stefan},
  volume={34},
  year={2018},
  publisher={Cambridge University Press}
}

@article{Serre,
  title={Homologie Singuliere Des Espaces Fibres},
  author={Jean-Pierre Serre},
  journal={Annals of Mathematics},
  year={1951},
  volume={54},
  pages={425}
}

@book{lewisMaySteinberger,
  title={Equivariant Stable Homotopy Theory},
  author={Lewis, L.G.J. and McClure, J.E. and May, J.P. and Steinberger, M.},
  isbn={9783540168201},
  lccn={86025968},
  series={Lecture Notes in Mathematics},
  year={1986},
  publisher={Springer Berlin Heidelberg}
}

@article{GreenleesConjecture,
  author  = {J. P. C. Greenlees},
  title   = {Triangulated categories of rational equivariant cohomology theories},
  journal = {Oberwolfach Reports},
  volume  = {8},
  year    = {2006},
  pages   = {480--488}
}

@book{GreenleesMay,
  author    = {J. P. C. Greenlees and J. P. May},
  title     = {Generalized Tate Cohomology},
  publisher = {American Mathematical Society},
  series    = {Memoirs of the American Mathematical Society},
  volume    = {113},
  year      = {1995},
  pages     = {viii+178}
}

@book{GreenleesConj1,
  author    = {J. P. C. Greenlees},
  title     = {Rational {$S^1$}-Equivariant Stable Homotopy Theory},
  publisher = {American Mathematical Society},
  series    = {Memoirs of the American Mathematical Society},
  volume    = {138},
  year      = {1999},
  pages     = {xii+289}
}

@incollection{GreenleesConj2,
  author    = {J. P. C. Greenlees},
  title     = {Rational {O}(2)-Equivariant Cohomology Theories},
  booktitle    = {Fields Institute Communications},
  volume    = {19},
  publisher = {American Mathematical Society},
  year      = {1998},
  pages     = {103--110}
}

@incollection{GreenleesConj3,
  author    = {J. P. C. Greenlees},
  title     = {Rational {SO}(3)-Equivariant Cohomology Theories},
  booktitle    = {Contemporary Mathematics},
  volume    = {271},
  publisher = {American Mathematical Society},
  year      = {2001},
  pages     = {99--125}
}

@article{ShipleyCDGA,
  title={{$\textup{H}\mathbb{Z}$}-algebra spectra are differential graded algebras},
  author={Shipley, Brooke},
  journal={American journal of mathematics},
  volume={129},
  number={2},
  pages={351--379},
  year={2007},
  publisher={Johns Hopkins University Press}
}

@article{HHR,
  title={On the nonexistence of elements of Kervaire invariant one},
  author={Hill, Michael A and Hopkins, Michael J and Ravenel, Douglas C},
  journal={Annals of Mathematics},
  pages={1--262},
  year={2016},
  publisher={JSTOR}
}
\end{document}